\newtheorem{thm}{Theorem}[section]
\newtheorem{cor}[thm]{Corollary}
\newtheorem{lemma}[thm]{Lemma}
\newtheorem{sublemma}[thm]{Sublemma}
\newtheorem{prop}[thm]{Proposition}
\newtheorem{defn}[thm]{Definition}
\theoremstyle{remark}
\theoremstyle{definition}
\newtheorem{rmk}[thm]{Remark}
\numberwithin{equation}{thm}
\def\beq{\begin{equation}}
\def\eeq{\end{equation}}
\def\beqa{\begin{equation*}}
\def\eeqa{\end{equation*}}
\def\ben{\begin{enumerate}}
\def\een{\end{enumerate}}
\def\besp{\begin{split}}
\def\eesp{\end{split}}
\def\SUMU{ \mathop{{\bigoplus}^u}\limits}
\def\PRODSCU#1{\mathop{{\mathop{\prod_{\substack #1}}\limits}^{\square,u}}\limits}
\def\limPROu{ \mathop{\underleftarrow{\rm lim}}\limits}
\def\limINDu{ \mathop{\underrightarrow{\rm lim}}\limits} 
\def\limINDU{ \mathop{\underrightarrow{\rm lim}^{ u}}\limits}
\def\Ab{{\cA b}}
\def\AP{{\cA\cP}}
\def\APH{{\cA\cP\cH}}
\def\Mod{{\cM od}}
\def\ACLM{{\cA\cC\cL\cM}}
\def\LMu{{\cC\cL\cM}^{u}}
\def\ACLMu{{\cA\cC\cL\cM}^u}
\def\LC{{\cL\cC}}
\def\CLC{{\cC\cL\cC}}
\def\crash#1{}
\def\N{{\mathbb N}}
\def\Z{{\mathbb Z}}
\def\0{{\mathbb O}}
\def\P{{\mathbb P}}
\def\Q{{\mathbb Q}}
\def\R{{\mathbb R}}
\def\C{{\mathbb C}}
\def\A{{\mathbb A}}
\def\E{{\mathbb E}}
\def\F{{\mathbb F}}
\def\G{{\mathbb G}}
\def\T{{\mathbb T}}
\def\1{{\mathbf 1}}
\def\l{\left}
\def\r{\right}
\def\[[{\l[\l[}
\def\]]{\r]\r]}
\def\cf{\emph{cf.}\,}
\def\ie{\emph{i.e.}\,}
\def\lc{\emph{loc.cit.\,}}
\def\cA{{\mathcal A}}
\def\cC{{\mathcal C}}
\def\cM{{\mathcal M}}
\def\cO{{\mathcal O}}
\def\cH{{\mathcal H}}
\def\cL{{\mathcal L}}
\def\cP{{\mathcal P}}
\def\g"{``}
\def\g'{`}
\def\sC{{\mathscr C}}
\def\sE{{\mathscr E}}
\def\sF{{\mathscr F}}
\def\sH{{\mathscr H}}
\def\sL{{\mathscr L}}
\def\sM{{\mathscr M}}
\def\sP{{\mathscr P}}
\def\sT{{\mathscr T}}
\def\what{\widehat}
\def\veps{\varepsilon}
\def\unif{{\rm unif}}
\def\ol{\overline}
\def\iso{\xrightarrow{\ \sim\ }}
\def\map#1{\xrightarrow{\ #1\ }}
\def\limind{\mathop{\lim\limits_{\displaystyle\rightarrow}}}
\newcommand{\dspl}{\displaystyle}
\def\wt{\what{\otimes}}
\def\bd{{\rm bd}}
\author{Francesco Baldassarri  
\thanks{Universit\`{a} di Padova,
Dipartimento di Matematica, Via Trieste, 63, 35121 Padova, Italy.
 } 
 \\[10pt]
{with an Appendix by Maurizio Candilera}\;$^{\ast}$
 }
\title{A $p$-adically entire function with integral values on $\Q_p$\\
and entire liftings of the $p$-divisible group $\Q_p/\Z_p$
%\vskip-100pt\vbox{\hfill{\small\sl Dedicated to the memory of \quad \quad \quad\quad\quad}\\ \vskip-5pt
%\hfill{\small\sl   Philippe Robba (Oran 1941 - Paris 1988)}
%\\ \vskip-5pt
%\hfill{\small\sl  with gratitude and admiration \quad\quad\quad}}%\vskip128pt
%%%
%\vskip60pt
%%\vbox{ 
%%{\large with an Appendix by Maurizio Candilera${}^\ast$} 
%%}
%%\vskip-80pt
}
\begin{document}
%%%%%%%%%%%%%%%%%%%%%%%%%%%%%%%%%%%%%
%%\begin{dedication}
%%\hspace{4cm}
%%\vspace*{6ex}{dedicated to Philippe Robba, with gratitude.}
%%\end{dedication}
%%%%%%%%%%%%%%%%%%%%%%%%%%%%%%%%%%%%%%%%
 \date{\today}

\maketitle
%\vskip60pt
%\vbox{ 
%{\large with an Appendix by Maurizio Candilera${}^\ast$} 
%}
\begin{abstract}
We  give a  self-contained proof of the fact, discovered in \cite{tesi} and proven in \cite{psi} with the methods of \cite{MA}, that, for any prime number $p$, there exists a  power series
$$\Psi= \Psi_p(T) \in T + T^2\Z[[T]] 
$$
 which trivializes the addition law of the formal group of Witt covectors 
 \cite{MA}, \cite[II.4]{fontaine}, is $p$-adically entire and assumes values in $\Z_p$ all over $\Q_p$. 
 We actually generalize, following a suggestion of M. Candilera, the previous facts to any fixed unramified extension $\Q_q$ of $\Q_p$ of degree $f$, where $q = p^f$. We show that $\Psi = \Psi_q$ provides a quasi-finite  covering of the Berkovich affine line $\A^1_{\Q_p}$ by itself. We prove in section~\ref{newton} new strong estimates for the growth of $\Psi$,  in view of the 
 application \cite{perf_fourier} to $p$-adic Fourier expansions on $\Q_p$. We refer to \cite{perf_fourier}  for the proof of a technical corollary (Proposition~\ref{estadm}) which we apply here to locate 
the zeros of $\Psi$ and to obtain its  product expansion (Corollary~\ref{invfunct}). 
 \par
 We reconcile the present discussion (for $q =p$) with the formal group proof given in \cite{psi} which takes place in the Fr\'echet algebra $\Q_p\{x\}$ of the analytic additive group $\G_{a,\Q_p}$ over $\Q_p$. 
We show that, for any $\lambda \in \Q_p^\times$,
the closure $\sE_\lambda^\circ$ of $\Z_p[\Psi(p^ix/\lambda)\,|\,i=0,1,\dots]$ in $\Q_p\{x\}$ 
 is a  Hopf algebra object in the category of 
  Fr\'echet $\Z_p$-algebras.   
\par    
  The special fiber of $\sE_\lambda^\circ$ is 
 the affine algebra of  
 the $p$-divisible group $\Q_p/p \lambda \Z_p$ over $\F_p$, while $\sE_\lambda^\circ [1/p]$ is dense in $\Q_p\{x\}$.
 \par  
From  $\Z_p[\Psi(\lambda x)\,|\,\lambda \in \Q_p^\times]$  we construct  a $p$-adic   analog 
$\AP_{\Q_p}(\Sigma_\rho)$  of the algebra  of  
Dirichlet series holomorphic in a strip $(-\rho, \rho) \times i \R \subset \C$. 
%almost periodic functions, holomorphic in a strip $(-\rho, \rho) \times i \R \subset \C$ (or, rather, $\R \times i(-\rho, \rho)$...), for any $\rho >0$,  in the sense of  \cite[III.2]{Bes}. 
%We  start developing this analogy: in particular, the   $p$-adic analog of the  Banach algebra of Bohr's almost periodic functions \cite[I.5]{Bes} on $\R$ appears to be the algebra $\sC_\unif^\bd(\Q_p,\Q_p)$ of uniformly continuous bounded functions $\Q_p \to \Q_p$. 
We  start developing this analogy. It turns out that  the Banach algebra of   almost periodic functions on $\Q_p$ 
%\cite[I.5]{Bes} on $\R$  
%turns out to be the uniform completion of  the restriction to $\Q_p$
identifies with the topological ring of germs of  holomorphic almost periodic functions on  strips around $\Q_p$.
  \end{abstract}

\tableofcontents
%\stepcounter{section}
%\setcounter{section}{-2}
\bigskip
\setcounter{section}{-1}

\section{Introduction}
\subsection{Foreword} An unfortunate feature of $p$-adic numbers is that there exists no   character
 $$\psi: (\Q_p,+) \to (\C_p^\times,\cdot)\;\;,\;\;\psi \neq 1
 $$
 which extends to an entire function $\C_p \to \C_p$. In fact, let $\pi_p \in \C_p^{\circ \circ}$ be such that 
 the radius of convergence of $\exp(\pi_p x)$ equals 1, so that $\exp$ and $\log$ establish an isomorphism
 $$
 (\pi_p \, \C_p^{\circ \circ},+) \iso ( \exp (\pi_p \, \C_p^{\circ \circ}),\cdot) \;\mbox{(} \subset (1 + \C_p^{\circ \circ}, \cdot)
 \mbox{)}\;.
 $$ 
 Now, assume  a $\psi$ as above exists, and let $n$  be a positive integer such  that 
 $\psi (p^n) \in \exp (\pi_p \, \C_p^{\circ \circ})$ so that $\psi$ restricts to a character 
 $\psi : (p^n \Z_p,+) \to ( \exp (\pi_p \, \C_p^{\circ \circ}),\cdot)$. 
 Let $a := \log ( \psi (p^n))$. Then, for any $x \in \Z$,   $\psi(p^nx) = \psi(p^n)^x = \exp (a x)$.   
 But $x \mapsto \exp (ax)$ has a finite radius of convergence.
 \par
    We partially remedy to the previous inconvenience by 
 showing the existence, for any $\lambda \in \Q_p^\times$, of a representable formal group functor 
\beq \label{tilambda}
\E_\lambda : \ACLMu_{\Z_p} \to \Ab
\eeq
(see section~\ref{lincat} in Appendix A, for notation) whose generic (resp. special) fiber is the $\Q_p$-analytic group $\G_a$ 
 (resp. the constant $p$-divisible group $\Q_p/\lambda
  \Z_p$ over $\F_p$). 
 The idea is the following.  Over the complex numbers the formulas 
 $$
 e^{ i z} =   \cos z + i \sin z \;\;,\;\; e^{ -i z} =   \cos z - i \sin z
 $$
show  that the two (Hopf) algebras    $\Z[i][e^{ i z} ,  e^{ -i z}]$  and $\Z[i][\sin z, \cos z]$ coincide.   The sequence of functions 
\beq \label{coord}
\Psi(x) = \Psi_p(x),\Psi(px), \Psi(p^2x),\dots 
\eeq
plays here the role of the pair $(\cos z , \sin z)$ in that  the $p$-adically entire and integral addition law \eqref{covsum2} holds, and $x$ is a logarithm for that formal group.  So, while it is improper to say that $\Psi$  plays the role of an entire character of $\Q_p$, it is suggestive to consider a suitable $p$-adic completion of the algebra $\Z_p[\Psi(\lambda x)\,|\, \lambda \in \Q_p^\times]$ and to compare it with the classical algebras of Bohr's almost periodic functions $APH_\R$ and   $AP_\R$. We review  for  convenience the classical definitions of real and complex Fourier analysis in  section~\ref{expvstrig} of Appendix B. 
A closer $p$-adic analog of those classical constructions, and a generalization of Amice-Fourier theory to $p$-adic functions on $\Q_p$, will appear in \cite{perf_fourier}.

 \subsection{The function $\Psi$}
In the paper \cite{psi} we introduced, for any prime number $p$, a power series
$$\Psi(T) = \Psi_p(T) = T + \sum_{i=2}^{\infty} a_{i} T^{i}\in \Z[[T]]\;,
$$
which represents an entire $p$-adic analytic function, \ie  is such that
\beq \label{conv} \limsup_{i \to \infty} |a_{i}|_p^{1/i} = 0 \; .
\eeq
 This function has the remarkable property that $\Psi_p(\Q_p) \subset \Z_{p}$ and that, for any $i \in \Z$ and $x \in \Q_p$, if we write $x$ as in \eqref{wittsum}, with $x_i$ defined  by \eqref{wittcomp2}, \eqref{wittcomp3}, then
\beq \label{wittcomp}
x_{-i} = \Psi_p(p^i x)\; \; {\rm mod} \; p \;\; \in \F_p\;.
\eeq
The power series $\Psi(T)$ is defined by the functional relation
\beq  \label{functeq}
\sum_{j=0}^{\infty} p^{-j}\Psi(p^{j} T)^{p^{j}} = T \; .
\eeq
Its inverse function $\beta=\beta_p \in T +T^2\Z[[T]]$ was shown to converge exactly in the region
\beq \label{convregion}
|T|_p < p\;\; \mbox{\ie} \;\;v_p(T)>-1 \; .
\eeq
\endgraf 
One property we had failed to notice in \cite{psi} is the following 
\begin{prop} \label{unifPsi}
The restriction of the function  $\Psi_p$ to a map $\Q_p \to \Z_p$ is uniformly continuous. More precisely, for any $j =0,1,\dots$ and $x \in \Q_p$,
\beq
\label{unifPsi1}
\Psi_p(x + p^j  \C_p^\circ ) \subset \Psi_p(x) + p^j  \C_p^\circ \;.
\eeq
\end{prop}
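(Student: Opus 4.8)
The plan is to extract the desired uniform continuity estimate directly from the defining functional equation \eqref{functeq}. First I would rewrite \eqref{functeq} in the form
\beq \label{psiuni1}
\Psi(T) = T - \sum_{j=1}^{\infty} p^{-j}\Psi(p^{j} T)^{p^{j}} \;,
\eeq
which already shows that $\Psi(T) \equiv T \pmod p$ as a power series in $\Z[[T]]$ (each summand on the right is divisible by $p^{p^j - j}$, hence by $p$, in $\Z[[T]]$ since $p^j - j \geq 1$ for $j \geq 1$). More importantly, for a point $x \in \Q_p$ and a perturbation $\delta \in p^j \C_p^\circ$ (so $v(\delta) \geq j$), I want to compare $\Psi(x+\delta)$ with $\Psi(x)$. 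The key observation is that $\Psi$ converges on all of $\C_p$ (by \eqref{conv}), so $\Psi(x+\delta) - \Psi(x) = \sum_{i \geq 1} a_i\bigl((x+\delta)^i - x^i\bigr)$, and every term $(x+\delta)^i - x^i$ is divisible by $\delta$, hence lies in $p^j\C_p^\circ$ provided $x \in \C_p^\circ$; but for $x \in \Q_p$ with $v(x) < 0$ one needs to be more careful, and this is where the functional equation does the real work.

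The cleanest route is induction on $j$. For $j = 0$ the statement \eqref{unifPsi1} is: $\Psi(x + \C_p^\circ) \subset \Psi(x) + \C_p^\circ$, i.e. $v(\Psi(x+\delta) - \Psi(x)) \geq 0$ whenever $v(\delta) \geq 0$; since $\Psi$ has coefficients in $\Z$ and $x+\delta, x \in \Q_p$ implies — wait, $x$ need not be integral. Here I would instead argue: write $x = \sum_{i \gg -\infty} [x_i]p^i$; the negative-power part $x^- := \sum_{i < 0}[x_i]p^i$ is a finite sum, so by continuity and the entire convergence it suffices to treat $x \in \Z_p$, for which $\Psi(x) \in \Z_p$ and $\Psi(x+\delta) - \Psi(x) \in p^j\Z_p$ is immediate from $\Psi \in \Z[[T]]$ and $\delta \in p^j\Z_p$. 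For general $x$, the functional equation \eqref{functeq} lets me relate $\Psi(x)$ to the values $\Psi(p^m x)$ for $m$ large enough that $p^m x \in \Z_p$: from \eqref{functeq} applied at $p^j T$ and at $T$, one gets
\beq \label{psiuni2}
\Psi(T)^p = p\bigl(\Psi(T) - \Psi(pT)^{?}\bigr)\ \ \text{-- more precisely, the recursion } \ \Psi(T) = T - \sum_{j \geq 1} p^{-j}\Psi(p^j T)^{p^j}
\eeq
expresses $\Psi(x)$ through $\Psi(p x), \Psi(p^2 x), \dots$, all of which have arguments closer to $\Z_p$. Shifting $x \mapsto x + \delta$ with $v(\delta) \geq j$ shifts each $p^m x$ to $p^m x + p^m\delta$ with $v(p^m\delta) \geq j + m \geq j$, and I can feed in the integral case (eventually $p^m x \in \Z_p$) together with the estimate $v(u^{p^m} - w^{p^m}) \geq v(u-w) + m$ valid for $u,w \in \C_p^\circ$ (a standard consequence of the binomial theorem, since $\binom{p^m}{k}$ is divisible by $p^{m - v_p(k)}$).

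Assembling: fix $x \in \Q_p$ and $\delta$ with $v(\delta)\ge j$. Choose $M$ with $p^M x \in \Z_p$ (possible since $x\in\Q_p$). For the finitely many $m$ with $0 \le m < M$, the ``tail'' contribution $\sum_{m \ge M} p^{-m}(\dots)$ to $\Psi(p^{m_0}x)-\Psi(p^{m_0}x+p^{m_0}\delta)$ is handled by the integral case plus the $p$-th power estimate, while the intermediate terms are handled by downward recursion in $m$ from $M$ to $0$. Tracking the valuations shows $v\bigl(\Psi(x+\delta) - \Psi(x)\bigr) \ge j$, which is exactly \eqref{unifPsi1}; uniform continuity of $\Psi\colon \Q_p \to \Z_p$ follows at once, and the containment $\Psi(\Q_p)\subset\Z_p$ (already known from \cite{psi}) reappears as the case $j$ arbitrary large applied to $x$ and $x - [x_0] \in p\Z_p + (\text{negative part})$. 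The main obstacle I anticipate is bookkeeping the valuations along the recursion \eqref{psiuni2} cleanly enough that no error term of valuation $< j$ sneaks in; the numerology $p^j - j \ge 1$ and $v(u^{p^m}-w^{p^m}) \ge v(u-w)+m$ is what makes it work, but one has to be disciplined about the order in which the telescoping is carried out, and about reducing to $x \in \Z_p$ via the finite negative-power part without circularity.
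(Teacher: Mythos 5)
Your strategy is essentially sound, but it is a genuinely different route from the one taken in the paper. The paper's proof (Corollary~\ref{basicest0}) is a two-line deduction from the formal-group machinery: it combines the covector addition law \eqref{covsum2}, $\Psi(x+y)=\Phi(\Psi(x),\Psi(px),\dots;\Psi(y),\Psi(py),\dots)$, with Barsotti's isobaric/congruence properties of the Witt addition polynomials (Lemma~\ref{phiisob}) and the fact that $v_p(\Psi(t))=v_p(t)$ for $v_p(t)>0$; applying this with $y=\delta\in p^j\C_p^\circ$ gives \eqref{unifPsi1} at once. You instead work directly from the functional equation \eqref{functeq}: writing $\Psi(T)=T-\sum_{m\ge1}p^{-m}\Psi(p^mT)^{p^m}$, choosing $M$ with $p^Mx\in\Z_p$, and running a downward recursion in the level $m_0$ from $M$ to $0$ on the statement $v\bigl(\Psi(p^{m_0}x+p^{m_0}\delta)-\Psi(p^{m_0}x)\bigr)\ge j+m_0$. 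This does work: the base levels $m_0\ge M$ follow from $\Psi\in T\Z[[T]]$, and at level $m_0$ the $m$-th term contributes valuation at least $-m+(j+m_0+m)+m=j+m_0+m$ by the inductive bound at level $m_0+m$ together with the $p$-power congruence, while the linear term contributes $v(p^{m_0}\delta)\ge j+m_0$. Your argument is more elementary and self-contained (it stays entirely within the style of sections~\ref{proofs}--\ref{newton}), at the price of needing Proposition~\ref{wittdecomp} ($\Psi(\Q_p)\subset\Z_p$) as an input so that the values $\Psi(p^\ell x)$ at non-integral $p^\ell x$ lie in $\C_p^\circ$; the paper's proof buys brevity by importing the covector formalism of \cite{psi} and \cite{MA}.

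A few points need tightening before this is a proof rather than a plan. The congruence you invoke, $v(u^{p^m}-w^{p^m})\ge v(u-w)+m$ for $u,w\in\C_p^\circ$, is false without a positivity hypothesis (take $u=1$, $w=0$); you need $v(u-w)\ge 1/(p-1)$, which does hold in your application since $v(u-w)\ge j+m\ge 1$, but it must be stated. The display you label \eqref{psiuni2} is garbled and should simply be deleted in favour of the rewritten functional equation. The preliminary idea of splitting off the finite negative-power part of $x$ ``by continuity'' does not work and is anyway superseded by the recursion. Finally, the closing remark that $\Psi(\Q_p)\subset\Z_p$ ``reappears'' from the case of large $j$ is circular, since your recursion already uses that containment (at the intermediate levels $0<\ell<M$); cite Proposition~\ref{wittdecomp} instead.
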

This is proven in Corollary~\ref{basicest0} below.  See also the more general Theorem~\ref{psibddsuper} whose proof depends on Proposition~\ref{estadm}, proven in \cite{perf_fourier}. 
\subsection{Our previous approach \cite{psi}} \label{previousapproach}
Proofs in \cite{psi} were  based on  Barsotti-Witt algorithms  \cite{MA}.
The most basic notion of topological algebra in \cite{MA} is the one of a \emph{simultaneously admissible} 
 family, indexed by $\alpha \in A$, of sequences $i \mapsto x_{\alpha,-i}$ for $i=0,1,\dots$ in a 
Fr\'echet algebra $R$ over $\Z_p$ (in particular, over $\F_p$)  \cite[Ch.1, \S 1]{MA}.
In case $R$ is a  Fr\'echet algebra  over $\Q_p$ the definition of simultaneous  admissibility is more restrictive, but the 
name used in \lc is the same.   For clarity,  the more restrictive notion will be called here (simultaneous) \emph{$\rm PD$-admissibility}, while the general notion will maintain the name of (simultaneous) \emph{admissibility}. 
\par
 Using the previous refined terminology, our main technical tool in \cite{psi}
was  a criterion \cite[Lemma 1]{psi} of \emph{simultaneous $\rm PD$-admissibility}  
for a family indexed by $\alpha \in A$, of sequences $i \mapsto x_{\alpha,-i}$ for $i=0,1,\dots$ in a 
Fr\'echet $\Q_p$-algebra. 
In
 Barsotti's theory of  $p$-divisible groups one  regards an admissible sequence $i \mapsto x_{-i}$ 
as a \emph{Witt covector} $(\dots,x_{-2},x_{-1},x_0)$ \cite{MA}, \cite{fontaine} with components $x_{-i} \in R$. 
\endgraf
We take here only a short detour on the group functor viewpoint
and refer the reader to \cite{fontaine} for precisions. As abelian group functors on a suitable category of topological $\Z_p$-algebras the
direct limit  ${\rm W}_n \to {\rm W}_{n+1}$ of the Witt vector groups of length $n$ via the \emph{Verschiebung map} 
 $$
V: (x_{-n},\dots, x_{-1},x_0) \to (0,x_{-n},\dots, x_{-1},x_0)  
 $$
 indeed exists.  It is the group functor  $\rm CW$ of \emph{Witt covectors}. 
For a topological $\Z_p$-algebra $R$ on which ${\rm CW}(R)$ is defined, it is convenient to denote an element $x \in {\rm CW}(R)$ by  an inverse sequence 
$$x=(\dots,x_{-2},x_{-1},x_0)$$ 
of elements of $R$, that is a Witt covector with components in $R$. 
Two Witt covectors $x=(\dots,x_{-2},x_{-1},x_0)$ and $y=(\dots,y_{-2},y_{-1},y_0)$ with components $R$ can be summed 
by taking limits of sums of finite  Witt vectors. Namely, 
let
 \beq
 \label{limphi0}
 \varphi_i(X_0,\dots,X_i;Y_0,\dots,Y_i) \in \Z[X_0,\dots,X_i,Y_0,\dots,Y_i]
\eeq
be  the $i$-th (= the last!) entry of the Witt vector $(X_0,\dots,X_i)+(Y_0,\dots,Y_i)$. 
Then, 
$$x + y = z = (\dots,z_{-2},z_{-1},z_0)
$$ 
means that, for any $i=0,-1,\dots$,
\beq\label{limphi}
z_i 
 = \lim_{n \to +\infty} \varphi_n(x_{i-n},x_{i-n +1},\dots,x_i;y_{i-n},y_{i-n+1},\dots,y_i) 
 \eeq
 converges in $R$. 
 The convergence properties on the 
 Witt covectors $x$ and $y$ above for the expressions \eqref{limphi} to converge, are dictated by the following
  \begin{lemma} \label{phiisob} (\cite[Teorema 1.11]{MA}) Notation as  above.
 For $i =0,1,2, \dots$, let us attribute the weight $p^i$ to the variables $X_i, Y_i$. Then, for any $i \geq 0$ the polynomial $\varphi_i$ in \eqref{limphi0} is isobaric
 of weight $p^i$. Moreover, for any $i\geq 1$,
\beq  \label{congphi}  \begin{split}
\varphi_i(X_0,X_1,&\dots,X_i;Y_0,Y_1,\dots,Y_i) - \varphi_{i-1} (X_1,\dots,X_i;Y_1,\dots,Y_i)  \in \\
&
\\ & X_0\,Y_0\, \Z[X_0,X_1,\dots,X_i,Y_0,Y_1,\dots,Y_i] \;.
 \end{split}
 \eeq
\end{lemma}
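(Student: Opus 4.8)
The plan is to reduce both assertions to elementary manipulations with the Witt (ghost) polynomials $g_n(Z_0,\dots,Z_n):=\sum_{k=0}^{n}p^{k}Z_k^{p^{n-k}}$, using only that the $\varphi_i$ of \eqref{limphi0} form the unique solution — integral, by the classical theorem of Witt — of the system $g_n(\varphi_0,\dots,\varphi_n)=g_n(X_0,\dots,X_n)+g_n(Y_0,\dots,Y_n)$, $n\geq 0$. For the isobaric assertion I would induct on $i$. If each variable $Z_k$ is given weight $p^k$, every monomial of $g_n(Z_0,\dots,Z_n)$ has weight $p^{k}\cdot p^{n-k}=p^{n}$, so $g_n$ is isobaric of weight $p^n$; hence for index $i$ the right-hand side of the defining equation is isobaric of weight $p^i$. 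The case $i=0$ is immediate since $\varphi_0=X_0+Y_0$. For the inductive step, $\varphi_k$ is isobaric of weight $p^k$ for $k<i$, so each term $p^{k}\varphi_k^{p^{i-k}}$ on the left is isobaric of weight $p^k\cdot p^{i-k}=p^i$; solving for $p^{i}\varphi_i$ then writes it as a difference of polynomials isobaric of weight $p^i$, and dividing by the nonzero constant $p^{i}$ (legitimate because $\varphi_i\in\Z[X_0,\dots,X_i,Y_0,\dots,Y_i]$) does not change the monomial support.

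For the congruence \eqref{congphi}, set
\[
D_i:=\varphi_i(X_0,\dots,X_i;Y_0,\dots,Y_i)-\varphi_{i-1}(X_1,\dots,X_i;Y_1,\dots,Y_i)\in\Z[X_0,\dots,X_i,Y_0,\dots,Y_i].
\]
This ring is a unique factorization domain in which $X_0$ and $Y_0$ are non-associate primes, so it is enough to check that $D_i$ vanishes after the substitution $X_0=0$ and, \emph{independently}, after the substitution $Y_0=0$: the two conclusions give $X_0\mid D_i$ and $Y_0\mid D_i$, hence $X_0Y_0\mid D_i$. Moreover $D_i$ is symmetric under the global interchange $X_k\leftrightarrow Y_k$ ($k=0,\dots,i$), since Witt addition is commutative; thus the two substitutions are swapped by this symmetry, and it suffices to treat $X_0=0$.

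The heart of the matter is then the identity
\[
\varphi_i(0,X_1,\dots,X_i;Y_0,Y_1,\dots,Y_i)=\varphi_{i-1}(X_1,\dots,X_i;Y_1,\dots,Y_i),
\]
which I would prove by computing the Witt sum on the left in ghost coordinates (the ghost map being injective over any $p$-torsion-free ring, in particular over these polynomial rings). Let $V$ be the length-raising shift $(z_0,\dots,z_m)\mapsto(0,z_0,\dots,z_m)$; it agrees with the Verschiebung and is additive on Witt vectors. A one-line comparison of ghost components gives the standard splitting $(Y_0,Y_1,\dots,Y_i)=(Y_0,0,\dots,0)+V(Y_1,\dots,Y_i)$, while $(0,X_1,\dots,X_i)=V(X_1,\dots,X_i)$ by definition. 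Combining these with additivity of $V$,
\[
(0,X_1,\dots,X_i)+(Y_0,Y_1,\dots,Y_i)=(Y_0,0,\dots,0)+V\big((X_1,\dots,X_i)+(Y_1,\dots,Y_i)\big).
\]
Writing $(W_0,\dots,W_{i-1}):=(X_1,\dots,X_i)+(Y_1,\dots,Y_i)$, so that $W_{i-1}=\varphi_{i-1}(X_1,\dots,X_i;Y_1,\dots,Y_i)$, and using the equally elementary identity $(c,0,\dots,0)+(0,w_0,\dots,w_{i-1})=(c,w_0,\dots,w_{i-1})$ (again a ghost-component check), the right-hand side equals $(Y_0,W_0,\dots,W_{i-1})$, whose last entry is $W_{i-1}$. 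That entry is exactly $\varphi_i(0,X_1,\dots,X_i;Y_0,\dots,Y_i)$, which proves the identity, hence $X_0\mid D_i$; the symmetric substitution $Y_0=0$ gives $Y_0\mid D_i$, and the unique factorization argument concludes. I do not expect a real obstacle: the isobaric part is a bookkeeping induction and the congruence rests on the three elementary ghost-component identities above; the only point needing care is the methodological one, that setting $X_0$ and $Y_0$ to zero simultaneously would give only $D_i\in(X_0,Y_0)$, which is strictly weaker than $X_0Y_0\mid D_i$, so the substitutions must be made separately.
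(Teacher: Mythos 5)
Your proposal is correct, but note that for this lemma the paper offers no argument of its own: it is quoted from Barsotti (\cite[Teorema 1.11]{MA}) and used as a black box, so your ghost-component derivation is a self-contained substitute rather than a variant of an in-text proof. Both halves check out. For the isobaric part, the induction on $i$ using $\sum_{k=0}^{i}p^{k}\varphi_k^{p^{i-k}}=g_i(X)+g_i(Y)$ is sound, and your remark that dividing $p^{i}\varphi_i$ by the constant $p^{i}$ does not change the monomial support (so isobarity of $p^{i}\varphi_i$ gives isobarity of $\varphi_i$) handles the only delicate point. For the congruence \eqref{congphi}, the reduction to the two separate specializations $X_0=0$ and $Y_0=0$, followed by the unique-factorization argument giving $X_0Y_0\mid D_i$, is valid, and the key identity $\varphi_i(0,X_1,\dots,X_i;Y_0,\dots,Y_i)=\varphi_{i-1}(X_1,\dots,X_i;Y_1,\dots,Y_i)$ is exactly the Teichm\"uller--Verschiebung decomposition: $(Y_0,Y_1,\dots,Y_i)=(Y_0,0,\dots,0)+V(Y_1,\dots,Y_i)$, additivity of $V$, and $(Y_0,0,\dots,0)+V(W)=(Y_0,W_0,\dots,W_{i-1})$, all legitimately checked on ghost components since $\Z[X_0,\dots,X_i,Y_0,\dots,Y_i]$ is $p$-torsion free and the resulting identities are universal polynomial identities. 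You were also right to flag that substituting $X_0=0$ and $Y_0=0$ simultaneously would only give $D_i\in(X_0,Y_0)$, which is strictly weaker; keeping the substitutions separate is what yields the product divisibility claimed in \eqref{congphi}.
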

So, we equip the polynomial ring $\Z[X_0,X_{-1},\dots,X_{-i},\dots;Y_0,Y_{-1},\dots,Y_{-i},\dots]$ with the linear topology defined by the powers of the ideals $I_N:=(X_{-N},X_{-N-1}, \dots;Y_{-N},Y_{-N-1}, \dots)$ and set 
$$
\cP:= \limPROu_{N,M \to +\infty} \Z[X_0,X_{-1},\dots,X_{-i},\dots;Y_0,Y_{-1},\dots,Y_{-i},\dots]/I_N^M \;.
$$
Then, the sequence  
%\beq \label{Phidef0} 
%i \longmapsto \varphi_i(X_0,X_{-1},\dots,X_{-i};Y_0,Y_{-1},\dots,Y_{-i})
%\eeq
\beq \label{Phidef0} 
i \longmapsto \varphi_i(X_{-i},\dots,X_{-1},X_0;Y_{-i},\dots,Y_{-1},Y_0)
\eeq
converges to  an element
\beq \label{Phidef00} 
\Phi(X_0,X_{-1},\dots,X_{-i},\dots;Y_0,Y_{-1},\dots,Y_{-i},\dots) \in \cP\;.
\eeq
So,  \eqref{limphi} is expressed more compactly as 
\beq\label{limphi1}
z_i =  \Phi(x_i,x_{i-1}, \dots;y_i,y_{i-1}, \dots) 
 \eeq
\begin{rmk}\label{usualwitt} The  projective  limit  
 $${\rm W}_{n+1} \to {\rm W}_n\;\;\;,\;\;\;(x_0,x_1,\dots,x_{n+1}) \mapsto  (x_0,x_1,\dots,x_n) \;,$$
 produces instead the algebraic group $\rm W$  of \emph{Witt vectors}. 
 \end{rmk}
 \endgraf
The approach of Barsotti \cite{MA} is more flexible and easier to apply to analytic categories. 
If $R$ is complete, for two simultaneously admissible 
Witt covectors $x=(\dots,x_{-2},x_{-1},x_0)$ and $y=(\dots,y_{-2},y_{-1},y_0)$ with components $R$ 
the expressions \eqref{limphi1} all converge in $R$ and  define $(\dots,z_{-2},z_{-1},z_0) = z =: x+y$, which is in turn 
simultaneously admissible with $x$ and $y$.
In the $\Q_p$-algebra case  a Witt covector $x=(\dots,x_{-2},x_{-1},x_0)$ has \emph{ghost components} 
$(\dots,x^{(-2)},x^{(-1)},x^{(0)})$ 
defined by 
\beq \label{ghost} 
x^{(i)} = x_i + p^{-1} x_{i-1}^p + p^{-2} x_{i-2}^{p^2} + \dots \;\;,\;\; i = 0,-1,-2,\dots \;.
\eeq
Under very general  assumptions \cite[Teorema 1.11]{MA}, a finite family of  sequences 
$(x_{\alpha,-i})_{i=0,1,\dots}$, for $\alpha \in A$ in a $\Q_p$-Fr\'echet algebra are simulaneously PD-admissible iff the same holds for the family of sequences of ghost components $(x_{\alpha}^{(-i)})_{i=0,1,\dots}$, for $\alpha \in A$. 
Under these assumptions, for simultaneously PD-admissible covectors $x$ and $y$,  $x+y =z$ is equivalent to
\beq \label{ghost2} 
z^{(i)} = x^{(i)} + y^{(i)}  \;\;,\;\; i = 0,-1,-2,\dots \;.
\eeq
In the present case, which coincides with the case treated in  \cite{psi},
the sequences $i \mapsto x_{-i} :=p^i x$ and $i \mapsto y_{-i}:=p^i y$  are simultaneously PD-admissible in the standard $\C_p$-Fr\'echet algebra $\C_p\{x,y\}$ of entire functions on $\C^2_p$ \cite[Lemma 1 and Lemma 3]{psi}. It follows from the relation \eqref{functeq} that 
$i \mapsto x_{-i} := p^i x$, for $i=0,1,2,\dots$  is the sequence of ghost components of $x \mapsto x^{(-i)} := \Psi(p^ix)$. 
Therefore from  \cite[\lc]{MA} we conclude that the two sequences $i \mapsto \Psi(p^{i}x)$ and $i \mapsto \Psi(p^{i}y)$ are  simultaneously admissible in $\C_p\{x,y\}$, as well. Moreover, by  \cite[\lc]{MA} and the definition of the addition law of Witt covectors with coefficients in $\C_p\{x,y\}$, we have
\beq
\label{covsum} \begin{split}
 ( \dots, \Psi(p^{2}(x+y)), \Psi(p(x+y)),&\Psi(x+y)) = \\
( \dots, \Psi(p^{2}x),& \Psi(px),\Psi(x)) + ( \dots, \Psi(p^{2}y), \Psi(py),\Psi(y))
 \; .
\end{split}
\eeq
Equivalently, $\Psi$ satisfies the addition law \cite[(11)]{psi}
\beq
\label{covsum2} 
\Psi(x+y) =\Phi (\Psi(x), \Psi(px),\dots;\Psi(y),\Psi(py),\dots)  
\eeq
where
\beq
\label{covsum3}  \begin{split}
 \Phi (\Psi(x), \Psi(px),\dots;\Psi(y),&\Psi(py),\dots)  = \\ \lim_{i \to \infty} 
 \varphi_i(\Psi(p^ix),&\dots,\Psi(px),\Psi(x);\Psi(p^i y),\dots,\Psi(py),\Psi(y)) \;,
 \end{split}
\eeq
for the polynomials $\varphi_i$ of \eqref{limphi0} and \eqref{Phidef0}.
Notice that   \eqref{wittsum} may be restated to say that,  for any $x \in \Q_p$, 
$$x = (\dots , x_{-2},x_{-1};x_0, x_1,\dots)\;,
$$ where $x_i \in \F_p$ is given by  \eqref{wittcomp2}, as a \emph{Witt bivector} \cite{MA} with coefficients in $\F_p$. 
\par
\subsection{Our present approach}
We present here in section~\ref{proofs}   direct elementary proofs of the main properties of $\Psi$, which 
make  no use of the Barsotti-Witt algorithms of \cite{MA}. 
Actually, following a suggestion of M. Candilera, we  consider rather than \eqref{functeq}, the more general functional  relation for $\Psi = \Psi_q$, $q =p^f$,
\beq  \label{functeq(q)}
\sum_{j=0}^{\infty} p^{-j}\Psi(p^{j} T)^{q^{j}} = T \; .
\eeq
The result, at no extra work,  will then be that \eqref{functeq(q)} admits a unique solution 
$\Psi_q(T) \in T + T^2 \Z[[T]]$. The series $\Psi_q(T)$  represents a $p$-adically entire function such that 
 $\Psi_q(\Q_q) \subset \Z_q$. 
In section~\ref{newton} we describe in the same elementary style the Newton and 
valuation polygons of  the entire function $\Psi_q$, and obtain new 
estimates on the growth of $|\Psi_q(x)|$ as $|x| \to \infty$, which will be crucial for the sequel \cite{perf_fourier}. 
From these estimates we also deduce, modulo the self-contained technical Proposition~\ref{estadm} whose proof appears in \cite{perf_fourier}, the location of the zeros of $\Psi_p$ (Theorem~\ref{invfunct}). 
%do not suffice to get the precise information 
%on the zeros of $\Psi_p$ we give in 
Namely, 
any ball of radius 1, $a + \Z_p \in \Q_p/\Z_p$, contains precisely one (simple) zero of $\Psi_p$. 

%for any $n =1,2,\dots$,  any  ball $a + \Z_p \subset \Q_q^\times$, with $v_p(a) = -n$ contains precisely one zero of $\Psi_p$. 
%We appeal to the more general estimates of Proposition~\ref{estadm} and we refer to \cite{perf_fourier} for the lengthy proof of a slightly more general statement.
%The conclusion is that, except for $\Psi_q(0)=0$, the zeros of $\Psi_q$ have negative integral $p$-adic valuation~: 
%for any $n=1,2,\dots$, $\Psi_q$ has $q^n-q^{n-1}$  zeros of $p$-adic valuation $-n$ all in $\Q_q$.
\par
We present in Appendix C below some  numerical calculations due to M. Candilera,  which exhibit the first coefficients of $\Psi_p$, for small values of $p$. These 
calculations have been  useful to us and we believe they may be quite convincing for the reader.
\par  
The function $\Psi_q: \A_{\Q_p}^1 \to \A_{\Q_p}^1$ is a quasi-finite  covering of the Berkovich affine line over $\Q_p$ by itself. We do not know 
%whether  the zeros of $\Psi_q$ form a subgroup of $\G_a(\Q_q)$ nor 
whether the previous covering  is Galois.  
\subsection{Convergence of Fourier-type expansions}\label{reconcile}
Section~\ref{entbddfcts} describes some Hopf algebras whose existence follows from the addition properties of $\Psi_p$. The next section~\ref{almper} suggests an interpretation of the functions $\Psi_p(x/\lambda)$, for 
$\lambda \in \Q_p^\times$,  as $p$-adic analogs of $\exp(\frac{2 \pi i}{\lambda} z)$, for $\lambda \in \R^\times$. 
We are naturally lead to the question of which functions can be expressed as uniform limits on $\Q_p$ of the previous functions. By analogy to the classical case, we call these functions uniformly almost periodic  on $\Q_p$ and denote by $AP_{\Q_p}$ the corresponding closed  subalgebra of the Banach algebra $\sC_\unif^\bd(\Q_p,\Q_p)$ of bounded uniformly continuous functions 
$\Q_p \to \Q_p$. Although we do not have an intrinsic characterization of these functions, we can show that 
%, similar to the classical case, 
they may be seen as germs of holomorphic functions on a neighborhood of $\Q_p$. 
We point out that colimits for topological algebras are not in general supported by set-theoretic inductive limits (see Remark~\ref{caution} below). Therefore, our Uniform Approximation Theorem~\ref{bohrpadcor} does not state that 
any uniformly almost periodic function  on $\Q_p$ necessarily extends to an analytic function on a  $p$-adic strip around $\Q_p$. 
On the other hand, $AP_{\Q_p}$ is dense in the Fr\'echet algebra $\sC(\Q_p,\Q_p)$ of continuous  
functions  $\Q_p \to \Q_p$, equipped with the  topology of uniform convergence on compact open subsets of $\Q_p$. 
The proofs of these facts are detailed in  sections~\ref{contfcts} and \ref{entfcts}. 
We spend some time in 
section~\ref{contfcts} to explain in categorical terms (clearly stated in Appendix A) the natural limit/colimit/tensor product formulas 
which justify the linear topologies of the previous function algebras.  
For example,  $\sC(\Q_p,\Z_p)$ (but not $\sC_\unif(\Q_p,\Z_p)$) is a
 Hopf algebra  related to the constant $p$-divisible group $\Q_p/\Z_p$ over $\Z_p$ and to its ``universal covering'' $\Q_p$.  A more complete discussion of these topological algebras and of their duality relation with the affine algebra of the universal covering of the $p$-divisible torus, interpreted as an algebra of measures,  will appear in \cite{duality}.  
 \endgraf
  In section~\ref{entfcts} we prove the facts announced in section~\ref{almper},
%  the description of our plan, 
  namely Theorem~\ref{bohrpadthm}, Theorem~\ref{bohrpadthm2},
Proposition~\ref{2struct},   Proposition~\ref{hopfpad}, Proposition~\ref{fejer21}, Proposition~\ref{fejer22}, 
  and Theorem~\ref{bohrpadcor}.
%Theorem~\ref{bohrpadthm}, 
%Proposition~\ref{2struct}, Theorem~\ref{bohrpadthm2}, Proposition~\ref{fejer21}, Proposition~\ref{fejer22}, and 
%   Proposition~\ref{hopfpad}.
\subsection{Aknowledgments} 
It is a pleasure to acknowledge that the proofs in sections \ref{proofs} and \ref{newton} of this  text are based on a discussion  with Philippe Robba which took place in April 1980.  
I am strongly indebted to him for this and for his friendship.
%\par \noindent I am indebted to Maurizio Candilera  for the idea of replacing $p$ by $q=p^f$ in the original draft of this manuscript. He also provided  numerical calculations that  were extremely useful to clarify our ideas.   
%\par \noindent The collaboration with Maurizio Cailotto in \cite{closed} and \cite{duality} has greatly helped us to properly express 
%the subtle properties of topological $\Z_p$-algebras related to $\Psi$. The criticism of Velibor Bojkovic was also very useful.
\par\noindent I thank my colleague Giuseppe De Marco for his patient explanations on classical Fourier theory.
\par \noindent 
%I thank  Michel Gros, Nick Katz, 
%%Peter Schneider, Peter Scholze, 
%Glenn Stevens, and Don Zagier 
%for showing interest in this work.
% (and the latter for discussing with me the zeros of $\Psi_p$). 
 I  thank the MPIM of Bonn for  hospitality during March 2018, when this article was completed.
% the universities of Princeton, Rennes, and Tohoku, and the institutes  IMJ  and IHP of Paris, where these results were presented. 

\begin{section}{Rings of $p$-adic analytic functions} \label{anftcs}
\begin{subsection}{Entire functions bounded on $p$-adic strips} \label{entbddfcts} 
(See Appendix A for notation of topological algebra and non-archimedean functional analysis.) We describe here the Hopf algebra object $\Q_p\{x\}$ in the category of Fr\'echet $\Q_p$-algebras equipped with the completed projective = inductive tensor product $\wt_{\pi,\Q_p} = \wt_{\iota,\Q_p}$, which consists of the global sections of the $\Q_p$-analytic group $\G_a$.  We also consider boundedness conditions for the functions in $\Q_p\{x\}$ 
on suitable neighborhoods of $\Q_p$ in the Berkovich affine line $\A^1_{\Q_p}$ over $\Q_p$. 
%In order to motivate sections \ref{contfcts} and \ref{entfcts} below, we also anticipate here the description of  
%a (non strictly) closed Fr\'echet subalgebra $\sE$ of $\Q_p\{x\}$. The reader will appreciate the need of the discussion in sections \ref{contfcts} and \ref{entfcts} in order to logically complete our proof. 
\endgraf
Our notation for  coproduct (resp. counit, resp. inversion) of a Hopf algebra object $A$ in a symmetric monoidal category with 
monoidal product $\otimes$ and unit object $I$
is usually $\P =\P_A:A \to A \otimes A$ (resp. $\veps = \veps_A :A \to I$, resp. $\rho = \rho_A:A \to A$). 
\endgraf

\begin{defn} \label{entfctsdef}
For any closed subfield $K$ of $\C_p$,   we denote by $K\{x\} = K\{x_1,\dots,x_n\}$ the ring of entire functions on the $K$-analytic affine space $(\A^n_K,\cO_K)$. 
The \emph{standard Fr\'echet topology} on the $K$-algebra $K\{x\}$  is induced by the family $\{w_r\}_{r \in \Z}$  of valuations  
$$w_r (f) := \inf_{x \in (p^{-r} \C_p^\circ)^n} v(f(x)) \;,
$$
for any  $f \in K\{x\}$.   
\end{defn} 
\begin{rmk}\label{supnormnot} More generally, for bounded functions $f:X \to (S,|~|)$,  where $X$ is a set and $(S,|~|)$ is a Banach ring in multiplicative notation, $||f||_X = \sup_{x \in X} |f(x)|$ will denote the supnorm on $X$. 
\end{rmk}
\begin{defn} \label{padicstrip}
For any $\rho>0$ and any finite extension $K/\Q_p$, the \emph{$p$-adic $n$-strip of width $\rho$ around $K^n$} is the analytic domain which is the union $\Sigma_\rho(K) = \Sigma^{(n)}_\rho(K)$ of all affinoid  $n$-polydiscs of radius $\rho$ centered at $K$-rational points.  
We denote by
$$
Res_\rho : \C_p\{x\} \longrightarrow \cO(\Sigma_\rho) \;,\; f \longmapsto f_{|\Sigma_\rho} 
$$
the restriction map. 
Clearly, the map $Res_\rho$ is an injection. We let $\cO_K^\bd(\Sigma_\rho(K))$ (resp. $\cO_K^\circ(\Sigma_\rho(K))$) denote the subring of $\cO_K(\Sigma_\rho(K))$, consisting of functions bounded (resp.   bounded  by 1) on $\Sigma_\rho(K)$. 
We denote by $||~||_{K,\rho}$ the supnorm on $\Sigma_\rho(K)$. The Banach algebra structure on $\cO_K^\bd(\Sigma_\rho(K))$ (resp. $\cO_K^\circ(\Sigma_\rho(K))$) induced by the norm $||~||_{K,\rho}$ will be called \emph{$K$-uniform}. The   Fr\'echet structure of $\cO_K(\Sigma_\rho(K))$ (resp. $\cO_K^\circ(\Sigma_\rho(K))$) induced by the family of seminorms of Definition~\ref{entfctsdef} will be called \emph{standard}.
% together with the induced Fr\'echet structure  on 
%on $\cO_K^\bd(\Sigma_\rho(K))$ (resp. .
We set in  particular
$\Sigma_\rho = \Sigma^{(1)}_\rho(\Q_p)$ but will keep the notation  $||~||_{\Q_p,\rho}$.
% to distinguish the supnorm on a strip of width $\rho$ around $\Q_p$ from the supnorm on a ball  of radius $\rho$, usually denoted by $||~||_{\rho}$.
We also denote by $\sH^{(n),\bd}_K(\rho)$ 
(resp. $\sH^{(n),\circ}_K(\rho)$) 
the subring of $K\{x\}$
of functions which are bounded 
(resp. bounded by 1) 
on $\Sigma_\rho(K)$.  
We set 
$$ \sH^{(n),\bd}_K := \bigcap_{\rho} \sH^{(n),\bd}_K(\rho) \;.
%\;\;,\;\;  \sH^{(n),\circ}_K := \sH^{(n),\circ}_K[1/p]
% \;.
$$  
For any $\rho >0$ and any 
%$f \in \cO_K^\bd(\Sigma_\rho(K))$ (resp. $f \in \sH^\bd_K(\rho)$), 
$f \in \sH^{(n),\bd}_K(\rho)$ we introduce one further valuation
\beq
\label{entbddfctsdef1}
w_{K,\infty} (f) := \inf_{x \in K^n} v(f(x)) \;.
\eeq
For $n=1$ and $K =\Q_p$, we shorten $\sH^{(n),\bd}_K(\rho)$ 
(resp. $\sH^{(n),\circ}_K(\rho)$, 
resp. $\sH^{(n),\bd}_K$, 
%resp. $\sH^{(n),\circ}_K$, 
resp. $w_{K,\infty}$, resp.  $K$-uniform) to $\sH^\bd(\rho)$ 
(resp. $\sH^\circ(\rho)$, 
resp. $\sH^\bd$, 
%resp. $\sH^\circ$, 
resp. $w_\infty$, resp. uniform).
 \end{defn}  
  \begin{rmk}\label{bddexist} It is not a priori clear that $\sH^\bd$ contains non-constant functions. We will prove below (Theorem~\ref{psibddsuper}) that $\Psi(x) \in \sH^\bd$.   
   \end{rmk}
 \begin{rmk}\label{bddent} 
 For any $n$ and any $\rho >0$, $\sH^{(n),\circ}_K(\rho)$ is a closed $K^\circ$-subalgebra of 
 $K\{x_1,\dots,x_n\}$; the induced Fr\'echet $K^\circ$-algebra structure on $\sH^{(n),\circ}_K(\rho)$ will be called \emph{standard}. 
 It follows from formula~\ref{functeq} below that, by contrast, 
 $\sH^{(n),\bd}_K(\rho) = \sH^{(n),\circ}_K(\rho)[1/p]$ is dense in $K\{x_1,\dots,x_n\}$. 
 \end{rmk} 
 \begin{rmk} \label{projstrips} The Fr\'echet structure on $\cO_K(\Sigma_{\rho})$ which we call ``standard'' 
 is the one of analytic geometry: it coincides with the topology of uniform convergence on rigid discs of radius $\rho$.  
Similarly for 
%$\cO_K^\bd(\Sigma_\rho(K))$ and 
$\cO_K^\circ(\Sigma_\rho(K))$.
 The standard Fr\'echet algebra $K\{x\}$ identifies with 
\beq \label{projstrips1}
K\{x\} = \limPROu_{\rho \to +\infty} (\cO_K(\Sigma_{\rho}),{\rm standard})
\eeq
%On the other hand, $\cO_K^\bd(\Sigma_\rho(K))$ and $\cO_K^\circ(\Sigma_\rho(K))$ admit the finer Banach algebra \emph{uniform} structure, induced by the topology of uniform convergence on $\Sigma_\rho(K)$; with this topology they will be denoted $(\cO_K^\bd(\Sigma_\rho(K)),  {\rm uniform})$ and $(\cO_K^\circ(\Sigma_\rho(K)), {\rm uniform})$, or $(\cO_K^\bd(\Sigma_\rho(K)), ||~||_{K,\rho})$ and $(\cO_K^\circ(\Sigma_\rho(K)), ||~||_{K,\rho})$, respectively.
\end{rmk}
%We observe that, set-theoretically, we have
%\beq \label{projstrips11}
%\sH^{(n),\bd}_K = \limPROu_{\rho \to +\infty} \cO_K^\bd(\Sigma_\rho(K)) \;,\;
%\sH^{(n),\circ}_K = \limPROu_{\rho \to +\infty} \cO_K^\circ(\Sigma_\rho(K))
%\eeq
\begin{defn} \label{projstrips2} 
The \emph{strip topology} on $\sH^{(n),\bd}_K$ 
%(resp. on $\sH^{(n),\circ}_K$)
 is the projective limit topology 
of the uniform topologies of Definition~\ref{padicstrip}.
%in \eqref{projstrips11}. 
So,
\beq \label{projstrips21}  
(\sH^{(n),\bd}_K, {\rm strip}) = \limPROu_{\rho \to +\infty} (\cO_K^\bd(\Sigma_\rho(K)), ||~||_{K,\rho}) \;,
\eeq
is a $K$-Fr\'echet space. 
%\beq \label{projstrips21} \begin{split}
%(\sH^{(n),\bd}_K, {\rm strip}) &= \limPROu_{\rho \to +\infty} (\cO_K^\bd(\Sigma_\rho(K)), ||~||_{K,\rho}) \;,
%\\
%(\sH^{(n),\circ}_K, {\rm strip}) &= \limPROu_{\rho \to +\infty} (\cO_K^\circ(\Sigma_\rho(K)), ||~||_{K,\rho})
%\end{split}
%\eeq  
\end{defn}
\begin{rmk}\label{stripvsstan} We have a dense embedding $\sH^{(n),\bd}_K \subset K\{x_1,\dots,x_n\}$. The strip topology on $\sH^{(n),\bd}_K$, for which this algebra is complete, is finer than its (non complete) standard topology.
\end{rmk}
The next lemma shows that, for any non archimedean field $K$ and $\G_a = \G_{a,K}$, 
$$\cO(\G_a \times \G_a) = \cO(\G_a) \wt_{\pi,K} \cO(\G_a)
$$ 
so that  
$\cO(\G_a)$ is a Hopf algebra object in the category of Fr\'echet $K$-algebras. 
\begin{lemma} \label{tensorident0} There are
 natural identifications 
 \beq \label{ident1}
K\{x_1,\dots,x_n \} \wt_{\pi,K} K\{y_1,\dots,y_m\} =  K\{  x_1,\dots,x_n,y_1,\dots,y_m \} \; .
\eeq
sending $x_i \otimes 1 \mapsto x_i$ and $1 \otimes y_j \mapsto y_j$, for $i = 1,\dots,n$, $j=1,\dots,m$.
\end{lemma}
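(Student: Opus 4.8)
The plan is to reduce the statement to a concrete description of $K\{x_1,\dots,x_n\}$ by power series with an appropriate growth condition, and then to check that the projective tensor product realizes exactly the two-variable growth condition. First I would recall that, by Definition~\ref{entfctsdef}, an element of $K\{x_1,\dots,x_n\}$ is a power series $f = \sum_{\nu \in \N^n} a_\nu x^\nu$ with $a_\nu \in K$ such that $v(a_\nu) + r|\nu| \to +\infty$ as $|\nu| \to \infty$, for every $r \in \Z$ (equivalently for every $r \in \R$); this is just the translation of the condition $w_r(f) > -\infty$ for all $r$, together with the standard maximum-modulus identity $w_r(f) = \inf_\nu (v(a_\nu) + r|\nu|)$ on a polydisc, which holds because $K$ is a nonarchimedean field and the infimum is attained. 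Likewise $w_r$ is multiplicative (a Gauss valuation), so the $w_r$ genuinely make $K\{x\}$ a Fr\'echet $K$-algebra. By Lemma~\ref{tensorfrechet} and Proposition~\ref{tensorfrechet2}, $K\{x\}\wt_{\pi,K}K\{y\}$ is the projective limit over $r$ of the Banach algebras $T_r(x) \wt_{\pi,K} T_r(y)$, where $T_r(z)$ is the completion of $K[z]$ for the Gauss norm of polyradius $p^{-r}$, i.e. the Tate algebra of the closed polydisc of that radius over $\C_p$ (intersected with $K$-coefficients).

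Next I would use the classical fact that for Tate algebras the completed projective tensor product over $K$ is again a Tate algebra in the union of the variables: $T_r(x)\wt_{\pi,K}T_r(y) = T_r(x,y)$, with the tensor semi-norm $w_{r,r}$ of \S\ref{tensprod} coinciding with the Gauss norm in all variables. The inequality $w_{r,r}(g) \le \inf_\nu (v(c_\nu) + r|\nu|)$ (for $g = \sum c_\nu x^{\nu'} y^{\nu''}$) is immediate from the definition of $w_{r,r}$ applied to the obvious representation of $g$; the reverse inequality is where the only real content sits — one must show that any representation $g = \sum_i x_i \otimes y_i$ cannot make $\min_i (w_r(x_i) + w_r(y_i))$ exceed the Gauss valuation of $g$, which follows because evaluating at a point $(\xi,\eta)$ of the polydisc over $\C_p$ (or rather taking the Gauss valuation itself, viewed as evaluation at the generic point) is multiplicative and bounds every summand below by $w_{r,r}$ of it. Passing to the projective limit over $r \to +\infty$ then gives, set-theoretically and topologically,
\beq
K\{x\}\wt_{\pi,K}K\{y\} = \limPROu_{r} T_r(x,y) = K\{x,y\}\;,
\eeq
the last equality by the power-series description again, since the family of growth conditions indexed by all $r$ is the same whether the variables are split into two groups or not. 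The map is the evident one sending $x_i\otimes 1$ and $1\otimes y_j$ to the coordinate functions, and it is a $K$-algebra homomorphism by construction of the multiplication on a completed tensor product.

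The main obstacle I expect is the reverse (lower-bound) estimate identifying the tensor semi-norm $w_{r,r}$ with the Gauss norm on $T_r(x,y)$ — in other words showing no clever representation $g=\sum x_i\otimes y_i$ beats the naive monomial one. The clean way to handle it is to observe that the Gauss valuation $w_{r,r}$ on $T_r(x,y)$ is itself multiplicative and restricts to $w_r$ on each factor, so for any representation $g = \sum_i x_i \otimes y_i$ one has $w_{r,r}(g) \ge \min_i w_{r,r}(x_i y_i) = \min_i (w_r(x_i)+w_r(y_i))$, whence $w_{r,r}(g) \ge \sup_{\text{repr.}} \min_i (w_r(x_i)+w_r(y_i))$, which is precisely the tensor semi-norm; combined with the trivial opposite inequality this pins it down. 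Everything else is bookkeeping: checking the identifications are compatible with the transition maps $T_{r'}(x,y)\to T_r(x,y)$ for $r'\ge r$, so that they assemble into an isomorphism of the projective limits, and noting that an $m$-fold iteration (or the same argument with multi-indices) yields the stated identity for arbitrary $n,m$. This then immediately gives, as announced before the lemma, that $\cO(\G_a\times\G_a)=\cO(\G_a)\wt_{\pi,K}\cO(\G_a)$ and hence the Hopf algebra structure on $\cO(\G_a)$ coming from the group law on $\G_a$.
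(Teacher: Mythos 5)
Your proof is correct and follows essentially the same route as the paper: identify the completed tensor product at each radius with the Tate algebra in the union of the variables and pass to the projective limit via Proposition~\ref{tensorfrechet2}, the only difference being that the paper simply cites \cite[\S 6.1.1, Cor. 8]{BGR} for the Banach-level identification, which you reprove via multiplicativity of the Gauss valuation. One small slip: in the paper's additive (valuation) convention the ``immediate'' estimate coming from the monomial representation is $w_{r,r}(g)\geq \inf_\nu\l(v(c_\nu)+r|\nu|\r)$, not $\leq$ (your concluding paragraph states the two inequalities correctly).
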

\begin{proof}
For any $s \in \Z$ the map of the statement produces  isomorphisms of $K$-Tate algebras \cite[\S 6.1.1, Cor. 8]{BGR}
  \beq \label{ident0}
K \langle p^{-s}x_1,\dots,p^{-s}x_n \rangle  \wt_{\pi,K} K \langle p^{-s}y_1,\dots,p^{-s}y_m  \rangle =  K \langle p^{-s}x_1,\dots,p^{-s}x_n,p^{-s}y_1,\dots,p^{-s}y_m \rangle \;.
\eeq 
We now apply Proposition~\ref{tensorfrechet2}.
\end{proof} 
\begin{cor}  \label{tensoridentbdd} Let $K$ be a finite extension of $\Q_p$ and $\rho>0$. The identifications \eqref{ident1} induce identifications 
 \beq \label{ident2}
( \sH^{(n),\circ}_K(\rho), {\rm standard})  \wt^u_{K^\circ}  (\sH^{(m ),\circ}_K(\rho), {\rm standard}) \iso  (\sH^{(m+n),\circ}_K(\rho), {\rm standard}) \;.
\eeq
Similarly for $(\cO_K^\circ(\Sigma_\rho(K)), {\rm standard})$. 
\end{cor}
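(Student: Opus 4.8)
The plan is to deduce the statement from the additive-group identity of Lemma~\ref{tensorident0} by descending along the admissible radii, using that $\wt^u_{K^\circ}$ commutes with filtered projective limits (formula~\eqref{tensproj}, Proposition~\ref{tensorfrechet2}). First I would build the comparison morphism. The geometric point is that $p$-adic strips multiply: a radius-$\rho$ $(m+n)$-polydisc centred at $(a,b)\in K^n\times K^m$ is the product of the radius-$\rho$ $n$-polydisc centred at $a$ and the radius-$\rho$ $m$-polydisc centred at $b$, so set-theoretically $\Sigma^{(m+n)}_\rho(K)=\Sigma^{(n)}_\rho(K)\times\Sigma^{(m)}_\rho(K)$ and $\bar B(0,p^r)^{(n)}\times\bar B(0,p^r)^{(m)}=\bar B(0,p^r)^{(m+n)}$. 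Hence for $f\in\sH^{(n),\circ}_K(\rho)$ and $g\in\sH^{(m),\circ}_K(\rho)$ we get $\|f(x)g(y)\|_{\Sigma^{(m+n)}_\rho(K)}=\|f\|_{\Sigma^{(n)}_\rho(K)}\|g\|_{\Sigma^{(m)}_\rho(K)}\le 1$, so the multiplication map of Lemma~\ref{tensorident0} sends $\sH^{(n),\circ}_K(\rho)\otimes_{K^\circ}\sH^{(m),\circ}_K(\rho)$ into $\sH^{(m+n),\circ}_K(\rho)$ and is bounded for the standard seminorms $w_r$, hence extends to a morphism $\Phi\colon\sH^{(n),\circ}_K(\rho)\wt^u_{K^\circ}\sH^{(m),\circ}_K(\rho)\to\sH^{(m+n),\circ}_K(\rho)$ of standard Fr\'echet $K^\circ$-algebras; the verbatim construction with $\cO_K^\circ(\Sigma_\rho(K))$ in place of the $\sH^{(\cdot),\circ}_K$'s gives the morphism in the second assertion. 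It remains to see $\Phi$ is an isomorphism.

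Next I would reduce this to a Banach-level statement. By Lemma~\ref{tensorfrechet}, in its standard topology $\sH^{(n),\circ}_K(\rho)=\limPROu_r H^{(n)}_r$, where $H^{(n)}_r$ is the separated completion for $w_r$, i.e.\ the closure of $\sH^{(n),\circ}_K(\rho)$ inside the Tate algebra $K\langle p^{-r}x\rangle=\cO_K(\bar B(0,p^r))$ of functions on $\{|x|\le p^r\}$. Since $\wt^u_{K^\circ}$ commutes with filtered projective limits (\eqref{tensproj}, Proposition~\ref{tensorfrechet2}), $\sH^{(n),\circ}_K(\rho)\wt^u_{K^\circ}\sH^{(m),\circ}_K(\rho)=\limPROu_r(H^{(n)}_r\wt^u_{K^\circ}H^{(m)}_r)$ and $\sH^{(m+n),\circ}_K(\rho)=\limPROu_r H^{(m+n)}_r$, so it suffices that $\Phi$ induce, for every $r$, an isomorphism $H^{(n)}_r\wt^u_{K^\circ}H^{(m)}_r\iso H^{(m+n)}_r$.

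The hard part is identifying the Banach pieces. Put $\Sigma^{(n)}_{\rho,\le r}(K):=\Sigma^{(n)}_\rho(K)\cap\bar B(0,p^r)$; for $r\gg 0$ this is a \emph{finite} disjoint union of radius-$\rho$ polydiscs centred at $K$-points. I claim $H^{(n)}_r=\cO_K^\circ(\Sigma^{(n)}_{\rho,\le r}(K))$ inside $K\langle p^{-r}x\rangle$. The inclusion ``$\subseteq$'' is routine, since restriction is norm-decreasing and the condition $\|\cdot\|_{\Sigma^{(n)}_{\rho,\le r}}\le 1$ is closed for $\|\cdot\|_{\bar B(0,p^r)}$; the reverse inclusion is the genuine content, namely that every $g\in K\langle p^{-r}x\rangle$ with $\|g\|_{\Sigma^{(n)}_{\rho,\le r}(K)}\le 1$ is approximable, uniformly on $\bar B(0,p^r)$, by entire functions bounded by $1$ on the \emph{whole} strip $\Sigma^{(n)}_\rho(K)$. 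Here the explicit construction seems unavoidable: approximate $g$ on $\bar B(0,p^r)$ by a polynomial $P$ with $\|P\|_{\Sigma^{(n)}_{\rho,\le r}}\le 1$, then correct $P$ by means of the integral entire functions produced by the functional equation~\ref{functeq} (the $\Psi_q$-generators of $\sH^{(n),\circ}_K(\rho)$, \cf Remark~\ref{psibdd}) so as to land in $\sH^{(n),\circ}_K(\rho)$, agreeing with $g$ to a prescribed $p$-adic precision on $\bar B(0,p^r)$ while remaining bounded by $1$ on all of $\Sigma^{(n)}_\rho(K)$; equivalently, a topological $K^\circ$-basis of $\sH^{(n),\circ}_K(\rho)$ should reduce, modulo ``the radii exceeding $p^r$'', to a basis of $\cO_K^\circ(\Sigma^{(n)}_{\rho,\le r}(K))$. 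I expect this density statement --- which rests on the growth estimates for $\Psi_q$ proved later in the paper --- to be the only step requiring more than formal arguments and the Tate-algebra lemma. For the $\cO_K^\circ(\Sigma_\rho(K))$ assertion it is vacuous: there, directly, $\cO_K^\circ(\Sigma^{(n)}_\rho(K))=\limPROu_S\cO_K^\circ(\bigcup_{i\in S}B_i)$ over the finite unions $S$ of radius-$\rho$ polydiscs, because the standard topology is then uniform convergence on radius-$\rho$ discs.

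Finally, granting that identification, $H^{(n)}_r=\prod_{i\in I_r}\cO_K^\circ(B_i)$ is a finite product of polydisc algebras (distinct radius-$\rho$ polydiscs are disjoint, so a function on the union is a tuple and the sup norm is the maximum); since $\wt^u_{K^\circ}$ commutes with finite direct sums the claim reduces to the single-polydisc identity $\cO_K^\circ(B)\wt^u_{K^\circ}\cO_K^\circ(B')\iso\cO_K^\circ(B\times B')$, which after translating centres to $0$ and, when $\rho\in|K^\times|$, rescaling to radius $1$ is $K^\circ\langle x\rangle\wt^u_{K^\circ}K^\circ\langle y\rangle=K^\circ\langle x,y\rangle$ --- the integral counterpart of the Tate-algebra identity used in the proof of Lemma~\ref{tensorident0}, obtained the same way from Proposition~\ref{tensorfrechet2} --- with the evident weighted-restricted-power-series variant for general $\rho$. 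Reassembling, $H^{(n)}_r\wt^u_{K^\circ}H^{(m)}_r=\prod_{i,j}\cO_K^\circ(B_i\times B'_j)=\cO_K^\circ(\bigcup_{i,j}B_i\times B'_j)=\cO_K^\circ(\Sigma^{(n)}_{\rho,\le r}(K)\times\Sigma^{(m)}_{\rho,\le r}(K))=\cO_K^\circ(\Sigma^{(m+n)}_{\rho,\le r}(K))=H^{(m+n)}_r$, an isomorphism manifestly induced by the multiplication $\Phi$, hence by \eqref{ident1}; passing to $\limPROu_r$ shows $\Phi$ is an isomorphism, which is \eqref{ident2}. The same four steps, with the density step vacuous and the projective limit run over finite unions of radius-$\rho$ polydiscs --- cofinal, because every finite union of $(m+n)$-polydiscs centred at $K$-points lies in a product of finite unions in the two groups of variables --- prove the statement for $(\cO_K^\circ(\Sigma_\rho(K)),{\rm standard})$.
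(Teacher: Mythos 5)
Your overall strategy --- present $\sH^{(n),\circ}_K(\rho)$ in its standard topology as $\limPROu_r$ of Banach pieces and push $\wt^u_{K^\circ}$ through the filtered limit via \eqref{tensproj} and Proposition~\ref{tensorfrechet2}, exactly as in the proof of Lemma~\ref{tensorident0} (the paper itself states the corollary with no separate argument, as induced by \eqref{ident1}) --- is reasonable, but the step on which everything rests is not proved and, as formulated, is then misused. The identification of the Banach piece $H^{(n)}_r$ (the closure of $\sH^{(n),\circ}_K(\rho)$ in $K\langle p^{-r}x\rangle$ for the sup norm $w_r$ on $\bar B(0,p^r)^n$) with the set of Tate-algebra elements bounded by $1$ on the truncated strip is precisely the density statement you flag and then only ``expect''; the tools you invoke for it are not available in this paper: boundedness of $\Psi$ on strips of arbitrary width is Theorem~\ref{psibddsuper}, whose proof is deferred to \cite{perf_fourier} (cf. Remarks~\ref{psibdd} and \ref{bddexist}), and the approximation results that are actually proved here (Lemma~\ref{periodrexpWittintro}, Theorem~\ref{bohrpadthm2}) are approximations modulo $p^{a+1}$ uniformly on compacta of $\Q_p$, i.e. for a topology much coarser than $w_r$, so they cannot substitute. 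So the key claim remains an unproven expectation.

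Independently of that, the final assembly is incorrect even if the density claim were granted. You pass from ``$H^{(n)}_r=\{g\in K\langle p^{-r}x\rangle:\;\|g\|_{\Sigma^{(n)}_{\rho,\le r}(K)}\le 1\}$ with the norm $w_r$'' to ``$H^{(n)}_r=\prod_i\cO^\circ_K(B_i)$ with the sup norm on the union''. These are different objects: restriction from $\bar B(0,p^r)^n$ to the finitely many disjoint radius-$\rho$ polydiscs is injective with dense but proper image (a locally constant, non-constant tuple is not the restriction of a Tate-algebra element), and it is not a homeomorphism onto its image, since $w_r$ is strictly finer than the sup on $\Sigma^{(n)}_{\rho,\le r}(K)$: for instance $\Psi^N\in\sH^{(1),\circ}_{\Q_p}(1)$ for all $N$ by Proposition~\ref{unifPsi}, while $\|\Psi^N\|_{\bar B(0,p^r)}=\|\Psi\|_{\bar B(0,p^r)}^N\to\infty$, so the unit ball for the strip norm is unbounded for $w_r$. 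Consequently the computation $H^{(n)}_r\wt^u_{K^\circ}H^{(m)}_r=\prod_{i,j}\cO^\circ_K(B_i\times B'_j)=H^{(m+n)}_r$ does not follow; at best it proves the analogous statement for a uniform/strip-type completion (the distinction the paper makes in Definition~\ref{projstrips2}, and between $\sT^\circ_\lambda$ and $\sP^\circ_\lambda$), i.e. you have silently replaced the standard topology by the strip topology at the decisive moment, which is not the assertion \eqref{ident2}. By contrast, your argument for the second assertion, concerning $(\cO^\circ_K(\Sigma_\rho(K)),{\rm standard})$, is essentially sound, precisely because there the standard topology is uniform convergence on the radius-$\rho$ polydiscs, so the product decomposition over finite unions and the integral Tate-algebra identity $K^\circ\langle x\rangle\wt^u_{K^\circ}K^\circ\langle y\rangle=K^\circ\langle x,y\rangle$ genuinely apply.
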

\begin{cor}\label{hopfbdd}
The map $\P:x_i \mapsto x_i \wt 1 + 1 \wt x_i$ makes $K\{x_1,\dots,x_n \}$ into a Hopf algebra object 
in the category of Fr\'echet $K$-algebras. The restriction of $\P$ to $\sH^{(n),\circ}_K(\rho)$ induces a map
$$
\P: (\sH^{(n),\circ}_K(\rho), {\rm standard}) \longrightarrow (\sH^{(n),\circ}_K(\rho), {\rm standard}) \wt_{K^\circ}^u (\sH^{(n),\circ}_K(\rho), {\rm standard})
$$
which makes $(\sH^{(n),\circ}_K(\rho), {\rm standard})$ a Hopf algebra object 
in the category of Fr\'echet $K^\circ$-algebras. Similarly for $(\cO_K^\circ(\Sigma_\rho(K)), {\rm standard})$. 
\end{cor}
\end{subsection}
\subsection{$p$-adic almost periodic functions} \label{almper}
We sketch  here the   the main ideas and results on $p$-adic almost periodic functions. Proofs are given in section~\ref{entfcts} below. We freely use in this introduction the (quite self-explanatory) notation of section~\ref{contfcts} for continuous, uniformly continuous, bounded rings of $p$-adic functions $\Q_p \to \Q_p$ and their topologies. 
\par
The following elementary lemma shows that a naive $p$-adic analog of real Bohr's uniformly almost periodic functions (see Definition~\ref{bohrdef} in Appendix B), where ``an interval of length $\ell_\veps$ in $\R$'' is taken to mean a coset $a + p^h\Z_p$, for $a \in \Q_p$ and $p^{-h} = \ell_\veps$, does not lead to a meaningful definition.
% A naive translation of Definition~\ref{bohrdef} in the $p$-adic setting, where ``an interval of length $\ell_\veps$ in $\R$'' is taken to mean a coset $a + p^h\Z_p$, for $a \in \Q_p$ and $p^{-h} = \ell_\veps$,  would be
\begin{lemma}
\label{naivedef} A continuous function $f: \Q_p \to \Q_p$  which has the property that for any $\veps>0$, there exists $h \in \Z$ such that any coset $a + p^h \Z_p$ in $\Q_p/p^h \Z_p$ contains an element $t_a$ such that
\beq \label{naivedef1} 
|f(x+t_a)-f(x)| < \veps \;\;\forall \; x \in \Q_p\;,
\eeq
is constant. 
\end{lemma}
\begin{proof}  In fact, from condition \eqref{naivedef1},  for any  $a \in \Q_p$, it follows by iteration that $t_a$ may be replaced by any $t \in \Z t_a$. By continuity, we may replace $t_a$ by any $t \in  \Z_p t_a$. For $a \notin p^h \Z_p$, $\Z_p t_a = \Z_p a$. So, if we pick $a = p^{-N}$, for   $N >>0$,  \eqref{naivedef1} 
implies that the variation of $f(x)$ in $p^{-N} \Z_p$ is less than $\veps$. So, the variation of $f(x)$ in $\Q_p$ is less than $\veps$ for any $\veps>0$, hence $f$ is constant. 
\end{proof}
 \endgraf   We resort to an \emph{ad hoc} definition. For $x \in \Q_p$, let us consider the classical Witt vector expression 
\beq \label{wittsum}
x = \sum_{i >> -\infty}^\infty [x_i]\,p^i \in {\rm W}(\F_p)[1/p]  =\Q_p\; ,
\eeq
where $[t]$, for $t \in \F_p$, is the Teichm\"uller representative of $t$ in ${\rm W}(\F_p) = \Z_p$. Notice that, for any $i \in \Z$,
the function
\beq \label{wittcomp2}
x_i : \Q_p \longrightarrow \F_p \;\;,\;\; x \longmapsto x_i
\eeq
factors through a function, still denoted by $x_i$,
\beq \label{wittcomp3}
x_i :  \Q_p/ p^{i+1}\Z_p \longrightarrow \F_p \;\;,\;\; h \longmapsto h_i \;.
\eeq
We regard the function in \eqref{wittcomp3}  as an $\F_p$-valued  \emph{periodic function  of period $p^{i+1}$} on $\Q_p$. In the following, for any $i \in \Z$ and any $\lambda \in \Q_p^\times$, we denote by ``$[(\lambda x)_i]$''  the uniformly continuous function $\Q_p \to \Z_p$, $x \mapsto [(\lambda x)_i]$. We observe that 
$$
[(\lambda p^jx)_i] = [(\lambda x)_{i-j}]
$$
for any $i,j \in \Z$ and $\lambda \in \Q_p^\times$.
\begin{defn} \label{seriousdef} We define the $\Q_p$-algebra  $AP_{\Q_p}$ (resp. the $\Z_p$-algebra  $AP_{\Z_p}$) of (resp. \emph{integral}) \emph{uniformly almost periodic} (\emph{u.a.p.} for short) functions $\Q_p \to \Q_p$ (resp. $\Q_p \to \Z_p$) as the closure of 
$$\Q_p[[(\lambda x)_i]\,|\, i \in \Z\,,\, \lambda \in \Z_p^\times]\;\;\mbox{(resp. of} \; \; \Z_p[[(\lambda x)_i]\,|\, i \in \Z\,,\, \lambda  \in \Z_p^\times] \;\mbox{)}
$$
in the $\Q_p$-Banach algebra $\sC^\bd_\unif(\Q_p,\Q_p)$ (resp. in the $\Z_p$-Banach ring $\sC_\unif(\Q_p,\Z_p)$), equipped with the induced valuation $w_\infty$.    
\end{defn}
\begin{rmk}\label{p-adicfract} This remark is made to partially justify Definition~\ref{seriousdef}.  
%We regard the $\Q_p$-Banach space $(\sC^\bd_\unif(\Q_p,\Q_p), w_\infty)$ as a $p$-adic analog of the algebra of Bohr's uniformly almost periodic functions (Definition~\ref{bohrdef} in appendix B). 
%We partially justify this idea as follows. (More analogies are given in Theorems~\ref{bohrpadthm} and \ref{bohrpadthm2}.) 
For any $N \in \Z$ we denote by $S_N : \Q_p \to p^N \Z_p$ the function 
\emph{$N$-th order fractional part}, namely 
\beq \label{wittsum12}
x = \sum_{i >> -\infty}^\infty [x_i]\,p^i \longmapsto S_N(x) = \sum_{i = N}^\infty [x_i]\,p^i \;.
\eeq
It is clear that, for any $N$ and $\lambda\in \Q_p^\times$, $x \mapsto S_N(\lambda x)$ is a bounded uniformly continuous  function. The function $S_3$, certainly not periodic,  is a $p$-adic analog of the function
$$\R \to [0,1)\;\;,\;\; ....1234.56789.... \mapsto 0.789... 
$$
which is genuinely periodic of period $0.01$. 
\end{rmk}
\endgraf
We will prove the following partial analog to  Bohr's ``Approximation Theorem'' (Theorem~\ref{bohrthm} in Appendix B), where in fact the functions $\cos(\frac{2 \pi}{\lambda} x)$ and $\sin(\frac{2 \pi}{\lambda} x)$, for $\lambda \in \R^\times$ are replaced by the functions $\Psi(\lambda x)$, for  $\lambda \in \Q_p^\times$.
\begin{thm} \label{bohrpadthm} 
%{\bf ()} 
$(AP_{\Q_p},w_\infty)$ (resp.   $(AP_{\Z_p}, w_\infty)$) is the completion of the valued ring 
$$(\Q_p[\Psi(\lambda x)\,|\, \lambda \in \Q_p^\times],w_\infty)\;\;\mbox{(resp.} \; 
(\Z_p[\Psi(\lambda x)\,|\, \lambda \in \Q_p^\times],w_\infty)\,\mbox{)}\;.
$$
\end{thm}
  \begin{defn}\label{Elambdadef}
  For any $\lambda \in \Q_p^\times$, the Fr\'echet $\Z_p$-algebra $\sE^\circ_\lambda$ (resp. $\sT^\circ_\lambda$)
  is the closure of 
\beq \label{Elambdadef2}
\Z_p[\Psi(\lambda^{-1}p^j x)\,|\,j=0,1,\dots]
\eeq
 in $\Q_p\{x\}$ (resp. in $\cO(\Sigma_{|\lambda|})$) with the standard topology.
%  We denote by $(\sE^\circ_\lambda, {\rm standard})$ 
% (resp. $(\sT^\circ_\lambda, {\rm standard})$, resp. 
% The Banach $\Z_p$-algebra $\sP^\circ_\lambda$ is the closure of the ring \eqref{Elambdadef2} for the norm $||\;||_{\Q_p, |\lambda|}$.
%  the induced Fr\'echet (resp. Fr\'echet, resp. Banach) $\Z_p$-algebra structure. Similarly for 
 We then set  $\sE^\bd_\lambda := \sE^\circ_\lambda[1/p]$ 
 (resp. $\sT^\bd_\lambda := \sT^\circ_\lambda[1/p]$).
%  resp. $\sP^\bd_\lambda := \sP^\circ_\lambda[1/p]$).
% By $\sP^\circ_\lambda$) we denote the closure of 
%$\Z_p[\Psi(\lambda^{-1}p^j x)\,|\,j=0,1,\dots]$ in  in $(\cO^\circ(\Sigma_{|\lambda|}),{\rm uniform})$, and set 
% $\sP^\bd_\lambda := (\sP^\circ_\lambda[1/p],{\rm uniform})$). 
\par Finally, we define the Fr\'echet $\Z_p$-algebra $\sE^\circ$ as the closure of $\Z_p[\Psi(\lambda^{-1}p^j x)\,|\,j=0,1,\dots]$ in 
$\Q_p\{x\}$, and set $\sE^\bd := \sE^\circ[1/p]$.   \end{defn}

  \begin{thm} \label{bohrpadthm2} {\bf (Approximation Theorem on compacts)}  The completion of the multivalued ring 
$$(\sE^\bd, \{||~||_{p^r\Z_p}\}_{r \in \Z}) \;\;\mbox{(resp.} \; 
(\sE^\circ, \{||~||_{p^r\Z_p}\}_{r \in \Z})\,\mbox{)} 
$$
is the Fr\'echet $\Q_p$-algebra (resp. $\Z_p$-algebra) $\sC(\Q_p,\Q_p)$ (resp. $\sC(\Q_p,\Z_p)$). 
%Same statement for 
%$$(\sT^\bd_\lambda, \{||~||_{p^r\Z_p}\}_{r \in \Z}) \;\;\mbox{(resp.} \; 
%(\sT^\circ_\lambda, \{||~||_{p^r\Z_p}\}_{r \in \Z})\,\mbox{)} \;.
%$$
\end{thm}
  The following proposition follows from the estimates of Proposition~\ref{unifPsi} (see Corollary~\ref{basicest0} or Theorem~\ref{psibddsuper} for the  proof) together with the fact that the conditions listed below are closed for the standard Fr\'echet structure.
% (in the case of the uniform structure, \ie in the case  of $\sP^\circ_\lambda$ the closure condition is trivial). 
  The proof of the latter fact is given  in Lemma~\ref{standardvsunif}. 
 \begin{prop}\label{2struct}  For any $f \in \sE^\circ_\lambda$
 (resp. $f \in \sT^\circ_\lambda$)
% , resp. $f \in \sP^\circ_\lambda$) 
 we have
 \ben
  \item $f$ is bounded by 1 on the $p$-adic strip $\Sigma_{|\lambda|}$;
\item $f(\Q_p) \subset \Z_p$;
  \item For any $r \in \Z$, $a,j \in \Z_{\geq 0}$, the function $g(x) := f(p^{-r}x)^{p^a}$ 
  satisfies 
  $$g(x + p^{r+j} \lambda \C_p^\circ) \subset g(x) + p^{a+j} \C_p^\circ \;\;,\;\; \forall \; x \in \Q_p\;.
  $$
  \een  
  \end{prop}

 \begin{prop} \label{hopfpad} For any $\lambda \in \Q_p^\times$,   $(\sE^\circ_\lambda, {\rm standard})$   (resp. 
 $(\sT^\circ_\lambda, {\rm standard})$)
is a
 Hopf algebra object  in the monoidal category  
$(\LMu_{\Z_p},\wt^u_{\Z_p})$ for the coproduct $\P$ and coidentity $\veps$ given by 
\beq  \label{hopfpad2}
\P(\Psi (\lambda^{-1} p^j x)) \mapsto  \Psi (\lambda^{-1} p^j x \wt 1 + 1 \wt \lambda^{-1} p^j x) \;\;,\;\; \veps (\Psi (\lambda^{-1} p^j x)) = 0\;, 
\eeq
 for $j=0,1,\dots$.
 This structure only depends  upon $|\lambda|$. 
\end{prop}
\begin{defn} \label{reptilambda}
We define  $\E_\lambda$ in \eqref{tilambda} (resp. $\T_\lambda$)
as  the abelian group functor on $\ACLMu_{\Z_p}$, represented by the Hopf algebras
 $(\sE^\circ_\lambda, {\rm standard})$ (resp. by $(\sT^\circ_\lambda, {\rm standard})$). 
  \end{defn}

 A partial $p$-adic analog of F\'ejer's Theorem, or, more precisely, of Theorem~\ref{fejer2} in Appendix B,  is then
 \begin{prop} \label{fejer21}
 For any $\lambda \in \Q_p^\times$, the completion of the valued ring 
$$(\Z_p[\Psi(\lambda^{-1}p^j x)\,|\,j=0,1,\dots],w_\infty) 
%\;\mbox{(resp.}\, (\sP_\lambda^\circ,w_\infty) \;\mbox{)}\;
$$ coincides with its closure in $\sC_\unif(\Q_p,\Z_p)={\rm W}(\sC_\unif(\Q_p,\F_p))$ equipped with the $p$-adic topology,  and identifies with ${\rm W}(\F_p[[(\lambda^{-1}x)_{-j}]\,|\, j=0,1,\dots])$ also equipped with the $p$-adic topology. 
 \end{prop}
% \begin{proof} (Of Proposition~\ref{fejer21}) The result follows  from Lemma~\ref{psicomp} by the same arguments used in the proof of Theorem~\ref{bohrpadthm}.
%  \end{proof}
  For the standard topology we have
 \begin{prop} \label{fejer22}
  For any $\lambda \in \Q_p^\times$, the completion of the valued ring  
   $(\sE_\lambda^\circ,w_\infty)$ (resp. $(\sT_\lambda^\circ,w_\infty)$) 
  coincides with its closure in $\sC(\Q_p,\Z_p)={\rm W}(\sC(\Q_p,\F_p))$ equipped with the product topology of the prodiscrete  topologies on the components \eqref{tesiform21},  and identifies with 
  ${\rm W}(\F_p(v(\lambda),\infty))$ (see Proposition~\ref{tesiunif} below for notation) also equipped with the product topology of the prodiscrete  topologies on the components. 
  \par
%  The completion of 
% contains $\sC_\unif(\Q_p/\lambda \Z_p,\Z_p)$.  
The
 ring $\sE_\lambda^\circ \otimes_{\Z_p} \F_p$ (resp. $\sT_\lambda^\circ \otimes_{\Z_p} \F_p$) equipped with the quotient topology coincides with 
 $\F_p(v(\lambda),\infty) = \sC(\Q_p/\lambda p \Z_p,\F_p)$. 
 \end{prop}
% \begin{proof} (Of Proposition~\ref{fejer22}) We already saw that we may assume $\lambda = p^r$, for the integer $r = v(\lambda)$. 
% We must prove that, for any $a,h=0,1,\dots$ and any function $w \in \sC(p^{r-h}\Z_p/p^{r+1}\Z_p,\Z_p/p^{a+1}\Z_p)$ (represented by a function $w : p^{r-h}\Z_p \to \Z_p$)   there exists an element  $f_{r,a,h} \in \sE^\circ_{r,a}$
% $$f_{r,a,h} :=  \sum_{b=0}^{a} \sum_{\ell=0}^{p-1} \sum_{i =0}^h c_{\ell,b,i} p^b \Psi(p^{i-r} x)^{\ell p^{a-b}} 
%  \;,$$ 
%  such that 
%\beq \label{wittfourier1} -\log ||(w(x) - f_{r,a,h}(x))||_{p^{h-r}} \geq a+1  
%  \;.
%\eeq
% \end{proof}
 
 We now introduce our  $p$-adic   analog  of the sheaf  $\APH_{\C}$ 
 of  almost periodic  analytic functions (see subsection~\ref{expvstrig} in Appendix B).
% algebra $\APH_{0,\R}$  of    germs at $0$ of  
% almost periodic real-analytic functions (see subsection~\ref{trigseries} in Appendix B). For an analog of $\APH_{\R}(a,b)$ and of  $APH_\R$ see Remark~\ref{entireAP} below.
%   \begin{defn} \label{padAPHdef}
% \hfill
% \ben  
% \item  The algebra of  \emph{germs at $0$ of  almost periodic $p$-adic analytic functions} is the locally convex inductive limit 
% \beq
%( \APH_{0,\Q_p},{\rm strip}) := \limINDu_{|\lambda|  \to 0} (\sP^\bd_\lambda,{\rm uniform}) \;.
% \eeq
%  \item  The algebra of  \emph{germs at $0$ of integral almost periodic $p$-adic analytic functions} is 
% \beq
%( \APH_{0,\Z_p},{\rm strip}) := \limINDU_{|\lambda|  \to 0} (\sP^\circ_\lambda,{\rm uniform}) \;.
% \eeq  
%  
%\een 
% \end{defn}
% \begin{rmk}\label{entireAP}  
% We will prove in Theorem~\ref{psibddsuper},  that, for any $\rho >0$
%\beq \label{entireAP1}
%Res_\rho( \Q_p[\Psi(\lambda x)\,|\, \lambda \in \Q_p^\times]) \subset \cO^\bd(\Sigma_\rho) \;.
% \eeq
% This important result will permit to modify Definition~\ref{padAPHdef} as follows 
 \begin{defn}\label{padAPHdefamel} 
% {\bf (Enhanced Definition~\ref{padAPHdef})} 
 \hfill 
 \ben
 \item For any $\rho >0$, we define the  algebra of (resp. \emph{integral}) \emph{almost periodic $p$-adic analytic functions on the strip $\Sigma_{\rho}$} as 
the closure $\APH_{\Q_p}(\Sigma_{\rho})$ (resp. $\APH_{\Z_p}(\Sigma_{\rho})$) of 
$\Q_p[\Psi(\lambda x) \,|\, \lambda \in \Q_p^\times]$  (resp. $\Z_p[\Psi(\lambda x) \,|\, \lambda \in \Q_p^\times]$) in 
$(\cO^\bd(\Sigma_{\rho}),{\rm uniform})$, with the induced Banach ring structure. 
 \item  The algebra of  \emph{germs at $0$ of  almost periodic $p$-adic analytic functions} is the locally convex inductive limit
 \beq
( \APH_{0,\Q_p},{\rm strip}) := \limINDu_{\rho \to 0} \APH_{\Q_p}(\Sigma_{\rho}) \;.
 \eeq 
   \item  The algebra of  \emph{germs at $0$ of integral almost periodic $p$-adic analytic functions} is 
 \beq
( \APH_{0,\Z_p},{\rm strip}) := \limINDU_{\rho \to 0} \APH_{\Z_p}(\Sigma_{\rho}) \;.
 \eeq 
  \item 
  The algebra of   \emph{almost periodic $p$-adic  entire  functions} is  
   \beq
 (APH_{\Q_p} ,{\rm strip}):= \limPROu_{\rho \to +\infty} \APH_{\Q_p}(\Sigma_{\rho}) \;.
  \eeq
  \item
  The algebra of \emph{integral  almost periodic $p$-adic  entire  functions} is the closure $(APH_{\Z_p} ,{\rm strip})$ of $\Z_p[\Psi(\lambda x) \,|\, \lambda \in \Q_p^\times]$ in $(APH_{\Q_p} ,{\rm strip})$ equipped with the induced Fr\'echet $\Z_p$-algebra 
  structure.  
  \item The Fr\'echet $\Z_p$-algebra $\sE^\circ$ is a Hopf algebra object in the category $\LMu_{\Z_p}$ for the laws \eqref{hopfpad2}.
The corresponding group functor 
 \beq \label{univcov2}
 \E : \ACLM^u_{\Z_p} \longrightarrow \Ab
 \eeq
 will be called
 the \emph{universal covering} of $\E_\lambda$, for any $\lambda \in \Q_p^\times$. 
   \een
  \end{defn}
   \begin{rmk}\label{entireQp} The special fiber of $\E$ is the constant group 
   $$\Q_p = \limPROu_{|\lambda| \to 0} \Q_p/\lambda \Z_p
   $$ 
   over $\F_p$. On the other hand, equation~\ref{functeq} shows that $\sE^\circ[1/p]$ is dense in $\Q_p\{x\}$, so that the  generic fiber of $\E$ is $\G_{a,\Q_p}$.
 \end{rmk}
% Granting Theorem~\ref{psibddsuper}, Definition~\ref{padAPHdefamel} 
% is compatible with  Definition~\ref{padAPHdef}, but it is more explicit. Eventually, when the proof Theorem~\ref{psibddsuper} will be made available, it will replace Definition~\ref{padAPHdef}. 
Our Definition~\ref{padAPHdefamel} is designed as to make   the analog of 
 Theorem~\ref{bochnerthm} in Appendix B a true statement. 
% As we explained, we have preferred to leave the proof of Theorem~\ref{psibddsuper} to our next paper \cite{perf_fourier}. 
%  \end{rmk}
%  The $p$-adic analog of
% Theorem~\ref{bochnerthmtrig} has become for us a definition, 
% Accordingly, the following analog to Theorem~\ref{bochnerthm} holds
In the $p$-adic case, we actually get the following more precise statement.
\begin{thm}  \label{bohrpadcor}
{\bf (Uniform Approximation Theorem)} The natural $\LMu_{\Q_p}$-morphism (resp. 
$\LMu_{\Z_p}$-morphism)
$$
( \APH_{0,\Q_p},{\rm strip})  \longrightarrow (AP_{\Q_p}, w_\infty)
$$
(resp. 
$$
 (\APH_{0,\Z_p},{\rm strip})   \longrightarrow (AP_{\Z_p}, w_\infty) \;\mbox{)}\;,
$$
is an isomorphism. 
%$(AP_{\Q_p}, w_\infty)$  (resp. $(AP_{\Z_p}, w_\infty)$) is the completion of  
%$$(\APH_{0,\Q_p}, w_\infty)\;\;\mbox{(resp. } \, (\APH_{0,\Z_p}, w_\infty) \;\mbox{)}\;. 
%$$
% $(\APH_{0,\Q_p}, w_\infty)$ (resp. $(\APH_{0,\Z_p}, w_\infty)$).
\end{thm} 
%\begin{rmk}\label{bohrpadrmk} Assuming   Theorem~\ref{psibddsuper} we could replace the ring $(\APH_{0,\Q_p}, w_\infty)$ (resp. $(\APH_{0,\Z_p}, w_\infty)$) by $(APH_{\Q_p}, w_\infty)$ (resp. $(APH_{\Z_p}, w_\infty)$) in the statement of Corollary~\ref{bohrpadcor}, thereby making it more closely analog to Corollary~\ref{bochnerthmtrig2}.
%\end{rmk}
  \endgraf
 The similarity with classical Fourier expansions will be made more stringent in \cite{perf_fourier}, where  the classical Mahler 
binomial expansions of continuous functions $\Z_p \to \Z_p$ is generalized to an expansion of any uniformly continuous functions $\Q_p \to \Q_p$ as a series with countably many terms of entire functions of exponential type. Such a $p$-adic Fourier theory on $\Q_p$  presents the same power and limitations as the classical Fourier theory on $\R$. 
%In particular, 
%any function in 
%$AP_{\Q_p}$ will be represented as the uniformly convergent limit on $\Q_p$ of a rearrangement of its Fourier series: in other words, 
Functions in $AP_{\Q_p}$  play the role of Bohr's uniformly almost periodic functions and a variation of the Bochner-Fej\'er approximation theorem \cite[I.9]{Bes} holds. 
On the other hand, a Fourier series $\sF(f)$ (with countably many terms) does exist for a much more general class of functions $f:\Q_p \to \Q_p$ and the classical question as to what extent the series $\sF(f)$ approximates $f$ makes perfect sense, precisely as in classical Harmonic Analysis.   
\endgraf
 We ask whether 
the classical Bohr compactification of $\Q_p$ has a $p$-adic analytic description, as it has one in terms of classical (\ie complex-valued) harmonic theory on the locally compact group $(\Q_p,+)$. 
\par
We expect that a completely analogous theory should exist for any finite extension $K/\Q_p$. To develop it 
properly it will be necessary to extend  Barsotti covector's construction to ramified Witt vectors modeled on $K$
and to relate this construction to  Lubin-Tate groups over $K^\circ$ \cite{schneider2}. 

\section{Elementary proofs of the main properties of $\Psi$} \label{proofs}
We prove here the basic properties of the function $\Psi$. In contrast to \cite{psi}, the proofs are here completely self-contained.
\begin{prop} \label{functeq2}
The equation \eqref{functeq(q)} has a unique solution in $\Psi = \Psi_q \in T+ T^2\Z[[T]]$\;.
\end{prop}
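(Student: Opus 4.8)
The plan is to solve the functional equation \eqref{functeq(q)} by determining the coefficients of $\Psi_q(T) = \sum_{n\geq 1} c_n T^n$ recursively and checking that each $c_n$ lies in $\Z$, while $c_1 = 1$. Write $\Psi = \Psi_q$ for brevity. Substituting the ansatz into \eqref{functeq(q)}, the left-hand side is $\sum_{j\geq 0} p^{-j} \Psi(p^j T)^{q^j}$; the term $j=0$ contributes $\Psi(T)$ itself, and for $j\geq 1$ the term $p^{-j}\Psi(p^jT)^{q^j}$ is a power series whose coefficient of $T^n$ involves only the coefficients $c_1,\dots,c_{n-1}$ of $\Psi$ (since $\Psi(p^jT)^{q^j} = (p^jT + \dots)^{q^j}$ starts in degree $q^j \geq q \geq p \geq 2$, and more precisely $p^{-j}\Psi(p^jT)^{q^j}$ has $T$-adic order $\geq j+1$ once one tracks the power of $p$). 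Comparing coefficients of $T^n$ on both sides of \eqref{functeq(q)} therefore yields $c_1 = 1$ and, for $n\geq 2$, an identity of the form
\beq
c_n = -\sum_{j=1}^{N(n)} p^{-j} \big[\text{coeff. of } T^n \text{ in } \Psi(p^jT)^{q^j}\big],
\eeq
where $N(n)$ is finite because $\Psi(p^jT)^{q^j}$ has $T$-order $q^j > n$ for $j$ large. This shows existence and uniqueness of $\Psi \in T + T^2\Q[[T]]$ immediately.

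The real content is integrality: one must show each $c_n \in \Z$, i.e. that the apparent denominators $p^j$ cancel. First I would set up the recursion more carefully, isolating for each monomial in the expansion of $\Psi(p^jT)^{q^j}$ the power of $p$ it carries: if such a monomial is a product of $q^j$ factors $c_{i_1}p^{i_1}T^{i_1}\cdots$ with $i_1+\dots+i_{q^j}=n$, it carries $p^{j n}$ from the arguments, so the term $p^{-j}\Psi(p^jT)^{q^j}$ contributes $p^{jn-j} = p^{j(n-1)}$ times an integer to the coefficient of $T^n$ — which for $n\geq 2$ and $j\geq 1$ is already a positive power of $p$. Wait: that would make integrality trivial, so the subtlety is rather the opposite — the coefficients actually lie in $\Z$ but one should double-check there is no hidden division; in fact the cleanest route is to rewrite \eqref{functeq(q)} as $\Psi(T) = T - \sum_{j\geq 1} p^{-j}\Psi(p^jT)^{q^j}$ and observe directly that $p^{-j}(p^jT + \dots)^{q^j} \in p^{(q^j-1)j}\,T^{q^j}\,\Z[[T]] \cdot(\text{stuff}) \subset p^{j}\Z[[T]]$ since $(q^j-1)j \geq j \geq 1$; hence the sum converges $p$-adically in $\Z[[T]]$ and $T$-adically, and the fixed-point iteration $\Psi^{(0)} = T$, $\Psi^{(k+1)} = T - \sum_{j\geq 1}p^{-j}\Psi^{(k)}(p^jT)^{q^j}$ stays in $T + T^2\Z[[T]]$ at every stage and converges coefficientwise.

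Thus the structure of the proof would be: (1) show the operator $F: G \mapsto T - \sum_{j\geq 1} p^{-j} G(p^jT)^{q^j}$ maps $T + T^2\Z[[T]]$ into itself, using the elementary estimate $v_p$ of the coefficients of $p^{-j}G(p^jT)^{q^j}$ is $\geq j\cdot(\text{min }T\text{-degree of }G^{q^j}) - j \geq j(q^j-1) - j \geq j$; (2) show $F$ is a contraction for the $(T)$-adic topology on $\Z[[T]]$ — indeed if $G \equiv G' \bmod T^m$ then $G(p^jT)^{q^j} \equiv G'(p^jT)^{q^j} \bmod T^{m+q^j-1}$, so $F(G) \equiv F(G') \bmod T^{m+1}$; (3) conclude by the Banach fixed point principle in the complete ring $\Z[[T]]$ that a unique fixed point exists in $T + T^2\Z[[T]]$, and uniqueness in $\Q[[T]]$ follows from the same contraction argument run over $\Q[[T]]$. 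The main obstacle — really the only place demanding care — is step (1): bookkeeping the exact power of $p$ attached to each monomial of $G(p^jT)^{q^j}$ and verifying the inequality $j(q^j - 1) - j \geq 1$ (equivalently $q^j \geq 3$, which holds since $q^j \geq q \geq p \geq 2$; for $q=p=2$, $j=1$ one has $q^j-1 = 1$ so the bound is $j\cdot 1 - j = 0$, and one must instead note the order of $G^{q}$ is $q=2$, giving $2j - j = j \geq 1$ — so one should phrase the estimate using the $T$-order $q^j$ of $G(p^jT)^{q^j}$ rather than $q^j-1$, and then $p^{-j}\cdot p^{jq^j} = p^{j(q^j-1)} = p^j \cdot p^{j(q^j-2)}$ with $q^j \geq 2$, clean in all cases). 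Everything else is routine.
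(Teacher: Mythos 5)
Your final argument is essentially the paper's own proof: the paper introduces the same operator $\sL(\varphi) = T - \sum_{j\geq 1} p^{-j}\varphi(p^jT)^{q^j}$ on the $T$-adically complete space $T+T^2\Z[[T]]$ and concludes by the contraction/fixed-point principle, exactly your steps (1)--(3). The opening coefficient-recursion discussion and the mid-proof self-correction are redundant, but your final bookkeeping ($p^{-j}G(p^jT)^{q^j}\in p^{j(q^j-1)}T^{q^j}\Z[[T]]$, so the operator preserves $T+T^2\Z[[T]]$) and the contraction estimate modulo $T^{m+q^j-1}$ are correct.
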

\begin{proof}
We endow $\Z[[T]]$ of the $T$-adic topology. It is clear that, for any $\varphi \in T\Z[[T]]$,
the series $\sum_{j=1}^{\infty} p^{-j}\varphi(p^{j} T)^{q^{j}}$ converges in $T\Z[[T]]$. 
Moreover, the map
\beq \label{contr}
\sL: \varphi \longmapsto T - \sum_{j=1}^{\infty} p^{-j}\varphi(p^{j} T)^{q^{j}} \;,
\eeq
is a contraction of the complete metric space $T+T^2\Z[[T]]$. In fact, let $\veps(T) \in T^r\Z[[T]]$, with $r \geq 3$. For any $\varphi \in T+T^2\Z[[T]]$ we see that 
$$
\sL(\varphi + \veps) - \sL(\varphi) \in T^{r(q-1)+q}\Z[[T]]\;.
$$
 Since $r(q-1)+q > r$ this shows that $\sL$ is a contraction. 
So, this map has a unique fixed point which is $\Psi_q(T)$.
\end{proof}
The following proposition, due to M. Candilera, provides an alternative proof of Proposition~\ref{functeq2} and finer information on $\Psi_q(T)$.
\begin{prop} \label{p-1} {\rm (M. Candilera)} The functional equation for the unknow function $u$
\beq \label{functeq(uq)}
1 = \sum_{j=0}^\infty p^{j (q^j-1)} T^{\frac{q^j-1}{q-1}}u(p^{j(q-1)}T)^{q^j}  
\eeq
admits a unique solution  $u(T) = u_q(T) \in 1 + T\Z[[T]]$. We have
\beq \label{formulaCand} \Psi_q(T) = Tu_q(T^{q-1})\;.
\eeq
\end{prop}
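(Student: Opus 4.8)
The plan is to mimic the proof of Proposition~\ref{functeq2} by exhibiting \eqref{functeq(uq)} as the fixed-point equation of a contraction on the complete metric space $1+T\Z[[T]]$, and then to verify \eqref{formulaCand} by direct substitution. First I would isolate the $j=0$ term of \eqref{functeq(uq)}, which is simply $u(T)$ (since $q^0-1=0$), and rewrite the equation as
\beq
u(T) = 1 - \sum_{j=1}^\infty p^{j(q^j-1)} T^{\frac{q^j-1}{q-1}} u(p^{j(q-1)}T)^{q^j} =: \cL_q(u)(T)\;.
\eeq
For $j\geq 1$ the exponent $\frac{q^j-1}{q-1}=1+q+\dots+q^{j-1}\geq 1$ is a positive integer and the coefficient $p^{j(q^j-1)}$ is a positive integer, so each summand lies in $T\Z[[T]]$; moreover the $T$-adic valuation of the $j$-th summand is at least $\frac{q^j-1}{q-1}\to\infty$, so the series converges in $T\Z[[T]]$ and $\cL_q$ maps $1+T\Z[[T]]$ into itself. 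To see it is a contraction, take $\veps\in T^r\Z[[T]]$ with $r\geq 1$ and $u\in 1+T\Z[[T]]$: for each $j\geq 1$,
\beq
u(p^{j(q-1)}T)^{q^j} \equiv (u+\veps)(p^{j(q-1)}T)^{q^j} \pmod{T^{r+\frac{q^j-1}{q-1}}\Z[[T]]}\;,
\eeq
because $(a+b)^{q^j}-a^{q^j}\in b\,\Z[[T]]$ raises the $T$-adic valuation by at least that of $b$, here $\geq r$, and the prefactor $T^{\frac{q^j-1}{q-1}}$ already contributes valuation $\geq 1$; hence $\cL_q(u+\veps)-\cL_q(u)\in T^{r+1}\Z[[T]]$. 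This makes $\cL_q$ a contraction, so it has a unique fixed point $u_q(T)\in 1+T\Z[[T]]$, proving existence and uniqueness.

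For \eqref{formulaCand} I would substitute $\Psi(T):=T\,u_q(T^{q-1})$ into the left-hand side of \eqref{functeq(q)} and check that the result equals $T$, which by the uniqueness part of Proposition~\ref{functeq2} forces $\Psi=\Psi_q$. Computing the $j$-th term,
\beq
p^{-j}\Psi(p^jT)^{q^j} = p^{-j}\bigl(p^jT\,u_q((p^jT)^{q-1})\bigr)^{q^j} = p^{-j}p^{jq^j}T^{q^j}\,u_q\bigl(p^{j(q-1)}T^{q-1}\bigr)^{q^j}\;,
\eeq
so with the substitution $S:=T^{q-1}$ one gets $p^{j(q^j-1)}T^{q^j}u_q(p^{j(q-1)}S)^{q^j}$. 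Since $T^{q^j}=T\cdot T^{q^j-1}=T\cdot S^{\frac{q^j-1}{q-1}}$, the $j$-th term equals $T\cdot p^{j(q^j-1)}S^{\frac{q^j-1}{q-1}}u_q(p^{j(q-1)}S)^{q^j}$, and summing over $j\geq 0$ and pulling out $T$ gives exactly $T$ times the right-hand side of \eqref{functeq(uq)} evaluated at $S$, which is $T\cdot 1=T$. Hence $\Psi(T)=T\,u_q(T^{q-1})$ solves \eqref{functeq(q)} and lies in $T+T^2\Z[[T]]$, so it coincides with $\Psi_q$.

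The only genuinely delicate point is the bookkeeping of the two competing powers of $T$ in the contraction estimate — the exponent $\frac{q^j-1}{q-1}$ from the explicit prefactor versus the gain $r$ from raising to the $q^j$-th power — and making sure the convergence of the infinite series is uniform enough that $\cL_q$ is well-defined and continuous on all of $1+T\Z[[T]]$; everything else is a routine substitution. I would also note in passing that the identity $\frac{q^j-1}{q-1}=1+q+\cdots+q^{j-1}$ is what makes the exponents integral, so that \eqref{functeq(uq)} genuinely lives in $\Z[[T]]$ rather than requiring fractional powers.
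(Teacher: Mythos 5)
Your proof is correct and follows essentially the same route as the paper: the same contraction $\varphi \mapsto 1 - \sum_{j\geq 1} p^{j(q^j-1)}T^{\frac{q^j-1}{q-1}}\varphi(p^{j(q-1)}T)^{q^j}$ on the $T$-adically complete space $1+T\Z[[T]]$, yielding existence and uniqueness of the fixed point. You are in fact slightly more complete than the paper, which stops at the fixed point and leaves implicit the verification of \eqref{formulaCand}; your substitution $S=T^{q-1}$ together with the uniqueness statement of Proposition~\ref{functeq2} is exactly the intended check.
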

\begin{proof} In this case we consider the $T$-adic metric space $1 + T\Z[[T]]$ and the map
\beq \label{contr2} \begin{split}
\sM:   1 + T\Z[[T]]  &\longrightarrow 1 + T\Z[[T]] 
\\
\varphi & \longmapsto 1 - \sum_{j=1}^\infty p^{j (q^j-1)} T^{\frac{q^j-1}{q-1}}\varphi(p^{j(q-1)}T)^{q^j}  \;.
\end{split}
\eeq
We endow $\Z[[T]]$ of the $T$-adic topology. It is clear that, for any $\varphi \in T\Z[[T]]$,
the series $\sum_{j=1}^{\infty} p^{-j}\varphi(p^{j} T)^{q^{j}}$ converges in $T\Z[[T]]$. 
If  $\veps(T) \in T^r\Z[[T]]$, with $r \geq 2$. For any $\varphi \in 1+T\Z[[T]]$ we see that 
$$
\sM(\varphi + \veps) - \sM(\varphi) \in T^{r+1}\Z[[T]]\;.
$$ 
So, the map $\sM$ is a contraction and its unique fixed point has the properties stated for the series $u$ in the statement.
\end{proof}

\begin{prop} \label{entire}
The series $\Psi(T) = \Psi_q(T)$ is entire.
\end{prop}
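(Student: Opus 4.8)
The plan is to extract from the functional equation \eqref{functeq(q)} an explicit recursion for the coefficients of $\Psi_q(T) = T + \sum_{i\geq 2} a_i T^i$, and to estimate the $p$-adic valuations $v_p(a_i)$ well enough to conclude $\liminf_{i} v_p(a_i)/i = +\infty$, which is exactly the entirety condition \eqref{conv}. First I would isolate the linear term: writing $\Psi = \Psi_q$, equation \eqref{functeq(q)} reads $\Psi(T) + \sum_{j\geq 1} p^{-j}\Psi(p^jT)^{q^j} = T$, so that
\beq
\Psi(T) = T - \sum_{j=1}^{\infty} p^{-j}\,\Psi(p^jT)^{q^j}\,.
\eeq
The key point is that the operator on the right raises $T$-adic order, so comparing coefficients of $T^i$ expresses $a_i$ purely in terms of $a_2,\dots,a_{i-1}$ (note $a_1=1$ is forced). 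Concretely, the coefficient of $T^i$ in $p^{-j}\Psi(p^jT)^{q^j}$ is $p^{ji - j}$ times the coefficient of $T^i$ in $\Psi(T)^{q^j}$, and the latter coefficient involves only $a_m$ with $m \leq i - q^j + 1$ (since $\Psi(T)^{q^j} \in T^{q^j} + \dots$). In particular only finitely many $j$ contribute, namely those with $q^j \leq i$.

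The heart of the matter is the valuation estimate. I would prove by strong induction a bound of the shape $v_p(a_i) \geq c(i)$ where $c$ is an explicit increasing function with $c(i)/i \to \infty$; a natural guess, matching the known Newton polygon behaviour hinted at in the introduction (zeros of valuation $-n$ appearing at scale $q^n$), is something like $v_p(a_i) \geq$ a quantity growing like $i\log_q i$ up to linear corrections. From the recursion, if $P_{j,i}$ denotes the $T^i$-coefficient of $\Psi^{q^j}$, then $v_p(P_{j,i}) \geq \min \sum v_p(a_{m_\ell})$ over ways of writing $i$ as a sum of $q^j$ indices $m_\ell \geq 1$ with $a_1 = 1$; combined with the factor $p^{j(i-1)}$, and using the inductive bound on the $a_{m_\ell}$ that actually exceed $1$, one gets a bound on $v_p(a_i)$. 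The bookkeeping — choosing the right $c(i)$ so that the induction closes, i.e. so that $j(i-1) + \sum_\ell c(m_\ell) \geq c(i)$ for every admissible decomposition and every $j$ with $q^j \leq i$ — is the main obstacle; it requires balancing the gain $p^{j(i-1)}$ (large when $j$ is large, but then few indices $m_\ell$ exceed $1$) against the loss from having many small $m_\ell$ (when $j$ is small).

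Alternatively, and perhaps more cleanly, I would use Candilera's Proposition~\ref{p-1}: since $\Psi_q(T) = T\,u_q(T^{q-1})$ with $u_q \in 1 + T\Z[[T]]$, entirety of $\Psi_q$ is equivalent to entirety of $u_q$, and $u_q$ satisfies \eqref{functeq(uq)}, whose $j$-th term already carries the explicit power $p^{j(q^j-1)}$ of $p$ — this makes the valuation gains transparent from the outset and avoids having to dig the factor $p^{j(i-1)}$ out of $\Psi^{q^j}$ by hand. Writing $u_q = 1 + \sum_{i\geq 1} b_i T^i$ and extracting the $T^i$-coefficient of $\eqref{functeq(uq)}$, one finds $b_i$ expressed via $b_m$, $m<i$, with each contribution from level $j$ multiplied by $p^{j(q^j-1) + j(q-1)\cdot(\text{something})}$; an induction of the form $v_p(b_i) \geq$ (an explicit superlinear function) should then go through with the same strategy but easier constants. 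In either route, once the superlinear lower bound on valuations is in hand, $\limsup_i |a_i|_p^{1/i} = \limsup_i p^{-v_p(a_i)/i} = 0$ follows immediately, which is \eqref{conv}, i.e. $\Psi_q$ is entire; I would close by remarking that the finer Newton-polygon description is deferred to the next section.
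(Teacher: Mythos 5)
Your proposal is a plan rather than a proof: the step you yourself flag as ``the main obstacle'' --- producing an explicit superlinear function $c(i)$ and verifying that $j(i-1)+\sum_\ell c(m_\ell)\geq c(i)$ for every level $j$ with $q^j\leq i$ and every decomposition $i=m_1+\dots+m_{q^j}$ --- is precisely the content of the statement, and it is not carried out in either of your two routes (the direct recursion for the $a_i$, or the one for Candilera's $u_q$). Worse, this bookkeeping is not a routine detail one can wave at: the valuation polygon computed later in the paper (Theorem~\ref{valpolpsi} and the ensuing Newton polygon, with vertices $(-q^i,\,i q^i-\frac{q^i-1}{q-1})$) shows that the coefficients actually attain the conjectural bound, so any inductive hypothesis of the shape you propose has essentially zero slack. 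Indeed, if you take $c$ to be the exact Newton--polygon function and test the inductive inequality at $n=q^i$ with $j=1$ and the balanced decomposition $m_\ell=q^{i-1}$ (which, by convexity of $c$, is where $\sum_\ell c(m_\ell)$ is smallest --- note the minimum is \emph{not} at the concentrated decomposition you implicitly privilege), the two sides are exactly equal; so the induction is tight and closing it requires handling all $n$, all $j$, all integer decompositions and the non-divisibility corrections with genuine care. In effect your route amounts to proving the Newton polygon of $\Psi_q$ at the same time as entirety, which is a substantially harder task than the proposition itself.

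The paper avoids coefficient estimates altogether by a bootstrap on the region of convergence: since $\Psi\in T+T^2\Z[[T]]$ it converges for $v_p(T)>0$ with $v_p(\Psi(T))=v_p(T)$ there; assuming convergence for $v_p(T)>\rho$ (with $\rho\leq 0$), each term $p^{-j}\Psi(p^jT)^{q^j}$ with $j\geq 1$ is defined for $v_p(T)>\rho-1$ (because $v_p(p^jT)=j+v_p(T)>\rho$) and has valuation $-j+q^j(j+v_p(T))\to+\infty$, so the series $T-\sum_{j\geq1}p^{-j}\Psi(p^jT)^{q^j}$ converges uniformly on $v_p(T)>\rho-1$ to an analytic function which, by the functional equation \eqref{functeq(q)}, is $\Psi$; iterating extends convergence to all of $\C_p$. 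If you want to keep a coefficientwise argument you would have to either find a strictly weaker but still superlinear $c(i)$ for which the induction closes with room to spare, or accept doing the full polygon computation of Section~\ref{newton}; as it stands, your argument has a genuine gap.
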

\begin{proof}
Since $\Psi \in T + T^2\Z[[T]] \subset T\Z[[T]]$, we deduce that $\Psi$ converges for $v_p(T)>0$. Since the coefficient of $T$ in $\Psi(T)$ is 1, whenever $v_p(T)>0$ we have $v_p(\Psi(T)) = v_p(T)$.
\par
Suppose $\Psi$ converges for $v_p(T)>\rho$, for $\rho \leq 0$. Then, for $j \geq 1$, $\Psi(p^{j} T)^{q^{j}}$ converges for $v_p(T)>\rho-1$. Moreover,
if $j > -\rho +1$ and $v_p(T)>\rho-1$, we have
\beq \label{est1}
v_p( p^{-j}\Psi(p^{j} T)^{q^{j}}) = -j + q^j (v_p(p^{j} T))  > -j + q^j(j +\rho-1) \;,
\eeq
and this last term $\to + \infty$, as $j \to +\infty$.
\par
This shows that the series $T - \sum_{j=1}^{\infty} p^{-j}\Psi(p^{j} T)^{q^{j}}$ converges uniformly for $v_p(T) > \rho -1$, so that its sum, which is $\Psi$,  is analytic for $v_p(T) > \rho -1$. It follows immediately from this that $\Psi$ is an entire function.
\end{proof}
\begin{rmk}\label{entire2} We have  proven that, for any $j =0,1,\dots$ and for $v_p(T) > -j$,
\beq \label{est2}
v_p( p^{-j}\Psi(p^{j} T)^{q^{j}}) = -j + q^j(j + v_p(T)) \;.
\eeq
In particular, for any $a \in \Z_q$,  ($\Psi(a) \in \Z_q$ and) $\Psi_q(a) \equiv a$, modulo $p\Z_q$.
\end{rmk}
\begin{prop}\label{wittdecomp}
For any $a \in \Q_q$, $\Psi_q(a) \in \Z_q$.
\end{prop}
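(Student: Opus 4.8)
The plan is to use the functional equation \eqref{functeq(q)} together with the entirety established in Proposition~\ref{entire} to show that $\Psi_q(a)$ is a $p$-adic limit of a sequence in $\Z_q$, hence lies in $\Z_q$ by completeness. The key observation is that \eqref{functeq(q)} can be rewritten, for any $a\in\Q_q$ and any $j_0\geq 0$, as
\beq
\Psi_q(p^{j_0}a) = p^{j_0}a - \sum_{j=1}^{\infty} p^{-j}\Psi_q(p^{j+j_0}a)^{q^j}\;,
\eeq
which, after reindexing, expresses $\Psi_q(p^{j_0}a)$ in terms of the values $\Psi_q(p^{j}a)$ for $j>j_0$. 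So the natural strategy is a descending induction: control the values $\Psi_q(p^{j}a)$ for large $j$ first (where $p^{j}a\in\Z_q$ and Remark~\ref{entire2} already gives $\Psi_q(p^{j}a)\in\Z_q$ with $\Psi_q(p^{j}a)\equiv p^{j}a \bmod p\Z_q$), then descend.

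First I would fix $a\in\Q_q$ and set $n=\max(0,-v_p(a))$, so that $p^{n}a\in\Z_q$. For all $j\geq n$ we have $p^{j}a\in p^{j-n}\Z_q\subset\Z_q$, hence $\Psi_q(p^{j}a)\in\Z_q$ by Remark~\ref{entire2}; moreover the estimate \eqref{est2} gives $v_p\big(p^{-j}\Psi_q(p^{j}a)^{q^j}\big) = -j + q^j(j+v_p(a))\to+\infty$, so the tail series converges $p$-adically. Then I would prove by downward induction on $j_0$ from $n$ down to any target index that $\Psi_q(p^{j_0}a)\in\Z_q$: indeed, given that $\Psi_q(p^{j}a)\in\Z_q$ for all $j>j_0$, the displayed identity shows $\Psi_q(p^{j_0}a) - p^{j_0}a$ equals a convergent sum of elements of $\Z_q$, hence lies in $\Z_q$; adding $p^{j_0}a$ (which need not be in $\Z_q$ when $j_0<n$) would only give $\Psi_q(p^{j_0}a)\in\Q_q$ — so the induction as stated only propagates upward, not downward. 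This forces a correction to the bookkeeping.

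The fix, and what I expect to be the main technical point, is to track the \emph{denominator} carefully rather than just membership in $\Z_q$. I would prove: for every $j_0\in\Z$, $v_p\big(\Psi_q(p^{j_0}a)\big)\geq \min(0, j_0+v_p(a)) = \min(0, -(n-j_0))$; equivalently $p^{(n-j_0)^+}\Psi_q(p^{j_0}a)\in\Z_q$. For $j_0\geq n$ this is Remark~\ref{entire2}. For $j_0<n$, the displayed identity gives $\Psi_q(p^{j_0}a) = p^{j_0}a - \sum_{j\geq 1}p^{-j}\Psi_q(p^{j+j_0}a)^{q^j}$; using the inductive bound $v_p\big(\Psi_q(p^{j+j_0}a)\big)\geq \min(0, j+j_0+v_p(a))$ on each term, one checks term by term that $v_p\big(p^{-j}\Psi_q(p^{j+j_0}a)^{q^j}\big)\geq -j + q^j\min(0, j+j_0+v_p(a)) \geq j_0+v_p(a) = -(n-j_0)$ — this is an elementary inequality in $j\geq 1$, $q\geq p\geq 2$, using $q^j\geq j+1$ and $j_0+v_p(a)<0$ — while the term $p^{j_0}a$ has valuation exactly $j_0+v_p(a)$. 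Hence $v_p\big(\Psi_q(p^{j_0}a)\big)\geq j_0+v_p(a)$, completing the induction. Applying this with $j_0=0$ gives $v_p\big(\Psi_q(a)\big)\geq \min(0,v_p(a))$; since by hypothesis $a\in\Q_q$ and we may as well have assumed $v_p(a)\geq 0$ is not required, this still only yields $\Psi_q(a)\in p^{-n}\Z_q$. To get the sharp statement $\Psi_q(a)\in\Z_q$ one must exploit a cancellation: the leading term $p^{j_0}a$ in the identity is killed when we sum, because $\Psi_q$ is the \emph{fixed point} — more precisely, I would instead run the argument on the function $x\mapsto\Psi_q(x)-x$ restricted to $p^{\Z}a$, or equivalently observe that \eqref{functeq(q)} read as $\sum_{j\geq 0}p^{-j}\Psi_q(p^j a)^{q^j}=a$ already lies in $\Q_q$, and then show that each partial sum $\sum_{j=0}^{N}p^{-j}\Psi_q(p^j a)^{q^j}$ is congruent to $a$ modulo deeper and deeper powers, forcing $\Psi_q(a)=a-\sum_{j\geq1}p^{-j}\Psi_q(p^ja)^{q^j}$ with the subtracted sum in $\Z_q$ \emph{and} the difference $a-(\text{that sum})$ coincidentally integral — which is exactly the content of \eqref{wittcomp}/\eqref{wittsum} identifying $\Psi_q(p^ja)\bmod p$ with Witt components. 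I expect the clean route is: prove the Witt-component identity \eqref{wittcomp} first (the Barsotti ghost-component relation, or directly from \eqref{functeq(q)} modulo $p$), deduce $\Psi_q(a)\equiv\sum_{i}[a_{-i}]p^i\bmod p$ termwise so that $\Psi_q(a) - \sum_{i\leq 0, \text{lift}}$ is integral, and then bootstrap modulo $p^2, p^3,\dots$ using uniform continuity (Proposition~\ref{unifPsi}, i.e. \eqref{unifPsi1}). The hard part is organizing this bootstrap so that the non-integral part $\sum_{i<0}[(a)_i]p^i$ of $a$ is matched exactly by the corresponding low-order terms of $\Psi_q(p^ja)$, $j>0$, via the covector addition law — I would isolate this as the crux and handle it by writing $a = a_{-} + a_{+}$ with $a_{-}\in p^{-n}\Z_q$ a polynomial in $p^{-1}$ with Teichmüller-type coefficients and $a_{+}\in\Z_q$, then show $\Psi_q(a)\equiv\Psi_q(a_{-})\bmod\Z_q$ and reduce to finitely many explicit $a_{-}$, each handled by the finite truncation of \eqref{functeq(q)}.
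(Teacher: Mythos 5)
Your proposal stalls exactly where the paper's argument does its real work, and one of the steps you present as checked is false. Concretely: the intermediate bound $v_p(\Psi_q(p^{j_0}a))\geq\min(0,j_0+v_p(a))$ is supported by the inequality $-j+q^j\min(0,j+m)\geq m$ (with $m:=j_0+v_p(a)<0$), which for $1\leq j<-m$ reads $(q^j-1)(j+m)\geq 0$ and fails since $j+m<0$ (e.g. $q=2$, $j=1$, $m=-3$ gives $-5\not\geq -3$). This is not repairable by termwise valuation estimates: the valuation polygon (Theorem~\ref{valpolpsi}, Corollary~\ref{basicest}) shows that pure size bookkeeping only yields $v_p(\Psi_q(x))\geq -\frac{q^n-1}{q-1}$ on $v_p(x)\geq -n$, and the improvement to $\geq 0$ at $\Q_q$-rational points is entirely a cancellation phenomenon. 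The part of your text that addresses this cancellation is a list of candidate strategies, none carried out. In particular the route through Proposition~\ref{unifPsi} is not available here: in the paper that proposition is proved (Corollary~\ref{basicest0}) only for $q=p$ and via the covector addition law \eqref{covsum2} imported from \cite{psi}, whereas Proposition~\ref{wittdecomp} belongs to the self-contained elementary treatment valid for all $q$; and even granting it, reducing to a finite polar part $a_-$ does not finish, because the truncated functional equation for $\Psi_q(a_-)$ still contains the polar terms $p^{-j}\Psi_q(p^ja_-)^{q^j}$, and to see that their sum with $a_-$ is integral you must know each $\Psi_q(p^ja_-)$ \emph{modulo $p$}, not merely that it is integral.

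That mod-$p$ control is the missing joint induction, and it is the paper's entire proof: for $a\in\Z_q$ set $a_0=a$ and $a_i=p^{-i}(a-\sum_{j=0}^{i-1}p^ja_j^{q^{i-j}})$; the amplification lemma (if $x\equiv y\bmod p$ then $x^{q^m}\equiv y^{q^m}\bmod p^{m+1}$, together with $x^q\equiv x\bmod p$) shows $a_i\in\Z_q$; then one proves by induction on $i$, using \eqref{functeq(q)} and Remark~\ref{entire2} (the tail terms $\ell>i$ lie in $p\Z_q$), that $\Psi_q(p^{-i}a)\equiv a_i\bmod p\Z_q$: the inductive congruences mod $p$, raised to the $q^{i-j}$-th powers, hold mod $p^{i-j+1}$, so after division by $p^{i-j}$ the whole polar part is determined mod $p$ and equals $a_i$ by construction; integrality falls out because $a_i\in\Z_q$. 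You correctly sensed that \eqref{wittcomp} and integrality should be proved together ``directly from \eqref{functeq(q)} modulo $p$'', but you never set up this simultaneous induction, nor the congruence-amplification step that makes the divisions by powers of $p$ harmless, and no bootstrap modulo $p^2,p^3,\dots$ or appeal to uniform continuity is needed once that is done.
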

\begin{proof}
Let $ a \in \Z_q$. We define by induction the sequence $\{a_i \}_{i=0,1,\dots}$~:
\beq \label{cong0}
a_0 =a\;\; ,\;\; a_i = \sum_{j=0}^{i-1} p^{j-i}(a_j^{q^{i-j-1}} - a_j^{q^{i-j}}) \;.
\eeq
Since, for  any $a,b \in \Z_q$, if $a \equiv b \mod p$, then $a^{q^n} \equiv b^{q^n}  \mod p q^n$, hence modulo $p^{n+1}$, while $a \equiv a^q \mod p$,
we see that $a_i \in \Z_q$, for any $i$.
\par
We then see by induction that, for any $i$,
\beq \label{cong1}
a_i = p^{-i}(a - \sum_{j=0}^{i-1} p^j a_j^{q^{i-j}}) \;\;\mbox{or, equivalently,}\;\; a = \sum_{j=0}^i p^j a_j^{q^{i-j}} \;.
\eeq
Explicitly, if we substitute in the formula which defines $a_i$, namely
$$
p^ia_i = \sum_{j=0}^{i-1} p^ja_j^{q^{i-j-1}} - \sum_{j=0}^{i-1} p^j a_j^{q^{i-j}}
$$
the $(i-1)$-st step of the induction, namely, $\displaystyle a = \sum_{j=0}^{i-1} p^j a_j^{q^{i-j-1}} $,
we get
$$
p^ia_i = a - \sum_{j=0}^{i-1} p^j a_j^{q^{i-j}} \;,
$$
which is precisely the $i$-th inductive step.

From the functional equation \eqref{functeq(q)} and from Remark~\ref{entire2} we have, for $a \in \Z_q$ and $i=0,1,2,\dots$,
\beq \label{cong2}
\Psi(p^{-i}a) \equiv p^{-i}a -  \sum_{\ell=1}^{i} p^{-\ell} \Psi( p^\ell p^{-i}a)^{q^\ell} =  p^{-i}(a -  \sum_{j=0}^{i-1} p^j \Psi(p^{-j}a)^{q^{i-j}})
 \mod p \Z_q \;.
\eeq  
Notice that $\Psi(a) \in \Z_q$ and that, modulo $p\Z_q$, $\Psi_q(a) \equiv a = a_0$, defined as in \eqref{cong0}.  We now show 
by induction on $i$ that for $a_1,\dots, a_i,\dots$ defined as in \eqref{cong0}, 
\beq \label{cong3}
\Psi(p^{-i}a) \equiv a_i \mod p\Z_q \; ,
\eeq
which proves the statement.   
In fact,
assume $\Psi(p^{-j}a) \equiv a_j \mod p\Z_q$, for $j =0,1,\dots,i-1$, and plug this information in \eqref{cong2}. We get
\beq
\Psi(p^{-i}a) \equiv p^{-i}a -  \sum_{\ell=1}^{i} p^{-\ell} a_{i-\ell}^{q^\ell} = p^{-i}(a -  \sum_{j=0}^{i-1} p^j a_j^{q^{i-j}}) = a_i\;\;\mod p\Z_q 
 \;,
 \eeq
 which is the $i$-th inductive step.
\end{proof}
\begin{rmk}\label{bivexpa} Notice that from  \eqref{cong2} it follows that, for any $a \in p^{-n}\Z_q$,
$$
a \equiv \sum_{\ell = 0}^n p^{-\ell}\Psi_q(p^\ell a)^{q^\ell} \;\mod p\Z_q \;.
$$
The formula can be more precise using the functional equation \eqref{functeq(q)} and Remark~\ref{entire2}. 
We get, for any $a \in \Q_q$,
\beq \label{bivexpa2} 
a \equiv \sum_{\ell = 0}^{-v_p(a)+i} p^{-\ell}\Psi_q(p^\ell a)^{q^\ell} \;\mod p^{i+1}\Z_q \;, \;\forall \; i \in \Z_{\geq 0}\;.
\eeq
that is  
\beq \label{bivexpa21} 
a \equiv \sum_{\ell = 0}^i p^{-\ell}\Psi_q(p^\ell a)^{q^\ell} \;\mod p^{i +v_p(a)+1}\Z_q \;, \;\forall \; i \in \Z_{\geq -v_p(a)}\;.
\eeq
\end{rmk}
We generalize \eqref{wittsum} as 
\begin{cor}\label{wittrepr} For any $a \in \Q_q$,  let 
$$
a_i := \Psi_q(p^{-i}a) \; \mod p \Z_q \;\in \F_q \;.
$$
We have 
\beq \label{wittsum(q)}
a = \sum_{i >> -\infty}^\infty [a_i]\,p^i \in {\rm W}(\F_q)[1/p]  =\Q_q\; .
\eeq
\begin{proof} Assume first that $a \in \Z_q$. In this case  \eqref{bivexpa21}  implies
\beq \label{bivexpa211} 
a \equiv \sum_{\ell = 0}^i p^{-\ell}\Psi_q(p^\ell a)^{q^\ell} \;\mod p^{i +1}\Z_q \;, \;\forall \; i \in \Z_{\geq 0}\;.
\eeq
So, the statement follows from the following
\begin{lemma} \label{onWitt} Let $i\mapsto b_i$ and $i\mapsto c_i$, for $i=0,1,\dots$, be two sequences in $\Z_q$ such that 
$$
\sum_{j=0}^i p^j b_j^{q^{i-j}} \equiv \sum_{j=0}^i p^j c_j^{q^{i-j}}  \;\mod p^{i +1}\Z_q \;, \;\forall \; i \in \Z_{\geq 0}\;.
$$
Then $$b_i \equiv c_i \mod p\Z_q\;, \;\forall \; i \in \Z_{\geq 0}\;.
$$
\end{lemma}
\begin{proof}
Immediate by induction on $i$. 
\end{proof}
In the general case, assume $a \in p^{-n} \Z_q$. Then 
\beq \label{wittsum(q1)}
p^n a = \sum_{i =0}^\infty [\Psi_q(p^{n-i}a) \,{\rm mod}\, p\Z_q ] \,p^i \in {\rm W}(\F_q)\; .
\eeq
hence
\beq \label{wittsum(q2)}
a = \sum_{i =0}^\infty [\Psi_q(p^{n-i}a) \,{\rm mod}\, p\Z_q ] \,p^{i-n} \in p^{-n}{\rm W}(\F_q)\; .
\eeq
\end{proof}
\end{cor}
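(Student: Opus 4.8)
The plan is to show that the $\F_q$-valued functions $a\mapsto a_i=\Psi_q(p^{-i}a)\bmod p$ compute precisely the Teichm\"uller (Witt bivector) digits of $a$, first reducing to $a\in\Z_q$ and then settling that case by the elementary arithmetic of Witt vectors over $\Z_q$.

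First I would dispose of the reduction to integral $a$. If $a\in p^{-n}\Z_q$ then for $i<-v_p(a)$ one has $p^{-i}a\in p\Z_q$, hence $\Psi_q(p^{-i}a)\in p\Z_q$ by Remark~\ref{entire2} ($\Psi_q(b)\equiv b\bmod p$ for $b\in\Z_q$); so $a_i=0$ for $i<-v_p(a)$, which is what makes the sum $\sum_{i>>-\infty}$ meaningful. Moreover $\Psi_q(p^{-i}(p^{n}a))=\Psi_q(p^{\,n-i}a)$, so once the statement is known for $p^{n}a\in\Z_q$, writing $p^{n}a=\sum_{i\ge0}[\,\Psi_q(p^{\,n-i}a)\bmod p\,]\,p^{i}$, multiplying by $p^{-n}$ and reindexing $k=i-n$ gives $a=\sum_{k\ge-n}[a_k]\,p^{k}$. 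Thus we may assume $a\in\Z_q$.

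For $a\in\Z_q$ the key input is the reciprocity congruence obtained from \eqref{cong2} by multiplying through by $p^{i}$ and rearranging: for every $i\ge0$,
\beqa
a\equiv\sum_{j=0}^{i}p^{\,j}\,\Psi_q(p^{-j}a)^{q^{\,i-j}}\pmod{p^{\,i+1}\Z_q}\,,
\eeqa
which is the identity recorded in Remark~\ref{bivexpa} and which ultimately encodes the functional equation \eqref{functeq(q)}. Now write $[a_j]\in\Z_q$ for the Teichm\"uller representative of $a_j=\Psi_q(p^{-j}a)\bmod p$, so that $\Psi_q(p^{-j}a)\equiv[a_j]\bmod p$ tautologically. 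Iterating the Frobenius-lifting lemma (if $x\equiv y\bmod p^{m}$ then $x^{q}\equiv y^{q}\bmod p^{m+1}$) $i-j$ times, and using that the Teichm\"uller section is multiplicative and $\alpha^{q}=\alpha$ on $\F_q$, so that $[a_j]^{q^{\,i-j}}=[a_j]$, one gets $p^{\,j}\Psi_q(p^{-j}a)^{q^{\,i-j}}\equiv p^{\,j}[a_j]\bmod p^{i+1}$ for $0\le j\le i$. Summing over $j$ and comparing with the displayed congruence yields $a\equiv\sum_{j=0}^{i}[a_j]\,p^{\,j}\bmod p^{i+1}$ for all $i\ge0$, hence $a=\sum_{j\ge0}[a_j]\,p^{\,j}$ in $\Z_q$, which is the assertion. (Alternatively, the last two steps are Lemma~\ref{onWitt} applied to $b_j=\Psi_q(p^{-j}a)$ and $c_j=[a_j]$, both of whose $q$-Witt ghost sums reduce to $a$ modulo the relevant powers of $p$.)

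I expect the only genuine content to be the reciprocity congruence in the display, which is where the functional equation \eqref{functeq(q)} does its work and which is already available through \eqref{cong2} / Remark~\ref{bivexpa}; everything after that is routine Witt-vector bookkeeping over $\Z_q$ --- the Frobenius-lifting lemma, the identity $[\alpha]^{q}=[\alpha]$ on $\F_q$, and the inductive uniqueness of Lemma~\ref{onWitt} --- together with the elementary rescaling that handles $a\notin\Z_q$. The one place that requires attention is keeping the index shifts straight in the reduction step and checking that the digits of negative index vanish.
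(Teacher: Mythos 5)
Your argument is correct and follows essentially the same route as the paper: reduce to $a\in\Z_q$ by applying the integral case to $p^na$ and rescaling, then use the congruence $a\equiv\sum_{j=0}^{i}p^{j}\Psi_q(p^{-j}a)^{q^{i-j}}\bmod p^{i+1}\Z_q$ (your rearrangement of \eqref{cong2}) together with Frobenius lifting and $[a_j]^{q^{i-j}}=[a_j]$ to read off the Teichm\"uller expansion directly, which is just a repackaging of the paper's appeal to Lemma~\ref{onWitt}. Two small points to fix: the low digits vanish for $i<v_p(a)$, not $i<-v_p(a)$ (for $i\le v_p(a)-1$ one has $p^{-i}a\in p\Z_q$, hence $a_i=0$ by Remark~\ref{entire2}/Corollary~\ref{basicest}), and your parenthetical application of Lemma~\ref{onWitt} with $c_j=[a_j]$ is vacuous as stated --- the intended comparison is with the Teichm\"uller representatives of the genuine digits of $a$ --- though your direct argument renders the lemma unnecessary anyway.
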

From the previous corollary, it follows that $a \in \Q_q$ has the following expression as a Witt bivector with coefficients in $\F_q$
\beq \label{wittsum(q3)}
a=  (\dots,a_{-i}^{(q/p)^i}, \dots,a_{-2}^{(q/p)^2},a_{-1}^{q/p}; a_0,a_1^p, a_1^{p^2},\dots)
\;.
\eeq
which obviously equals $(\dots,a_{-i}, \dots,a_{-2},a_{-1}; a_0,a_1, a_1,\dots)$, if $q=p$.
\begin{rmk} \label{genq} We have tried to provide a simple addition formula for $\Psi_q$ of the form \eqref{covsum2}, in terms of the same power-series $\Phi$. We could not get one, nor were we  
able to establish the relation between $\Psi_q$ and $\Psi_p$, for $q =p^f$ and $f>1$. On the other hand it is clear that Barsotti's construction of 
Witt bivectors, based on classicals Witt vectors,  extends to the $L$-Witt vectors of \cite[Chap. 1]{schneider2}, where $L/\Q_p$ denotes any fixed finite extension. In our case, we would only need  the construction of \lc in the case of the field $L =\Q_q$. We  believe that the  inductive limit of $\Z_q$-groups 
${\rm W}_{\Q_q,n} \to {\rm W}_{\Q_q,n+1}$ under Verschiebung
 $$
V: (x_{-n},\dots, x_{-1},x_0) \to (0,x_{-n},\dots, x_{-1},x_0)  
 $$
is a $\Z_q$-formal groups whose  addition law is expressed by a power-series  
$\Phi_q$ analog to Barsotti's $\Phi$. We believe that equation \eqref{covsum2} still holds true for $\Psi_q$ if we replace $\Phi$ by 
$\Phi_q$. We also believe that a generalized $\Psi$ exists for any finite extension $L/\Q_p$, with analogous properties. 
\end{rmk}
\end{section}
\section{Valuation and Newton polygons of $\Psi_q$} \label{newton}
This section is dedicated to establishing the growth behavior of $|\Psi_q(x)|$ as $|x| \to \infty$. These results 
will be essential to get the delicate estimates of \cite{perf_fourier}.  \par \smallskip
\begin{subsection}{Valuation polygon of $\Psi_q$}
We recall from \cite{Laz} that the valuation polygon of a Laurent series $f = \sum_{i \in \Z} a_i T^i$ with coefficients $a_i \in \C_p$, converging in an annulus $A:=\alpha \leq v_p(T) \leq \beta$, is the graph ${\rm Val}(f)$
of the function $\mu \mapsto v(f,\mu) := \inf_{i} (v_p(a_i) + i \mu)$, which is in fact finite along the segment $\alpha \leq \mu  \leq \beta$. The function $\mu \mapsto v(f,\mu)$ is continuous, piecewise affine, and concave on $[\alpha,\beta]$.  For any $\mu \in [\alpha,\beta]$, we have
  $v(f,\mu) = \inf \{v_p(\Psi(x))\,|\,v_p(x)=\mu\,\}$. In the case of $\Psi$, $A = \C_p$ and the segment $ [\alpha,\beta]$ is the entire $\mu$-line. For the convenience of the reader we have recalled below the relation between the valuation polygon and the Newton polygon of $f$.
  \par
  \smallskip
We prove
\begin{thm} \label{valpolpsi} The valuation polygon of $\Psi_q$ goes through the origin,  has slope 1 for $\mu >-1$, and slope $q^j$, for $-j-1 <\mu <-j$, $j=1,2,\dots$ (see Figure 1).
\end{thm}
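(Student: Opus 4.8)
The plan is to compute $v(\Psi_q,\mu) := \inf_i\bigl(v_p(a_i)+i\mu\bigr)$, where $\Psi_q(T)=\sum a_i T^i$, directly from the functional equation \eqref{functeq(q)}, exploiting the scaling structure of that equation. First I would use what was already proved in Proposition~\ref{entire} and Remark~\ref{entire2}: for $v_p(T)>0$ one has $v_p(\Psi_q(T))=v_p(T)$, which gives $v(\Psi_q,\mu)=\mu$ for all $\mu>0$, hence slope $1$ there, and the valuation polygon passes through the origin since $a_1=1$. The real content is to propagate this to $\mu\le 0$. The key observation is that \eqref{functeq(q)} can be rewritten as
\beq
\Psi_q(T) = T - \sum_{j=1}^{\infty} p^{-j}\Psi_q(p^{j}T)^{q^{j}}\;,
\eeq
so if $\Psi_q$ is already understood on the region $v_p(T)>\rho$, then each term $p^{-j}\Psi_q(p^jT)^{q^j}$ on the right is understood on $v_p(T)>\rho-1$ (only $j=1$ matters at the boundary), giving an inductive descent in steps of width $1$ in $\mu$. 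This matches exactly the intervals $(-j-1,-j)$ appearing in the statement.

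The main step I would carry out by induction on $j\ge 0$ is the claim: for $-j-1<\mu<-j$ one has $v(\Psi_q,\mu) = c_j + q^j\mu$ for the appropriate constant $c_j$ (so that the polygon is continuous at the integer breakpoints), and moreover on that open interval the infimum in the definition of $v(\Psi_q,\mu)$ is attained at a single, dominant coefficient. Granting the inductive hypothesis on $v_p(T)>-j$, I look at the identity above on $-j-1<v_p(T)<-j$. The term $T$ has valuation polygon with slope $1$; the term $p^{-1}\Psi_q(pT)^{q}$ has, on this range, $\mu\mapsto -1 + q\cdot v(\Psi_q,\mu+1)$, which by the inductive hypothesis (applied at $\mu+1\in(-j,-j+1)$) is affine with slope $q\cdot q^{j-1}=q^j$; and the higher terms $j'\ge 2$, by the estimate \eqref{est1}/\eqref{est2} already recorded, have valuations tending to $+\infty$ and in any case with steeper slopes, so they are negligible for the infimum. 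Since $q^j>1$, for $\mu$ just below $-j$ the slope-$q^j$ term dominates (is smaller) and the slope-$1$ term $T$ is negligible; I then read off that $v(\Psi_q,\mu)$ coincides with the slope-$q^j$ contribution, i.e. has slope $q^j$, and continuity at $\mu=-j$ fixes the constant. Concavity and piecewise-affineness are automatic from \cite{Laz}, so it suffices to check the slopes on each open interval and continuity at the breakpoints.

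The main obstacle I anticipate is the bookkeeping needed to be sure that on the interval $(-j-1,-j)$ the term $T$ (slope $1$) and the single term $p^{-1}\Psi_q(pT)^q$ (slope $q^j$) genuinely dominate, i.e. that no cancellation occurs between coefficients of $\Psi_q(T)$ and of $-\sum_{j\ge1}p^{-j}\Psi_q(p^jT)^{q^j}$ when one expands both sides as power series — a priori the recursion only determines $a_i$, it does not by itself say the minimum is attained uniquely. I would handle this by proving the sharper inductive statement that on each open interval the minimum in $\inf_i(v_p(a_i)+i\mu)$ is attained at exactly one index $i=i(j)$ (the vertex of the Newton polygon corresponding to that slope), so that $v_p$ of the sum equals the min of the $v_p$'s with no cancellation; this uniqueness is what gets transported through the $q$-th power and the recursion. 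Once that is in place, identifying the constants $c_j$ is a short computation: $c_0=0$, and continuity at $\mu=-j$ gives $c_j = c_{j-1} + ( q^{j-1}-q^{j})\cdot(-j) = c_{j-1} + j(q^j - q^{j-1})$, i.e. $c_j=\sum_{k=1}^{j}k(q^k-q^{k-1})$. I would then remark that this valuation polygon, via the standard duality with the Newton polygon recalled after the statement, yields the zero-distribution of $\Psi_q$ used in Corollary~\ref{invfunct}.
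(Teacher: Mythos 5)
Your overall strategy coincides with the paper's: induct in unit steps on $\mu$ using the rewritten functional equation $\Psi_q(T)=T-\sum_{j\ge1}p^{-j}\Psi_q(p^jT)^{q^j}$, identify $p^{-1}\Psi_q(pT)^q$ as the dominant term on $-j-1<\mu<-j$ (its valuation $-1+q\,v(\Psi_q,\mu+1)$ is exactly the affine function the paper calls $\sigma_{j+1}(\mu)$), and conclude by the ultrametric equality together with continuity and concavity from Lazard. Your anti-cancellation machinery, however, is unnecessary: at a fixed $\mu$ the Gauss valuation satisfies $v(f+g,\mu)=\min\bigl(v(f,\mu),v(g,\mu)\bigr)$ whenever the two values differ, and this passes to the convergent series; no claim about a unique minimizing coefficient index is needed, and this standard fact is what the paper uses.

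The genuine gap is in your dismissal of the terms $j'\ge 2$. On the annulus $-n-1<\mu<-n$ (with $n\ge 2$), the estimate \eqref{est2} applies to $p^{-j'}\Psi_q(p^{j'}T)^{q^{j'}}$ only when $v_p(p^{j'}T)>0$, i.e.\ only for $j'>n$; for $2\le j'\le n$ it is not available, and these terms must be evaluated through the inductive hypothesis: their valuation is $-j'+q^{j'}\sigma_{n-j'+1}(\mu+j')$, an affine function of slope $q^{j'}\cdot q^{n-j'}=q^{n}$, the \emph{same} slope as the dominant term, not a steeper one. So ``steeper slopes, hence negligible'' fails exactly for these intermediate terms, and their domination must be proved by comparing values, e.g.\ at $\mu=-n$, where it reduces to the inequality $1+q+\cdots+q^{j'-1}>j'$ for $j'\ge 2$ (note this is an equality for $j'=1$, which is why that term is the dominant one). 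Similarly, for $j'>n$ a steeper slope alone does not give domination on the interval; since the difference of the two affine functions is increasing in $\mu$, one must check its sign at the left endpoint $\mu=-n-1$, a short but genuine computation. These two families of comparisons are precisely the content of the paper's auxiliary lemma and form the bulk of its proof of Theorem~\ref{valpolpsi}; once you supply them, your induction closes, and your constants $c_j$ and breakpoints agree with the paper's $\sigma_{j+1}$.
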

\begin{figure}[ht] \label{figure:valpolygon}
\begin{picture}(300,200)(-50,0)     % dimension de la boite  300pt de largeur, 170pt de hauteur, translat�e de -50 points dans l'axe horizontal
\put(0,140){\vector(1,0){170}}       %vecteur horizontal commencant en (0,25)  de longueur 200
\put(100,10){\vector(0,1){180}}         %vecteur vertical commencant en (60,0)  de longueur 120
\thicklines
\put(83,123){\line(1,1){60}}
\put(63,83){\line(1,2){20}}
\put(45,10){\line(1,4){18}}
\put(80,138){\makebox{$\bullet$}}
\put(75,148){\makebox{$-1$}}
\put(60,138){\makebox{$\bullet$}}
\put(55,148){\makebox{$-2$}}
\put(40,138){\makebox{$\bullet$}}
\put(35,148){\makebox{$-3$}}
\put(20,138){\makebox{$\bullet$}}
 \put(180,145){\makebox(0,0)[b]{$\mu$-line}}
 \put(133,150){\makebox(0,0)[b]{slope $1$}}
\put(35,120){\makebox(0,0)[b]{vertex $V_1$ at  $(-1, -1)$}}
\put(5,80){\makebox(0,0)[b]{vertex $V_2$ at  $(-2, -q-1)$}}
\put(10,0){\makebox(0,0)[b]{vertex $V_3$ at  $(-3, -q^2-q-1)$}}
\put(93,100){\makebox(0,0)[b]{slope $q$}}
\put(75,40){\makebox(0,0)[b]{slope $q^2$}}
\end{picture}
\caption{The valuation polygon of $\Psi_q$.}
\end{figure}
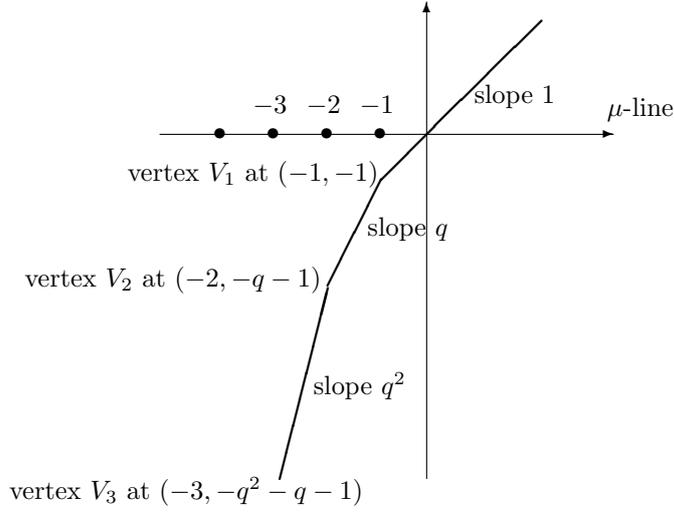
\begin{proof}
We recall that  if both $f$ and $g$ converge in the annulus $A:=\alpha \leq v_p(T) \leq \beta$, then, for any $\mu \in [\alpha,\beta]$, $v(f+g,\mu) \geq \inf(v(f,\mu),v(g,\mu))$, and that equality holds at $\mu$ if  $v(f,\mu) \neq v(g,\mu)$. Moreover, for any $n \in \N$, $v(f^n,\mu) = n\,v(f,\mu)$.
\par
In the polygon in Figure 1, for $j=1,2,\dots$, the side of projection $[-j,1-j]$ on the $\mu$-axis is the graph of the function
\beq
\sigma_j (\mu) := q^{j-1} (\mu + j-1) - q^{j-2}-\dots-q -1   \;.
\eeq
 Notice that
\beq
\sigma_{j+1}(\mu) = -1 + q\, \sigma_{j}(\mu + 1) \;,
\eeq
and therefore
\beq
\sigma_{j+i}(\mu) = -1 -q - \dots - q^{i-1} + q^i\, \sigma_{j}(\mu + i) \;,
\eeq
for any $i =0,1,2,\dots$.
\par
Since $\Psi \in T \Z[[T]]$ and since the coefficient of $T$ is 1, we have $v(\Psi,\mu) = \mu$, for $\mu \geq 0$. For $0 > \mu > -j$, $j \geq 1$, we have
\beq \label{est3} \begin{split}
v(p^{-j} \Psi(p^j T)^{q^j}, \mu) = -j + v(\Psi(p^j T)^{q^j}, \mu) = -j + q^j v(\Psi(p^j T), \mu) = \\ -j + q^j v(\Psi(S), j+\mu) =
-j + (j + \mu)q^j > \mu =  v(T,\mu)\;,
\end{split}
\eeq
where we have used the variable $S =  p^j T$.
 \begin{rmk} \label{equality}
 For $\mu = -j$ we get equality in the previous formula. \end{rmk}
 Let us set, for $j=0,1,2,\dots$,
$$\ell_j(\mu) = -j + (j + \mu)q^j \;,
$$
so that \eqref{est3} becomes
\beq \label{est31}
v(p^{-j} \Psi(p^j T)^{q^j}, \mu) = \ell_j(\mu)  > \ell_0(\mu) = \mu =  v(T,\mu)\;,
\eeq
for $0 > \mu > -j$, $j \geq 1$, with equality holding if $\mu =-j$. Notice that
$$
\ell_0(\mu) = \mu = \sigma_1(\mu) \;.
$$
Because of \eqref{est31} and \eqref{functeq}, and by continuity of $\mu \mapsto v(\Psi,\mu)$, we have
\beq
v(\Psi,\mu) = v(T,\mu) = \mu = \sigma_1(\mu) \; , \;\mbox{for}\;\mu \geq  -1 \;.
\eeq
We now reason by induction on $n = 1,2,\dots$. We assume that, for any $j=1,2,\dots,n$ the side of projection $[-j,1-j]$ on the $\mu$-axis of the valuation polygon of $\Psi$ is the graph of $\sigma_j (\mu)$.  This at least was proven for $n=1$. We consider the various terms in the functional equation

$$ \Psi = T - p^{-1}\Psi(pT)^q - p^{-2}\Psi(p^2T)^{q^2} - \sum_{j=3}^\infty p^{-j}\Psi(p^jT)^{q^j} \; .
$$
We assume $n >1$. For $j= 1,2,\dots,n$, and $-n-1 < \mu <-n$, we have
\beq \label{est4} \begin{split}
v(p^{-j} \Psi(p^j T)^{q^j}, \mu) = -j + v(\Psi(p^j T)^{q^j}, \mu) = -j + q^j v(\Psi(p^j T), \mu) = \\  -j + q^j v(\Psi(S), j+\mu)= -j +q^j\sigma_{n-j+1}(\mu+j)
\;,
\end{split}
\eeq
since $ j-n-1 < j+\mu < j-n$, and therefore the inductive assumption gives $v(\Psi, j+\mu)= \sigma_{n-j+1}(\mu+j)$ in that interval.
For $j>n$, and $-n-1 < \mu$, we have instead, from \eqref{est31}, $v(p^{-j} \Psi(p^j T)^{q^j}, \mu) = \ell_j(\mu)$.

%%%%%%%%%%%%%%%%%%%%%%%%%%%%%%%%%%%%%%%%%%%%%%%%%%%%%%%%%%%%%%%
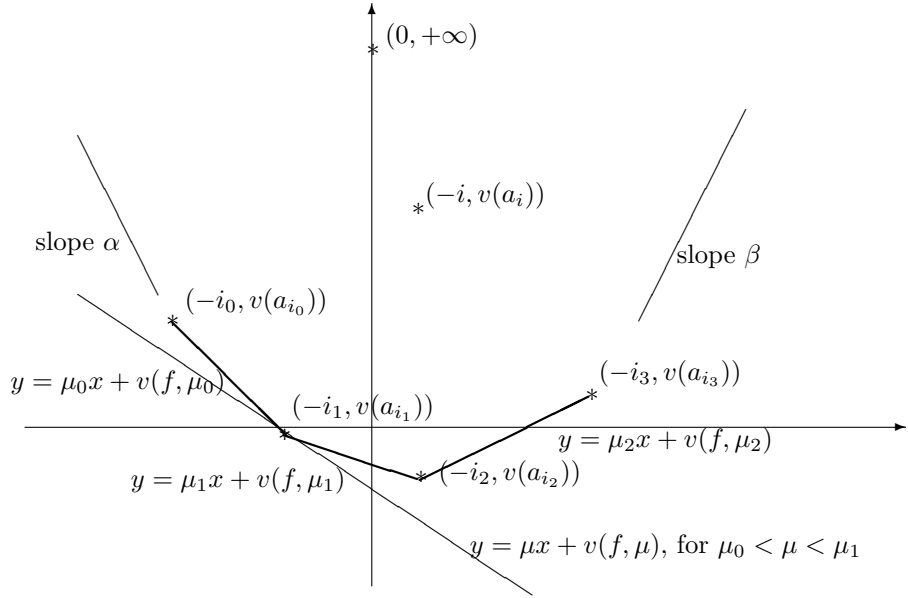
\begin{figure}[h!t] \label{figure:valuation polygon}
\begin{picture}(330,260)(-50,30)     % dimension de la boite  300pt de largeur, 170pt de hauteur, translat�e de -50 points dans l'axe horizontal
\put(0,100){\vector(1,0){330}}       %vecteur horizontal commencant en (0,25)  de longueur 200
\put(130,40){\vector(0,1){220}}         %vecteur vertical commencant en (60,0)  de longueur 120
\put(20,210){\line(1,-2){30}}
\put(148,80){\line(2,1){63}}
\put(230,140){\line(1,2){40}}
\put(20,150){\line(3,-2){170}}
\thicklines
\put(60,145){\makebox{$(-i_0,v(a_{i_0}))$}}
\put(53,138){\makebox{$\ast$}}
\put(35,112){\makebox(0,0)[b]{$ y = \mu_0 x + v(f,\mu_0)$}}
\put(80,75){\makebox(0,0)[b]{$ y = \mu_1 x + v(f,\mu_1)$}}
\put(240,50){\makebox(0,0)[b]{$ y = \mu x + v(f,\mu)$, for $\mu_0 < \mu < \mu_1$}}
\put(240,90){\makebox(0,0)[b]{$ y = \mu_2 x + v(f,\mu_2)$}}
\put(55,140){\line(1,-1){43}}
\put(100,105){\makebox{$(-i_1,v(a_{i_1}))$}}
\put(95,95){\makebox{$\ast$}}
\put(97,97){\line(3,-1){50}}
\put(150,185){\makebox{$(-i,v(a_i))$}}
\put(145,180){\makebox{$\ast$}}
\put(155,80){\makebox{$(-i_2,v(a_{i_2}))$}}
\put(146,79){\makebox{$\ast$}}
\put(215,118){\makebox{$(-i_3,v(a_{i_3}))$}}
\put(210,110){\makebox{$\ast$}}
\put(135,245){\makebox{$(0,+ \infty)$}}
\put(128,240){\makebox{$\ast$}}
\put(20,165){\makebox(0,0)[b]{slope $\alpha$}}
\put(260,160){\makebox(0,0)[b]{slope $\beta$}}
\put(148,80){\line(2,1){63}}
\end{picture}
\caption{The Newton polygon ${\rm Nw}(f)$ of $f$.}
\end{figure}

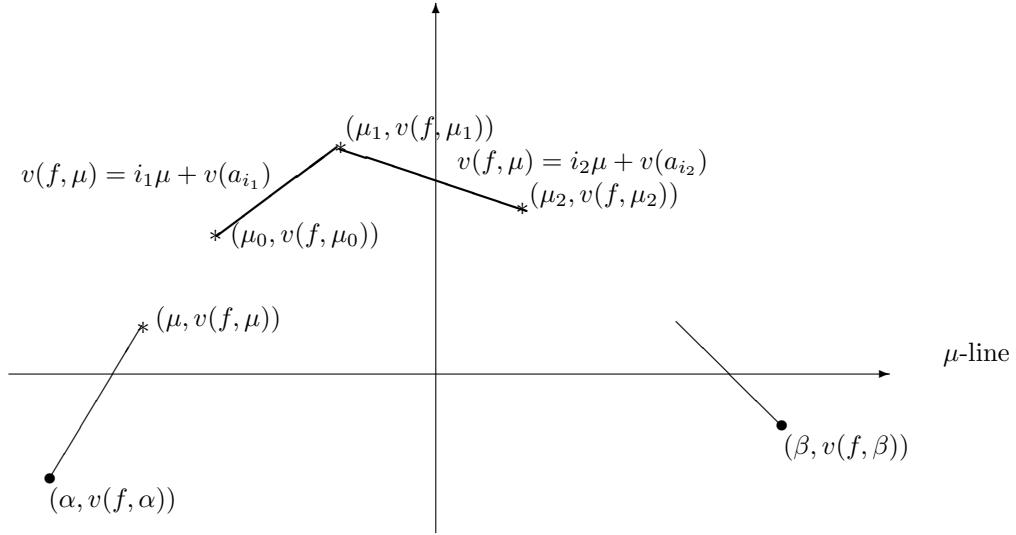
\begin{figure}[h!b] \label{figure:valpolygon}
\begin{picture}(330,200)(-50,30)     % dimension de la boite  300pt de largeur, 170pt de hauteur, translat�e de -50 points dans l'axe horizontal
\put(-30,100){\vector(1,0){330}}       %vecteur horizontal commencant en (0,25)  de longueur 200
\put(130,40){\vector(0,1){200}}         %vecteur vertical commencant en (60,0)  de longueur 120
\put(-15,60){\line(3,5){35}}
\put(220,120){\line(1,-1){40}}
\thicklines
\put(320,105){\makebox{$\mu$-line}}
\put(25,118){\makebox{$(\mu,v(f,\mu))$}}
\put(-15,50){\makebox{$(\alpha,v(f,\alpha))$}}
\put(-17,58){\makebox{$\bullet$}}
\put(260,70){\makebox{$(\beta,v(f,\beta))$}}
\put(257,78){\makebox{$\bullet$}}
\put(18,115){\makebox{$\ast$}}
\put(53,150){\makebox{$(\mu_0,v(f,\mu_0))$}}
\put(45,150){\makebox{$\ast$}}
\put(96,190){\makebox{$(\mu_1,v(f,\mu_1))$}}
\put(92,183){\makebox{$\ast$}}
\put(94,185){\line(3,-1){70}}
\put(165,165){\makebox{$(\mu_2,v(f,\mu_2))$}}
\put(160,160){\makebox{$\ast$}}
\put(48,152){\line(4,3){45}}
\put(20,170){\makebox(0,0)[b]{ $v(f,\mu)= i_1 \mu + v(a_{i_1})$}}
\put(185,175){\makebox(0,0)[b]{$v(f,\mu) =i_2 \mu + v(a_{i_2})$}}
\end{picture}
\caption{The valuation polygon  ${\rm Val}(f)$.}
\end{figure}
%%%%%%%%%%%%%%%%%%%%%%%%%%%%%%%%%%%%%%%%%%%%%%%%%%%%%%%%%%%%%%%

\begin{lemma} Let $n>1$.
\ben
\item For $j=1,2,\dots,n$ and for any $\mu \in \R$,
\beq\label{item1}
\sigma_{n+1}(\mu) < -j +q^j\sigma_{n-j+1}(\mu+j) \;.
\eeq
\item
For $j>n$ and $\mu >-n-1$, we have
 \beq\label{item2}
 \sigma_{n+1}(\mu) < \ell_j(\mu) \;.
 \eeq
 \item
For $-n-1 <\mu <-n$,
 \beq\label{item3}
 \sigma_{n+1}(\mu) < \mu
 \;.
 \eeq
 \een
\end{lemma}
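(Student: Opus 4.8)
The plan is to reduce all three inequalities to elementary comparisons between geometric sums $1+q+\cdots+q^{k}$ and linear expressions in $k$, using only the closed forms $\sigma_m(\mu)=q^{m-1}(\mu+m-1)-(q^{m-2}+\cdots+q+1)$ and $\ell_j(\mu)=-j+q^j(\mu+j)$, the iterated recursion $\sigma_{a+i}(\mu)=-(1+q+\cdots+q^{i-1})+q^i\,\sigma_a(\mu+i)$ displayed above, and the fact that $q\ge 2$.

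First I would dispose of \eqref{item1}. Taking $a=n+1-j$ and $i=j$ in the iterated recursion gives $\sigma_{n+1}(\mu)=-(1+q+\cdots+q^{j-1})+q^j\,\sigma_{n+1-j}(\mu+j)$, so the difference of the two sides of \eqref{item1} equals $(1+q+\cdots+q^{j-1})-j=\sum_{k=0}^{j-1}(q^k-1)$, which is $\ge 0$ and strictly positive as soon as $j\ge 2$. The case $j=1$ degenerates to the identity $\sigma_{n+1}(\mu)=-1+q\,\sigma_n(\mu+1)$; this equality is exactly what singles out $p^{-1}\Psi(pT)^q$ as the unique term of the functional equation realizing $v(\Psi,\mu)$ on $(-n-1,-n)$ in the proof of Theorem~\ref{valpolpsi}. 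So I would read \eqref{item1} for $1\le j\le n$ with the understanding that $j=1$ yields equality and $2\le j\le n$ yields the stated strict inequality.

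Next, \eqref{item3}: setting $t:=\mu+n$, a direct expansion gives $\mu-\sigma_{n+1}(\mu)=(q^n-1)(-t)+\bigl((1+q+\cdots+q^{n-1})-n\bigr)$. For $-n-1<\mu<-n$ one has $t\in(-1,0)$, so the first summand is strictly positive, while the second is $\ge 0$ because $1+q+\cdots+q^{n-1}\ge n$; hence $\sigma_{n+1}(\mu)<\mu$. For \eqref{item2}, I would first note that $j\mapsto\ell_j(\mu)$ is strictly increasing on $\{j\ge n+1\}$ whenever $\mu>-n-1$, since $\ell_{j+1}(\mu)-\ell_j(\mu)=-1+q^j\bigl[(q-1)(\mu+j)+q\bigr]$ and $\mu+j>0$ for such $j$, whence this difference is $\ge -1+q^{j+1}>0$. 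It therefore suffices to establish $\sigma_{n+1}(\mu)<\ell_{n+1}(\mu)$; regrouping terms one gets the identity $\ell_{n+1}(\mu)-\sigma_{n+1}(\mu)=q^n(q-1)(\mu+n+1)+\bigl((1+q+\cdots+q^{n})-(n+1)\bigr)$, and for $\mu>-n-1$ both summands are strictly positive (the second because $q\ge 2$ forces $1+q+\cdots+q^{n}>n+1$), proving \eqref{item2}.

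No step here is genuinely hard; the lemma is just a bundle of convexity-type estimates on the explicit piecewise-affine functions $\sigma_m$ and $\ell_j$. The only points needing care are the bookkeeping of the half-open ranges of $\mu$ on which each $\sigma_m$ (resp.\ $\ell_j$) is the relevant side of the valuation polygon --- in particular that $j>n$ together with $\mu>-n-1$ forces $v_p(T)>-j$, so that the exact evaluation of Remark~\ref{entire2} applies --- and the already-noted fact that \eqref{item1} collapses to an equality when $j=1$.
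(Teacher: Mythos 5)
Your proof is correct, and it follows essentially the same elementary route as the paper: each item is a comparison of explicit affine functions of $\mu$, settled by the recursion for $\sigma_m$ and by positivity of quantities of the form $1+q+\cdots+q^{k}$ minus a linear term. Your small reorganizations are equivalent in substance: for \eqref{item2} you reduce to $j=n+1$ by monotonicity in $j$, whereas the paper fixes $j$ and reduces to $\mu=-n-1$ by monotonicity in $\mu$; for \eqref{item3} you expand directly instead of comparing at the two endpoints of $[-n-1,-n]$. Your observation about $j=1$ is right and worth stressing: since $\sigma_{n+1}(\mu)=-1+q\,\sigma_n(\mu+1)$ identically, \eqref{item1} is an equality at $j=1$, so the strict inequality claimed in the statement holds only for $2\le j\le n$; correspondingly, in the paper's own proof the comparison of the values at $\mu=-n$ reduces to $-j$ versus $-(1+q+\cdots+q^{j-1})$, which is strict precisely for $j\ge 2$ (and the intermediate expression $(q^j-j)$ there should read $\bigl(\tfrac{q^j-1}{q-1}-j\bigr)$). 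This does not affect the induction step in Theorem~\ref{valpolpsi}: what is actually used there is the exact equality for the $j=1$ term together with strictness for $T$ and for all terms with $j\ge 2$ and $j>n$, which is exactly what you prove.
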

\begin{proof}
Assertion \eqref{item1} is clear, since the two affine functions $\mu \mapsto \sigma_{n+1}(\mu)$ and $\mu \mapsto -j +q^j\sigma_{n-j+1}(\mu+j)$, have the same slope $q^n$, while
their values at $\mu = -n$ are $-q^{n-1}-q^{n-2}-\dots-q-1$ and $-j -q^{n-1}-q^{n-2}-\dots-q^{j+1}-q^j$, respectively. Notice that
$$-j -q^{n-1}-q^{n-2}-\dots-q^{j+1}-q^j  = (q^j -j) -q^{n-1}-q^{n-2}-\dots-p-1 > -q^{n-1}-q^{n-2}-\dots-q-1 \;,
$$
so that the conclusion follows.
\par
We examine assertion \eqref{item2}, namely that, for $j>n$ and $\mu >-n-1$, we have
$$
q^n(\mu+n) -q^{n-1}-q^{n-2}-\dots-q-1 < - j + (j +\mu)q^j \;.
$$
The previous inequality translates into
$$
q^n(\mu+n) -q^{n-1}-q^{n-2}-\dots-q-1 < - j + (j -n)q^j +(n+\mu)q^{n+(j-n)} \;,
$$
that is
$$
 q^{n-1}+q^{n-2}+\dots+q+1 - j +  (j -n)q^j + (n+\mu)q^{n}(q^{j-n} -1) >0\;,
$$
for $\mu >-n-1$. Since the l.h.s. is an increasing function of $\mu$, it suffices to show that the inequality hold for $\mu =-n-1$, that is to prove that
\beq \label{calcoli1}
 q^{n-1}+q^{n-2}+\dots+q+1 - j +  (j -n)q^j  - q^{n}(q^{j-n} -1) >0 \;,
\eeq
for any $j >n >1$.  We rewrite the l.h.s. of \eqref{calcoli1} as
\beq \label{calcoli2}
\begin{split}
 q^{n-1}+q^{n-2}+\dots+q+1 - n + (n-j) +  (j -n)q^j  - q^j + q^n = \\  (q^{n-1}+q^{n-2}+\dots+q+1 -n) + (q^j-1)(j-n) + (q^n-q^j)\;,
 \end{split}
\eeq
where the four terms in round brackets on the r.h.s. are each, obviously, positive numbers. The conclusion follows.
\par
We finally show  \eqref{item3} , namely that for $-n-1 <\mu <-n$,
$$
q^n(\mu+n) -q^{n-1}-q^{n-2}-\dots-p-1 < \mu \;.
$$
It suffices to compare the values at $\mu =-n-1$ and at $\mu =-n$. We get
$$
- q^n  - q^{n-1}-q^{n-2}-\dots-q-1 < -n-1\;,
$$
and
$$
-q^{n-1}-q^{n-2}-\dots-q-1 < -n\;,
$$
respectively, both obviously true.
\end{proof}  
 The previous calculation shows  that the side of projection $[-n-1,-n]$ on the $\mu$-axis of the valuation polygon of $\Psi$ is the graph of $\sigma_{n+1} (\mu)$. We have then crossed  the inductive step Case $n \Rightarrow$ Case $n+1$, and Theorem~\ref{valpolpsi} is proven.
\end{proof}
\begin{cor}\label{basicest0} Proposition~\ref{unifPsi} holds true.
\end{cor}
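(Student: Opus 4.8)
The plan is to prove, for $\Psi=\Psi_q$ and allowing the perturbation to range over all of $\C_p$, the assertion $A(n)$ (for $n\in\Z_{\ge 0}$): for every $j\in\Z_{\ge 0}$, every $x\in\Q_q$ with $v_p(x)\ge -n$, and every $h\in\C_p$ with $v_p(h)\ge j$, one has $v_p(\Psi(x+h)-\Psi(x))\ge j$. I would prove $A(n)$ by induction on $n$, keeping $j$ free throughout; since $\Q_q=\bigcup_{n}\{x:v_p(x)\ge -n\}$ and $\Psi\in T+T^2\Z[[T]]$, the union of all $A(n)$ for $q=p$ is exactly Proposition~\ref{unifPsi}. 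Two ingredients feed the induction. First, an elementary \emph{power lemma}: if $u,v\in\C_p$ satisfy $v_p(u)\ge 0$, $v_p(v)\ge 0$ and $v_p(u-v)\ge 1$, then $v_p(u^p-v^p)\ge v_p(u-v)+1$; this follows from $u^p=(v+(u-v))^p=\sum_{m=0}^p\binom pm v^{p-m}(u-v)^m$, where the term $m=1$ contributes valuation $1+v_p(u-v)$ and every term with $m\ge 2$ contributes at least $v_p(u-v)+1$. Iterating $f\ell$ times and using $q^\ell=p^{f\ell}$ gives $v_p(u^{q^\ell}-v^{q^\ell})\ge v_p(u-v)+f\ell$ under the same hypotheses. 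Second, the base case $A(0)$: for $x\in\Z_q$ and $v_p(h)\ge j$, Taylor's formula gives $\Psi(x+h)-\Psi(x)=\sum_{k\ge 1}\frac{\Psi^{(k)}(x)}{k!}h^k$, and since $\Psi$ is entire (Proposition~\ref{entire}) with integral coefficients, each $\Psi^{(k)}/k!=\sum_{i\ge k}\binom ik a_iT^{i-k}$ is entire with integral coefficients, hence carries $\Z_q$ into $\Z_q$; so each summand has valuation $\ge k\,v_p(h)\ge j$, proving $A(0)$ for all $j$.

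For the inductive step, fix $n\ge 1$, assume $A(0),\dots,A(n-1)$, and take $x\in\Q_q$ with $v_p(x)\ge -n$ and $h\in\C_p$ with $v_p(h)\ge j$. If $v_p(x)\ge -(n-1)$ we invoke $A(n-1)$, so assume $v_p(x)=-n$. Subtracting \eqref{functeq(q)} evaluated at $x$ from \eqref{functeq(q)} evaluated at $x+h$ (both series converge, the $\ell$-th term being controlled by Remark~\ref{entire2}) yields
\begin{equation*}
\Psi(x+h)-\Psi(x)=h-\sum_{\ell\ge 1}p^{-\ell}\left(\Psi(p^\ell(x+h))^{q^\ell}-\Psi(p^\ell x)^{q^\ell}\right).
\end{equation*}
The term $h$ has valuation $\ge j$. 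Fix $\ell\ge 1$ and put $u=\Psi(p^\ell(x+h))$, $v=\Psi(p^\ell x)$. Now $v_p(p^\ell x)=\ell-n$, which is $\ge -(n-1)$ when $1\le\ell<n$ and $\ge 0$ when $\ell\ge n$, while $v_p(p^\ell h)\ge \ell+j$; hence $A(n-1)$ (resp.\ $A(0)$), applied to the point $p^\ell x\in\Q_q$ with perturbation $p^\ell h$, gives $v_p(u-v)\ge \ell+j\ge 1$. Moreover $v=\Psi(p^\ell x)\in\Z_q$ by Proposition~\ref{wittdecomp}, so $v_p(v)\ge 0$, and therefore $v_p(u)\ge 0$ as well; the iterated power lemma then gives $v_p(u^{q^\ell}-v^{q^\ell})\ge \ell+j+f\ell$, whence $v_p\bigl(p^{-\ell}(u^{q^\ell}-v^{q^\ell})\bigr)\ge j+f\ell>j$. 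Thus every term on the right-hand side has valuation $\ge j$, so $A(n)$ holds; letting $n\to\infty$ proves Proposition~\ref{unifPsi}.

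The delicate point — and the reason Taylor's formula alone does not settle the matter — is the behaviour at points $x$ of negative valuation: there the summands $p^{-\ell}\Psi(p^\ell x)^{q^\ell}$ of the functional equation individually have large negative valuation, even though their sum lies in $\Z_q$. The induction on $-v_p(x)$ with $j$ kept free is arranged precisely so that the $\ell$-th summand of $\Psi(x+h)-\Psi(x)$ is treated at the strictly higher level $v_p(p^\ell x)>v_p(x)$ but with the enlarged smoothness exponent $\ell+j$; the power lemma then trades the surplus $f\ell$ for exactly the factor $p^{-\ell}$ that must be absorbed, and the integrality $\Psi(\Q_q)\subset\Z_q$ of Proposition~\ref{wittdecomp} is what makes the hypotheses of the power lemma hold at every step. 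I expect the only real work to be the bookkeeping in the inductive step — keeping $j$, $n$, $\ell$ and the level of the evaluation point straight — since everything else reduces to the two short observations above.
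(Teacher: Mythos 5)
Your proof is correct, but it follows a genuinely different route from the paper's. The paper disposes of Corollary~\ref{basicest0} in two lines: it invokes the Witt-covector addition law \eqref{covsum2}, $\Psi(x+y)=\Phi(\Psi(x),\Psi(px),\dots;\Psi(y),\Psi(py),\dots)$, together with Barsotti's isobaricity/congruence Lemma~\ref{phiisob} (the congruence \eqref{congphi} makes the dependence of $\Phi$ on the deep components occur only through products $\Psi(p^ix)\Psi(p^ih)$, and the components $\Psi(p^ih)$ of a small increment $h$ have valuation at least $j$, using $v_p(\Psi_p(t))=v_p(t)$ for small $t$ and integrality of the $\Psi(p^ix)$). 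That argument is short but imports the formal-group machinery of \cite{psi} and \cite{MA}, and it is only available for $q=p$, since the paper explicitly states it has no analogue of \eqref{covsum2} for $\Psi_q$ with $f>1$. You instead work directly from the defining functional equation \eqref{functeq(q)}, by induction on $-v_p(x)$ with the exponent $j$ kept free, using only entirety, the integrality $\Psi_q(\Q_q)\subset\Z_q$ of Proposition~\ref{wittdecomp} (and of the Hasse derivatives for the base case on $\Z_q$), and the elementary congruence $v_p(u^p-v^p)\ge v_p(u-v)+1$ for $u,v$ integral with $u\equiv v\pmod p$; the bookkeeping is right — the $\ell$-th term is handled at the level $p^\ell x$ with exponent $j+\ell$, and the Frobenius gain $f\ell$ absorbs the factor $p^{-\ell}$ — and there is no circularity, since Propositions~\ref{entire} and \ref{wittdecomp} are proved earlier and independently of uniform continuity. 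What your approach buys: it is self-contained, in the elementary spirit of Sections~2--3, and it proves the estimate for every $q=p^f$ (indeed for increments $h$ ranging over $\C_p$), which the paper's argument cannot reach; what the paper's approach buys: brevity, and a conceptual reading of uniform continuity as an immediate consequence of the covector addition law.
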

\begin{proof} We have seen that $v_p(\Psi_p(x)) = v_p(x)$ if $v_p(x) >0$. 
Then Proposition~\ref{unifPsi}  follows from  \eqref{covsum2}  and Lemma~\ref{phiisob}. 
\end{proof} 
\begin{cor}\label{basicest} For any $i =1,2,\dots$, and $v_p(x) \geq -i $ (resp. $v_p(x) > -i$), we have $v_p(\Psi_q(x)) \geq - \frac{q^i-1}{q-1}$ (resp. $v_p(\Psi_q(x)) >- \frac{q^i-1}{q-1}$). If $v_p(x) > -1$, we have $v_p(\Psi_q(x)) = v_p(x)$.
\end{cor}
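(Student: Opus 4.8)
The plan is to read everything off the valuation polygon ${\rm Val}(\Psi_q)$ established in Theorem~\ref{valpolpsi}, using only the elementary inequality $v_p(\Psi_q(x)) \geq v(\Psi_q, v_p(x))$ valid for every $x \in \C_p$, where $\mu \mapsto v(\Psi_q,\mu)$ is the ordinate function of that polygon. First I would record the coordinates of its vertices: reading Figure~1, the side over $(-j-1,-j)$ has slope $q^j$ for $j=0,1,2,\dots$ (with $q^0=1$ on $(-1,+\infty)$) and the graph passes through the origin, so that $v(\Psi_q,-1)=-1$ and, telescoping, the vertex $V_i$ sits at $\bigl(-i,\,-(1+q+\dots+q^{i-1})\bigr)=\bigl(-i,\,-\tfrac{q^i-1}{q-1}\bigr)$ for every $i\geq 1$. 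Since all slopes $1,q,q^2,\dots$ are strictly positive, $\mu\mapsto v(\Psi_q,\mu)$ is strictly increasing; in particular on the ray $[-i,+\infty)$ it attains its minimum at $\mu=-i$, namely $-\tfrac{q^i-1}{q-1}$.

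The first two assertions then follow at once. For $v_p(x)\geq -i$ we get
$v_p(\Psi_q(x))\geq v(\Psi_q,v_p(x))\geq v(\Psi_q,-i)=-\tfrac{q^i-1}{q-1}$,
and for $v_p(x)>-i$ strict monotonicity of $v(\Psi_q,\cdot)$ upgrades this to $v_p(\Psi_q(x))>-\tfrac{q^i-1}{q-1}$.

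For the last assertion the polygon only gives $v_p(\Psi_q(x))\geq v_p(x)$, so a small extra argument is needed to obtain equality; this is the one point that is not completely formal, since a priori cancellation could make $v_p(\Psi_q(x))$ strictly larger. I would argue directly from the functional equation \eqref{functeq(q)}: for $v_p(x)>-1$ and $j\geq 1$ one has $v_p(p^jx)=j+v_p(x)>0$, hence $v_p(\Psi_q(p^jx))=v_p(p^jx)$ by the observation in the proof of Proposition~\ref{entire} (equivalently by \eqref{est2}), so that
\[
v_p\bigl(p^{-j}\Psi_q(p^jx)^{q^j}\bigr)-v_p(x)=(q^j-1)\bigl(j+v_p(x)\bigr)>0 .
\]
Thus in $\Psi_q(x)=x-\sum_{j\geq 1}p^{-j}\Psi_q(p^jx)^{q^j}$ every subtracted term has $v_p$ strictly larger than $v_p(x)$, whence $v_p(\Psi_q(x))=v_p(x)$. (Equivalently, $\mu>-1$ is never the abscissa of a vertex of ${\rm Val}(\Psi_q)$, so the monomial $T$ is the unique dominant term and no cancellation can occur.) There is essentially no obstacle here beyond what is already contained in Theorem~\ref{valpolpsi}; the only point deserving a moment's care is precisely this passage from the polygon inequality to the pointwise equality in the range $v_p(x)>-1$.
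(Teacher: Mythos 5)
Your proof is correct. For the two inequalities your argument (read $v_p(\Psi_q(x)) \geq v(\Psi_q,v_p(x))$ off the polygon of Theorem~\ref{valpolpsi}, locate the vertices at $(-i,-\tfrac{q^i-1}{q-1})$, and use that all slopes are positive so $v(\Psi_q,\cdot)$ is strictly increasing) is exactly what the paper treats as immediate from Theorem~\ref{valpolpsi} without spelling it out. Where you genuinely diverge is the equality $v_p(\Psi_q(x))=v_p(x)$ for $v_p(x)>-1$: the paper disposes of it in one line by declaring it ``a general fact for automorphisms of an open $k$-analytic disk with one $k$-rational fixed point'' and citing \cite[Lemma 6.4.4]{Berkovich}, whereas you argue directly from the functional equation \eqref{functeq(q)}, using \eqref{est2} to see that each term $p^{-j}\Psi_q(p^jx)^{q^j}$, $j\geq 1$, has valuation exceeding $v_p(x)$ by $(q^j-1)(j+v_p(x))>0$, so the ultrametric inequality forces equality. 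Your route is more elementary and self-contained (in the spirit of the paper's stated aim in section~\ref{proofs}), and it does not presuppose that $\Psi_q$ restricts to an automorphism of the disk $v_p>-1$ --- a fact the paper only establishes afterwards, via Newton polygons of $\Psi-a$, in the covering corollary containing \eqref{locisom}; the paper's citation buys brevity at the price of leaning on that external input. One small point worth a half-sentence in your write-up: since the sum over $j\geq 1$ is infinite, you should note that the term valuations $-j+q^j(j+v_p(x))$ increase with $j$ (or tend to $+\infty$), so the infimum over $j$ is attained at $j=1$ and the strict inequality survives passage to the sum; with that remark your argument is complete.
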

\begin{proof} The last part of the statement is a general fact for automorphisms of an open $k$-analytic disk $D$ with one $k$-rational fixed point $a \in D(k)$ (the disk $v_p(x) > -1$ and $x(a)=0$, in the present case) \cite[Lemma 6.4.4]{Berkovich}.
\end{proof}
\end{subsection}
\begin{subsection}{Newton polygon of $\Psi_q$} \label{Newtpol}
We now   recall that to a Laurent series $f = \sum_{i \in \Z} a_i T^i$ with coefficients $a_i \in \C_p$, converging in an annulus $A:=\alpha \leq v_p(T) \leq \beta$, one associates
two, dually related, polygons. The valuation polygon $\mu \mapsto v(f,\mu)$, was recalled before.
The \emph{Newton polygon} ${\rm Nw}(f)$ of $f$ is the convex closure in the standard affine plane $\R^2$ of the points $(-i,v(a_i))$ and $(0,+\infty)$.   If $a_i =0$, then $v (a_i)$ is understood as $= + \infty$. We define $s \mapsto {\rm Nw}(f,s)$ to be the function whose graph is the lower-boundary of ${\rm Nw}(f)$.  The main property of
${\rm Nw}(f)$ is that the length of the projection on the $X$-axis of the side of slope $\sigma$ is the number of zeros of $f$ of valuation $=\sigma$.
The formula
$$
v(f,\mu) = \inf_{i \in \Z} i \,\mu + v(a_i)
$$
indicates (\cf \cite{Laz}) that the relation between  ${\rm Nw}(f)$ and ${\rm Val}(f)$ ``almost'' coincides with the duality formally described in the following lemma.
\begin{lemma} \label{polar} {\bf (Duality of polygons)} In the projective plane $\P^2$,  with affine coordinates $(X,Y)$, we
consider the polarity with respect to the
parabola $X^2 = -2Y$
$$ \P^2 \to (\P^2)^\ast \to \P^2\;,
$$
$$
\mbox{\rm point}\; (\sigma, \tau)  \longmapsto \;\mbox{\rm line}\; (Y = - \sigma X - \tau) \longmapsto \; \mbox{\rm point}\;  (\sigma, \tau) \; .
$$
Assume the graph $\Gamma$ of a continuous convex piecewise affine function has consecutive vertices
$$
\dots, \, (-i_0,\varphi_0)\, ,\, (-i_1,\varphi_1)\, ,\, (-i_2,\varphi_2)\,,\, (-i_3,\varphi_3)\,,\, \dots
$$
joined by the lines
$$\dots\, ,\, Y = \sigma_1X + \tau_1\, ,\, Y = \sigma_2 X + \tau_2\, ,\, Y = \sigma_3X  + \tau_3\,, \dots \;.
$$
Then, the lines joining the points
$$ \dots \, ,\, (-\sigma_1, -\tau_1)\, ,\, (- \sigma_2, - \tau_2)\, ,\, (-\sigma_3, -\tau_3)\,,\, \dots
$$  are
$$\dots\, ,\, Y =  i_1X - \varphi_1\, ,\, Y =  i_2 X - \varphi_2\, ,\,  \dots\;,
$$
and the polarity transforms these back into
$$
\dots \,, \, (-i_1,\varphi_1)\, ,\, (-i_2,\varphi_2)\,,\, \dots \;.
$$
We say that the graph $\Gamma^\ast$ joining the vertices $(\sigma_i, \tau_i)$, $(\sigma_{i+1}, \tau_{i+1})$ by a straight segment is the \emph{dual graph} of $\Gamma$. It is clear that the relation is reciprocal, that is $(\Gamma^\ast)^\ast = \Gamma$ and that $\Gamma^\ast$ is a continuous concave piecewise affine function.
\end{lemma}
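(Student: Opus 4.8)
The plan is to deduce the whole statement from one classical fact: the polarity with respect to a smooth conic is an involutive bijection between points and lines of $\P^2$ which preserves incidence. Once this is in place, the lemma is just the remark that a convex piecewise-affine graph is the same datum whether one records its vertices (points) or its edges (lines), and that the two records are exchanged by the polarity.

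First I would carry out the two one-line computations that describe the correspondence explicitly. Homogenising the parabola as $X^2 + 2YZ = 0$, the associated symmetric bilinear form is $((a,b,c),(a',b',c')) \mapsto aa' + bc' + b'c$; equivalently, in affine coordinates, the polar of the point $(a,b)$ is obtained from $X^2 + 2Y$ by the substitution $X^2 \mapsto aX$, $2Y \mapsto b+Y$, giving the line $\{Y = -aX - b\}$, and conversely the pole of the line $\{Y = mX+c\}$ is the point $(-m,-c)$. In particular the correspondence is an involution, matching the display in the statement. Incidence-preservation is then immediate and symmetric: $(a,b)$ lies on $\{Y = mX+c\}$ iff $b = ma + c$, and this is exactly the condition for the pole $(-m,-c)$ of that line to lie on the polar $\{Y=-aX-b\}$ of that point.

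Next I apply this to $\Gamma$. Write $P_k := (-i_k,\varphi_k)$ for the vertices and $L_k := \{Y = \sigma_k X + \tau_k\}$ for the edge joining $P_{k-1}$ to $P_k$, so that $P_k$ is the common point of $L_k$ and $L_{k+1}$. Under the polarity, $P_k$ goes to the line $M_k := \{Y = i_k X - \varphi_k\}$ and $L_k$ goes to the point $Q_k := (-\sigma_k,-\tau_k)$. Since $P_k \in L_k$ and $P_k \in L_{k+1}$, incidence-preservation forces $Q_k \in M_k$ and $Q_{k+1} \in M_k$; hence the segment of $\Gamma^\ast$ joining the consecutive vertices $Q_k$ and $Q_{k+1}$ lies on $M_k = \{Y = i_k X - \varphi_k\}$, which is precisely the asserted description of the edges of $\Gamma^\ast$. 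Applying the polarity a second time sends $M_k$ back to $P_k$ by involutivity, which gives $(\Gamma^\ast)^\ast = \Gamma$. Finally, convexity of $\Gamma$ means that along $\Gamma$ the vertex-abscissae $-i_k$ and the edge-slopes $\sigma_k$ vary monotonically in opposite senses; since the vertices of $\Gamma^\ast$ have abscissae $-\sigma_k$ and its edges have slopes $i_k$, these two monotonicities are interchanged for $\Gamma^\ast$, i.e. $\Gamma^\ast$ is a concave piecewise-affine graph.

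I do not anticipate a genuine obstacle: the mathematical content is entirely contained in the incidence-preservation of the polarity, which is visibly symmetric in point and line. The only thing to watch is the index bookkeeping --- keeping straight which subscript labels a vertex of $\Gamma$ and which labels an edge, and checking the signs so that ``consecutive vertices of $\Gamma$'' is carried to ``consecutive edges of $\Gamma^\ast$'' --- but this is purely combinatorial and is already essentially spelled out in the statement.
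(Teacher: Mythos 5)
Your core computations are right and they do exactly what the paper's own ``proof'' (a one-line appeal to ``the magic of polarities'') leaves implicit: the polar of $(a,b)$ with respect to $X^2=-2Y$ is $\{Y=-aX-b\}$, the pole of $\{Y=mX+c\}$ is $(-m,-c)$, the correspondence is involutive and preserves incidence, and therefore the poles $Q_k=(-\sigma_k,-\tau_k)$ of consecutive edges of $\Gamma$ lie on the polar $\{Y=i_kX-\varphi_k\}$ of the common vertex $(-i_k,\varphi_k)$, with $(\Gamma^\ast)^\ast=\Gamma$ by involutivity. This is the intended argument, carried out correctly, and it is all that the subsequent Proposition (${\rm Val}(f)=(-{\rm Nw}(f))^\ast$) actually uses.

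The last paragraph, however, does not work as written. For a convex piecewise affine graph the vertex abscissae and the edge slopes both increase from left to right, so they vary in the \emph{same} sense, not ``in opposite senses''; and with your (and the displayed) normalization $Q_k=(-\sigma_k,-\tau_k)$ the dual of a convex graph is again \emph{convex}, not concave. Concretely, take $\Gamma$ with vertices $(0,0),(1,0),(2,1),(3,3)$, so edges $Y=0$, $Y=X-1$, $Y=2X-3$; the poles are $(0,0),(-1,1),(-2,3)$, and read left to right these give slopes $-2$ then $-1$, a convex graph. The concave conclusion corresponds to the \emph{other} normalization appearing in the paper's own closing sentence, namely vertices $(\sigma_k,\tau_k)$, which differs from yours by the point reflection $(X,Y)\mapsto(-X,-Y)$ and hence swaps convex and concave; the statement of the lemma is in fact internally inconsistent on this sign, and in the actual application the input $\Gamma=-{\rm Nw}(f)$ is concave and the output ${\rm Val}(f)$ is concave, consistent with type-preservation under your convention. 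So you should either switch the vertices of $\Gamma^\ast$ to $(\sigma_k,\tau_k)$ before asserting concavity, or state the correct conclusion for your convention (convexity is preserved) and note the sign discrepancy; as it stands, the monotonicity argument you give would not survive being made precise.
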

\begin{proof} It is the magic of polarities.
\end{proof}
The precise relation between ${\rm Nw}(f)$ and ${\rm Val}(f)$ is 
\begin{prop}
$${\rm Val}(f) = (-{\rm Nw}(f))^\ast
$$
where $-{\rm Nw}(f)$ is the polygon obtained from ${\rm Nw}(f)$ by the transformation $(X,Y) \mapsto (X,-Y)$.
\end{prop}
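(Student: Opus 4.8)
The plan is to display both ${\rm Val}(f)$ and $(-{\rm Nw}(f))^\ast$ as the lower envelope of one and the same family of lines, the lines $L_i\colon Y = iX + v(a_i)$, and then to let Lemma~\ref{polar} carry out the matching. First I would recall the defining formula: ${\rm Val}(f)$ is the graph of $\mu \mapsto v(f,\mu) = \inf_{i}\bigl(i\mu + v(a_i)\bigr)$, where the indices with $a_i = 0$, read as $v(a_i) = +\infty$, may be dropped; this exhibits ${\rm Val}(f)$ as the lower envelope of $\{L_i\}$. A routine convexity observation then shows that only the lines $L_{i_k}$ indexed by the abscissas $-i_k$ of the \emph{vertices} of ${\rm Nw}(f)$ can carry an edge of that envelope: for every other index $i$ one has $i\mu + v(a_i) \ge v(f,\mu)$ for all $\mu$, so $L_i$ lies on or above ${\rm Val}(f)$, and hence $\inf_i L_i = \inf_k L_{i_k}$.

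Next I would run the polarity. Write ${\rm Nw}(f)$ as the convex piecewise--affine graph with consecutive vertices $(-i_k, v(a_{i_k}))$, including the endpoint vertex $(0,+\infty)$, and with $k$-th edge on the line $Y = \sigma_k X + \tau_k$; then $-{\rm Nw}(f)$ is the piecewise--affine graph with vertices $(-i_k, -v(a_{i_k}))$ and edges on the lines $Y = -\sigma_k X - \tau_k$. The polarity with respect to $X^2 = -2Y$ sends the vertex $(-i_k, -v(a_{i_k}))$ to the line $Y = i_k X + v(a_{i_k}) = L_{i_k}$ and the edge line $Y = -\sigma_k X - \tau_k$ to the point $(\sigma_k,\tau_k)$; so, by Lemma~\ref{polar} (which, being the statement of an involution on piecewise--affine graphs, applies verbatim to $-{\rm Nw}(f)$), the graph $(-{\rm Nw}(f))^\ast$ is exactly the continuous piecewise--affine concave graph whose consecutive vertices are the points $(\sigma_k,\tau_k)$ and whose $k$-th edge is the segment of $L_{i_k}$ joining two of them. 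Because this graph is concave, every line $L_{i_k}$ supporting one of its edges lies above it globally, so the graph lies on or below each $L_{i_k}$; being moreover built from pieces of those very lines, it coincides with $\inf_k L_{i_k}$, which by the first step is ${\rm Val}(f)$. Hence ${\rm Val}(f) = (-{\rm Nw}(f))^\ast$.

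Finally I would attend to the degenerate vertex $(0,+\infty)$ of ${\rm Nw}(f)$, which becomes $(0,-\infty)$ in $-{\rm Nw}(f)$. Under the polarity this ``vertex at infinity'' is responsible for the unbounded rightmost edge of ${\rm Val}(f)$, the ray lying on $Y = i_0 X + v(a_{i_0})$ for $\mu \gg 0$, where $i_0$ is the least index with $a_{i_0}\neq 0$; for $f = \Psi_q$ this is $i_0 = 1$, $v(a_1) = 0$, which recovers the slope-$1$ ray of Figure~1. No step introduces a genuinely new idea --- all the substance is already in Lemma~\ref{polar} --- and the hard part will be only the careful sign bookkeeping in the polarity together with this treatment of the unbounded ends of the two polygons.
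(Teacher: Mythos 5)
Your argument is correct and is essentially the paper's: the paper's entire proof is the instruction to compare Lemma~\ref{polar} with Figures 2 and 3, and what you write out --- ${\rm Val}(f)$ as the lower envelope of the lines $Y=iX+v(a_i)$, the polarity exchanging the vertices and edge-lines of $-{\rm Nw}(f)$ with the edge-lines and vertices of ${\rm Val}(f)$, and the separate treatment of the degenerate vertex $(0,+\infty)$ --- is exactly that comparison made precise. One small touch-up: in the reduction $\inf_i L_i=\inf_k L_{i_k}$, the inequality to quote is $v(a_i)\geq {\rm Nw}(f,-i)$ (each point $(-i,v(a_i))$ lies on or above the Newton polygon), which by the convexity you invoke gives $L_i\geq \inf_k L_{i_k}$; the inequality $L_i\geq v(f,\mu)$ that you state is just the definition of the infimum and does not by itself yield the equality.
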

\begin{proof}
The most convincing proof follows from comparing Lemma~\ref{polar}  with Figures 2 and 3.
\end{proof}
We now apply the previous considerations to the two polygons associated to the function $\Psi_q$.
\begin{cor} The Newton polygon ${\rm Nw}(\Psi_q)$ has vertices  at the points
$$V_i:=(-q^i, i\,q^i-\frac{q^i-1}{q-1}) = (-q^i, i\,q^i-q^{i-1}-\dots-q-1)\;.$$
The equation of the side joining the vertices $V_i$  and $V_{i-1}$ is
$$
Y = -iX - \frac{q^i-1}{q-1} \; ;
$$
 its projection on the $X$-axis is the segment $[-q^i,-q^{i-1}]$.
So, ${\rm Nw}(\Psi)$ has the form described in Figure 4.
\end{cor}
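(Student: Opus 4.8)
The plan is to read off ${\rm Nw}(\Psi_q)$ from the valuation polygon ${\rm Val}(\Psi_q)$, which was determined completely in Theorem~\ref{valpolpsi}, by dualizing via the polarity with respect to the parabola $X^2=-2Y$ of Lemma~\ref{polar} together with the Proposition ${\rm Val}(f)=(-{\rm Nw}(f))^\ast$. Since ${\rm Val}(\Psi_q)$ is explicitly known, the argument is purely formal and amounts to bookkeeping.

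\emph{Step 1: the combinatorics of ${\rm Val}(\Psi_q)$.} By Theorem~\ref{valpolpsi}, ${\rm Val}(\Psi_q)$ is the graph of a concave piecewise affine function whose side over $[-j-1,-j]$ has slope $q^j$, for $j=0,1,2,\dots$, and whose vertices are the points $W_j=(-j,\sigma_j(-j))$ with $\sigma_j(-j)=-q^{j-1}-\cdots-q-1=-\tfrac{q^j-1}{q-1}$ (with $W_0=(0,0)$). Since the side of slope $q^j$ passes through $W_j$, it has equation
$$
Y=q^j X+\tau_j\;,\qquad\text{where}\quad\tau_j:=j\,q^j-\tfrac{q^j-1}{q-1}\;.
$$

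\emph{Step 2: dualization.} By the reciprocity $(\Gamma^\ast)^\ast=\Gamma$ of Lemma~\ref{polar}, the Proposition ${\rm Val}(f)=(-{\rm Nw}(f))^\ast$ is equivalent to the statement that ${\rm Nw}(\Psi_q)$ is the reflection in the $X$-axis of $({\rm Val}(\Psi_q))^\ast$. Under the polarity, the side $Y=q^jX+\tau_j$ of ${\rm Val}(\Psi_q)$ is polar to the point $(-q^j,-\tau_j)$, while the vertex $W_i$ — the intersection of the two sides of slopes $q^{i-1}$ and $q^i$ — is polar to the line $Y=iX+\tfrac{q^i-1}{q-1}$. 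Applying the reflection $(X,Y)\mapsto(X,-Y)$ carries $(-q^j,-\tau_j)$ to
$$
V_j=\bigl(-q^j,\ j\,q^j-\tfrac{q^j-1}{q-1}\bigr)\;,
$$
a vertex of ${\rm Nw}(\Psi_q)$, and carries the line $Y=iX+\tfrac{q^i-1}{q-1}$ to the side $Y=-iX-\tfrac{q^i-1}{q-1}$; being the polar image of the vertex $W_i$, the latter joins $V_{i-1}$ and $V_i$, so its projection onto the $X$-axis is $[-q^i,-q^{i-1}]$. This gives the three assertions and the shape of Figure 4.

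\emph{On the main difficulty.} The computation is elementary; the only delicate point is keeping the orientations straight, since ${\rm Val}(\Psi_q)$ is concave while ${\rm Nw}(\Psi_q)$ is convex, the reflection $(X,Y)\mapsto(X,-Y)$ distinguishing ${\rm Nw}$ from $-{\rm Nw}$ must be tracked, and the polarity interchanges vertices and sides. A reassuring check is that the side of ${\rm Nw}(\Psi_q)$ from $V_{i-1}$ to $V_i$ has slope $-i$ and horizontal length $q^i-q^{i-1}$, which recovers the count, announced in the introduction, of $q^i-q^{i-1}$ zeros of $\Psi_q$ of $p$-adic valuation $-i$. One could instead verify directly that $v(a_{q^i})=i\,q^i-\tfrac{q^i-1}{q-1}$ and that every other coefficient of $\Psi_q$ lies on or above ${\rm Nw}(\Psi_q)$, but this is essentially a restatement of Theorem~\ref{valpolpsi}, so the dualization route is both shorter and more transparent.
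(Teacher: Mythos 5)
Your dualization argument is correct and is exactly the route the paper intends: the corollary is stated without a separate proof precisely because it follows by applying Lemma~\ref{polar} and the relation ${\rm Val}(f)=(-{\rm Nw}(f))^\ast$ to the valuation polygon computed in Theorem~\ref{valpolpsi}, which is the bookkeeping you carry out (and your vertices $V_i$, side equations, and projections all check out). Nothing to add beyond noting that your consistency check against the zero count $q^i-q^{i-1}$ matches the paper's subsequent corollaries.
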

\begin{figure}[h!t] \label{figure:valpolygon}
\begin{picture}(300,200)(-50,0)     % dimension de la boite  300pt de largeur, 170pt de hauteur, translat�e de -50 points dans l'axe horizontal
\put(0,40){\vector(1,0){270}}       %vecteur horizontal commencant en (0,25)  de longueur 200
\put(240,10){\vector(0,1){180}}         %vecteur vertical commencant en (60,0)  de longueur 120
\thicklines
\put(217,40){\line(0,1){170}}
\put(174,83){\line(1,-1){42}}
\put(115,202){\line(1,-2){60}}
\put(215,38){\makebox{$\bullet$}}
\put(174,38){\makebox{$\bullet$}}
\put(30,38){\makebox{$\bullet$}}
\put(215,45){\makebox{$-1$}} 
\put(174,45){\makebox{$-q$}} 
\put(30,45){\makebox{$-q^2$}} 
\put(170,55){\makebox(0,0)[b]{slope -1}}
\put(170,145){\makebox(0,0)[b]{slope  -2}}
\put(40,130){\makebox(0,0)[b]{vertex $V_i$ at  $(-q^i, i\,q^i-q^{i-1}-\dots-q-1)$}}
\put(120,70){\makebox(0,0)[b]{vertex $V_1$ at  $(-q, q-1)$}}
\put(173,80){\makebox{$\ast$}}
\put(57,185){\makebox(0,0)[b]{vertex $V_2$ at  $(-q^2, 2q^2-q-1)$}}
\put(105,210){\makebox{$\ast$}}
\end{picture}
\caption{The Newton polygon ${\rm Nw}(\Psi_q)$ of $\Psi_q$.}
\end{figure}
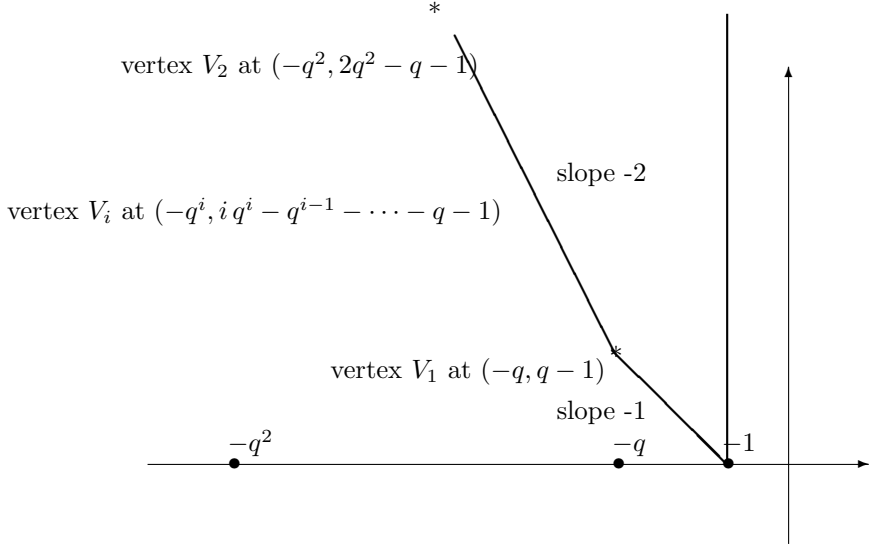
\begin{cor}  \label{etcov0} For any $i=0, 1,  \dots$,   the  map $\Psi = \Psi_q$ induces  coverings of degree $q^i$,
\beq \label{etcov}
\Psi :  \{\,x \in \C_p\,| \, v_p(x) >  -i -1 \,\} \longrightarrow \{\,x \in \C_p\,| \, v_p(x) >  -\frac{q^{i+1}-1}{q-1}   \,\} \; ,
\eeq
(in particular, an isomorphism
\beq \label{locisom}
\Psi :  \{\,x \in \C_p\,| \, v_p(x) > -1 \,\} \iso \{\,x \in \C_p\,| \, v_p(x) > -1  \,\} \;,
\eeq
for $i=0$),
finite  maps of degree $q^i$
\beq \label{etcov1}
\Psi :  \{\,x \in \C_p\,| \, -(i+1)< v_p(x) < -i \,\} \longrightarrow \{\,x \in \C_p\,| \, -\frac{q^{i+1}-1}{q-1} < v_p(x) <  -\frac{q^{i}-1}{q-1}   \,\} \; ,
\eeq
and finite  maps of degree $q^{i+1} - q^i$
\beq \label{etcov3}
\Psi :  \{\,x \in \C_p\,| \,  v_p(x) = -i - 1\,\} \longrightarrow \{\,x \in \C_p\,| \, -\frac{q^{i +1}-1}{q-1} \leq v_p(x)   \,\} \; .
\eeq
\end{cor} 
\begin{proof} The shape of the Newton polygon of $\Psi$ indicates that, for any $a \in \C_p$, with $v_p(a) >-1$, the side of slope $= v_p(a)$ of the
Newton polygon of $\Psi -a$ has projection of length 1 on the $X$-axis. So, $\Psi : \{\,x \in \C_p\,| \, v_p(x) > -1\,\} \to \{\,x \in \C_p\,| \, v_p(x) > -1\,\}$ is bijective, hence biholomorphic.   Now we recall from Corollary~\ref{basicest} that for any given $i \geq 1$,
\beq \label{etcov2}
\Psi ( \{\,x \in \C_p\,| \, v_p(x) >  -i -1 \,\}) \subset  \{\,x \in \C_p\,| \, v_p(x) >  -\frac{q^{i+1}-1}{q-1}   \,\} \; .
\eeq
So, let $a$ be such that  $-\frac{q^{i+1}-1}{q-1} < v_p(a) \leq -\frac{q^{i}-1}{q-1}$, say $v_p(a) = -\frac{q^{i}-1}{q-1} -\veps$, with $\veps \in [0,q^i)$.
Then, the
Newton polygon of $\Psi -a$ has a single side of slope $>  \, -i -1$, which has precisely slope $= -\veps \, q^{-i}-i$ and  has projection of length $q^i$ on the $X$-axis. So, the equation
$\Psi(x) =a$ has precisely $q^i$ solutions $x$ in the  annulus $-i  -1 <  v_p(x) \leq  \, -i$.   If, for the same $i$,  $-\frac{q^{i}-1}{q-1} < v_p(a) \leq -\frac{q^{i -1}-1}{q-1}$,  the
Newton polygon of $\Psi -a$ has a  side of slope $-i$, whose projection  on the $X$-axis has  length $q^i -q^{i-1}$, and a side of slope $\sigma$, $1-i \geq \sigma >-i$, whose projection  on the $X$-axis has  length $q^{i-1}$. So again $\Psi^{-1}(a)$ consists of $q^i$ distinct points.
We go on,  for $a$ in an annulus of the form $-\frac{q^{i-j}-1}{q-1} < v_p(a) \leq -\frac{q^{i -j -1}-1}{q-1}$, up to  $j=i-2$, \ie to $-\frac{q^2 -1}{q-1} < v_p(a) \leq - 1$.  In that case, the
Newton polygon of $\Psi -a$ has a  side of slope $-i$ of projection $q^i -q^{i-1}$, a  side of slope $1-i$ of projection $q^{i-1} - q^{i-1}$,\dots,  a  side of slope $j-i$ of projection $q^{i-j} - q^{i-j-1}$ on the $X$-axis, \dots,  up to a side of slope -1 of projection $q-1$ on the $X$-axis. Finally, for $v_p(a)>-1$, there is still exactly one solution of $\Psi(x) =a$, with $v_p(x) >-1$.
This means that $\Psi$  induces a (ramified) covering of degree $q^i$ in \eqref{etcov}.
\end{proof}
\end{subsection}
\begin{subsection}{The addition law of  $\Psi$}
We now extend  the estimates of Corollary~\ref{etcov0} to translates $\Psi (a + x)$ of $\Psi$, for $a \in \Q_p$. Although we expect that the same discussion carries over to $\Psi_q(a + x)$, where $a \in \Q_q$, we assume for simplicity that 
 $q=p$ in the rest of this subsection.
\begin{prop} \label{estadm}    Let $m \in \Z_{>0}$ and let  $M(x_{-m} \dots,x_{-1},x_0)$ be a monomial
in  $\Z_p[x_{-m} \dots,x_{-1},x_0]$ divisible by $x_{-m}$ and of pure weight $1$, where $x_i$ weighs $p^i$, for any $i$. 
Set
$$
M(x) := M(\Psi(p^m x), \dots, \Psi(x)) \;.
$$
Then, for any $r=1,2,\dots$,
 \beq \label{r=1,2}
\begin{split}
w_r(M(x))& \geq  \\  
m  +1 - (p-1)r (m-r+1) + (p-1) & \l({{m+1} \choose {2}} - {{r} \choose {2}}\r) - \frac{p^{r  +1}-1}{p-1} >   \\
m  +1 + (p-1) \frac{(m-r)^2 + (m-r)}{2}& - \frac{p^{r  +1}-1}{p-1} \; (\,> - \frac{p^{r  +1}-1}{p-1} \,)
\;,
\end{split}
\eeq
while, for $r =0,-1,-2,\dots$, we get 
 \beq \label{r=0,-1}
 w_r(M(x)) \geq m-r  - (p-1)mr + (p-1) {{m+1} \choose {2}} \;(\, \geq p \,(1-r) \,)\; .
\eeq
\end{prop}
\begin{proof} This follows from the estimates of Corollary~\ref{etcov0} via  a  totally 
 self-contained, but lengthy, computation on isobaric polynomials of Witt-type.  
We refer to the upcoming paper \cite{perf_fourier} for the   proof of a more general statement. 
\end{proof}
We apply Proposition~\ref{estadm}  to the study of the addition law of $\Psi$. From \eqref{covsum2}  and 
\eqref{covsum3}, we deduce, taking into account Proposition~\ref{wittdecomp}, that, for any $c \in \Q_p$
\beq
\label{covsum4}  \begin{split}
 &\Psi(x + c)   = \\ \lim_{i \to \infty} 
 \varphi_i(\Psi(p^ix), \dots,&\Psi(px),\Psi(x);\Psi(p^i c),\dots,\Psi(p c),\Psi(c)) \;,
 \end{split}
\eeq
where 
\beq
\label{covsum5}  \begin{split}
\varphi_m(\Psi(p^m x),& \dots,\Psi(px),\Psi(x);\Psi(p^m c),\dots,\Psi(p c),\Psi(c)) - \\
& \varphi_{m-1}(\Psi(p^{m-1}x), \dots,\Psi(px),\Psi(x);\Psi(p^{m-1} c),\dots,\Psi(p c),\Psi(c)) 
\end{split}
\eeq
is  a sum of monomials $M(x)$ as in Proposition~\ref{estadm}.  
  \begin{thm} \label{psibddsuper} \hfill
  \ben
  \item
  The function $\Psi$ is bounded and uniformly continuous on any $p$-adic strip around $\Q_p$. In particular, 
  $$
  \Psi(x) \in  \sH^{\bd} \;.
  $$
  \item For any $j =0,1,\dots$ and $x \in \Q_p$,
\beq
\label{unifPsi11}
\Psi_p(x + p^j  \C_p^\circ ) \subset \Psi_p(x) + p^j  \C_p^\circ \;.
\eeq
\een
   \end{thm}
   \begin{proof}
For the first part of the statement, we observe that  Proposition~\ref{estadm} shows that, for  any fixed $r \geq 0$, 
 the sequence 
$$
i \longmapsto  \varphi_i(\Psi(p^ix), \dots,\Psi(px),\Psi(x);\Psi(p^i c),\dots,\Psi(p c),\Psi(c))
$$
converges in the $w_r$-valuation. This means that for
 any $\rho >0$, the previous sequence 
is a sequence of entire functions bounded on the $p$-adic strip $\Sigma_\rho$ around $\Q_p$,  which  converges to $\Psi(x + c)$ uniformly on $\Sigma_\rho$. 
\par
The second part of the statement was already proved in Corollary~\ref{basicest0}. It also follows from the estimates of Proposition~\ref{estadm} when $r \leq 0$. 
\end{proof}
\end{subsection}
\begin{subsection}{The zeros of $\Psi$} \label{zeropsi}
The following theorem is formulated in a way to make sense for $q =$ any power of $p$. We expect that it is true 
in that generality. However, for the time being, we can only prove it for $q = p$.
\begin{thm} \label{invfunct} In this statement, let $q=p$. 
\ben
\item
For any $n=1,2,\dots$, the map $\Psi_q$ has $q_n := q^n-q^{n-1}$ simple zeros of valuation $-n$ in $\Q_q$. More precisely, for any system of representatives $a_1, \dots, a_{q_n} \in \Z_q$ of $(\Z_q/p^n \Z_q)^\times = {\rm W}_{n-1}(\F_q)^\times$, and any $j =1,\dots,q_n$, the open disc $D( a_j p^{-n},
p^-)$ 
 contains a unique zero $z^{(n)}_j \in \Q_q$ of $\Psi_q$. Then  $z^{(n)}_1,\dots,z^{(n)}_{q_n}$ are all the zeros of $\Psi_q$ of valuation $-n$. 
 \item
For $n=1,2,\dots$ let $z^{(n)}_1,\dots,z^{(n)}_{q_n}$  be the zeros of $\Psi_q$ of valuation $-n$. 
We set
$$\psi_n(x) = \prod_{j=1}^{q_n} (1- \frac{x}{z^{(n)}_j}) \in 1 + p^n x\Z_q[x]
\; .$$
Then 
\beq \label{prodformPsi}
\Psi_q(x) = x \prod_{n=1}^\infty \psi_n(x) 
\eeq 
is the canonical  convergent infinite Schnirelmann product expression \cite[(4.13)]{Laz} of $\Psi_q(x)$ in the ring $\Q_p\{x\}$. 
\item
The inverse function $\beta(T) = \beta_q(T)$ of $\Psi_q(T)$ (\ie the power series such that, in $T\Z[[T]]$, $\Psi_q(\beta_q(T))=T = \beta_q(\Psi_q(T))$) belongs to $T+T^2\Z[[T]]$. Its disc of convergence is exactly $v_p(T)>-1$.
\een
\end{thm} 
\begin{proof}  We now prove  the first statement in Theorem~\ref{invfunct}.  We recall that here $q=p$, so that $a_1, \dots, a_{p_n} \in \Z_p$, with $p_n = p^n - p^{n-1}$, are a system of representatives of $(\Z_p/p^n \Z_p)^\times$. 
 \begin{lemma} \label{diffest} For any $m,n \in \Z_{>0}$, with $m \leq n$, and any $j = 1,\dots, p_n$,  the value of $\Psi$ at the maximal point $\xi_{a_j p^{-n},p^{m}}$ (of Berkovich type 2) of the rigid 
disc $D(a_j p^{-n}, (p^{m})^+)$, that is $-\log |\Psi(\xi_{a_j p^{-n},p^{m}})| = w_m(\Psi (a_j p^{-n} +x))$,  is $-\frac{p^{m}-1}{p-1} <0$.
\end{lemma}  
\begin{proof} The proof   follows from the addition law \eqref{covsum4} in which 
$$\Psi(a_j p^{-n}), \dots,  \Psi(a_j p^{i-n}) \in \Z_p 
$$
so that, for any $i =0,1,2,\dots$,
$$
 \varphi_i(\Psi(p^ix), \dots,\Psi(px),\Psi(x);\Psi(a_j p^{i-n}),\dots,\Psi(a_j p^{1-n}),\Psi(a_j p^{-n})) 
 $$
is a sum of a dominant (at $\xi_{a_j p^{-n},p^{m}}$)
term 
$$
\Psi(x) + \Psi(a_j p^{-n})
$$
and 
of terms $M(x)$  described, for $m = n-i$, in Proposition~\ref{estadm}.
\end{proof}
 From the  harmonicity  of the function $|\Psi(x)$ at  the point 
$\xi_{a_j p^{-n},p^{m}}$, the estimate of Lemma~\ref{diffest}, and the fact 
 that $\Psi(a_j p^{-n}) \in \Z_p$, we deduce   that each of the $p_n p^{m-n}$ open discs of radius $p^{m}$, centered at points of $p^{-n} \Z_p \, \setminus\,  p^{1-n} \Z_p$  
contains at least one zero of $\Psi_q$ in $\Q_q$. For $m=1$, this proves the first part of the statement.
\par
For the second part of the statement we refer to \cite[\S 4]{Laz}. The fact that every $\psi_n(x) \in \Q_p[x]$ is $-n$-extremal follows from the fact that its zeros are all of exact valuation $-n$ \cite[(2.7')]{Laz}. 
\par
The fact that $\beta_p$ belongs to $T+T^2\Z[[T]]$ is obvious. The convergence of $\beta_q$ for $v_p(T)>-1$ follows from \eqref{locisom}. The fact that it cannot converge in a bigger disk
is a consequence of the fact that $\Psi_q$ has $q-1$  zeros of valuation $-1$.
\end{proof}
\begin{cor} \label{zerodistr} All zeros of $\Psi_q$ are simple and are contained in $\Q_q$. Each ball $a + \Z_q \in \Q_q/\Z_q$ contains a single zero of $\Psi_q$.
\end{cor}
\begin{rmk} \label{invfunct2} We believe that Theorem~\ref{invfunct} holds, with essentially the same proof,   for any power $q$ of $p$.  See Remark~\ref{genq}.
\end{rmk} 
\end{subsection}
 \begin{section}{Rings of continuous  functions on $\Q_p$} \label{contfcts} 
 The point of this section is that of establishing the categorical limit/colimit formulas for the linear topologies of rings of $p$-adic functions on $\Q_p$. For topological algebra notions, we take the viewpoint and use the definitions explained in Appendix A (see also \cite{closed}).
 \par \smallskip
 We consider here a linearly topologized separated and complete ring $k$, whose family of open ideals we denote by $\cP(k)$. In practice $k = \Z_p$ or $=\F_p$, or $=\Z_p/p^r\Z_p$, for any $r \in \Z_{\geq 1}$.  More generally, $A$ will be a  complete and separated  topological ring equipped with a $\Z$-linear  topology, defined by a family of open additive subgroups of $A$. In particular we have in mind $A=$  a fixed finite extension $K$ of $\Q_p$, whose topology is $K^\circ$-linear but not $K$-linear. Again, a possible $k$ would be $K^\circ$ or any $K^\circ/(\pi_K)^r$, for a parameter $\pi = \pi_K$ of $K$, and $r$ as before. 
 \par 
 We will  express our statements  for an abelian topological group $G$, which is separated and complete in the $\Z$-linear topology
 defined by a countable family of profinite subgroups $G_r$, with $G_r \supset G_{r+1}$, for any $r \in \Z$. So, 
$$G = \limPROu_{r \to +\infty} G/G_r = \limINDu_{r \to -\infty} G_r
\;,
$$ 
where $G/G_r$ is discrete, $G_r$ is compact, and limits and colimits are taken in the category of topological abelian groups separated and complete in a $\Z$-linear topology. 
 We denote by  $\pi_r : G \to G/G_r$  the canonical projection. 
 Then, $G$ is canonically a uniform space in which a function $f:G \to A$ is uniformly continuous iff, for any open subgroup $J \subset A$, the induced function $G \to A/J$  factors via a $\pi_r$, for some $r = r(J)$. A subset of $G$ of the form 
 $\pi_r^{-1}(\{h\}) = g + G_r$, for $g \in G$ and $h = \pi_r(g)$   is sometimes called the  \emph{ball of radius $G_r$} and \emph{center} $g$. 
 In particular, $G$ is a locally compact, paracompact, $0$-dimensional topological space.  A general discussion of the duality between $k$-valued functions and measures on such a space, will appear in \cite{duality}. 
In practice here $G= \Q_p$ or $\Q_p/p^r \Z_p$ or $p^r \Z_p$, with the obvious uniform and topological structure.  
 \begin{defn} \label{contdef}
 Let $G$ and $A$ be as before. We define $\sC(G,A)$ (resp. $\sC^\bd_\unif(G,A)$) as the $A$-algebra of continuous (resp. bounded and uniformly continuous)  functions $f : G \to A$. 
 We equip $\sC(G,A)$ (resp.   $\sC^\bd_\unif(G,A)$)  with the topology of uniform convergence on compact subsets of $X$ (resp. on $X$). For any  $r \in \Z$ and $g \in G$, we denote by $\chi_{g+G_r}$ is the characteristic function of $g+G_r \in G/G_r$. If $G$ is discrete and $h \in G$, by $e_h:G \to k$ we denote the function such that $e_h(h) =1$, while $e_h(x) =0$ for any $x \neq h$ in $G$.
 \end{defn} 
 \begin{rmk} It is clear that if $A=k$ is a linearly topologized ring any subset of $k$ and therefore any function $f:G \to k$, is bounded. So, we write $\sC_\unif(G,k)$ instead of $\sC^\bd_\unif(G,k)$ in this case. If $G$ is discrete, any 
 function $G \to k$ is (uniformly) continuous; still, 
the bijective map  $\sC_\unif(G,k) \to \sC(G,k)$ is not an isomorphism in general, so we do keep 
the difference in notation. If $G$ is compact, any continuous function $G \to k$ is uniformly continuous, and
$\sC_\unif(G,k) \to \sC(G,k)$ is an isomorphism, so there is no need to make any distinction.
 \end{rmk}

 \medskip
 \begin{lemma} \label{discretecase} Notation as above, but assume $G$ is discrete (so that the $G_r$'s are finite). Then $\sC(G,k)$ (resp.  $\sC_\unif(G,k)$) is the 
$k$-module of functions $f:G \to k$ endowed with the topology of simple (resp. of uniform) convergence on $G$. So
 $$
 \sC(G,k) =  \limPROu_{r \to -\infty} \sC(G_r,k)= \prod_{h} k \,e_h \;\;,\;\;h \in G \;.
 $$ 
Similarly, 
 $$
 \sC_\unif(G,k) =    \limPROu_{I \in \cP(k)}  \PRODSCU{h \in G}\, (k/I) \,e_h =  \PRODSCU{h \in G}\, k \,e_h  \;,
 $$  
 where $\dspl \PRODSCU{h \in G} \, (k/I) \,e_h$ carries the discrete topology. 
 \end{lemma}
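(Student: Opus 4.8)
The plan is to split the statement into two parts: the identification of the common \emph{underlying $k$-module} of $\sC(G,k)$ and $\sC_\unif(G,k)$, and the recognition of each of the two topologies as one of the constructions of Section~\ref{lincat}. First I would note that, $G$ being discrete, every function $f:G\to k$ is continuous; and since $\{0\}$ is then an open subgroup, some member $G_{r_0}$ of the fundamental system equals $\{0\}$, so for every $I\in\cP(k)$ the reduction $f\bmod I$ factors trivially through $\pi_{r_0}:G\to G/G_{r_0}=G$ and $f$ is uniformly continuous as well (this also proves the Remark preceding the Lemma). Hence, as $k$-modules, $\sC(G,k)=\sC_\unif(G,k)$ is the module of all functions $G\to k$, that is $\prod_{h\in G}k\,e_h$, and only the topologies differ. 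On the discrete space $G$ the compact subsets are exactly the finite ones, so $\sC(G,k)$ carries the topology of pointwise convergence, a fundamental system of open submodules being the $\{f\mid f(F)\subseteq I\}$ with $F\subseteq G$ finite and $I\in\cP(k)$; while $\sC_\unif(G,k)$ carries the topology of uniform convergence on $G$, with fundamental system the $\{f\mid f(G)\subseteq I\}=\prod_{h\in G}I\,e_h$, $I\in\cP(k)$.

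For $\sC(G,k)$: each $G_r$ is finite, hence compact, so $\sC(G_r,k)=k^{G_r}$ is a finite product of copies of $k$ and thus an object of $\LMu_k$, and the restriction maps form a filtered projective system since the $G_r$ form a chain. Its limit in $\LMu_k$ is, by the generalities of Section~\ref{lincat}, computed on underlying $k$-modules with the initial topology, which here is already separated and complete. Since $G=\bigcup_r G_r$ (the presentation $G=\limINDu_{r\to-\infty}G_r$), the underlying module of the limit is $\prod_{h\in G}k\,e_h$; and, every coordinate $h\in G$ lying in some $G_r$, the initial topology for the projections to the $k^{G_r}$ is exactly the product topology, i.e. pointwise convergence. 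Hence $\sC(G,k)=\limPROu_{r\to-\infty}\sC(G_r,k)=\prod_{h\in G}k\,e_h$, the last term being the $\LMu_k$-direct product of copies of $k$.

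For $\sC_\unif(G,k)$: for each $I\in\cP(k)$ the module of all functions $G\to k/I$, with its discrete topology, lies in $\LMu_k$, because scalar multiplication is uniformly continuous — here $I\cdot(k/I)=0$, so the entourage $I\subseteq k$ already forces $rm=r'm$ whenever $r-r'\in I$. This module coincides with $\PRODSCU{h\in G}(k/I)\,e_h$: indeed in the defining condition of the uniform box product one may take $I_U=I$, which makes $\prod_h\{0\}$ an open submodule, so the uniform box product of copies of the discrete module $k/I$ is discrete. The reductions $k/I\to k/J$ for $I\subseteq J$ form a filtered projective system over $\cP(k)$; its limit in $\LMu_k$ has underlying module the functions $G\to k$ (a compatible family of functions into the $k/I$ being the same thing as a function into $\limPROu_{I\in\cP(k)}k/I=k$), and its topology has fundamental system the preimages of $\{0\}$, namely the $\{f\mid f(G)\subseteq I\}=\prod_{h\in G}I\,e_h$ — precisely the uniform-convergence topology of the first paragraph, which is in turn exactly the uniform box product topology on $\PRODSCU{h\in G}k\,e_h$, since those submodules $\prod_{h\in G}I\,e_h$ are coinitial among its open submodules $\prod_h U_h$ (each such contains $\prod_h I_U\,e_h$). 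Hence $\sC_\unif(G,k)=\limPROu_{I\in\cP(k)}\PRODSCU{h\in G}(k/I)\,e_h=\PRODSCU{h\in G}k\,e_h$, the box products $\PRODSCU{h\in G}(k/I)\,e_h$ carrying the discrete topology.

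The routine points — separatedness and completeness of the function spaces (so that the $\LMu_k$-limits need no further completion), and the continuity and $k$-linearity of the transition maps — are immediate. The step I expect to be the main obstacle is the interface between the abstract limits in $\LMu_k$, with their characteristic demand that the scalar action be \emph{uniformly} continuous and the warning that such a limit carries the initial topology made separated and complete, and the concrete function-space topologies; in practice this comes down to the two observations that the uniform box product of discrete $k/I$-modules is again discrete, and that uniform convergence on $G$ is cofinally generated by the submodules $\prod_{h\in G}I\,e_h$, so that it matches the uniform box product and not the ordinary product that governs $\sC(G,k)$.
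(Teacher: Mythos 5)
Your proposal is correct and takes the same route as the paper, which disposes of this lemma with ``Clear from the definitions'': your argument is exactly that definition-chase. You identify both spaces with the module of all functions $G\to k$ (using that some $G_{r_0}=\{0\}$, so uniform continuity is automatic), match the compact-open (i.e.\ pointwise) topology with the $\LMu_k$-product $\prod_{h\in G}k\,e_h$ and with $\limPROu_{r\to-\infty}\sC(G_r,k)$, and match the uniform-convergence topology with the uniform box product, i.e.\ with the limit over $\cP(k)$ of the discrete reductions, which is precisely what the statement asserts.
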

 \begin{proof} Clear from the definitions.
 \end{proof}
 The next lemma is a simplified  abstract form, in the framework of linearly topologized rings and modules,  of the classical  decomposition of a continuous function as a sum of characteristic functions of balls 
 (see for example Colmez \cite[\S 1.3.1]{colmez2}).
\begin{lemma} \label{compactcase} Notation as above but assume $G$ is compact (so that the $G/G_r$'s are finite). Then
 $$
 \sC(G,k) =\sC_\unif(G,k)   = \limINDU_{r \to +\infty} \sC(G/G_r,k) = \limINDU_{r \to +\infty} \bigoplus_{g+ G_r \in G/G_r} k \chi_{g + G_r} \;.
 $$ 
 For any $r$, the canonical morphism  $\sC(G/G_r,k) \to  \sC(G,k)$ is injective. 
 \end{lemma}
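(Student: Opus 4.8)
The plan is to prove the three identifications in turn: first $\sC(G,k)=\sC_\unif(G,k)$ as topological $k$-modules; then $\sC(G/G_r,k)=\bigoplus_{g+G_r\in G/G_r}k\,\chi_{g+G_r}$ together with the injectivity of the transition and evaluation maps; and finally that $\sC(G,k)$ satisfies the universal property of the filtered colimit $\limINDU_r\sC(G/G_r,k)$ in $\LMu_k$.

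Since $G$ is compact, every continuous map $G\to k$ is automatically uniformly continuous and bounded, and the topology of uniform convergence on compact subsets of $G$ is that of uniform convergence on $G$; hence $\sC(G,k)=\sC_\unif(G,k)$, as already remarked after Definition~\ref{contdef}. A basis of open submodules of $\sC(G,k)$ is then given by the $W_I:=\{f\mid f(G)\subseteq I\}$, $I\in\cP(k)$, and $\sC(G,k)$ is separated and complete: separated because $\bigcap_IW_I=0$, and complete because a uniformly Cauchy net of continuous $k$-valued functions has a pointwise limit in the complete ring $k$, the convergence is uniform, and a uniform limit of continuous functions is continuous. A short direct check shows scalar multiplication on $\sC(G,k)$ is uniformly continuous, so $\sC(G,k)$ is an object of $\LMu_k$. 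Next, $G/G_r$ is a finite discrete group, so a function on it is an arbitrary assignment of values and decomposes uniquely over the characteristic functions of points, while the topology of convergence on a finite set is the product topology; thus $\sC(G/G_r,k)=\bigoplus_{g+G_r}k\,\chi_{g+G_r}$. Viewing $\chi_{g+G_r}$ as the characteristic function on $G$ of the ball $g+G_r$, the map $\pi_r^{\ast}\colon\sC(G/G_r,k)\to\sC(G,k)$, and likewise $\sC(G/G_r,k)\to\sC(G/G_{r+1},k)$, is injective because $\pi_r$ (resp. $G/G_{r+1}\to G/G_r$) is surjective; this is the final assertion of the lemma.

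For the colimit identification, first the module $M_\infty:=\bigcup_r\pi_r^{\ast}\sC(G/G_r,k)$ of locally constant functions $G\to k$ is dense in $\sC(G,k)$: given $f$ and $I\in\cP(k)$, the continuous map $f\bmod I\colon G\to k/I$ takes values in a discrete module, hence --- by compactness of $G$ and since the cosets $g+G_r$ form a neighbourhood basis --- is constant on the cosets of $G_r$ for $r$ large; choosing a representative in each such coset and lifting its value to $k$ produces a locally constant $\tilde f\in\sC(G/G_r,k)$ with $f-\tilde f\in W_I$. Now let $N$ be an object of $\LMu_k$ together with a compatible family of morphisms $\phi_r\colon\sC(G/G_r,k)\to N$. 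Each $\phi_r$ is determined by the elements $\mu(g+G_r):=\phi_r(\chi_{g+G_r})\in N$, and compatibility reads $\mu(g+G_r)=\sum_{h+G_{r+1}\subseteq g+G_r}\mu(h+G_{r+1})$, i.e. $\mu$ is a finitely additive $N$-valued distribution on $G$. For $f\in\sC(G,k)$ one approximates $f$ by locally constant functions $f_I\equiv f\bmod I$ factoring through some $G/G_{r(I)}$ and sets $\phi(f):=\lim_I\sum_Cf_I(C)\,\mu(C)$, the limit of the Riemann sums over the cosets $C$. The decisive input is that, scalar multiplication on $N$ being uniformly continuous in $\LMu_k$, for every open submodule $J\subseteq N$ there is $I_J\in\cP(k)$ with $I_JN\subseteq J$; this makes the Riemann sums a Cauchy net (for $I,I'\subseteq I_J$ their difference is a finite sum of elements of $I_J\mu(C)\subseteq I_JN\subseteq J$), so the limit exists in the complete module $N$, is well defined and $k$-linear, is continuous since $\phi(W_{I_J})\subseteq J$, and satisfies $\phi\circ\pi_r^{\ast}=\phi_r$; by density of $M_\infty$ it is the unique such map. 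Hence $\sC(G,k)=\limINDU_r\sC(G/G_r,k)$, which by the previous paragraph equals $\limINDU_r\bigoplus_{g+G_r}k\,\chi_{g+G_r}$.

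The step requiring care --- as opposed to routine bookkeeping --- is the convergence of the Riemann sums and the continuity of $\phi$; everything rests on the uniform-boundedness property $I_JN\subseteq J$ of morphisms in $\LMu_k$, which is exactly what makes the colimit in $\LMu_k$ larger than the naive algebraic colimit, so that it absorbs \emph{all} continuous functions and not only the locally constant ones. Alternatively one may bypass the universal property by quoting the computation of filtered colimits in $\LMu_k$ from \cite{closed} together with the identities $\sC(G,k)=\varprojlim_{I\in\cP(k)}\sC(G,k/I)$ and $\sC(G,k/I)=\varinjlim_r\sC(G/G_r,k/I)$, the latter being the classical decomposition of a locally constant function on a compact set (cf. \cite[\S 1.3.1]{colmez2}).
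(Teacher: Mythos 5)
Your argument is correct. The paper itself dismisses this lemma with ``this is also clear from the definitions'' (pointing only to the classical decomposition \`a la Colmez quoted just before), so there is no competing proof to compare with: what you write out — $\sC(G,k)=\sC_\unif(G,k)$ by compactness, the finite decomposition over the $\chi_{g+G_r}$, injectivity of $\pi_r^\ast$ from surjectivity of $\pi_r$, density of the locally constant functions, and the verification of the universal property of the completed colimit in $\LMu_k$ via the Riemann-sum construction resting on the existence, for each open submodule $J\subseteq N$, of $I_J\in\cP(k)$ with $I_JN\subseteq J$ — is precisely the content being taken for granted, and the point you single out (that $\limINDU$ is the completion of the algebraic colimit, which is why it captures all continuous functions and not only the locally constant ones) is indeed the only step with real substance.
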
 
 \begin{proof} This is also clear from the definitions.
  \end{proof}
  \begin{rmk} \label{wavelet} We observe that the inductive limit appearing in the formula hides the
  complication of formulas of the type
  $$
\chi_{g + G_r} = \sum_{i} \chi_{g_i + G_{r+1}}\;\;\mbox{if}\;\; g + G_r = \dot\bigcup_i \; g_i + G_{r+1} 
  $$
  which we do not need to make explicit for the present use (see \cite{duality} for a detailed discussion). 
\end{rmk}
 \begin{prop} \label{contvsunif} Notation as above, with $G$ general.   Then in the category $\LMu_k$ we have~:
\ben
\item
$$ \sC(G,k) =  \limPROu_{r \to -\infty} \sC(G_r,k) \;\;\mbox{for the restrictions} \;\; \sC(G_r,k) \to \sC(G_{r+1},k) \;.
$$ 
In particular, for any fixed $r \in \Z$,
$$
 \sC(G,k) =   \prod_{g+G_r \in G/G_r} \sC(g+G_r,k) \;.
  $$   
\item
$$
\sC_\unif(G,k)  =  
 \limINDU_{r \to +\infty} \sC_\unif(G/G_r,k) 
 $$
 for the embeddings
 $$
 \sC_\unif(G/G_r,k) \hookrightarrow  \sC_\unif(G/G_{r+1},k)  
$$
\item 
The natural morphism 
$$\sC_\unif(G,k) \longrightarrow \sC(G,k)
$$
is injective and has dense image. 
\een
\end{prop}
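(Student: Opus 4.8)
I will treat the three items in turn, reducing each to the discrete and compact cases already settled in Lemma~\ref{discretecase} and Lemma~\ref{compactcase}, and using two elementary features of $G$: it is the increasing union $G=\bigcup_{r\to-\infty}G_r$ of the compact open subgroups $G_r$, and (since the $G_r$ form a nested open cover) every compact subset of $G$ is contained in some $G_r$. For (1): continuity being local and the $G_r$ being open, a map $f\colon G\to k$ is continuous iff every $f|_{G_r}$ is, and a family $(f_r)_r$ with $f_r|_{G_{r+1}}=f_{r+1}$ glues uniquely to such an $f$; moreover, since every compact of $G$ lies in some $G_r$, the topology of uniform convergence on compacts of $G$ is the coarsest one making all the restrictions $\sC(G,k)\to\sC(G_r,k)$ continuous, the targets carrying uniform convergence on their own compacts. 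This is exactly the assertion $\sC(G,k)=\limPROu_{r\to-\infty}\sC(G_r,k)$ in $\LMu_k$. For the ``in particular'' clause, fix $r$: then $G$ is the disjoint union of the cosets $g+G_r$, $g+G_r\in G/G_r$, each of which is compact and clopen and isomorphic, as a uniform space, to $G_r$; a continuous function on $G$ is an \emph{arbitrary} family of continuous functions on the pieces, a compact subset of $G$ meets only finitely many pieces, and comparing neighbourhood bases yields $\sC(G,k)=\prod_{g+G_r\in G/G_r}\sC(g+G_r,k)$ with the product topology, each factor being $\sC_\unif(g+G_r,k)$ as described in Lemma~\ref{compactcase}.

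For (2): by Lemma~\ref{discretecase}, $\sC_\unif(G/G_r,k)$ is the module of \emph{all} functions $G/G_r\to k$ with the topology of uniform convergence, and the surjection $G/G_{r+1}\twoheadrightarrow G/G_r$ induces a strict embedding $\sC_\unif(G/G_r,k)\hookrightarrow\sC_\unif(G/G_{r+1},k)$ (one checks that the standard open submodule $\{f\equiv 0\bmod I\}$ of the source is the intersection with the source of the corresponding submodule of the target, for every $I\in\cP(k)$). The algebraic colimit $\bigcup_r\sC_\unif(G/G_r,k)$ is precisely the subalgebra $U$ of $\sC_\unif(G,k)$ of functions that are constant on the cosets of some $G_r$ (all such functions are automatically uniformly continuous). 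The crux is that $U$ is dense in $\sC_\unif(G,k)$ for the uniform topology: if $f$ is uniformly continuous and $I\in\cP(k)$, choose $r$ with $x-y\in G_r\Rightarrow f(x)-f(y)\in I$ and let $f_r$ be the sampling of $f$ at a fixed system of coset representatives of $G/G_r$; then $f_r\in\sC_\unif(G/G_r,k)$ and $f_r(x)-f(x)\in I$ for every $x\in G$. Since $\sC_\unif(G,k)$ is complete, and since the $u$-colimit in $\LMu_k$ carries the topology in which the defining ideal $I\in\cP(k)$ must be chosen uniformly in the index $r$ — which on $U$ is exactly the uniform topology — we conclude that $\sC_\unif(G,k)$ is the $u$-completion of $U$, i.e. $\sC_\unif(G,k)=\limINDU_{r\to+\infty}\sC_\unif(G/G_r,k)$.

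For (3): the natural map sends a uniformly continuous function to itself viewed as a continuous function, so it is injective; it is continuous because uniform convergence on $G$ forces uniform convergence on every compact subset. For density of its image it suffices, by (2), to show that functions constant on the cosets of some $G_r$ are dense in $\sC(G,k)$: given a continuous $f$ and a basic neighbourhood cut out by a compact $K$ (which we may enlarge to some $G_r$) and some $I\in\cP(k)$, the uniform continuity of $f|_{G_r}$ on the \emph{compact} group $G_r$ furnishes $r'\ge r$ such that $f$ is constant modulo $I$ on each $G_{r'}$-coset inside $G_r$; sampling those cosets at representatives and assigning the value $0$ on the remaining cosets produces a function constant on $G_{r'}$-cosets that agrees with $f$ modulo $I$ on $K$.

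The step I expect to demand the most care is the topological bookkeeping in (2): one must verify that the topology induced on $\bigcup_r\sC_\unif(G/G_r,k)$ is the $\LMu_k$-style $u$-colimit topology — the one forcing a single ideal $I\in\cP(k)$ across all indices $r$, which is genuinely coarser than the naive colimit topology — so that its completion is \emph{all} of $\sC_\unif(G,k)$ rather than a proper subspace. Granting that, items (1) and (3) are a routine assembly of the discrete and compact cases already in hand.
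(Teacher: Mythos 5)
Your proof is correct, and it rests on the same skeleton as the paper's — reduction to the discrete and compact cases of Lemmas~\ref{discretecase} and \ref{compactcase} — but it fills in concretely what the paper dispatches formally. For items (1) and (2) the paper only invokes the universal properties of $\limPROu$ and $\limINDU$; you instead glue over the open cover $\{G_r\}$, use that every compact of $G$ lies in some $G_r$, identify the algebraic colimit in (2) with the algebra $U$ of functions factoring through some $G/G_r$, and prove by the sampling-at-coset-representatives argument that $U$ is dense in $\sC_\unif(G,k)$ for the uniform topology — this is precisely the content the paper leaves implicit in the phrase ``built-in in the definition of $\limINDU$''. In item (3) the routes genuinely diverge: the paper constructs the morphism via the colimit universal property, proves injectivity by noting that the $\LMu_k$-colimit is a completion of the separated $\Mod_k$-inductive limit and invoking AB5, and proves density by reducing, through (1), to density of $\bigcup_{s}\sC_\unif(G_r/G_{r+s},k)$ in each $\sC(G_r,k)$; you obtain injectivity for free from the concrete identification in (2) (the map is then literally an inclusion of function algebras) and prove density directly by approximating a continuous $f$ on a compact $K\subset G_r$, using uniform continuity of $f|_{G_r}$ and extending by zero outside $G_r$ — the same underlying idea, executed on $G$ rather than componentwise. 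What your version buys is self-containedness and an explicit handling of the one delicate point, which you rightly flag: that the $u$-colimit topology on $U$ (a single ideal $I\in\cP(k)$ uniformly in $r$) coincides with the topology of uniform convergence; note that identifying $I\cdot U$ with the $I$-valued functions in $U$ is immediate when $\cP(k)$ has a cofinal family of finitely generated (e.g.\ principal) open ideals, as in all cases used in the paper ($k=\Z_p$, $\F_p$, $\Z/p^{a+1}\Z$), and this point is equally implicit in the paper's appeal to universal properties. The paper's version is shorter and emphasizes that the proposition is formal once the compact and discrete building blocks are in place; yours makes the approximation mechanism visible, which is also what drives the later Lemma~\ref{periodrexpWittintro}.
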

\begin{proof} The first two parts follow from the universal properties of limits and colimits. 
The morphism in part 3 comes from the injective morphisms, for $r \in \Z$, 
$$\sC_\unif(G/G_r,k) \longrightarrow \sC(G,k)$$
and the universal property of colimits. The  inductive limit of these morphisms in the category $\LMu_k$ is a completion 
of the inductive limit taken in the category $\Mod_k$ of $k$-modules equipped with the $k$-linear inductive limit topology. 
Since the latter is separated and since the axiom AB5 holds for the abelian category $\Mod_k$, we deduce that 
the morphism in part 3 is injective.  
The morphism has dense image because, for any $r \in \Z$ and for any $s \in \Z_{\geq 0}$, the composed morphism
$$\sC_\unif(G_r/G_{r+s},k) \longrightarrow \sC_\unif(G/G_{r+s},k) \longrightarrow \sC(G,k)  \longrightarrow \sC(G_r,k)$$
is the canonical map of Lemma~\ref{compactcase}
$$\sC_\unif(G_r/G_{r+s},k) \longrightarrow \sC(G_r,k)$$
for the compact group $G_r$ and its subgroup $G_{r+s}$. The fact that the set theoretic union 
$\bigcup_{s \geq 0} \sC_\unif(G_r/G_{r+s},k)$ is dense in $\sC(G_r,k)$ is built-in in the definition of $\limINDU$. 
\end{proof}
 \begin{prop} \label{prodfcts} 
 Let  $(H,\{H_r\}_r)$ be a locally compact group with the same properties as   $(G,\{G_r\}_r)$ above, so that 
 $(G \times H,\{G_r \times H_r\}_r)$ also has the same properties. 
 Then 
  we have a natural identification  in $\LMu_k$
 \beq \label{canisomprod1}
 \sC(G,k) \wt^u_k  \sC(H,k) \iso  \sC(G\times H,k) 
 \eeq
 and a continuous strictly closed embedding
 \beq \label{canisomprod2}
 \sC_\unif(G,k) \wt^u_k  \sC_\unif(H,k) \longrightarrow  \sC_\unif(G\times H,k) \;.
 \eeq 
   \end{prop}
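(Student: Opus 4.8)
The plan is to prove both parts by reducing to the elementary fact that, for finite sets $X$ and $Y$ and open ideals $I,J\in\cP(k)$, one has $(k/I)^X\otimes_k(k/J)^Y\cong (k/(I+J))^{X\times Y}$ (using $k/I\otimes_k k/J=k/(I+J)$), and by exploiting that $\wt^u_k$ commutes with filtered projective limits (equation~\eqref{tensproj}), while the algebraic tensor product over $k$ commutes with filtered colimits.

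For \eqref{canisomprod1}, write, by \eqref{tensproj},
$$\sC(G,k)\wt^u_k\sC(H,k)=\limPROu_{P\in\cP(\sC(G,k)),\ Q\in\cP(\sC(H,k))}\ \sC(G,k)/P\ \otimes_k\ \sC(H,k)/Q.$$
By Proposition~\ref{contvsunif}(1) together with Lemma~\ref{compactcase}, a cofinal family of open submodules $P$ of $\sC(G,k)$ is indexed by pairs $(C,I)$ with $C\subseteq G$ a compact open subset (a finite union of $G_r$-cosets) and $I\in\cP(k)$, with $\sC(G,k)/P\cong\sC(C,k/I)$; by Lemma~\ref{compactcase} applied to $C$, the latter is the filtered colimit of the finite free $(k/I)$-modules $(k/I)^E$, where $E$ ranges over the (finite) sets of $G_s$-cosets contained in $C$ as $s\to+\infty$. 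Since $-\otimes_k-$ commutes with filtered colimits, the elementary fact above gives, for compact opens $C\subseteq G$, $D\subseteq H$ and $I,J\in\cP(k)$, a natural identification
$$\sC(C,k/I)\otimes_k\sC(D,k/J)\ \cong\ \sC\!\left(C\times D,\,k/(I+J)\right),\qquad \chi_c\otimes\chi_d\longmapsto\chi_{(c,d)}.$$
As $(C,I,D,J)$ varies, the data $(C\times D,\,I+J)$ are cofinal among those describing the open submodules of $\sC(G\times H,k)$ (Proposition~\ref{contvsunif}(1) for $G\times H$ with the system $\{G_r\times H_r\}$), so passing to the projective limit yields the natural identification \eqref{canisomprod1}, sending $\chi_{g+G_r}\otimes\chi_{h+H_t}$ to $\chi_{(g+G_r)\times(h+H_t)}$.

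For \eqref{canisomprod2}, the bilinear assignment $(f,g)\mapsto f\boxtimes g$, with $(f\boxtimes g)(x,y):=f(x)g(y)$, sends a pair of uniformly continuous functions to a uniformly continuous function, hence induces a continuous $k$-linear map $\mu\colon\sC_\unif(G,k)\wt^u_k\sC_\unif(H,k)\to\sC_\unif(G\times H,k)$. On the underlying algebraic tensor products, $\sC_\unif(G,k)\otimes_k\sC_\unif(H,k)\to\sC_\unif(G\times H,k)$ is injective, because a relation $\sum_i f_i\boxtimes g_i\equiv 0$ with the $f_i$ and the $g_i$ each $k$-linearly independent forces the tensor to vanish, by evaluating at points of $G\times H$. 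What remains — and this is the only real point — is that the $\wt^u_k$-topology on $\sC_\unif(G,k)\otimes_k\sC_\unif(H,k)$ agrees with the topology induced from the complete module $\sC_\unif(G\times H,k)$; granting this, $\sC_\unif(G,k)\wt^u_k\sC_\unif(H,k)$, being the separated completion of the algebraic tensor product for that topology, is identified with the closure of $\sC_\unif(G,k)\otimes_k\sC_\unif(H,k)$ inside $\sC_\unif(G\times H,k)$, that is with a strictly closed $k$-submodule, which is the assertion. To compare the two topologies I would reduce to the case of $G$ and $H$ compact, where $\sC_\unif=\sC$ and \eqref{canisomprod1} already gives an isomorphism, using $\sC_\unif(G,k)=\limPROu_{I\in\cP(k)}\sC_\unif(G,k/I)$ and $\sC_\unif(G,k/I)=\limINDu_{r}(k/I)^{G/G_r}$ (Proposition~\ref{contvsunif}(2) and Lemma~\ref{discretecase}), together with the compatibility of $\wt^u_k$ with these filtered projective limits and colimits; for the precise bookkeeping I would appeal to the general formalism of $\wt^u_k$ in \cite[$\mathbf 0$.7.7]{EGA} and \cite[Chap.\ III, \S 2]{algebracomm}, and to \cite{closed}.

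The main obstacle lies entirely in \eqref{canisomprod2}, and specifically in the topological comparison above when $G$ and $H$ are infinite discrete — equivalently, when the quotients $G/G_r$ and $H/H_r$ are infinite. There $\sC_\unif(G/G_r,k)$ is the infinite uniform box product $\PRODSCU{h \in G/G_r} k\,e_h$ of Lemma~\ref{discretecase}, and since $\wt^u_k$ does not commute with infinite direct products, the completed tensor product is genuinely only a proper strictly closed submodule of $\sC_\unif(G\times H,k)$, never the whole of it; this is precisely the asymmetry recorded by the statement — an isomorphism in \eqref{canisomprod1}, merely a strictly closed embedding in \eqref{canisomprod2}. Everything else in the argument is formal.
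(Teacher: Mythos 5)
Your proof of \eqref{canisomprod1} is correct and is in substance the paper's own argument: both reduce, through the compatibility of $\wt^u_k$ with filtered projective limits and of $\otimes_k$ with filtered colimits, to the finite-level identification $(k/I)^E\otimes_k(k/J)^F\cong (k/(I+J))^{E\times F}$; the paper first passes to $G$, $H$ compact via Proposition~\ref{contvsunif}(1) and then works modulo each $I\in\cP(k)$, whereas you run the same computation through a cofinal family of open submodules in \eqref{tensproj} --- a difference of bookkeeping only.

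The gap is in \eqref{canisomprod2}, at exactly the point you yourself call ``the only real point''. First, the proposed reduction ``to the case of $G$ and $H$ compact, where $\sC_\unif=\sC$'' is not available: by Proposition~\ref{contvsunif}(2) the algebra $\sC_\unif(G,k)$ is an inductive limit over the \emph{discrete} quotients $G/G_r$, not a projective limit over the compact subgroups $G_r$, and its uniform topology is not determined by restrictions to compacta; the case that carries all the content is the one you label the main obstacle, namely $G/G_r$ and $H/H_r$ infinite discrete, and for it you offer no argument --- you assert that the image is a proper strictly closed submodule, which is (more than) the assertion to be proved, not a proof of it. The paper closes precisely this case by reducing modulo each open ideal: since the uniform topologies are defined by the submodules of functions with values in $I$, both the source $M$ and the target $N$ satisfy $M=\limPROu_{I\in\cP(k)}M/\ol{IM}$, and modulo $I$ every module in sight is discrete (Lemma~\ref{discretecase}), so the topological strictness becomes automatic and the whole statement reduces to the algebraic injectivity of $(k/I)^{G'}\otimes_{k/I}(k/I)^{H'}\to (k/I)^{G'\times H'}$ for the discrete quotients $G'$, $H'$, after which one passes to the filtered limit over $I$. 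Second, your injectivity argument (choose a representation $\sum_i f_i\otimes g_i$ with both families $k$-linearly independent, then evaluate) is only valid over a field: over $k=\Z_p$, and over the discrete quotients $k/I$ to which the problem is actually reduced, a nonzero tensor need not admit such a representation, so the evaluation argument does not go through as stated; this residual algebraic fact, the inclusion $k^G\otimes_k k^H\hookrightarrow k^{G\times H}$ for $k$ discrete, is exactly what the paper isolates at the end of its proof, and it needs a justification other than the field-theoretic minimal-length trick.
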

   \begin{proof} We prove \eqref{canisomprod1} first. By the first part of point $1$ in Proposition~\ref{contvsunif} and the fact that $\wt_k^u$ commutes with projective limits, we are reduced to the case of $G$ and $H$ compact. We are then in the situation of Lemma~\ref{compactcase} for both $G$ and $H$ (in particular, the $G/G_r$'s and the $H/H_r$'s are finite). 
 We need to prove
 $$
    \limINDU_{r \to +\infty} \bigoplus_{(g,h)+(G_r \times H_r)} k \chi_{(g,h) + G_r \times H_r}
 =   \limINDU_{r \to +\infty} \bigoplus_{g+ G_r  } k \chi_{g + G_r}
 \; \wt^u_k \;
 \limINDU_{r \to +\infty} \bigoplus_{h+ H_r  } k \chi_{h + H_r}
  \;.
   $$
   Let $M$ (resp. $N$) be the  l.h.s. (resp.  the r.h.s.)  in the previous equation. Then 
   $$
   M = \limPROu_{I \in \cP(k)} M/\ol{IM} \;\;,\;\;  N = \limPROu_{I \in \cP(k)} N/\ol{IN} \;.
   $$
   We show that $M/\ol{IM}  \iso N/\ol{IN}$, for any $I \in \cP(k)$. Now, 
   $$M/\ol{IM}  = \limind_r \bigoplus_{(g,h)+(G_r \times H_r)} (k/I) \chi_{(g,h) + G_r \times H_r} \;.
   $$
Let 
$$P:=  \limINDU_r \bigoplus_{g+ G_r  } k \chi_{g + G_r} \;,\; Q := \limINDU_r \bigoplus_{h+ H_r  } k \chi_{h + H_r} \;.
$$ 
Then 
   $$
   N/\ol{IN} =   P/\ol{IP} \otimes_{k/I} Q/\ol{IQ} =  \limind_r \bigoplus_{g+ G_r  } (k/I) \chi_{g + G_r}
 \; \otimes_{k/I} \;
 \limind_r \bigoplus_{h+ H_r  } (k/I) \chi_{h + H_r} =  M/\ol{IM} \;.
   $$
   This concludes the proof of \eqref{canisomprod1}.
    \endgraf
    We now pass to \eqref{canisomprod2}. We use formula $2$ of Proposition~\ref{contvsunif}, to replace the map in the 
    statement by 
        $$ 
         \limINDU_r \sC_\unif(G/G_r,k) \wt^u_k  \limINDU_{r \to +\infty} \sC_\unif(H/H_r,k)  \longrightarrow 
          \limINDU_r \sC_\unif((G \times H)/(G_r \times H_r),k)
 \;.
 $$
 By Lemma~\ref{discretecase} this reduces to considering 
 \beqa \begin{split} 
 &\limINDU_r  \limPROu_{I \in \cP(k)} \PRODSCU{g \in G}\, (k/I) \,e_{g+G_r} \wt^u_k  \limINDU_{r \to +\infty} \limPROu_{I \in \cP(k)} \PRODSCU{h \in H}\, (k/I) \,e_{h + H_r}
 \longrightarrow 
  \\
 &\\
 & \limINDU_r   \limPROu_{I \in \cP(k)}  \PRODSCU{(g,h)}\, (k/I) \,e_{(g+G_r,h+H_r)} \end{split}
\eeqa
As before, let $M$ (resp. $N$) be the  l.h.s. (resp.  the r.h.s.)  in the previous equation. Then 
   $$
   M = \limPROu_{I \in \cP(k)} M/\ol{IM} \;\;,\;\;  N = \limPROu_{I \in \cP(k)} N/\ol{IN} \;.
   $$
   We show that $M/\ol{IM}  \hookrightarrow N/\ol{IN}$ in an embedding with the relative topology, for any $I \in \cP(k)$. Now, 
   \beqa \begin{split} 
   M/\ol{IM}  = &\limINDU_r  \prod_{g}\, (k/I) \,e_{g+G_r}   \wt^u_{k/I} \limINDU_r  \prod_{h}\, (k/I) \,e_{h+H_r} =\\
    &\limINDU_r ( \prod_{g}\, (k/I) \,e_{g+G_r}   \wt^u_{k/I}  \prod_{h}\, (k/I) \,e_{h+H_r})  \;,   \end{split}
\eeqa
and
  $$N/\ol{IN}  = \limINDU_r  \prod_{(g,h)}\, (k/I) \,e_{(g+G_r,h+H_r)}   
   $$
where $\limINDU$ and $\wt^u_{k/I}$ are taken in the category $\LMu_{k/I}$. So, our statement is reduced 
to the fact that, for $k$, $G$, and $H$, discrete, if $k^G$ (resp. $k^H$, resp. $k^{G \times H}$) indicates 
the $k$-algebra  of functions $G \to k$ (resp. $H \to k$, resp. $G\times H \to k$) with the discrete topology, we have an inclusion
$$
k^G \otimes_k k^H \hookrightarrow k^{G \times H}\;.
$$
%See also Remark~\ref{tensind}.
   \end{proof}
We are especially interested in 
  \begin{cor} \label{tesiunif3}\hfill
   \ben 
 \item  For any $r \in \Z$,  
  \beq
   \label{tesiunif31}
   \sC_\unif(\Q_p/p^{r+1}\Z_p,\Z_p) = \limPROu_{s \to +\infty}\sC(\Q_p/p^{r+1}\Z_p,\Z_p/p^s\Z_p)
  \eeq
  where $\sC(\Q_p/p^{r+1}\Z_p,\Z_p/p^s\Z_p)$ is equipped with the discrete topology. 
  It  is the $\Z_p$-algebra 
 of all maps $\Q_p/p^{r+1}\Z_p \to \Z_p$
equipped with the   $p$-adic topology;
\item  For any $r \in \Z$,  
  \beq
   \label{tesiunif31b}
   \sC(\Q_p/p^{r+1}\Z_p,\Z_p) = \limPROu_{s,t \to +\infty}\sC(p^{-t}\Z_p/p^{r+1}\Z_p,\Z_p/p^s\Z_p)
  \eeq
  where $\sC(p^{-t}\Z_p/p^{r+1}\Z_p,\Z_p/p^s\Z_p)$ is equipped with the discrete topology. 
  It  is the $\Z_p$-Hopf algebra 
 of all maps $\Q_p/p^{r+1}\Z_p \to \Z_p$
equipped with the topology of simple convergence on $\Q_p/p^{r+1}\Z_p$ for the $p$-adic topology of $\Z_p$;
 \item
  \beq 
   \label{tesiunif32}
   \sC_\unif(\Q_p,\Z_p) =  \limINDU_{r \to +\infty} \sC_\unif(\Q_p/p^{r+1}\Z_p,\Z_p) \;;
  \eeq
   \item
    \beq 
   \label{tesiunif32b}
   \sC(\Q_p,\Z_p) =  
   \limPROu_{s \to +\infty}\sC(\Q_p,\Z_p/p^s\Z_p) \;.
  \eeq
\een
  \end{cor}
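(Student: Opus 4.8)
The plan is to derive all four identities by specialising the structural results already established in this section---Lemma~\ref{discretecase}, Lemma~\ref{compactcase}, Proposition~\ref{contvsunif} and Proposition~\ref{prodfcts}---to the base ring $k=\Z_p$, whose family of open ideals is $\{p^s\Z_p \mid s\in\Z_{\geq 0}\}$, and to the groups $G=\Q_p$, $\Q_p/p^{r+1}\Z_p$ and $p^r\Z_p$ with their standard filtrations.

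For the first identity I would take $G:=\Q_p/p^{r+1}\Z_p$, which is discrete, and apply Lemma~\ref{discretecase}: it identifies $\sC_\unif(G,\Z_p)$ with $\limPROu_{I\in\cP(\Z_p)}\PRODSCU{h \in G}(\Z_p/I)\,e_h$. Since $I$ runs through the $p^s\Z_p$ and $\PRODSCU{h \in G}(\Z_p/p^s\Z_p)\,e_h$ is exactly the (discrete) algebra $\sC(G,\Z_p/p^s\Z_p)$ of all maps $G\to\Z_p/p^s\Z_p$, passing to the limit over $s$ yields \eqref{tesiunif31} and exhibits the topology as the $p$-adic one. For the second identity, again with $G:=\Q_p/p^{r+1}\Z_p$ discrete, Lemma~\ref{discretecase} (equivalently part~1 of Proposition~\ref{contvsunif}) writes $\sC(G,\Z_p)=\limPROu_{t\to+\infty}\sC(p^{-t}\Z_p/p^{r+1}\Z_p,\Z_p)$, the product of copies of $\Z_p$ over $G$ with the topology of simple convergence; expanding each finite‑group term as $\limPROu_{s}\sC(p^{-t}\Z_p/p^{r+1}\Z_p,\Z_p/p^s\Z_p)$ and combining the two projective limits gives \eqref{tesiunif31b} with exactly the asserted topology. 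The Hopf structure I would obtain from the canonical identification \eqref{canisomprod1} of Proposition~\ref{prodfcts}, namely $\sC(G,\Z_p)\wt^u_{\Z_p}\sC(G,\Z_p)\iso\sC(G\times G,\Z_p)$, which lets the group law $G\times G\to G$, the zero section and inversion pull back to a coproduct, counit and antipode in $\LMu_{\Z_p}$; verifying the Hopf axioms is then formal.

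The third identity is a direct re-reading of part~2 of Proposition~\ref{contvsunif} for $G=\Q_p$ with the filtration $G_r=p^{r+1}\Z_p$, so that $G/G_r=\Q_p/p^{r+1}\Z_p$. For the fourth I would combine part~1 of Proposition~\ref{contvsunif}, $\sC(\Q_p,\Z_p)=\limPROu_{r\to-\infty}\sC(p^r\Z_p,\Z_p)$, with the compact case $\sC(p^r\Z_p,\Z_p)=\limPROu_{s}\sC(p^r\Z_p,\Z_p/p^s\Z_p)$ (Lemma~\ref{compactcase} together with $\Z_p=\limPROu_s\Z_p/p^s\Z_p$), and then interchange the two projective limits (which always commute), using $\sC(\Q_p,\Z_p/p^s\Z_p)=\limPROu_{r\to-\infty}\sC(p^r\Z_p,\Z_p/p^s\Z_p)$ once more, to reach \eqref{tesiunif32b}.

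I do not expect a genuine obstacle here: the entire content is already contained in the lemmas quoted, and what remains is bookkeeping of the two independent index directions---$s$ for the $p$-adic filtration of $\Z_p$ and $t$ or $r$ for the exhaustion, respectively filtration, of the group---together with the observation that the compact-open topology on $\sC(\Q_p,\Z_p)$ coincides with the projective-limit topology. The closest thing to a point requiring care is that last topological comparison, which is immediate once one notes that a basic neighbourhood $\{f : f(K)\subset p^s\Z_p\}$ of $0$, with $K\subset\Q_p$ compact, is precisely the preimage of $\{g : g|_K=0\}$ under the reduction map $\sC(\Q_p,\Z_p)\to\sC(\Q_p,\Z_p/p^s\Z_p)$.
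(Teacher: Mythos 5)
Your proposal is correct and follows essentially the same route as the paper, which presents Corollary~\ref{tesiunif3} as an immediate specialization of Lemma~\ref{discretecase}, Lemma~\ref{compactcase} and Propositions~\ref{contvsunif} and \ref{prodfcts} to $k=\Z_p$ and the groups $\Q_p$, $\Q_p/p^{r+1}\Z_p$, $p^{-t}\Z_p/p^{r+1}\Z_p$ with their standard filtrations; your derivation of the Hopf structure from \eqref{canisomprod1} is exactly the paper's Remark~\ref{conthopf}/Proposition~\ref{Hopf}. The bookkeeping of the indices $s$, $t$, $r$ and the identification of the compact-open topology with the projective-limit topology are handled as the paper intends, so nothing is missing.
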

  \begin{rmk}\label{conthopf}
  Formula \ref{canisomprod1} shows that $\sC(\Q_p,\Z_p)$ is a Hopf algebra object in $\LMu_{\Z_p}$.
  \end{rmk}
  \begin{rmk} \label{Barsper} We point out a tautological, but useful, formula which holds in  $\sC(\Q_p/p^{r+1}\Z_p,\F_p)$.
    For any $h \in \Q_p/p^{r+1}\Z_p$, let $e_h$ denote as before the function $\Q_p/p^{r+1}\Z_p \to \F_p$ such that 
  $e_h(h) =1$ while $e_h(x)=0$, if $x \in \Q_p/p^{r+1}$, $x \neq h$. For any $i \leq r$, the function 
  $$
  x_i : \Q_p/p^{r+1}\Z_p \longrightarrow  \F_p \;,
  $$
  was  introduced in \eqref{wittcomp3}. 
  We then have  
 \beq \label{xfcts}
 x_i  = \sum_{h \in \Q_p/p^{r+1}\Z_p} h_{i} e_h \;,
 \eeq
 where $h_i = x_i(h)$.
 \end{rmk}  
\begin{lemma}\label{discreteBanach}  Let $G$ and $K$ be as above but assume $G$ is discrete.    
Then in the category $\CLC_K$  
\ben
\item 
$$ \sC(G,K) = \prod_{g \in G} K e_g
$$ 
 is a Fr\'echet $K$-algebra.  
\item
$$
\sC^\bd_\unif(G,K)  = \ell_\infty(G,K)
$$
is the Banach $K$-algebra of bounded sequences $(a_g)_{g \in G}$ of elements of $K$, equipped with the componentwise sum and product and with the supnorm. 
\een
\end{lemma}
\begin{proof} Obvious from the definitions. 
\end{proof}
\begin{lemma} \label{compactBanach} Let $G$ and $K$ be as above, but assume $G$ is compact.    
Then in the category $\CLC_K$  
$$ \sC(G,K) =  \sC_\unif^\bd(G,K) = \ell^0_\infty(G,K)
$$ 
is the Banach $K$-algebra of sequences $(a_g)_{g \in G}$, with $a_g \in K$, such that $a_g \to 0$ along the filter of 
cofinite subsets of $G$, equipped with componentwise sum and product and with the supnorm. 
\end{lemma}
\begin{proof} This is a straightforward generalization of the classical wavelet decomposition. See \cite[Prop. 1.16]{colmez2}.  
\end{proof}
 \begin{prop} \label{contvsunifField} Let $G$ and $K$ be as in all this section.    Then in the category $\CLC_K$ we have~:
\ben
\item
$$ \sC(G,K) =  \limPROu_{r \to -\infty} \sC(G_r,K) \;\;\mbox{for the restrictions} \;\; \sC(G_r,K) \to \sC(G_{r+1},K) \;.
$$ 
In particular, $\sC(G,K)$ is a  Fr\'echet 
$K$-algebra. 
\item
$$
\sC^\bd_\unif(G,K)  = 
 \limINDu_{r \to +\infty} \sC^\bd_\unif(G/G_r,K)
 $$
 for the embeddings
 $$\sC^\bd_\unif(G/G_r,K) \hookrightarrow  \sC^\bd_\unif(G/G_{r+1},K)  \;,
$$
where the inductive limit of Banach $K$-algebras is strict. In particular, $\sC^\bd_\unif(G,K)$ is a complete bornological $K$-algebra. 
\item 
The natural morphism 
$$\sC^\bd_\unif(G,K) \longrightarrow \sC(G,K)
$$
is injective and has dense image. 
\een
\end{prop}
\begin{proof} It is clear. For the notion of a bornological topological $K$-vector space we refer to \cite[\S 6]{schneider}; the fact that the notion is stable by strict inductive limits is Example 3 on page 39 of \lc.  The statement on completeness is proved in \cite[Lemma 7.9]{schneider}.
\end{proof}
\begin{prop}\label{prodfcts2} Let $G$ and $H$ be locally compact groups as in Proposition~\ref{prodfcts}.
Then
 \beq \label{canisomprod21}
 \sC(G,K) \wt_{\pi,K} \sC(H,K) \iso  \sC(G\times H,K) \;,
 \eeq 
while the canonical map
  \beq \label{canisomprod22}
 \sC_\unif^\bd(G,K) \wt_{\pi,K} \sC_\unif^\bd(H,K) \longrightarrow  \sC_\unif^\bd(G\times H,K)  
 \eeq 
 is a strictly closed embedding of complete bornological algebras. 
 \end{prop}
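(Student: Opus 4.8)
The plan is to follow the pattern of the proof of Proposition~\ref{prodfcts}, but now in the category $\LC_K$ with the completed projective tensor product $\wt_{\pi,K}$; the only genuinely new analytic ingredient is the elementary \emph{finite--level isometry}
\[
(K^A,||~||_{\sup})\otimes_K(K^B,||~||_{\sup}) \iso (K^{A\times B},||~||_{\sup})
\]
for \emph{finite} sets $A$, $B$: given $\xi=\sum_{a,b}c_{ab}\,e_a\otimes e_b$, the representation $\xi=\sum_a e_a\otimes(\sum_b c_{ab}e_b)$ yields $||\xi||_\pi\le||\xi||_{\sup}$, while applying the norm--decreasing evaluations $K^A\to K$, $f\mapsto f(a)$, tensored with $\mathrm{id}_{K^B}$, yields the reverse inequality. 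Applied blockwise, the same computation shows that for arbitrary sets $A,B$ the algebraic tensor product of the subspaces of \emph{simple} functions (functions with finitely many values) inside $\ell_\infty(A,K)$ and $\ell_\infty(B,K)$ maps isometrically, for the projective tensor norm, onto the subspace of $\ell_\infty(A\times B,K)$ of functions constant on the cells of a product of two finite partitions.

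First I would prove \eqref{canisomprod21}. By Proposition~\ref{contvsunifField}(1), $\sC(G,K)$ is a Fr\'echet $K$-algebra equal to $\limPROu_r\sC(G_r,K)$ over the exhausting countable family of compact open subgroups $G_r$, and likewise for $H$ and, via the cofinal family $\{G_r\times H_r\}$, for $G\times H$. Since $\wt_{\pi,K}$ commutes with such countable filtered projective limits (Proposition~\ref{tensorfrechet2}, whose hypotheses hold because the restriction seminorms $||\cdot||_{G_r}$ are nested), the statement reduces to $G$ and $H$ \emph{compact}. In that case $\sC(G,K)=\sC^\bd_\unif(G,K)$ is, by Lemma~\ref{compactBanach}, the completion of its dense subspace of locally constant functions $L(G)=\bigcup_s K^{G/G_s}$; and $L(G)\otimes_K L(H)=\bigcup_s K^{(G/G_s)\times(H/H_s)}=L(G\times H)$ (a locally constant function on the compact $G\times H$ becomes product--locally--constant after refining a defining clopen partition by product cells). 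By the finite--level isometry above this identification is isometric for the projective tensor norm on the source and the sup--norm on the target, and $L(G\times H)$ is dense in $\sC(G\times H,K)$; passing to completions gives \eqref{canisomprod21}.

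Next I would treat \eqref{canisomprod22}. By Proposition~\ref{contvsunifField}(2), $\sC^\bd_\unif(G,K)=\limINDu_r\sC^\bd_\unif(G/G_r,K)$ is a strict inductive limit of Banach $K$-algebras, with isometric transition maps (pullback along the quotients $G/G_{r+1}\twoheadrightarrow G/G_r$), and similarly for $H$ and, via $\{G_r\times H_r\}$, for $G\times H$; each $\sC^\bd_\unif(G/G_r,K)=\ell_\infty(G/G_r,K)$ by Lemma~\ref{discreteBanach}(2). For discrete $A,B$ the blockwise version of the finite--level isometry shows that $\ell_\infty(A,K)\wt_{\pi,K}\ell_\infty(B,K)$ is the closed subspace of $\ell_\infty(A\times B,K)$ spanned by the product--simple functions, with the restricted sup--norm, i.e. a strictly closed embedding of Banach algebras; it is in general \emph{proper} (a finite partition of $A\times B$ need not be a product partition — e.g. the diagonal indicator is not approximable by product--simple functions), which is why \eqref{canisomprod22} is only an embedding. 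These embeddings are compatible with the transition maps of the three strict inductive systems; invoking that $\wt_{\pi,K}$ of two strict $LB$-algebras is computed degreewise (the bornological tensor product, \cite[\S 17]{schneider}), and that a compatible family of strictly closed (isometric) embeddings of strict inductive systems of Banach spaces induces a strictly closed embedding of the complete bornological colimits, the limit over $r$ gives \eqref{canisomprod22}.

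The hard part will be the bornological bookkeeping in this last step — checking that $\wt_{\pi,K}$ of the two strict $LB$-algebras $\sC^\bd_\unif(G,K)$ and $\sC^\bd_\unif(H,K)$ is the strict colimit of the Banach tensor products $\ell_\infty(G/G_r,K)\wt_{\pi,K}\ell_\infty(H/H_r,K)$, and that "strictly closed embedding'' survives passage to this colimit; the analytic content is by contrast entirely elementary, resting only on the finite--dimensional isometry $K^A\otimes_K K^B=K^{A\times B}$ and the density of locally constant (resp. simple) functions.
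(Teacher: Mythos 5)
Your proof of \eqref{canisomprod21} is correct and is essentially the paper's route: reduction to compact $G,H$ via Proposition~\ref{contvsunifField} and Proposition~\ref{tensorfrechet2}, then the compact case, which the paper simply quotes from the Example after Prop.\ 17.10 of \cite{schneider} and which you reprove by hand via locally constant functions and the finite-level isometry $K^A\otimes_K K^B = K^{A\times B}$. (Note that the density of simple functions which you use rests on the finiteness of $K^\circ/\pi^n K^\circ$, i.e.\ on $K/\Q_p$ being finite; this is within the standing hypotheses, but it is where they are used.)

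For \eqref{canisomprod22} your route diverges from the paper's and has a genuine gap, exactly at the point you yourself flag as ``the hard part''. You invoke that $\wt_{\pi,K}$ of two strict $LB$-algebras ``is computed degreewise'', citing \cite[\S 17]{schneider}; there is no such statement there, and it is not a standard property of the \emph{projective} tensor product, which does not in general commute with locally convex inductive limits (it is the inductive tensor topology that does, and the identification of the two is available in \lc only for Fr\'echet spaces, the case the paper itself appeals to in section~\ref{tensprod} -- indeed the paper only defines $\wt_{\pi,S}$ for Fr\'echet algebras). In addition, the survival of ``strictly closed embedding'' under passage to the colimit of the three strict systems is asserted rather than proved: closedness of an increasing union of closed subspaces inside a strict $LB$-space requires an argument. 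The paper sidesteps all of this by reducing \eqref{canisomprod22} to the integral statement \eqref{canisomprod2} of Proposition~\ref{prodfcts}, i.e.\ to $\sC_\unif(G,K^\circ)\,\wt^u_{K^\circ}\,\sC_\unif(H,K^\circ)\hookrightarrow \sC_\unif(G\times H,K^\circ)$, where the colimits are handled inside $\LMu_{K^\circ}$ by reduction modulo open ideals -- a mechanism with no direct counterpart over the field $K$. Two repairs are available: either perform the paper's reduction to \eqref{canisomprod2}; or take the sup-norm (Banach) reading of $\sC^\bd_\unif(-,K)$, in which case your own finite-level isometry already finishes the proof with no colimit bookkeeping at all: uniformly continuous functions with finitely many values (equivalently, simple functions factoring through some $G/G_r$) are sup-norm dense in $\sC^\bd_\unif(G,K)$, your two-sided estimate shows the canonical map is isometric on the algebraic tensor product, and passing to completions yields an isometric, strictly closed embedding onto the closure of the product-simple functions in $\sC^\bd_\unif(G\times H,K)$.
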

 \begin{proof}
 In the case of $G$ and $H$ compact this is detailed  in the Example after Prop. 17.10 of \cite{schneider}. 
 In the general case \eqref{canisomprod21} follows by taking projective limits. 
 The statement for $\sC^\bd_\unif(G,K)$ reduces instead to \eqref{canisomprod2}. 
  \end{proof}
  \endgraf
  We point out  that $(\CLC_K,\wt_{\pi,K})$ is a $K$-linear symmetric monoidal category.
 \smallskip
  From Remarks~\ref{prodfcts} and \ref{prodfcts2}, we conclude 
 \begin{prop}\label{Hopf} Let $G$  be as in Definition~\ref{contdef}, and let $A$ be either $k$ or $K$, as before, and $\sC(G,A)$ be as in \lc. We regard $(\LMu_k,\wt_k)$ and $(\CLC_K,\wt_{\pi,K})$ as symmetric monoidal categories.  
 The coproduct, counit, and inversion
 $$
 \P(f) (x,y) = f(x+y) \;\;,\;\; \veps(f) = f(0_G)\;\;,\;\; \rho(f)(x) = f(-x) \;,
 $$
 for any $f \in \sC(G,A)$ and any $x,y \in G$, define a structure of topological 
 $A$-Hopf algebra on $ \sC(G,A)$, in the sense of the previous monoidal categories.
\end{prop}
 \medskip
 The following result describes the structure of the Hopf algebras of functions 
 $$\Q_p/p^{r+1}\Z_p \to \Z_p/p^{a+1}\Z_p
 \;,
 $$
  for any $r,a \in \Z$ and $a \geq 0$ in terms of the functions $x_i$ 
 $$ x_i :  \Q_p/ p^{i+1}\Z_p \longrightarrow \F_p 
 $$
 introduced in \eqref{wittcomp3}. See also Remark~\ref{Barsper}. 
 
 \begin{prop} \label{tesiunif} For any $i \in \Z$, let  $x_i$ be as in \eqref{wittcomp3} and let $X_i$ be indeterminates.
For $r \in \Z$ and $i \in \Z_{\geq 0}$,   let $\F_p(r,i)$ denote the $\F_p$-algebra  
$$
 \F_p[X_r,X_{r-1},X_{r-2},\dots,X_{r-i}]/(1-X_{r}^{p-1}, 1-X_{r-1}^{p-1},\dots, 1-X_{r-i}^{p-1}) \;.
 $$
 The dimension of $\F_p(r,i)$  as a $\F_p$-vector space
is $(p-1)^{i+1}$. 
 Let $X_{r,i}:= (X_{r-i}, X_{r-i+1},\dots,X_{r-1},X_r)$ be viewed as a Witt vector of length $i+1$ with coefficients in $\F_p(r,i)$.
 We make $\F_p(r,i)$ into an $\F_p$-Hopf algebra by setting
 $$
 \P X_{r,i} = X_{r,i} \otimes_{\F_p} 1 + 1 \otimes_{\F_p} X_{r,i} \;.
 $$
For any $i=0,1,\dots$, the map $\F_p$-algebra map $\F_p(r,i+1) \to \F_p(r,i)$ sending $X_{r-j}$ to $X_{r-j}$ if $0 \leq j \leq i$ and $X_{r-i-1}$ to $0$ 
 is  a homomorphism of $\F_p$-Hopf algebras. 
  Then, in the category $\LMu_{\F_p}$
  \ben 
  \item The map
\beq \begin{split}
\F_p(r,i) &\longrightarrow  \sC(p^{r-i}\Z_p/p^{r+1}\Z_p,\F_p)\\
X_j&\longmapsto x_j\;\;,\;\;\mbox{for}\;\; r-i \leq j \leq r\;,
\end{split}
\eeq
is an isomorphism of $\F_p$-Hopf algebras.
   \item
the $\F_p$-Hopf algebra $\sC(\Q_p/p^{r+1}\Z_p,\F_p)$ equals 
$$
\F_p(r,\infty) := \limPROu_{i \to +\infty} \F_p(r,i) 
$$
with the prodiscrete topology;
 \item 
 the topological $\F_p$-algebra $\sC_\unif(\Q_p/p^{r+1}\Z_p,\F_p)$ equals $\F_p(r,\infty)$
equipped with the discrete topology.
\een
 \end{prop}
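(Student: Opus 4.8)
The plan is to prove the finite-level statement (1) first and to obtain (2) and (3) from it by passing to the (co)limit, using the results of Section~\ref{contfcts}. Write $G^{(i)}:=p^{r-i}\Z_p/p^{r+1}\Z_p$, a finite group of order $p^{i+1}$; then $\Q_p/p^{r+1}\Z_p$ is the discrete group $\bigcup_{i\ge 0}G^{(i)}$, and multiplication by $p^{r-i}$ identifies $G^{(i)}$ with $\Z_p/p^{i+1}\Z_p={\rm W}_{i+1}(\F_p)$, carrying the function $x_j$ of \eqref{wittcomp3}, for $r-i\le j\le r$, to the $(j-r+i)$-th Teichm\"uller coordinate.

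For (1) I would first dispose of the auxiliary assertions about $\F_p(r,i)$: its $\F_p$-dimension is computed by the Chinese Remainder Theorem applied in each variable; the defining ideal is a Hopf ideal for $\P X_{r,i}=X_{r,i}\otimes 1+1\otimes X_{r,i}$, because the Frobenius endomorphism of Witt vectors in characteristic $p$ acts componentwise by $p$-th powers, so $\P(X_j)$ satisfies the same relations as $X_j$ modulo that ideal; and the transition maps of the statement, once $\F_p(r,i)$ is identified with the function algebra of $G^{(i)}$, become the restriction maps dual to $G^{(i)}\hookrightarrow G^{(i+1)}$, hence Hopf morphisms. Next I would show $X_j\mapsto x_j$ yields an isomorphism $\F_p(r,i)\iso\sC(G^{(i)},\F_p)$ of $\F_p$-algebras: it is well defined since the $x_j$ are $\F_p$-valued; it is surjective because $e_h=\prod_{j=r-i}^{r}\bigl(1-(x_j-h_j)^{p-1}\bigr)$ is the indicator of $h\in G^{(i)}$ (with $h_j$ the Teichm\"uller components of $h$), so a basis of $\sC(G^{(i)},\F_p)$ lies in the image; and it is injective because $G^{(i)}$ surjects onto all tuples of digits, whence the ideal of polynomial functions vanishing on $G^{(i)}$ is exactly the defining ideal of $\F_p(r,i)$. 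It remains to check the coalgebra compatibility: by Proposition~\ref{Hopf} the coproduct on $\sC(G^{(i)},\F_p)$ is $f\mapsto[(x,y)\mapsto f(x+y)]$, and the required identity $x_j(x+y)=\varphi_{j-r+i}\bigl(x_{r-i}(x),\dots,x_j(x);x_{r-i}(y),\dots,x_j(y)\bigr)$ is, after transport by multiplication by $p^{r-i}$ to $\Z_p={\rm W}(\F_p)$, precisely the statement that addition in $\Z_p$, written in Teichm\"uller coordinates, is governed by the Witt addition polynomials $\varphi_k$ of Lemma~\ref{phiisob}; the counit matches since $x_j(0)=0$, and the antipode is then matched automatically by its uniqueness.

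For (2) and (3) I would note that, since $x_{r-i-1}$ vanishes on $G^{(i)}$ while $x_{r-j}|_{G^{(i)}}=x_{r-j}$ for $0\le j\le i$, the isomorphisms of (1) identify the restriction maps $\sC(G^{(i+1)},\F_p)\to\sC(G^{(i)},\F_p)$ with the transition maps $\F_p(r,i+1)\to\F_p(r,i)$ of the statement. Since $\Q_p/p^{r+1}\Z_p$ is discrete and equals $\bigcup_i G^{(i)}$, Proposition~\ref{contvsunif}(1) (equivalently Lemma~\ref{discretecase}, and see also \eqref{tesiunif31b}) gives, in $\LMu_{\F_p}$, that $\sC(\Q_p/p^{r+1}\Z_p,\F_p)=\limPROu_{i\to+\infty}\sC(G^{(i)},\F_p)$ with the projective-limit, prodiscrete topology; transporting through (1) this is $\limPROu_{i\to+\infty}\F_p(r,i)=\F_p(r,\infty)$, and since every level and every transition map is Hopf the identification respects the Hopf structure of Proposition~\ref{Hopf}, which is (2). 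For (3), Lemma~\ref{discretecase} applied to the discrete group $\Q_p/p^{r+1}\Z_p$ and to $k=\F_p$ (whose only open ideal is $(0)$) shows that $\sC_\unif(\Q_p/p^{r+1}\Z_p,\F_p)$ is the ring of all functions $\Q_p/p^{r+1}\Z_p\to\F_p$ — the same ring as in (2), that is $\F_p(r,\infty)$ — but now carrying the discrete topology, which for $\F_p$ coincides with the $p$-adic one.

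The real obstacle here is the coalgebra compatibility in (1): identifying the group law of $p^{r-i}\Z_p/p^{r+1}\Z_p$ in Teichm\"uller coordinates with Witt addition and matching it against $f\mapsto f(x+y)$. The remaining ingredients — the dimension and Hopf-ideal bookkeeping for $\F_p(r,i)$ and the two limit arguments — are formal, the latter being direct applications of Section~\ref{contfcts}.
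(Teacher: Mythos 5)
Your route is genuinely different from the paper's: for parts 1 and 2 the paper simply cites Barsotti \cite[Teorema 3.31]{MA34}, and disposes of part 3 by forgetting the topology, whereas you supply a self-contained finite-level computation. Your skeleton is the right one: multiplication by $p^{r-i}$ identifies $p^{r-i}\Z_p/p^{r+1}\Z_p$ with ${\rm W}_{i+1}(\F_p)=\Z_p/p^{i+1}\Z_p$ so that the digit functions $x_j$ of \eqref{wittcomp3} become Witt coordinates; the coproduct of Proposition~\ref{Hopf} is then governed by the addition polynomials $\varphi_k$ of Lemma~\ref{phiisob}, which is exactly what $\P X_{r,i}=X_{r,i}\otimes 1+1\otimes X_{r,i}$ prescribes; and parts 2 and 3 do follow formally from part 1 via Lemma~\ref{discretecase}, Proposition~\ref{contvsunif} and \eqref{tesiunif31b}, the last step being the same ``forget the topology'' observation the paper makes. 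So where the paper outsources the finite-level identification, you actually prove it, which is worthwhile.

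However, there is a concrete problem at the very first step of your part 1, which you wave through. With $\F_p(r,i)$ presented, as in the statement, by the relations $1-X_j^{p-1}$, the assignment $X_j\mapsto x_j$ is \emph{not} well defined: $\F_p$-valuedness gives only $x_j^p=x_j$, and $1-x_j^{p-1}$ is the indicator function of the set $\{x_j=0\}$ (which contains $0$), hence a nonzero element of $\sC(p^{r-i}\Z_p/p^{r+1}\Z_p,\F_p)$. Moreover the dimension you endorse via the Chinese Remainder Theorem, $(p-1)^{i+1}$, is strictly smaller than $\dim_{\F_p}\sC(p^{r-i}\Z_p/p^{r+1}\Z_p,\F_p)=p^{i+1}$, so no isomorphism is possible; as written, your proof asserts two mutually contradictory things without noticing. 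What your arguments actually establish --- well-definedness from $x_j^p=x_j$, surjectivity via $e_h=\prod_j\bigl(1-(x_j-h_j)^{p-1}\bigr)$, injectivity because the vanishing ideal of $\F_p^{\,i+1}$ is generated by the $X_j^p-X_j$, and the Hopf-ideal property via additivity of the componentwise Frobenius on Witt vectors --- is the statement for the presentation by the relations $X_j-X_j^p$, of dimension $p^{i+1}$; nothing in your argument works for the relations as printed (for $p=2$, $i=0$ the printed relation forces $X_r=1$, which is incompatible with the coproduct and with the counit $X_r\mapsto 0$, so the printed ideal is not even a Hopf ideal). You should have detected this mismatch and said explicitly that the presentation and the dimension in the statement must be read as $X_j^p=X_j$ and $p^{i+1}$; without that correction your proof of part 1 collapses at the well-definedness step, and parts 2 and 3, which you derive from it, inherit the defect.
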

 \begin{proof} Parts 1 and 2 are \cite[Teorema 3.31]{MA34}.
 Part 3 follows by forgetting the topology.  \end{proof}
 \begin{rmk} \label{perfalg}
 Notice that the $\F_p$-algebras $\F_p(r,i)$ are perfect. 
 \end{rmk}
  \begin{cor} \label{tesiunif2} For $r \in \Z$ and $i,a\in \Z_{\geq 0}$
   \ben 
 \item 
 the topological $\Z_p/p^{a+1}\Z_p$-algebra $\sC_\unif(\Q_p/p^{r+1}\Z_p,\Z_p/p^{a+1}\Z_p)$ equals
\beq \label{tesiform1}
{\rm W}_{a}(\sC_\unif(\Q_p/p^{r+1}\Z_p,\F_p)) =   {\rm W}_{a}(\F_p(r,\infty))
\eeq
equipped with the discrete topology. Therefore,
\beq \label{tesiform11}
\sC_\unif(\Q_p/p^{r+1}\Z_p,\Z_p) = {\rm W}(\sC_\unif(\Q_p/p^{r+1}\Z_p,\F_p)) =   {\rm W}(\F_p(r,\infty))
\eeq
equipped with the $p$-adic topology.
 \item
the $\Z_p/p^{a+1}\Z_p$-Hopf algebra $\sC(\Q_p/p^{r+1}\Z_p,\Z_p/p^{a+1}\Z_p)$ equals 
\beq \label{tesiform2}
{\rm W}_{a}(\sC(\Q_p/p^{r+1}\Z_p,\F_p)) =  {\rm W}_{a}(\F_p(r,\infty))
\eeq
with the prodiscrete topology. Therefore,
\beq \label{tesiform21}
\sC(\Q_p/p^{r+1}\Z_p,\Z_p) = {\rm W}(\sC(\Q_p/p^{r+1}\Z_p,\F_p)) =   {\rm W}(\F_p(r,\infty))
\eeq
equipped with the product topology of the prodiscrete topology of $\F_p(r,\infty)$ on the components.
\een
  \end{cor}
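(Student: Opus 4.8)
The plan is to deduce the statement from Proposition~\ref{tesiunif} by means of the elementary principle that, for any group $Y$ as in Definition~\ref{contdef} and any $a\in\Z_{\geq 0}$, the ring of $\Z_p/p^{a+1}\Z_p$-valued functions on $Y$ is \emph{canonically} the ring of Witt vectors of length $a+1$ over the ring of $\F_p$-valued functions on $Y$, compatibly with continuity, uniform continuity, the relevant topologies, and the Hopf structure.

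First I would record the set-theoretic identification. The Witt coordinates give a bijection $\Z_p/p^{a+1}\Z_p\iso{\rm W}_a(\F_p)$; composing a function $f\colon Y\to\Z_p/p^{a+1}\Z_p$ with the $a+1$ coordinate projections ${\rm W}_a(\F_p)\to\F_p$ therefore yields a bijection between $\{f\colon Y\to\Z_p/p^{a+1}\Z_p\}$ and $\{g\colon Y\to\F_p\}^{a+1}={\rm W}_a(\{g\colon Y\to\F_p\})$, which is a ring isomorphism once the target carries the Witt ring structure of the pointwise ring $\{g\colon Y\to\F_p\}$, because Witt addition and multiplication are integer polynomials applied coordinatewise. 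Since each coordinate projection is a map of finite discrete sets, and conversely $f$ is recovered from its coordinate functions by an integer polynomial, $f$ is continuous (resp.\ uniformly continuous) if and only if all its coordinate functions are; hence this bijection restricts to ring isomorphisms
\[
\sC(Y,\Z_p/p^{a+1}\Z_p)\iso{\rm W}_a(\sC(Y,\F_p))\;,\qquad \sC_\unif(Y,\Z_p/p^{a+1}\Z_p)\iso{\rm W}_a(\sC_\unif(Y,\F_p))\;.
\]

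Next I would verify compatibility of topologies and Hopf structures. As $\Z_p/p^{a+1}\Z_p$ and $\F_p$ are finite discrete, uniform convergence on a compact $K\subset Y$ (resp.\ on $Y$) just means eventual equality on $K$ (resp.\ on $Y$), and two $\Z_p/p^{a+1}\Z_p$-valued functions agree on $K$ exactly when all their $\F_p$-valued Witt coordinates do; therefore the isomorphisms above are homeomorphisms when ${\rm W}_a(-)=(-)^{a+1}$ is given the product of the corresponding topologies. In particular ${\rm W}_a$ of a discrete ring is discrete, ${\rm W}_a$ of a profinite (prodiscrete) ring is again profinite, being a finite product, and ${\rm W}_a$ commutes with the filtered projective limit $\F_p(r,\infty)=\limPROu_{i\to+\infty}\F_p(r,i)$. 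For the Hopf structure, the coproduct on $\sC(Y,-)$ is the map $f\mapsto\big((x,y)\mapsto f(x+y)\big)$ into $\sC(Y\times Y,-)$ of Proposition~\ref{prodfcts}, and it is plainly compatible with passing to Witt coordinates; likewise for the counit $f\mapsto f(0_Y)$ and inversion $f\mapsto f(-\,\cdot\,)$. Hence the Hopf structure on $\sC(Y,\Z_p/p^{a+1}\Z_p)$, transported across the isomorphism, is precisely ${\rm W}_a$ applied coordinatewise to the Hopf structure of $\sC(Y,\F_p)$, and applying $\limPROu_{a\to+\infty}$ gives $\sC(Y,\Z_p)\iso{\rm W}(\sC(Y,\F_p))$ and $\sC_\unif(Y,\Z_p)\iso{\rm W}(\sC_\unif(Y,\F_p))$, with the $p$-adic topology in the uniform case and the product of the prodiscrete topologies in the continuous case.

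Finally I would take $Y=\Q_p/p^{r+1}\Z_p$ and insert Proposition~\ref{tesiunif}: by its parts 2 and 3, $\sC(\Q_p/p^{r+1}\Z_p,\F_p)=\F_p(r,\infty)$ with the prodiscrete topology and $\sC_\unif(\Q_p/p^{r+1}\Z_p,\F_p)=\F_p(r,\infty)$ with the discrete topology, both as $\F_p$-Hopf algebras. Substituting into the isomorphisms of the previous paragraph yields exactly \eqref{tesiform1}, \eqref{tesiform11}, \eqref{tesiform2} and \eqref{tesiform21} with the stated topologies. The only delicate point is the topological bookkeeping in the middle step, namely matching uniform convergence on compacts with the product of the prodiscrete topologies on $\F_p(r,\infty)^{a+1}$; but this reduces to the triviality that agreement on $K$ of finite-valued functions is detected coordinatewise, so, granting Proposition~\ref{tesiunif}, the proof is essentially organizational.
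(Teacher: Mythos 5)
Your argument is correct and is exactly the (implicit) route the paper takes: Corollary~\ref{tesiunif2} is stated as an immediate consequence of Proposition~\ref{tesiunif}, the point being precisely that Witt coordinates identify functions into $\Z_p/p^{a+1}\Z_p \cong {\rm W}_a(\F_p)$ with ${\rm W}_a$ of the $\F_p$-valued function ring, compatibly with (uniform) continuity, the topologies, and the pointwise Hopf structure, after which one passes to the limit in $a$. Your filling-in of the routine coordinatewise bookkeeping matches what the paper leaves unwritten.
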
 
  \begin{defn} \label{unif(r,a)} 
  We set 
  $$\sC = \sC(\Q_p,\Z_p) \;.
  $$ 
%  For any $r \in \Z$, we identify  
%    $$
%  \sC_r = \sC_\unif(\Q_p/p^{r+1}\Z_p,\Z_p)  
%  $$
%  with the $\Z_p$-subalgebra of $\sC$ of functions $\Q_p \to \Z_p$ which are constant on balls 
%  of radius $p^{r+1}\Z_p$. Then
%  $$
% \Z_p = \bigcap_r \sC_r \subset \dots \subset  \sC_{-2}  \subset \sC_{-1} \subset \sC_0 \subset  \sC_1  \subset \sC_2 \subset  \dots \subset \sC = \limINDU_r \sC_r = \ol{\bigcup_r \sC_r} \;.
%$$
  For any $r,a \in \Z$ with $a  \geq 0$, we define a  Fr\'echet $\Z_p$-subalgebra of $\sC$
  $$
  \sC_{r,a} := \{f \in \sC  \,|\, f(x+p^{r+1}\Z_p) \subset f(x) +p^{a+1}\Z_p\;\, , \,\forall \,x\in \Q_p\,\}\;.
  $$
  Let $F$ be the set-theoretic map
 \beq \label{frobenius} \begin{split}
 F:\sC &\longrightarrow \sC \\
 f &\longmapsto f^p  
 \end{split} 
\eeq
  \end{defn}
Then
 \beq \sC_{r,a+1} \subset \sC_{r,a} \;\;\mbox{and}\;\; \sC_{r,a}  \subset \sC_{r+1,a}  
 \eeq
%%  Then, for any $r$, 
%%  \beq
%%    \sC_{r,0} \supset \sC_{r,1} \supset \dots \supset \sC_{r,a} \supset \sC_{r,a+1} \supset \dots \supset \sC_r = 
%%    \bigcap_a \sC_{r,a}  
%%  \eeq
%%  while, for any $a$
%%    \beq
%%   \Z_p/p^{a+1}\Z_p =  \bigcap_r \sC_{r,a} \subset \dots \subset \sC_{r-1,a}  \subset \sC_{r,a} \subset \sC_{r+1,a } \subset \dots  
%%      \eeq
%% For any $r$ and $a$, $\sC_{r,a}$ is a Fr\'echet $\Z_p$-subalgebra of $\sC$, 
\beq \label{unifa1} 
  p^{a+1}\sC   \subset   \sC_{r,a}  
\eeq
  is an  ideal of $\sC_{r,a}$, and $F$ induces a map 
 %and, for $c \in \Q_p$ let $c \iota$ be the map
% \beq \label{moltph} \begin{split}
% c \iota:\sC &\longrightarrow \sC \\
% f(x) &\longmapsto f(c x)  \;.
% \end{split} 
%\eeq
\beq
F : \sC_{r,a} \longrightarrow \sC_{r,a+1} \;.
\eeq
%while $p^{-1} \iota$ induces maps
%$$
%p^{-1} \iota : \sC_{r,a} \longrightarrow \sC_{r+1,a}
%$$
%for any $r,a$ as before. The map $c \iota$ for $c \in \Z_p^\times$ is an automorphism of  $\sC_{r,a}$. 
%So we get a map
%\beq \label{amelior}
%F^a \circ p^{-r} \iota : \sC_{0,0} \longrightarrow \sC_{r,a}\;.
%\eeq
%In particular, 
%for  $a \geq 1$  
% \beq \label{unifa2} F(\sC_{r,a-1}) + p \sC_{r,a-1} \subset   \sC_{r,a}  \;.
%\eeq 
%%  \begin{rmk}  \label{unif(r,a)proj} Let $r,a$ be as before. 
  There exists a 
  canonical map 
  \beq
  \label{unif(r,a)proj1}
  \begin{split}
  R_{r,a}:  \sC_{r,a} & \longrightarrow \sC(\Q_p/p^{r+1}\Z_p,\Z_p/p^{a+1}\Z_p )  \\
   f &\longmapsto R_{r,a}(f)
   \end{split}
   \eeq
   such that  
   $$\pi_{a+1} \circ f =  R_{r,a}(f) \circ \pi_{r+1}$$
   which sits in the exact sequence 
%   (in which ${\rm W}_{a}(\F_p(r,\infty))$ has the topology \eqref{tesiform2})
\beq
  \label{unif(r,a)ex}
   0 \longrightarrow p^{a+1} \sC \longrightarrow \sC_{r,a}  \map{R_{r,a}} \sC(\Q_p/p^{r+1}\Z_p,\Z_p/p^{a+1}\Z_p ) 
   = {\rm W}_{a}(\F_p(r,\infty)) \longrightarrow 0
\eeq 
  We conclude
  \begin{prop}\label{itext} For any $r \in \Z$ and any $a \in \Z_{\geq 1}$, the map $f \mapsto \pi_1 \circ f$ induces an isomorphism
  \beq \label{itext1} 
  \sC_{r,a} / p \sC_{r,a-1} \iso \sC_\unif(\Q_p/p^{r+1}\Z_p,\F_p) \;.
\eeq
For $a=0$ we similarly have 
  \beq \label{itext2} 
   \sC_{r,0} / p \sC  \iso \sC_\unif(\Q_p/p^{r+1}\Z_p,\F_p) \;.
  \eeq
The inverse of the  isomorphism of discrete $\F_p$-algebras
  \beq \label{itext3} 
  \sC_{r,a} / p \sC_{r,a-1} \iso \sC_{r,0} / p \sC 
\eeq
is provided by the map 
\beq \label{itext4} \begin{split}
F^a: \sC_{r,0} / p \sC &\iso   \sC_{r,a} / p \sC_{r,a-1} \\
f & \longmapsto f^{p^a} \;.
\end{split}
\eeq
 \end{prop}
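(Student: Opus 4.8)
\emph{Plan.} The heart of the matter is the exact sequence \eqref{unif(r,a)ex}, which we combine with the filtration relations \eqref{unifa1}, \eqref{unifa2}. First I would treat \eqref{itext1}. Consider the composite $\sC_{r,a}\to \sC_\unif(\Q_p/p^{r+1}\Z_p,\F_p)$, $f\mapsto \pi_1\circ f$; by definition of $\sC_{r,a}$ this is well defined ($f$ is constant on cosets of $p^{r+1}\Z_p$ modulo $p$, since $a\geq 0$), and it is visibly a ring homomorphism. It factors through $R_{r,a}$ of \eqref{unif(r,a)proj1} followed by the reduction $\sC(\Q_p/p^{r+1}\Z_p,\Z_p/p^{a+1}\Z_p)={\rm W}_a(\F_p(r,\infty))\to \F_p(r,\infty)=\sC_\unif(\Q_p/p^{r+1}\Z_p,\F_p)$, i.e. the zeroth Witt component. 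Surjectivity of $\pi_1\circ(-)$ is then immediate from surjectivity of $R_{r,a}$ in \eqref{unif(r,a)ex} and surjectivity of the zeroth Witt component. The real content is identifying the kernel with $p\sC_{r,a-1}$.

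\emph{Kernel computation.} The kernel of $f\mapsto\pi_1\circ f$ consists of those $f\in\sC_{r,a}$ with $f(\Q_p)\subset p\Z_p$. Write $f=pg$ with $g\in\sC=\sC(\Q_p,\Z_p)$. I claim $g\in\sC_{r,a-1}$, i.e. $g(x+p^{r+1}\Z_p)\subset g(x)+p^a\Z_p$: this is exactly the condition $f(x+p^{r+1}\Z_p)\subset f(x)+p^{a+1}\Z_p$ after dividing by $p$, valid because $\Z_p$ is torsion-free. Conversely, if $g\in\sC_{r,a-1}$ then $pg\in\sC_{r,a}$ by the same cancellation, and $\pi_1\circ(pg)=0$; so the kernel is precisely $p\sC_{r,a-1}$. (When $a=0$ one instead uses \eqref{unifa1} directly: $p\sC\subset\sC_{r,0}$, and the kernel of $f\mapsto\pi_1\circ f$ on $\sC_{r,0}$ is $\{f\in\sC_{r,0}: f(\Q_p)\subset p\Z_p\}=p\sC$, since any $f$ with values in $p\Z_p$ is $p$ times an element of $\sC$, and the congruence condition on $f$ is vacuous modulo $p$. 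This gives \eqref{itext2}.) One must also check the topologies match: both sides are discrete, or rather, $\sC_{r,a}/p\sC_{r,a-1}$ carries the quotient of its Fréchet topology, which one identifies with the discrete topology on $\F_p(r,\infty)$ via Proposition \ref{tesiunif}(3) and the observation that the quotient map kills an open subgroup (namely the image of $p^{a+1}\sC+$ a basic open of $\sC_{r,a}$); this is routine given \eqref{unif(r,a)ex}.

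\emph{The Frobenius identification.} For \eqref{itext3}–\eqref{itext4}: the map $F^a:f\mapsto f^{p^a}$ sends $\sC_{r,0}$ into $\sC_{r,a}$ by iterating \eqref{unifa2} (indeed $F(\sC_{r,b})\subset\sC_{r,b+1}$ follows since $f\equiv f'\bmod p^{b+1}$ implies $f^p\equiv f'^p\bmod p^{b+2}$ on $\Z_p$), and it sends $p\sC$ into $p\sC_{r,a-1}$ for the same reason, so it descends to $\sC_{r,0}/p\sC\to\sC_{r,a}/p\sC_{r,a-1}$. Modulo $p$ the Frobenius $F$ induces the absolute Frobenius on $\sC_\unif(\Q_p/p^{r+1}\Z_p,\F_p)=\F_p(r,\infty)$, which by Remark \ref{perfalg} is an automorphism (the $\F_p(r,i)$ are perfect). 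Chasing through the isomorphisms \eqref{itext2} and \eqref{itext1}, the composite $\sC_{r,0}/p\sC\xrightarrow{F^a}\sC_{r,a}/p\sC_{r,a-1}\xrightarrow{\sim}\F_p(r,\infty)$ equals the $a$-th power of the (bijective) Frobenius of $\F_p(r,\infty)$ composed with the isomorphism \eqref{itext2}, hence is an isomorphism; therefore $F^a$ itself is an isomorphism, which is the content of \eqref{itext4}, and \eqref{itext3} is its source-target statement.

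\emph{Main obstacle.} The only genuinely delicate point is the torsion-free cancellation argument pinning the kernel down to \emph{exactly} $p\sC_{r,a-1}$ (as opposed to something a priori larger sitting between $p\sC$ and $p\sC_{r,a-1}$), and making sure the congruence bookkeeping in the indices $r,a$ is consistent with the inclusions $\sC_{r,a+1}\subset\sC_{r,a}$, $\sC_{r,a}\subset\sC_{r+1,a}$ stated in Definition \ref{unif(r,a)}; everything else is a formal consequence of the already-established exact sequence \eqref{unif(r,a)ex} and the perfectness in Remark \ref{perfalg}.
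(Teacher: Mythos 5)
Your argument is correct and is in substance the paper's own: the paper deduces \eqref{itext1} from \eqref{unifa1} and the exact sequence \eqref{unif(r,a)ex}, writing $\sC_{r,a}/p\,\sC_{r,a-1}$ as a quotient of $\sC_{r,a}/p^{a+1}\sC\cong {\rm W}_a(\F_p(r,\infty))$ by the image of $p\,\sC_{r,a-1}$ and reading off $\F_p(r,\infty)$, then dismisses \eqref{itext2}--\eqref{itext4} with ``similarly''; your elementwise kernel computation (divide by $p$, using that $\Z_p$ is torsion-free) is exactly the pointwise form of that quotient identity, and your Frobenius-plus-perfectness argument is what the ``similarly'' conceals, so the substance is fine.

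Two caveats. First, the parenthetical topological claim is false: $p\,\sC_{r,a-1}$ contains no neighbourhood of $0$ of $\sC_{r,a}$ for the topology induced from $\sC(\Q_p,\Z_p)$ (for $N$ large, the characteristic function of $\Q_p\setminus p^{-N}\Z_p$ lies in $\sC_{r,a}$, vanishes on any prescribed compact set, yet is not in $p\,\sC$), so the quotient map kills no open subgroup and the quotient topology is the prodiscrete one of \eqref{tesiform2}, not the discrete one. This does not affect the proposition as used (the paper itself treats \eqref{itext1}--\eqref{itext3} as isomorphisms of abstract, ``discrete'', $\F_p$-algebras, and its proof ignores topology), but you should delete or correct that aside. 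Second, since $F^a$ is not additive, ``$F^a(p\sC)\subset p\,\sC_{r,a-1}$'' does not by itself show that $F^a$ descends to the quotients; what is needed (and is implicit in the congruence you quote) is that $f\equiv f' \bmod p\,\sC$ implies $f^{p^a}\equiv f'^{p^a} \bmod p^{a+1}\sC\subset p\,\sC_{r,a-1}$. Relatedly, if one reads \eqref{itext3} as the map induced by the inclusion $\sC_{r,a}\subset\sC_{r,0}$, the assertion that $F^a$ is its inverse follows from your kernel identification together with the pointwise Fermat congruence $f^{p^a}\equiv f\bmod p$; your bijectivity argument for $F^a$ delivers the same content.
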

  \begin{proof} The first formula follows from \eqref{unifa1} and \eqref{unif(r,a)ex}. In fact, 
  $$  \sC_{r,a} / p \sC_{r,a-1} = (\sC_{r,a}/p^a\sC) / p (\sC_{r,a-1} /p^{a-1}\sC) = {\rm W}_{a}(\F_p(r,\infty))/p{\rm W}_{a-1}(\F_p(r,\infty)) = \F_p(r,\infty)\;.
  $$
  Similarly for the
   other formulas.
   \end{proof}
  By iteration, we get
  \begin{cor} \label{itextcor}
    \beq \label{itext0} 
  \sC_{r,a} / p^{a+1} \sC \iso \sC(\Q_p/p^{r+1}\Z_p,\Z_p/p^{a+1}\Z_p) = {\rm W}_{a}(\F_p(r,\infty)) \;.
\eeq
For any $f \in  \sC_{r,a}$ there exist $f_0,f_1,\dots,f_a \in \sC_{r,0}$, well determined modulo $p\sC$, such that
  \beq \label{itext01} 
 f  \equiv  f_0^{p^a} +  p  f_1^{p^{a-1}} + p^2  f_2^{p^{a-2}}  + \dots + p^a  f_a \;\;\;{\rm mod}\; p^{a+1} \sC  \;.
\eeq 
  \end{cor}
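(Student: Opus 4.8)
The plan is to deduce \eqref{itext0} by iterating the short exact sequences already obtained, and then extract \eqref{itext01} as a concrete consequence. First I would observe that Proposition~\ref{itext}, formula \eqref{itext1}, together with the chain $\sC_{r,a}\supset p\sC_{r,a-1}\supset p^2\sC_{r,a-2}\supset\dots\supset p^a\sC_{r,0}\supset p^{a+1}\sC$, gives a finite filtration of $\sC_{r,a}$ whose successive quotients are all isomorphic to $\sC_\unif(\Q_p/p^{r+1}\Z_p,\F_p)=\F_p(r,\infty)$. Summing the lengths (or rather: invoking \eqref{unif(r,a)ex} directly, which already identifies $\sC_{r,a}/p^{a+1}\sC$ with $\sC(\Q_p/p^{r+1}\Z_p,\Z_p/p^{a+1}\Z_p)={\rm W}_a(\F_p(r,\infty))$ via $R_{r,a}$) yields \eqref{itext0} with no further work; indeed \eqref{itext0} is essentially a restatement of the exactness in \eqref{unif(r,a)ex} after noting $\ker R_{r,a}=p^{a+1}\sC$.

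For \eqref{itext01} I would argue by induction on $a$, using the splitting of the filtration provided by the Frobenius-type maps \eqref{amelior}, \eqref{itext4}. Given $f\in\sC_{r,a}$, its image under the isomorphism \eqref{itext2} (the case $a=0$ of \eqref{itext1}) picks out a class in $\sC_{r,0}/p\sC$; choose a representative $f_0\in\sC_{r,0}$. By \eqref{itext4}, $f_0^{p^a}\in\sC_{r,a}$ and $f_0^{p^a}\equiv f$ modulo $p\sC_{r,a-1}$, so $f-f_0^{p^a}=p g$ for some $g\in\sC_{r,a-1}$. Applying the inductive hypothesis to $g$ furnishes $f_1,\dots,f_a\in\sC_{r,0}$, well-defined modulo $p\sC$, with $g\equiv f_1^{p^{a-1}}+p f_2^{p^{a-2}}+\dots+p^{a-1}f_a$ mod $p^a\sC$; multiplying by $p$ and adding $f_0^{p^a}$ gives exactly \eqref{itext01}. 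The base case $a=0$ is \eqref{itext2} itself.

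The well-definedness of the $f_i$ modulo $p\sC$ is the one point that needs a word of care rather than being purely formal: it follows because at each stage the representative is pinned down by the isomorphism \eqref{itext1}, whose target $\F_p(r,\infty)$ is a perfect $\F_p$-algebra (Remark~\ref{perfalg}), so that the $p^a$-th power map \eqref{itext4} is a bijection and there is no ambiguity in ``undoing'' it at the level of the quotient $\sC_{r,0}/p\sC$. I expect this bookkeeping — keeping straight which quotient each $f_i$ lives in and checking that the congruence \eqref{itext01} is stable under the allowed modifications — to be the only mildly delicate part; the structural input (the exact sequence \eqref{unif(r,a)ex}, Proposition~\ref{itext}, perfectness) has all been established already, so the proof is essentially an assembly of those pieces.
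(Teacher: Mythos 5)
Your proposal is correct and is essentially the paper's own argument spelled out: the paper disposes of the corollary with the single phrase ``by iteration'' of Proposition~\ref{itext}, and your induction on $a$ — reading \eqref{itext0} off the exact sequence \eqref{unif(r,a)ex}, then peeling off $f_0$ via the isomorphism \eqref{itext4} and applying the inductive hypothesis to $(f-f_0^{p^a})/p\in\sC_{r,a-1}$ — is precisely that iteration, with the well-determinedness coming from the bijectivity of $F^a$ on $\sC_{r,0}/p\sC$ as you say.
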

 \end{section}
 \begin{section}{$p$-adically entire functions bounded on $\Q_p$} \label{entfcts}  

   We prove here the statements announced in the Introduction, namely  Theorem~\ref{bohrpadthm}, Theorem~\ref{bohrpadthm2},
Proposition~\ref{2struct},   Proposition~\ref{hopfpad}, Proposition~\ref{fejer21}, Proposition~\ref{fejer22}, 
  and Theorem~\ref{bohrpadcor}.  We assume $q=p$ from now on, so in particular $\Psi$ stands for $\Psi_p$. 
  \endgraf \smallskip
  We start with  the proof of Theorem~\ref{bohrpadthm}.
  \begin{proof} (of Theorem~\ref{bohrpadthm}) It suffices to prove the statement over $\Z_p$. Notice that 
$$\Z_p[\Psi(\lambda x)\,|\, \lambda \in \Q_p^\times] = \Z_p[\Psi(\lambda p^{-i} x)\,|\, i\in \Z \,,\,\lambda \in \Z_p^\times] \;.
$$
Both rings 
$\Z_p[[(\lambda x)_i]\,|\, i \in \Z\,,\, \lambda  \in \Z_p^\times]$ and 
$ \Z_p[\Psi(\lambda p^{-i} x)\,|\, i \in \Z \,,\,\lambda \in \Z_p^\times]$ are contained in the $\Z_p$-Banach ring $\sC_\unif(\Q_p,\Z_p)$ 
which may be identified with ${\rm W}(\sC_\unif(\Q_p,\F_p))$ equipped with the $p$-adic topology. 
Then $AP_{\Z_p}$ consists of ${\rm W}( \F_p[(\lambda x)_i\,|\, i \in \Z\,,\,  \lambda  \in \Z_p^\times])$. Notice that $\F_p[(\lambda x)_i \,|\, i \in \Z\,,\,  \lambda  \in \Z_p^\times]$ is a perfect subring of the perfect ring $\sC_\unif(\Q_p,\F_p)$, since $(\lambda x)_i^p=(\lambda x)_i$, for any $i,\lambda$. 
It suffices to prove
\begin{lemma}  \label{psicomplambda} For any fixed $\lambda \in \Z_p^\times$, the closure  of 
$\Z_p[\Psi(\lambda p^ix)\,|\, i =0,1,2,\dots ]$ in ${\rm W}(\sC_\unif(\Q_p,\F_p))$ coincides with 
of ${\rm W}( \F_p[(\lambda x)_i\,|\, i = 0, -1,-2,\dots])$. 
\end{lemma}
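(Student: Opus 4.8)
The plan is to put $A:=\Z_p[\Psi(\lambda p^ix)\,:\,i\ge 0]$, $R:=\sC_\unif(\Q_p,\F_p)$ — a perfect $\F_p$-algebra, since every $f\in R$ satisfies $f^p=f$ — $S:=\F_p[(\lambda x)_{-j}\,:\,j\ge 0]\subseteq R$ (a perfect subring, for the same reason), and $B:={\rm W}(S)\subseteq{\rm W}(R)=\sC_\unif(\Q_p,\Z_p)$, and to show that the $p$-adic closure $\overline{A}$ equals $B$. First I would record the soft properties of $B$: because $S$ is perfect, ${\rm W}(S)$ is $p$-torsion free and $p$-adically complete with ${\rm W}(S)/p\,{\rm W}(S)=S$; it is the $p$-adic closure of $\Z_p[\,[(\lambda x)_{-j}]\,:\,j\ge 0\,]$, the $\Z_p$-subalgebra generated by the Teichm\"uller lifts; and it is $p$-adically closed in ${\rm W}(R)$ (as $p\,{\rm W}(R)$ is open, a $p$-adically convergent sequence from ${\rm W}(S)$ stabilises coordinate by coordinate, and each coordinate then lies in $S$).

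Next comes the reduction mod $p$. By Corollary~\ref{wittrepr} applied to $\lambda x$ — equivalently by \eqref{wittcomp} — one has $\Psi(\lambda p^jx)\equiv(\lambda x)_{-j}\bmod p$ for every $j\ge 0$; hence the composite $A\hookrightarrow{\rm W}(R)\twoheadrightarrow R$ has image exactly $\F_p[(\lambda x)_{-j}\,:\,j\ge 0]=S$.

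The heart of the matter, and the step I expect to be the real obstacle, is the inclusion $A\subseteq B$, i.e. that $\Psi(\lambda p^ix)\in{\rm W}(S)$ for every $i\ge 0$ — equivalently that each Witt coordinate of $\Psi(\lambda p^ix)$, viewed as a function $\Q_p\to\F_p$, is a polynomial over $\F_p$ in the $(\lambda x)_{-j}$. I would extract these coordinates from the functional equation \eqref{functeq}, in the guise of the Witt--bivector expansion \eqref{bivexpa21}: writing $b:=\Psi(\lambda p^ix)$ and $b=\sum_k[b_k]p^k$ with $b_k=\Psi(p^{-k}b)\bmod p$ (Corollary~\ref{wittrepr}), one substitutes the expansion of $b$ back into \eqref{functeq} and reduces successively modulo $p,p^2,\dots$, using the valuation estimates of Corollary~\ref{basicest} to truncate the series at each stage. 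This is precisely the assertion that the covector $(\dots,\Psi(\lambda p^2x),\Psi(\lambda px),\Psi(\lambda x))$, whose ghost components are $(\dots,\lambda p^2x,\lambda px,\lambda x)$ (see \S\ref{previousapproach}), is assembled from the digits $(\lambda x)_{-j}$ of these ghost components through the integral, isobaric covector-addition polynomials $\varphi_i$ of Lemma~\ref{phiisob}, which confines the recursion to the subring $S$. The cleanest write-up should be an induction on $k$ establishing $b_k\in S$ for all $k$; this is where all the real work is.

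Granting $A\subseteq B$, the conclusion is routine. The image of $A$ in $B/pB=S$ is all of $S$, so $A+pB=B$; since $pA\subseteq A$, iteration gives $A+p^nB=B$ for every $n$, so $A$ is $p$-adically dense in the complete ring $B$ and $\overline{A}\supseteq B$; and since $B$ is $p$-adically closed in ${\rm W}(R)$ and contains $A$, also $\overline{A}\subseteq B$. Therefore $\overline{A}=B$, which is the assertion of the lemma.
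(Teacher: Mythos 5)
Your proposal is correct, and for the crucial inclusion $A\subseteq{\rm W}(S)$ it follows the same route as the paper: there one writes $\Psi(x)=(\Psi_0(x),\Psi_1(x),\dots)$ in Witt coordinates and uses the truncated forms \eqref{functeqQ1}--\eqref{functeqQ2} of the functional equation \eqref{functeq} to show, by induction on $i$, that $\Psi_i\in\F_p[x_j\,|\,j=0,-1,\dots,-i]$. This is precisely the induction you defer to as ``where all the real work is''; it is in fact short once the truncated identity is written down, the point being exactly the one you name, namely that the Witt addition (and subtraction) polynomials are integral, so solving for the next coordinate never leaves $S$ --- so there is no missing idea, only an unexecuted (and easy) final write-up. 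The one genuine divergence is in the density half: you obtain ${\rm W}(S)\subseteq\overline{A}$ from surjectivity of $A\to{\rm W}(S)/p\,{\rm W}(S)=S$ together with the iteration $A+p^n{\rm W}(S)={\rm W}(S)$ and completeness, whereas the paper uses the Teichm\"uller limits $[x_{-i}]=\lim_{N}\Psi(p^ix)^{p^N}$ and the density in ${\rm W}(S)$ of the $\Z_p$-algebra generated by Teichm\"uller representatives; both arguments are standard, and yours has the small advantage of avoiding any explicit appeal to Teichm\"uller lifts. (The paper also first reduces to $\lambda=1$, a normalization you dispense with harmlessly by carrying $\lambda$ throughout.)
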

\begin{proof}
We may as well assume $\lambda=1$ and prove
\begin{sublemma} \label{psicomp} The closure  of 
$\Z_p[\Psi(p^ix)\,|\, i =0,1,2,\dots ]$ in  ${\rm W}(\sC_\unif(\Q_p,\F_p))$  coincides with 
of ${\rm W}( \F_p[x_i\,|\, i = 0, -1,-2,\dots])$.
\end{sublemma}
\begin{proof} Let $C$ be the closure  of 
$\Z_p[\Psi(p^ix)\,|\, i =0,1,2,\dots ]$ in  ${\rm W}(\sC_\unif(\Q_p,\F_p))$. The formula
$$
[x_{-i}] = \lim_{N \to \infty} \Psi(p^ix)^N 
$$
shows that ${\rm W}(\sC_\unif(\Q_p,\F_p)) \subset C$. It will suffice to show that, as functions $\Q_p \to \Z_p$ 
$$
\Psi(x) \in {\rm W}( \F_p[x_i\,|\, i = 0, -1,-2,\dots]) \;.
$$
We write the restriction of $\Psi(x)$ to a function $\Q_p \to \Z_p$ as
$$
\Psi(x) = (\Psi_0(x),\Psi_1(x),\Psi_2(x), \dots) 
$$
with $\Psi_i \in \sC_\unif(\Q_p,\F_p)$ and $\Psi_0(x)=x_0$. 
We have, from \eqref{functeq}, the formula in $\sC_\unif(\Q_p,\Q_p)$
\beq\label{functeqQ1}\Psi(x) + p^{-1}\Psi(px)^p + \dots + p^{-i}\Psi(p^ix)^{p^i} +\dots  = x= (\dots, x_{-i},\dots,x_{-2},x_{-1};x_0,\ast,\ast,\dots)
\eeq
From  \eqref{functeqQ1} we deduce that, as functions in $\sC_\unif(\Q_p,p^{-i}\Z_p)$
\beq\label{functeqQ2}\Psi(x) + p^{-1}\Psi(px)^p + p^{-2}\Psi(p^2x)^{p^2} + \dots + p^{-i}\Psi(p^ix)^{p^i}=  (x_{-i},\dots,x_{-2},x_{-1};x_0,\ast,\ast,\dots)
\eeq
This shows, inductively on $i$, that  
$$\Psi_i \in \F_p[x_j\,|\, j = 0, -1,-2,\dots,-i] \;.$$
\end{proof}
\end{proof}
\end{proof}

  \medskip
  \begin{defn} \label{periodrdef} Let $r \in \Z$ and $a \in \Z_{\geq 1}$.  
 We define $\sE_{r,a}^\circ$ (resp. $\sT_{r,a}^\circ$) to be  the $\Z_p$-subalgebra  of $\sE^\circ_{p^r}$ (resp. of 
 $\sT_{p^r}^\circ$)  (\cf Definition~\ref{Elambdadef})
  consisting of those functions $f$ such that
  \beq \label{periodr}
 f(x+p^{r+j}\C^\circ_p) \subset f(x) + p^{a+j}  \C^\circ_p\;\;,\;\;  \forall \;x \in \Q_p \;\;\mbox{and} \;\;  \forall \;j \in \Z_{\geq 1}\;. 
 \eeq  
  \end{defn}
  \begin{rmk}
 For the rest of this section the statements valid for the rings $\sE_{r,a}^\circ \subset \Q_p\{x\}$ hold equally well, and with the same proof,  for the rings $\sT_{r,a}^\circ \subset \cO(\Sigma_{p^{-r}})^\circ$. For short, we deal with the former only. 
\end{rmk}
Notice that
  \beq \label{periodrdef10} \sE_{r,a+1}^\circ  \subset \sE_{r,a}^\circ  \subset \sE_{r+1,a+1}^\circ 
  \;\;\mbox{and}\;\; p  \sE_{r,a}^\circ  \subset \sE_{r,a+1}^\circ  
  \eeq 
  and that we have a map $F$  as in Definition~\ref{unif(r,a)} such that
    \beq \label{periodrdef11} F(\sE_{r,a}^\circ)  \subset \sE_{r,a+1}^\circ   
%  \;\;\mbox{and}\;\; p^{-1} \iota(  \sE_{r,a}^\circ)  \subset \sE_{r+1,a}^\circ 
   \;.
  \eeq 

  \begin{rmk} \label{changenot} 
    We have 
    $$\sE_{p^r}^\circ := \sE_{r,0}^\circ \;.
    $$
    We already proved (Proposition~\ref{unifPsi} and Corollary~\ref{basicest0}) that $\Psi(x) \in \sE_{0,0}^\circ$. Therefore,
%In particular, for any unit $\alpha \in \Z_p^\times$ and any $r \in \Z$,
%$$
%\Psi(\alpha p^{-r} x) \in \sE^\circ_{r,0} \;.
%$$
% Notice that, 
 for any  $i \in \Z_{\geq 0}$ and $\ell =0,1,\dots,p-1$, the function $\Psi(p^{i-r} x)^{\ell p^a}$ belongs to  $\sE_{r-i,a}^\circ \subset \sE_{r,a}^\circ$. 
% Similarly for the rings $\sT_{r,a}^\circ$.
\end{rmk}  
\begin{lemma}\label{standardvsunif}
If a sequence of functions $n \mapsto f_n \in \sE_{r,a}^\circ$ (resp. $\in \sT_{r,a}^\circ$) converges  to $f \in \C_p\{x\}$ (resp. to $f \in \cO(\Sigma_{p^{-r}})^\circ$)
uniformly on 
bounded subsets of $\C_p$  (resp. of $\Sigma_{p^{-r}}$)
then $f \in  \sE_{r,a}^\circ$ (resp. $\in \sT_{r,a}^\circ$). Therefore $\sE_{r,a}^\circ$ (resp. $\sT_{r,a}^\circ$) is a closed $\Z_p$-subalgebra of $\C_p\{x\}$ (resp. of $(\cO(\Sigma_{p^{-r}})^\circ,{\rm standard})$). The induced Fr\'echet algebra structure 
  on $\sE_{r,a}^\circ$ (resp. on $\sT_{r,a}^\circ$) will be called \emph{standard}. 
\end{lemma}
 \begin{proof} We deal, to fix ideas, with the case of $\sE_{r,a}^\circ$. We show that for any $c \in \Q_p$ and $j=0,1,\dots$, 
 $$
 f(c + p^{r+j +1} \C_p^\circ) \subset f(c) + p^{a+j+1} \C_p^\circ \;.
 $$
 By assumption, for any $s,t \in \N$, there exists $N=N_{s,t}$ such that if $n \geq N$, then
 $$
 (f_n-f)(p^{-s}\C_p^\circ) \subset p^t \C_p^\circ \;.
 $$ 
 So, for $c$ and $j$ as before, let $s$ be such $c + p^{r+j+1}\C_p^\circ \subset p^{-s} \C_p^\circ$, and let $t$ be $\geq j+a+1$.
 Then, for any $n \geq N_{s,t}$, 
 $$
 (f_n-f)(c + p^{r+j+1} \C_p^\circ) \subset  (f_n-f)(p^{-s}\C_p^\circ) \subset p^t \C_p^\circ  \subset p^{j+a+1} \C_p^\circ \;.
 $$
 Therefore $f \in  \sE_{r,a}^\circ$. 
  \end{proof}
  Notice that Proposition~\ref{2struct} follows from Lemma~\ref{standardvsunif}, by taking $a=0$. 
  \par
 Let $r,a$ be as in Definition~\ref{periodrdef}. 
 Any function $f \in \sE_{r,a}^\circ$
 induces a continuous function $f_{|\Q_p}: \Q_p \to \Z_p$.
 The 
 $\Z_p$-linear map 
%\beq \label{periodrra}
%Res^\circ: (\sE_{r,a}^\circ,||~||_{p^{-r}}) \longrightarrow \sC_\unif(\Q_p,\Z_p)\;\;,\;\; f \longmapsto f_{|\Q_p} 
%\eeq
%and 
\beq \label{periodrrast} 
Res^\circ: (\sE_{r,a}^\circ,{\rm standard}) \longrightarrow \sC_{r,a} \subset \sC(\Q_p,\Z_p)\;\;,\;\; f \longmapsto f_{|\Q_p}\;,
\eeq
is  continuous and injective.  
By composition, we obtain, for any $r \in \Z$ and any $a,h=0,1,\dots$,  a morphism  
%\beq
%\label{restrhol0}
% R_{r,a} \circ Res^\circ : (\sE_{r,a}^\circ,||~||_{p^{-r}}) \longrightarrow \sC_\unif(\Q_p/p^{r+1}\Z_p,\Z_p/p^{a+1}\Z_p) = {\rm W}_{a}(\F_p(r,\infty))\;, 
%\eeq
%where the r.h.s. is equipped with the discrete topology of \eqref{tesiform1},
%and 
\beq\label{restrholst}
 R_{r,a} \circ Res^\circ : (\sE_{r,a}^\circ,{\rm standard}) \longrightarrow \sC(p^{r-h}\Z_p/p^{r+1}\Z_p,\Z_p/p^{a+1}\Z_p) = {\rm W}_{a}(\F_p(r,h))\;, 
\eeq
where the r.h.s. is equipped with the  topology of \eqref{tesiform2}.
The kernel  of that  map   is the set of $g \in \sE_{r,a}^\circ$ such that $-\log ||g||_{p^{h-r}}   \geq a+1$. 
From \eqref{periodrrast} we  also get   maps of Fr\'echet $\Z_p$-algebras
\beq \label{periodrra2}
Res^\circ:  (\sE^\circ_\lambda, \{||~||_{p^r\Z_p}\}_{r \in \Z})^\wedge \longrightarrow \sC(\Q_p,\Z_p)\;\;,\;\; f \longmapsto f_{|\Q_p} \;,
\eeq
\beq \label{periodrra3}
Res^\circ:  (\sT^\circ_\lambda, \{||~||_{p^r\Z_p}\}_{r \in \Z})^\wedge \longrightarrow \sC(\Q_p,\Z_p)\;\;,\;\; f \longmapsto f_{|\Q_p} \;.
\eeq
\begin{lemma} \label{periodrexpWittintro} Let $r \in \Z$ and $a \in \Z_{\geq 0}$ be as before. 
 \hfill
 \ben 
 \item  
 Any series of functions of the form
  \beq \label{periodrexpWitt1}
 \sum_{\ell=0}^{p-1} \sum_{i =0}^\infty c_{\ell,a,i} \Psi(p^{i-r} x)^{\ell p^a}\;\;,\;\; c_{\ell, a,i}  \in \Z_p \;,
\eeq
converges in the standard Fr\'echet topology of $\Q_p\{x\}$ to an element of $\sE_{r,a}^\circ$   along the filter of cofinite subsets of $\{0,1,\dots,p-1\} \times \Z_{\geq 0}$.
\item   For any element $f \in  \sC_{r,a}$ and for any $s =0,1,2,\dots$  there exist 
uniquely determined 
elements $c_{\ell,b,i} = c_{\ell,b,i}^p \in \Z_p$,  such that 
for 
$$f_{r,a} :=  \sum_{b=0}^{a} \sum_{\ell=0}^{p-1} \sum_{i =0}^\infty c_{\ell,b,i} p^b \Psi(p^{i-r} x)^{\ell p^{a-b}} 
\in \sE_{r,a}^\circ \;,$$
where the infinite sum converges in the standard Fr\'echet topology of $\sE_{r,a}^\circ$, we have
\beq \label{wittfourier}  
-\log ||(f - f_{r,a})||_{p^{-r}\Z_p} \geq a+1  \;.
\eeq
Same statement for $\sE_{r,a}^\circ$ replaced by  $\sT_{r,a}^\circ$.
\item   For any element $f \in  \sC_{r,a}$ and any $h=0,1,\dots$,   there exist 
uniquely determined 
elements $c_{\ell,b,i} = c_{\ell,b,i}^p \in \Z_p$,  such that 
for 
$$f_{r,a,h} :=  \sum_{b=0}^{a} \sum_{\ell=0}^{p-1} \sum_{i =0}^h c_{\ell,b,i} p^b \Psi(p^{i-r} x)^{\ell p^{a-b}} 
  \;,$$ 
\beq \label{wittfourier1} -\log ||(f - f_{r,a,h})||_{p^{h-r}\Z_p} \geq a+1  
  \;.
\eeq
\item The map \eqref{restrholst} is surjective. 
\item The maps \eqref{periodrra2} and \eqref{periodrra3} are the  isomorphisms of Theorem~\ref{bohrpadthm2}.
 \een
 \end{lemma}
 \begin{proof} The first part is clear. As for the second, 
 we observe that, for any $b =0,1,\dots,a$, the map $R_{r,a} \circ Res^\circ$  
  transforms the function $p^{b} \Psi(p^{i-r} x)^{\ell p^{a-b}}$, for $\ell =0,1,\dots,p-1$,  into the Witt vector 
 $$(0,\dots,0,w_b= x_{r-i}^\ell,0,\dots,0) \in {\rm W}_{a}(\F_p(r,\infty))\;,
 $$
 where $x_{r-i}^\ell$ is placed at the $b$-th level.
 Since any $y \in \F_p(r,\infty)$ admits a unique expression as a  sum, convergent in the 
 prodiscrete topology of  $\F_p(r,\infty)$, 
 $$
 y = \sum_{\ell=0}^{p-1} \sum_{i =0}^\infty \gamma_{\ell,i}  x_{r-i}^\ell \;\;,\;\; \gamma_{\ell,i}  \in \F_p\;,
 $$
  is clear that any $w = (w_0,w_1,\dots,w_a) \in  {\rm W}_{a}(\F_p(r,\infty))$ admits a unique expression 
  as a sum
  $$
   \sum_{b=0}^{a} \sum_{\ell=0}^{p-1} \sum_{i =0}^\infty [\gamma_{\ell,b,i}] (0,\dots,0,w_b=x_{r-i}^\ell,0,\dots,0) 
   $$
   which in turn converges in the  prodiscrete topology of ${\rm W}_{a}(\F_p(r,\infty))$. More precisely, for any $a,h =0,1,\dots$,  we can determine coefficients $c_{\ell,b,i} \in \Z_p$ such that 
   $$
  w -  \sum_{b=0}^{a} \sum_{\ell=0}^{p-1} \sum_{i =0}^{h} c_{\ell,b,i}  x_{r-i}^\ell
   $$
   has zero image in ${\rm W}_{a}(\F_p(r,h))$. So, the function
   $$f_{r,a,h} :=  \sum_{b=0}^{a} \sum_{\ell=0}^{p-1} \sum_{i =0}^{h} c_{\ell,b,i} p^b \Psi(p^{i-r} x)^{\ell p^{a-b}} 
 \;,$$
   is such that 
   $$\min \{v_p(f_{r,a,h}(x) - f(x)) \,|\, x \in p^{r-h} \Z_p + p^r \C_p^\circ\} \geq a+1 \;.
   $$
  Finally,  we already observed that the kernel of the map
  \eqref{restrholst} consists of the elements 
$g \in \sE_{r,a}^\circ$ such that $-\log ||g||_{p^{h-r}\Z_p}  \geq a+1$. 
This proves $2$, $3$ and $4$. 
\par
As for the last part of the statement, we pick any $f \in \sC_\unif(\Q_p,\Z_p)$ and a natural number $N=0,1,\dots$. 
Then there exists an $M =0,1,\dots$ and $f_M \in \sC(\Q_p/p^M\Z_p, \Z_p)$ such that $w_\infty(f-f_M) \geq N$.       It will suffice to determine an element   $g \in \Z_p[ \Psi(\lambda x)\,|\,\lambda \in \Q_p^\times]$ such that $w_\infty (g-f_M) \geq N$. We then pick $r \in \Z$ and $a \in \Z_{\geq 0}$ so that $r+1 \geq M$ and $a+1 \geq N$.   
The statement follows from the surjectivity of  \eqref{restrholst}.
This concludes the proof.
 \end{proof} 
 As a corollary, we obtain the   proof of Propositions~\ref{fejer21} and \ref{fejer22}.
%  \begin{proof} (Of Proposition~\ref{fejer21}) The result follows  from Lemma~\ref{psicomp} by the same arguments used in the proof of Theorem~\ref{bohrpadthm}.
%  \end{proof}
% \begin{proof} (Of Proposition~\ref{fejer22}) We already saw that we may assume $\lambda = p^r$, for the integer $r = v(\lambda)$. 
% We must prove that, for any $a,h=0,1,\dots$ and any function $w \in \sC(p^{r-h}\Z_p/p^{r+1}\Z_p,\Z_p/p^{a+1}\Z_p)$ (represented by a function $w : p^{r-h}\Z_p \to \Z_p$)   there exists an element  $f_{r,a,h} \in \sE^\circ_{r,a}$
% $$f_{r,a,h} :=  \sum_{b=0}^{a} \sum_{\ell=0}^{p-1} \sum_{i =0}^h c_{\ell,b,i} p^b \Psi(p^{i-r} x)^{\ell p^{a-b}} 
%  \;,$$ 
%  such that 
%\beq \label{wittfourier1} -\log ||(w(x) - f_{r,a,h}(x))||_{p^{h-r}} \geq a+1  
%  \;.
%\eeq
% \end{proof}
 We now give the proof of Proposition~\ref{hopfpad}.
 \begin{proof} (of Proposition~\ref{hopfpad}) We discuss $(\sE^\circ_\lambda, {\rm standard})$ in order to fix ideas. The case of  $(\sT^\circ_\lambda, {\rm standard})$ is analogous. 
The coproduct of $\sE^\circ_\lambda$ originates from \eqref{covsum2}
\beq \label{covsum20}\begin{split}
x \mapsto \Psi(x \wt_{\Z_p} 1& + 1 \wt_{\Z_p} x) =  \Phi (\Psi(x\wt_{\Z_p} 1), \Psi(px \wt_{\Z_p} 1),\dots;\Psi(1 \wt_{\Z_p} x),\Psi(1 \wt_{\Z_p} px),\dots) =\\
&\Phi (\Psi(x) \wt_{\Z_p} 1, \Psi(px) \wt_{\Z_p} 1 ,\dots;1 \wt_{\Z_p} \Psi(x),\wt_{\Z_p} \Psi(1 px),\dots)
\end{split}
\eeq
 and the identification \eqref{ident1}. The fact that $\sE^\circ_\lambda$ only depends upon $|\lambda|$ follows from the fact that, for any $f \in  \C\{x\}$,  the map $\Q_p \to \C\{x\}$, $a \mapsto f(a x)$ is continuous. 
 For any $n \in \Z$, the map $n \iota: \Psi(\lambda^{-1}p^jx) \mapsto \Psi(\lambda^{-1}p^j n x)$, for any $j=0,1,\dots$, is an endomorphism of 
 $\sE^\circ_\lambda$. By continuity, we obtain a map $a \iota : \sE^\circ_\lambda \to \sE^\circ_\lambda$, for any $a \in \Z_p$. 
If $m,n \in \Z$ are such that $mn = 1 + a p^N$, for $a \in \Z$ and $N \in \Z$, $N >>0$, $\Psi(\lambda^{-1}p^j mn x)$ is close to $\Psi(\lambda^{-1}p^j x)$. Again by continuity we find that if $a\in \Z_p^\times$,   $a \iota$ is an automorphism of $\sE^\circ_\lambda$. 
\end{proof}
We finally prove our Uniform Approximation Theorem~\ref{bohrpadcor}.
\begin{proof} We discuss the integral case only; the bounded case follows directly.  We first observe that a $\LMu_{\Z_p}$-morphism 
$$
( \APH_{0,\Z_p},{\rm strip})  = \limINDU_{\rho \to 0} ( \APH_{\Z_p}(\Sigma_\rho), {\rm strip}) \longrightarrow (AP_{\Z_p},w_\infty)
$$
exists because so does, for any $\rho >0$, the  morphism  $(\APH_{\Z_p}(\Sigma_\rho), {\rm strip}) \to (AP_{\Z_p},w_\infty)$. Moreover, that morphism is injective. An element of $\APH_{0,\Z_p}$ is represented by a sequence 
$P_{\rho_n} \in \Z_p[\Psi(x/\lambda)\,|\, \lambda \in \Q_p^\times \,]$  with $\rho_n$ decreasing to $0$, such that for any $\veps >0$, there exists $N_\veps$ such that for any $m \geq n \geq N_\veps$,
$$
||P_{\rho_n} - P_{\rho_m}||_{\Q_p,\rho_m} < \veps \;.
$$

Let $f \in AP_{\Z_p}$ and let $N \in \Z_{>0}$. By definition of u.a.p. functions, there exists a polynomial 
$$
P_N := \sum_{\lambda \in \Q_p^\times} a_\lambda \Psi(x/\lambda)
$$
where $a_\lambda \in \Z_p$ $=0$ for almost all $\lambda$, such that 
$$
w_\infty (f - P_N) > N \;.
$$
By \eqref{unifPsi11} of Theorem~\ref{psibddsuper}, for any $N >0$ there exists $\rho_N >0$ such that 
$v_p(P_N(a + x) - P_N(a)) > N$, for any  $a \in \Q_p$ and $x \in \C_p$, $|x| \leq \rho_N$. We may assume that the sequence $N \to \rho_N$ decreases to $0$. 
We deduce that for $M \geq N$  
$$
-\log ||P_N - P_M||_{\Q_p, \rho_M} > N \;.
$$
 So, the sequence $N \mapsto P_N$ represents a germ $P \in \APH_{0,\Z_p}$ whose restriction to $\Q_p$ is $f$. 
\end{proof}
\begin{rmk} \label{caution}
We are not asserting here that there should be a $p$-adic strip $\Sigma_\rho$ around $\Q_p$ on which $f$ extends analytically. 
In fact, an inductive limit in the category $\LMu_{\Z_p}$ is not necessarily supported by a set-theoretic inductive limit (see section~\ref{lincat} of Appendix A below) and similarly for a locally convex inductive limit of Banach spaces. 
\end{rmk}
\end{section}
\begin{section}{Appendix A. Non archimedean topological algebra} \label{Conventions}
A prime number $p$ is fixed throughout this paper and $q=p^f$ is a power of $p$. So, $\Q_q$ will denote the unramified extension of $\Q_p$ of degree $f$, and $\Z_q$ will be its ring of integers.      Unless otherwise specified, a  \emph{ring} is meant to be commutative with 1.  
\begin{subsection}{Linear topologies} \label{lincat}
Let $k$ be a  separated and complete linearly topologized ring; we will denote by $\cP(k)$ the family of open ideals of $k$. We will consider the category $\LMu_k$ of separated and complete linearly topologized  $k$-modules $M$ such that 
the map multiplication by scalars
$$
k \times M \longrightarrow M\;\;,\;\; (r,m) \longmapsto rm
$$
is \emph{uniformly continuous} for the product uniformity of $k \times M$. Morphisms of $\LMu_k$ are continuous $k$-linear maps.  This is the classical category of \cite[Chap. III, \S 2]{algebracomm}. See \cite{closed}  for more detail.  
\begin{rmk}\label{warning} 
All over this paper we will assume that in a topological ring $R$ (resp. topological $R$-module $M$), the product (resp. the scalar product) map $R \times R \to R$ (resp. $R \times M \to M$)
is at least continuous for the product topology of $R \times R$ (resp. of $R \times M$); morphisms will be continuous morphisms of rings (resp. of $R$-modules).
\par By a  \emph{non-archimedean} (\emph{n.a.}) 
ring  $R$ (resp.  $R$-module $M$) we mean a topological ring $R$ (resp.   $R$-module $M$)  equipped with a topology for which a basis of neighborhoods of $0$ consists of additive subgroups and   additive translations are  homeomorphisms.  So, any valued non-archimedean field $K$ is a n.a. ring in the previous sense and, if $K$ is non-trivially valued,  the category $\LC_K$ of locally convex $K$-vector spaces \cite{schneider} is a full subcategory of the category of n.a. $K$-modules. But, such a field $K$ is never a linearly topologized ring. The ring of integers  $K^\circ$ is indeed linearly topologized, but no  non-zero object of $\LC_K$ is an object of $\LMu_{K^\circ}$. 
\end{rmk}
\begin{defn}\label{strictly closed} Let $R$ be a topological ring and $M$ be a topological $R$-module. 
A closed topological $R$-submodule $N$ of $M$ is said to be \emph{strictly} closed if it is endowed with the subspace topology of $M$.
\end{defn}
For any object $M$ of $\LMu_k$, $\cP(M)$ will denote the family of open $k$-submodules of $M$. The category $\LMu_k$ admits  all limits and colimits. 
The former are calculated in the category of $k$-modules but not the latter. So, a limit will be denoted by $\limPROu$ while a colimit will carry an apex $(-)^u$ as in  $\limINDU$.
In particular, for any family $M_\alpha$, $\alpha \in A$, of objects of $\LMu_k$, the direct sum and direct product will be denoted by 
$$
\SUMU_{\alpha \in A} M_\alpha \;\;,\;\; \prod_{\alpha \in A} M_\alpha \;,
$$
respectively. We explicitly notice that $\SUMU_{\alpha \in A} M_\alpha$ is the completion of the algebraic direct sum 
$\bigoplus_{\alpha \in A} M_\alpha$ of the algebraic $k$-modules $M_\alpha$'s, equipped with the $k$-linear topology for which a fundamental system of open $k$-modules consists of the $k$-submodules 
$$
\bigoplus_{\alpha \in A} (U_\alpha + I M_\alpha)\;\mbox{such that} \;U_\alpha \in \cP(M_\alpha)\; \forall \alpha\;, \; \mbox{and}\;I \in \cP(k)\; \mbox{is independent of} \, \alpha \; .
$$
 Then the  $k$-module underlying $\SUMU_{\alpha \in A} M_\alpha$ in general properly contains the algebraic direct sum $\bigoplus_{\alpha \in A} M_\alpha$.
 It will also be useful to introduce the \emph{uniform box product} of the same family 
\beq
\label{square}
\PRODSCU{\alpha \in A} M_\alpha 
\eeq
which, set-theoretically, coincides with $\prod_{\alpha \in A} M_\alpha$ but whose family of open submodules consists 
of all $U := \prod_{\alpha \in A} U_\alpha$, with $U_\alpha \in \cP(M_\alpha)$, such that there exists $I_U \in \cP(k)$ such that 
$I_U M_\alpha \subset U_\alpha$, for any $\alpha \in A$. The category $\LMu_k$, equipped with the tensor product 
$\wt^u_k$ of \cite[{$\bf 0$}.7.7]{EGA} (see also \cite[Chap. III, \S 2, Exer. 28]{algebracomm}) is a symmetric monoidal category.  The category of monoids of $\LMu_k$ is denoted by $\ACLMu_k$.  
\par
For two objects $M$ and $N$ of $\LMu_k$, we have 
\beq \label{tensproj}
M \wt^u_k N = \limPROu_{P \in \cP(M),Q \in \cP(N)} M/P \otimes_k N/Q  
\eeq
so that $\wt^u_k$ commutes with filtered projective limits in $\LMu_k$.  
%\begin{rmk} \label{tensind} The tensor product $\wt^u_k$ does not commute  with inductive limits in $\LMu_k$. For example, assume the topology of $k$ is discrete and, for any set  $S$, let $k^S$ be the discrete $k$-module of functions $S \to k$. Then, for any set  $S$ and $T$,  
%$$
% k^S \wt^u_k k^T = k^S \otimes_k k^T 
%$$
%while
%$$
%k^S = \limINDU_{F \subset S} k^F \;,
%$$
%where $F$ runs over the finite subsets of $S$. 
%Then, for $S$ or $T$ finite, we have 
%$$
%k^{S \times T} = k^S \otimes_k k^T 
% \;,$$
%while this is clearly not the case if both $S$ and $T$ are infinite. 
%Therefore, for  both $S$ and $T$ infinite, 
%$$
%k^S \otimes_k k^T  = k^S \wt^u_k k^T = \limINDU_{F \subset S} k^F \wt^u_k  \limINDU_{G \subset T} k^G  
%$$
%does not coincide with
%$$
%k^{S \times T} = \limINDU_{F\times G \subset S \times T} k^{F \times G} = \limINDU_{F\times G \subset S \times T}
%k^F \wt^u_k k^G \;,
%$$
%where $F$ and $G$ run  over the finite subsets of $S$ and $T$, respectively.
%\end{rmk}
\end{subsection}
\begin{subsection}{Semivaluations} \label{semival}
We describe here full subcategories of $\LMu_k$, and special base rings $k$, of most common use. 
  We denote by $\Z_{(p)} = \Q \cap \Z_p$, the localization of $\Z$ at $(p)$.
Then $\C_p$ will be the completion of a fixed algebraic closure of $\Q_p$.  
On  $\C_p$  we use the absolute value  $|x|=|x|_p = p^{-v_p(x)}$, for the $p$-adic valuation $v=v_p$, with $v_p(p)=1$, and $x \in \C_p$.   
\endgraf
\begin{defn} \label{semival1}
A \emph{semivaluation}  on a ring $R$ is a map $w:R \to \R  \cup \{+\infty\}$ such that $w(0) = +\infty$, 
$w(x+y) \geq \min(w(x),w(y))$ and $w(xy) \geq w(x) + w(y)$, for any $x,y \in R$. We will say that a semivaluation is \emph{positive} if it takes its values in $\R_{\geq 0}  \cup \{+\infty\}$. 
\end{defn}
\begin{rmk} \ben
\item If $w_1,w_2,\dots,w_n$ are a finite set of semivaluations  on the ring $R$, so is their infimum
$$
w := \inf_{i=1,\dots,n} w_i \;.
$$
\item
The \emph{trivial valuation} $v_0:R \to \{0,+\infty\}$, which exists on any ring $R$, is (in our sense!) a positive semivaluation. 
\item We will indifferently use the multiplicative notation $|x|_w = \exp(-w(x))$.
\een
\end{rmk}
For any semivaluation $w$ of a ring $R$, the  family of 
\beq
 \label{semival11} 
 R_{w,c} :=   \{x \in R \,|\, w(x) \geq c\,\} 
\eeq
for ${c \in \R}$ is a fundamental set 
of open subgroups for a group topology of $R$. Moreover, $R_{w,0}$ is a subring of $R$ and all $R_{w,c}$ are $R_{w,0}$-submodules of $R$.  
A (\emph{multi}-) \emph{semivalued} ring $(R,\{w_\alpha\}_{\alpha \in A})$ is a ring $R$ equipped with a family $\{w_\alpha\}_{\alpha \in A}$ of semivaluations. A semivalued ring is endowed with the topology in which any $x \in R$ has a fundamental system of 
neighborhoods  consisting 
of the subsets 
$$
x + \bigcap_{\alpha \in F} R_{\alpha,c_\alpha} 
$$
where $F$ varies among finite subsets of $A$ and, for any $\alpha \in F$, $c_\alpha \in \R$. 
A \emph{Fr\'echet ring} (resp. \emph{Banach ring})  is a ring $R$ which is separated and complete in the topology  induced by a countable family   of semivaluations (resp. by a single semivaluation). If the semivaluations 
$w_\alpha$ are all positive, the Fr\'echet (resp. Banach)  ring $(R,\{w_\alpha\}_{\alpha \in A})$ is 
linearly topologized. We will call it a \emph{linearly topologized Fr\'echet} (resp. \emph{Banach}) \emph{ring}. 
When $R$ is an algebra over a Banach ring $(S,v)$, and the semivaluations $w_\alpha$ satisfy 
$$w_\alpha (xy) = v(x) + w_\alpha (y)\;\;\forall \;x \in S\;,\; y \in R\;,
$$
we also say that $R = (R, \{w_\alpha\}_{\alpha \in A})$ is a \emph{Fr\'echet} (resp. \emph{Banach}) \emph{$S$-algebra}.
In the particular case when $(S,v)$ is a complete non-trivially valued real-valued field $(K,v)$ a Fr\'echet or Banach $S$-algebra  is a Fr\'echet or Banach algebra over $K$ in the classical sense. Notice however that we allow $v$ to be the trivial valuation of $S$ or $K$. 
We denote by $\CLC_K$  the category of locally convex topological $K$-vector spaces of \cite{schneider}, where morphisms are continuous $K$-linear maps,  which are moreover separated and complete. 
\par 
We have the easy
 \begin{lemma}\label{tensorfrechet} Let  $(S,v)$ be a Banach ring and $(R, \{w_n\}_{n=1,2,\dots})$ be a Fr\'echet $S$-algebra. Let  $(R_n,w_n)$ be the separated completion of $R$ in the locally convex topology induced by the semivaluation  
$w_n$.  Assume $w_n(r) \geq w_m(r)$ for any $r \in R$ if $n \leq m$. Then, the identity of $R$ extends to a morphism 
$R_m \to R_n$ of Banach $S$-algebras and $R$ is the limit, in the category of n.a.  $S$-algebras, of the filtered projective system $(R_n)_n$. \par
In particular, 
 a $S$-subalgebra $T$ of $R$  is dense in $R$ if and only if it is dense in $R_n$, for any $n$.
  \end{lemma}
\end{subsection}
\begin{subsection}{Tensor products} \label{tensprod}
Let $(S,v)$ be a complete real-valued ring and let $R = (R, \{w_\alpha\}_{\alpha \in A})$ and $R' = (R', \{w'_\beta\}_{\beta \in B})$ be two Fr\'echet $S$-algebras. Then we define a Fr\'echet $S$-algebra $R\wt_{\pi,S}R'$ as the completion of the $S$-algebra $R \otimes_S R'$ in the topology induced by the following semivaluations  \cite[2.1.7]{BGR}, for any $(\alpha,\beta) \in A \times B$,
$$
w_{\alpha,\beta} (g) = \sup \l(\min_{1\leq i \leq n} w_\alpha(x_i) + w'_{\beta}(y_i)\r) \;,
$$
where the supremum runs over all possible representations
$$
g = \sum_{i=1}^n x_i \otimes y_i \;\;,\;\;x_i \in R\;,\;y_i \in R'\;.
$$
The following proposition follows immediately from Lemma~\ref{tensorfrechet}.
\begin{prop}\label{tensorfrechet2} Let  $(S,v)$ be a Banach ring and $(R, \{w_n\}_{n=1,2,\dots})$, 
$(R', \{w'_n\}_{n=1,2,\dots})$
 be two Fr\'echet $S$-algebra satisfying the assumption of Lemma~\ref{tensorfrechet}. 
 Then, with the same notation, $R\wt_{\pi,S}R'$ is the limit, in the category of n.a.  $S$-algebras, of the filtered projective system 
 of Banach $S$-algebras 
 $(R_n\wt_{\pi,S}R'_n)_n$.
 \end{prop}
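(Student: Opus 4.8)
The plan is to deduce the statement from Lemma~\ref{tensorfrechet}, applied not to $R$ and $R'$ separately but to the Fr\'echet $S$-algebra $R\wt_{\pi,S}R'$ itself. First I would record the transition maps: for $n\le m$ the Lemma gives morphisms of Banach $S$-algebras $R_m\to R_n$ and $R'_m\to R'_n$, and by functoriality of the completed projective tensor product $\wt_{\pi,S}$ these induce morphisms of Banach $S$-algebras $R_m\wt_{\pi,S}R'_m\to R_n\wt_{\pi,S}R'_n$, compatible for $n\le n'\le m$; hence $(R_n\wt_{\pi,S}R'_n)_n$ is a filtered projective system of Banach $S$-algebras.

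Next I would analyze the semivaluations defining $R\wt_{\pi,S}R'$. Among the $w_{n,m}$, $(n,m)\in\N^2$, single out the diagonal ones $w_{n,n}$. Using $w_n\ge w_m$ and $w'_n\ge w'_m$ on $R$, resp. $R'$, for $n\le m$, one checks directly from the formula for $w_{\alpha,\beta}$ that, on $R\otimes_S R'$, $w_{n,n}\ge w_{n,m}\ge w_{m,m}$ whenever $n\le m$; so the countable family $\{w_{n,n}\}_{n\ge1}$ defines the same topology as $\{w_{n,m}\}$ and is monotone in the sense of Lemma~\ref{tensorfrechet}. Thus $R\wt_{\pi,S}R'$ is the separated completion of $R\otimes_S R'$ for $\{w_{n,n}\}_{n\ge1}$, and Lemma~\ref{tensorfrechet}, applied to this Fr\'echet $S$-algebra, yields $R\wt_{\pi,S}R'=\limpro_n (R\wt_{\pi,S}R')_n$ in the category of n.a.\ $S$-algebras, where $(R\wt_{\pi,S}R')_n$ is the separated completion with respect to the single semivaluation $w_{n,n}$; since $R\otimes_S R'$ is dense in $R\wt_{\pi,S}R'$, this $w_{n,n}$-completion coincides with that of $R\otimes_S R'$.

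It then remains to identify $(R\wt_{\pi,S}R')_n$ with $R_n\wt_{\pi,S}R'_n$, compatibly with the transition maps above, and this is the one point that is not purely formal. The natural $S$-algebra map $R\otimes_S R'\to R_n\otimes_S R'_n$ has dense image because $R\to R_n$ and $R'\to R'_n$ do, and I claim the pullback along it of the projective tensor seminorm of $R_n\otimes_S R'_n$ is exactly $w_{n,n}$: one inequality is immediate since more representations are available over $R_n\otimes_S R'_n$, and for the other any representation $\sum a_i\otimes b_i$ with $a_i\in R_n$, $b_i\in R'_n$ is approximated, using density and continuity of the projective seminorm in each variable, by representations coming from $R\otimes_S R'$ without essential loss in $\min_i\bigl(w_n(a_i)+w'_n(b_i)\bigr)$. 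This is precisely the statement that the completed projective tensor product commutes with completion of the factors \cite[2.1.7]{BGR}, and it gives the isometric identification $(R\wt_{\pi,S}R')_n\cong R_n\wt_{\pi,S}R'_n$. As this identification visibly respects the transition maps, passing to the limit gives $R\wt_{\pi,S}R'=\limpro_n(R_n\wt_{\pi,S}R'_n)$. The expected main obstacle is this last identification (the ``commutation with completion'' of $\wt_{\pi,S}$); everything else is formal once Lemma~\ref{tensorfrechet} is available, which is why the statement is flagged as following immediately from it.
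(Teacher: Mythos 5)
Your argument is correct and takes essentially the route the paper intends (the paper simply declares the proposition an immediate consequence of Lemma~\ref{tensorfrechet}): pass to the cofinal diagonal family $\{w_{n,n}\}$, apply the lemma to $R\wt_{\pi,S}R'$, and identify the $w_{n,n}$-completion with $R_n\wt_{\pi,S}R'_n$ by the standard fact that the completed projective tensor product is unchanged upon completing the factors \cite[2.1.7]{BGR}. The only loose point is your sketched direct proof that the pulled-back tensor seminorm equals $w_{n,n}$ on the nose (approximating a representation of the image of $g$ in $R_n\otimes_S R'_n$ by elements of $R\otimes_S R'$ yields a representation of a nearby element rather than of $g$ itself), but since what you actually need — the isometric identification of the two completions — is exactly the cited commutation-with-completion result, the proof stands.
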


Notice that
\ben
\item
 if $R$ and $R'$ are Fr\'echet algebras over a complete real-valued field $(K,v)$, with non-trivial valuation $v$, 
$R\wt_{\pi,K}R'$ coincides with  both the completed projective and the inductive tensor product of \cite{schneider} (\cf Lemma 17.2 and Lemma 17.6 of \lc);
\item  if $R$ and $R'$ are linearly topologized  Fr\'echet algebras over a linearly topologized Banach ring $(S,v)$, 
$R\wt_{\pi,S}R'$ coincides with $R \wt^u_S R'$.
\een

\end{subsection}

\end{section}
\begin{section}{Appendix B. Classical theory of almost periodic functions} \label{Canalog}
The main character of this paper, our function $\Psi$, shows many analogies with the classical holomorphic almost periodic functions of Bohr, Bochner, and Besicovitch \cite{Bes}. In fact many of the  subtle function theoretic  
difficulties which appear in the $p$-adic setting are also  encountered in classical Harmonic Analysis. 
We feel that a short presentation of the   basics of the classical theory might be useful. See also the survey article \cite{cooke}.
\subsection{Fej\'er's Theorem} \label{fejerthm} Let $(\sC_\unif^\bd(\R,\R), ||~||_\R)$ be the Banach algebra of bounded uniformly continuous functions $\R \to \R$, equipped with the supnorm on $\R$.
For $\lambda \in \R_{>0}$ let $\sP_{\R,\lambda} \subset \sC_\unif^\bd(\R,\R)$ be the strictly closed 
Banach subalgebra of continuous functions periodic of period $\lambda$.
 \par
Let us recall the classical \emph{Fej\'er's Theorem} \cite[\S 13.31]{Tit}. Let $f \in \sP_{\R,\lambda}$. The \emph{Fourier expansion of $f$} is 
 the formal  trigonometric series 
 $$
 \frac{a_0}{2} + \sum_{n=1}^\infty a_n \cos(\frac{2 \pi n}{\lambda} z) + b_n \sin(\frac{2 \pi n}{\lambda} z) \;,  
 $$
 with  
 $$
 a_n = \frac{2}{\lambda} \int_0^\lambda f(t) \cos(\frac{2 \pi n}{\lambda} z)dt \;\;,\;\; b_n = \frac{2}{\lambda}  \int_0^\lambda f(t) 
 \sin(\frac{2 \pi n}{\lambda} z) dt \;.
 $$
 The sequence of the partial sums 
 $$
 S_N (f) =  \frac{a_0}{2} + \sum_{n=1}^N a_n \cos(\frac{2 \pi n}{\lambda} z) + b_n \sin(\frac{2 \pi n}{\lambda} z) \;,  
 $$
 does not necessarily converge  to $f$ uniformly on $\R$.
 However,
   the  Cesaro means
 $$
 \sigma_n =    \frac{S_0 + \dots + S_{n-1}}{n}
  $$
  converge to $f$ uniformly on $\R$. In particular, 
  \begin{thm} \label{fejer2}  $\R[\cos(\frac{2 \pi}{\lambda} x), \sin(\frac{2 \pi}{\lambda} x)]$ is dense in 
   the $\R$-Banach algebra 
  $(\sP_{\R,\lambda}, ||~||_\R)$.
    \end{thm}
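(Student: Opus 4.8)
The plan is to deduce the statement directly from the Cesàro convergence recalled just above; the only extra ingredient needed is the observation that a general trigonometric polynomial — which a priori involves the functions $\cos(\tfrac{2\pi n}{\lambda}z)$ and $\sin(\tfrac{2\pi n}{\lambda}z)$ for all $n\ge 1$ — in fact already lies in the subring $\R[\cos(\tfrac{2\pi}{\lambda}x),\sin(\tfrac{2\pi}{\lambda}x)]$ generated by the fundamental harmonics alone.

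First I would normalize by setting $\theta=\tfrac{2\pi}{\lambda}z$, so that it suffices to check that $\cos(n\theta)$ and $\sin(n\theta)$ belong to $\R[\cos\theta,\sin\theta]$ for every $n\ge 1$. This is the classical Chebyshev identity: by induction on $n$, using the product-to-sum relations $\cos((n+1)\theta)=2\cos\theta\cos(n\theta)-\cos((n-1)\theta)$ and $\sin((n+1)\theta)=2\cos\theta\sin(n\theta)-\sin((n-1)\theta)$, one obtains $\cos(n\theta)=T_n(\cos\theta)$ and $\sin(n\theta)=\sin\theta\cdot U_{n-1}(\cos\theta)$ for suitable polynomials $T_n,U_{n-1}\in\Z[t]$. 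Consequently, for each $N$ the Cesàro mean $\sigma_N=\sigma_N(f)$, being an $\R$-linear combination of $1$ and of the functions $\cos(\tfrac{2\pi n}{\lambda}z),\sin(\tfrac{2\pi n}{\lambda}z)$ with $1\le n\le N-1$, lies in $\R[\cos(\tfrac{2\pi}{\lambda}x),\sin(\tfrac{2\pi}{\lambda}x)]$. (The one point worth a word is that $\sigma_N$ is genuinely real-valued — its Fourier coefficients $a_n,b_n$ are real because $f$ is — so it lies in the real, not merely the complex, polynomial ring; this is immediate from the integral formulas for $a_n,b_n$.)

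Then I would conclude: given $f\in\sP_{\R,\lambda}$ and $\veps>0$, Fejér's theorem as recalled above furnishes an $N$ with $||\sigma_N(f)-f||_\R<\veps$, and $\sigma_N(f)\in\R[\cos(\tfrac{2\pi}{\lambda}x),\sin(\tfrac{2\pi}{\lambda}x)]$ by the previous step; hence the subalgebra is dense in $(\sP_{\R,\lambda},||~||_\R)$.

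There is essentially no obstacle here — the content is entirely in the Fejér convergence already stated. If one wished to avoid the Chebyshev computation altogether, an alternative is to note that $\R[\cos(\tfrac{2\pi}{\lambda}x),\sin(\tfrac{2\pi}{\lambda}x)]$ is a subalgebra of $C(\R/\lambda\Z,\R)\cong\sP_{\R,\lambda}$ that contains the constants and separates the points of the circle $\R/\lambda\Z$, so that the Stone–Weierstrass theorem applies directly; but I would keep the Fejér-based argument, since it is the one whose $p$-adic analogue is developed in the body of the paper.
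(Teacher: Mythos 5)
Your proof is correct and follows essentially the same route as the paper, which deduces the theorem directly from the uniform convergence of the Cesàro means recalled just before it ("In particular, ..."). Your Chebyshev remark merely makes explicit the (implicit) fact that the higher harmonics $\cos(\tfrac{2\pi n}{\lambda}x)$, $\sin(\tfrac{2\pi n}{\lambda}x)$ already lie in the ring generated by the fundamental ones, so nothing differs in substance.
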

  We will show below that a suitably reformulated $p$-adic analog of Theorem~\ref{fejer2} holds true $p$-adically.
 
\begin{defn} \label{bohrdef} (Bohr's definition of u.a.p. functions) A continuous function $f:\R \to \R$ is   \emph{uniformly almost periodic} (\emph{u.a.p.} for short) if, for any $\veps >0$, there exists $\ell_\veps >0$ such that for any interval $I \subset \R$ of length $\ell_\veps$ there exists $\tau \in I$ such that 
 $$
 |f(x+\tau) - f(x)| < \veps\;\;,\;\; \forall \, x\in \R\;.
 $$
\end{defn} 
\par
It is easy to check that the set of uniformly almost periodic functions  $\R \to \R$ is a closed subalgebra $AP_\R$ of 
$(\sC_\unif^\bd(\R,\R), ||~||_\R)$ \cite[Chap. I, \S 1, Thms $4^\circ$,$5^\circ$]{Bes}. We define 
$AP_\C \subset (\sC_\unif^\bd(\R,\C), ||~||_\R)$ similarly.
\par
The following result is Bohr's ``Approximation Theorem''. We refer to \cite[I.5]{Bes} for its proof and 
for  a detailed description of the contributions of S. Bochner and H. Weyl.
\begin{thm} \label{bohrthm} $(AP_\R, ||~||_\R)$ identifies with the completion  of the normed ring 
$$(\R[\cos(\frac{2 \pi}{\lambda} x), \sin(\frac{2 \pi}{\lambda} x) \,|\, \lambda \in \R^\times], ||~||_\R)
\;.
$$
Similarly for $(AP_\C, ||~||_\R)$.
\end{thm}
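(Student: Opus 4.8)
The plan is to reduce Bohr's Approximation Theorem to Fej\'er's Theorem (Theorem~\ref{fejer2}) via the structural properties of $AP_\R$ already recorded in the excerpt, namely that $AP_\R$ is a closed subalgebra of $(\sC_\unif^\bd(\R,\R),||~||_\R)$ containing each periodic subalgebra $\sP_{\R,\lambda}$. Write $\sT = \R[\cos(\tfrac{2\pi}{\lambda}x),\sin(\tfrac{2\pi}{\lambda}x)\mid\lambda\in\R^\times]$ for the ring of real trigonometric polynomials with arbitrary (real) frequencies, and let $\ol{\sT}$ be its closure in $(\sC_\unif^\bd(\R,\R),||~||_\R)$. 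The two inclusions to establish are $\ol{\sT}\subseteq AP_\R$ and $AP_\R\subseteq\ol{\sT}$; granting both, $AP_\R = \ol{\sT}$ is exactly the completion of $(\sT,||~||_\R)$, which is the assertion (the complex case $AP_\C$ follows by tensoring with $\C$, or by treating real and imaginary parts separately).

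For the first inclusion, each generator $\cos(\tfrac{2\pi}{\lambda}x)$ and $\sin(\tfrac{2\pi}{\lambda}x)$ is periodic, hence trivially u.a.p. (for a period $\lambda$ one may take $\ell_\veps = \lambda$ and $\tau = \lambda$ in Definition~\ref{bohrdef}); since $AP_\R$ is an algebra, $\sT\subseteq AP_\R$, and since $AP_\R$ is closed in the supnorm, $\ol{\sT}\subseteq AP_\R$. The substance is the reverse inclusion: given $f\in AP_\R$ and $\veps>0$, produce a trigonometric polynomial $P\in\sT$ with $||f-P||_\R<\veps$. Here I would follow the Bochner--Weyl route. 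First, the almost periodicity of $f$ gives relative compactness of the set of translates $\{f(\,\cdot+\tau)\mid\tau\in\R\}$ in $(\sC_\unif^\bd(\R,\R),||~||_\R)$; this compactness is the technical heart and is equivalent to Bohr's definition. Using it one shows the mean value $M(f) = \lim_{L\to\infty}\tfrac1{2L}\int_{-L}^{L}f(t)\,dt$ exists (uniformly in translates), so that the Bohr--Fourier coefficients $a(f,\chi) = M(f(t)e^{-i\chi t})$ are well defined for each real frequency $\chi$, and one proves (Bessel/Parseval) that only countably many are nonzero and $\sum_\chi|a(f,\chi)|^2\le M(|f|^2)<\infty$.

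The passage from Fourier coefficients to a uniformly convergent approximation is the main obstacle, exactly as in the periodic case where na\"ive partial sums fail and Cesaro/Fej\'er summation is needed. The clean way, and the one that genuinely \emph{uses} Theorem~\ref{fejer2}, is a Kronecker-approximation argument: given $f$ and $\veps$, finitely many frequencies $\chi_1,\dots,\chi_N$ carry all but $\veps$ of the ``energy''; approximate the $\chi_j$ simultaneously by rational multiples of a single $\omega>0$ (Dirichlet/Kronecker), so that $f$ is uniformly $\veps$-close to a function $g$ that is \emph{periodic} of period $2\pi/\omega$ up to an error one controls via the compactness of translates; then apply Fej\'er's Theorem~\ref{fejer2} to the genuinely periodic function approximating $g$ to get a trigonometric polynomial with frequencies in $\omega\Z\subset\R^\times$ within $\veps$ in supnorm. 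Combining the estimates yields $P\in\sT$ with $||f-P||_\R<3\veps$, proving $AP_\R\subseteq\ol{\sT}$. I expect the delicate points to be (i) making the ``periodic up to small error'' step precise — this is where relative compactness of $\{f(\cdot+\tau)\}$ is indispensable and where a direct elementary argument is easy to get wrong — and (ii) bookkeeping the composition of three $\veps$-approximations; the algebra and the reduction to Fej\'er are then routine. For $AP_\C$ one repeats the argument coefficientwise, or notes $AP_\C = AP_\R\oplus iAP_\R$ as closed subspaces and that $\sT_\C = \sT\oplus i\sT$, so the closure statement transfers verbatim.
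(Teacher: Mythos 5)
You should first note that the paper does not prove Theorem~\ref{bohrthm} at all: it quotes it from \cite[I.5]{Bes}, so the comparison is with the classical argument. Your first inclusion (trigonometric polynomials lie in $AP_\R$, which is closed) is fine, and the preparatory facts you invoke (relative compactness of translates, existence of the mean value, Bohr--Fourier coefficients, Bessel) are correct. The genuine gap is the decisive step, where you replace the frequencies $\chi_1,\dots,\chi_N$ by rational multiples of a single $\omega$ and claim that $f$ is then uniformly close to a periodic function to which Fej\'er's Theorem~\ref{fejer2} applies. This cannot work. First, perturbing a frequency is not a small perturbation in $||~||_\R$: for $\chi\neq\chi'$ one has $\sup_{t\in\R}|e^{i\chi t}-e^{i\chi' t}|=\sup_{t\in\R}|e^{i(\chi-\chi')t}-1|=2$, so Dirichlet/Kronecker approximation of exponents gives control only on intervals of length small compared with $1/|\chi-\chi'|$, never uniformly on $\R$, and compactness of the translates does not repair this. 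Second, your scheme would prove too much: the polynomial $P$ you produce has all its frequencies in a single lattice $(\omega/Q)\Z$, hence is periodic, so every $f\in AP_\R$ would be a uniform limit of continuous periodic functions. That is false: $f(x)=\cos x+\cos(\sqrt{2}\,x)$ is u.a.p.\ but not such a limit, since if $g_n\to f$ uniformly with $g_n$ continuous of period $T_n$, then $a(f,\chi)=M\bigl(f(t)e^{-i\chi t}\bigr)=\lim_n a(g_n,\chi)$, and $a(g_n,\chi)=0$ unless $\chi\in(2\pi/T_n)\Z$; applying this to $\chi=1$ and $\chi=\sqrt{2}$ would force $\sqrt{2}\in\Q$. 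Note also that ``finitely many frequencies carry all but $\varepsilon$ of the energy'' is a statement in the Besicovitch mean-square norm and by itself yields no sup-norm approximation.

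The classical proof cited by the paper proceeds differently: one chooses a rational basis $\beta_1,\dots,\beta_k$ of the $\Q$-module generated by the Fourier exponents of $f$, forms the composite (several-variable) Bochner--Fej\'er kernels relative to this basis, and obtains polynomials whose frequencies lie in the exponent module of $f$ itself (in general incommensurable, hence not periodic) and whose coefficients are damped Bohr--Fourier coefficients of $f$; uniform convergence to $f$ is then established using the translation-compactness and equi-uniform-continuity you mention. In other words, the reduction is to multi-variable Fej\'er summation restricted to the diagonal, not to one-variable Fej\'er applied to a nearby periodic function; your outline is missing this mechanism, and the shortcut it proposes cannot be repaired.
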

We propose $p$-adic analogs of those Banach algebras and of the latter theorem. 
\subsection{Dirichlet series} \label{expvstrig} 
 Let $\C\{x\}$ be the Fr\'echet $\C$-algebra of entire functions $\C \to \C$, equipped with  the topology of uniform convergence on  compact subsets of $\C$.  
The rotation $z \mapsto i z$ transforms trigonometric series into series of exponentials 
%so that we may reformulate 
and Bohr's definition naturally propagates  into the following
%we take the following more standard viewpoint.
\begin{defn} \label{bochnerdef} \emph{\cite[III.2,$1^\circ$]{Bes}}. For any interval $(a,b) \subset \R$, an analytic function $f$ on the \emph{strip $(a,b) \times i \R \subset \C$} is 
 \emph{almost periodic holomorphic}  on $(a,b)$   if, for any $\veps >0$, there exists $\ell_\veps >0$ such that for any interval $I \subset \R$ of length $\ell_\veps$ there exists $\tau \in I$ such that 
 $$
 |f(x+ i \tau) - f(x)| < \veps\;\;,\;\; \forall \, x\in (a,b) \times i \R\;.
 $$
We let $\APH_\C((a,b))$ denote the $\C$-algebra  of almost periodic holomorphic functions on $(a,b)$.
\end{defn}
Notice that $\APH_\C((a,b))$ is a closed subalgebra of the Fr\'echet algebra $\cO((a,b) \times i \R)$; the induced 
Fr\'echet algebra structure is called \emph{standard}. We may equip $\APH_\C((a,b))$ with the finer Fr\'echet algebra structure  of uniform convergence on   substrips $(a',b') \times i \R$, for $a<a'<b'<b$. We informally call this topology \emph{the strip topology}.  
\endgraf
The following Polynomial Approximation Theorem  \cite[III.3,$3^\circ$]{Bes} holds. 
\begin{thm}\label{bochnerthm} $\APH_\C((a,b))$ is the Fr\'echet completion of the $\C$-polynomial algebra 
generated by the restrictions to  $(a,b) \times i \R$ of all 
continuous characters of $\R$, namely by the maps 
\beq 
e_\lambda : (a,b) \times i \R   \longrightarrow \C
\;\;,\;\;
z \longmapsto e^{\lambda z}
\eeq
%\beq
%\begin{split}
%e_\lambda : (a,b) \times i \R & \longrightarrow \C
%\\
%z &\longmapsto e^{\lambda z}
%\end{split}
%\eeq
for $\lambda \in \R^\times$, equipped with the strip topology.
\end{thm}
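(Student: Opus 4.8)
The plan is to run the classical Bochner--Fej\'er argument: restrict $f\in\APH_\C((a,b))$ to the vertical lines $\{\Re z=c\}$ with $a<c<b$, read off its Bohr--Fourier data on each such line, observe that holomorphy forces these data to coalesce into a single Dirichlet series $\sum_\lambda A_\lambda e^{\lambda z}$, and then approximate $f$ by Bochner--Fej\'er partial sums of that series, passing from line-wise convergence to convergence on closed substrips by Hadamard's three-lines theorem. First I would check that for every $c\in(a,b)$ the function $y\mapsto f(c+iy)$ lies in $AP_\C$: boundedness on each closed substrip $[a',b']\times i\R$ follows from Definition~\ref{bochnerdef} (taking $\veps=1$ with associated length $\ell_1$, any point $z$ of the strip has an $\R$-translate $z+i\tau$ with $\Im(z+i\tau)\in[0,\ell_1]$ and $|f(z)-f(z+i\tau)|<1$, and $f$ is bounded on the compact set $[a',b']\times[0,\ell_1]$), and then continuity together with the Bohr condition yields uniform continuity. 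Hence $y\mapsto f(c+iy)$ has a Bohr mean and a Fourier series $\sum_n a_n(c)e^{i\lambda_n(c)y}$ with $a_n(c)=M_y\{f(c+iy)e^{-i\lambda_n(c)y}\}$.

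The key step is that for each fixed $\lambda\in\R$ the function $c\mapsto a(\lambda;c):=M_y\{f(c+iy)e^{-i\lambda y}\}$ is holomorphic on $(a,b)$ and satisfies the differential equation $\partial_c a(\lambda;c)=\lambda\,a(\lambda;c)$. This comes from differentiating under the mean, applying the Cauchy--Riemann identity $\partial_x f=-i\,\partial_y f$, and integrating by parts in $y$ inside the mean (the boundary terms dropping out because $f$ is bounded on the line). Consequently $a(\lambda;c)=A_\lambda e^{\lambda c}$ for a constant $A_\lambda$ not depending on $c$, so the set $\Lambda:=\{\lambda\in\R : A_\lambda\neq0\}$ of Dirichlet exponents is \emph{the same for every line of the strip}, is at most countable, and the formal Fourier--Dirichlet series of $f$ is $\sum_{\lambda\in\Lambda}A_\lambda e^{\lambda z}$; since $e^{\lambda z}$ restricts on $\{\Re z=c\}$ to $e^{\lambda c}e^{i\lambda y}$, this series restricts there to the genuine Bohr--Fourier series of $y\mapsto f(c+iy)$.

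Now I would invoke the Bochner--Fej\'er summation underlying Theorem~\ref{bohrthm} (the holomorphic counterpart of the Ces\`aro means in Theorem~\ref{fejer2}): as $\Lambda$ is countable there are nonnegative trigonometric kernels $K_N$ with $M\{K_N\}=1$ whose convolutions send a u.a.p. function $g$ with Bohr series $\sum_\lambda b_\lambda e^{i\lambda y}$ to $\sum_\lambda d_{N,\lambda}b_\lambda e^{i\lambda y}$, where $0\le d_{N,\lambda}\le1$, only finitely many $d_{N,\lambda}$ are nonzero, $d_{N,\lambda}\to1$ for each $\lambda\in\Lambda$, and $\sum_\lambda d_{N,\lambda}b_\lambda e^{i\lambda y}\to g$ uniformly on $\R$. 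Since $\Lambda$ is common to all lines, one and the same $K_N$ applied to $y\mapsto f(c+iy)$ for every $c$ produces a single exponential polynomial $\sigma_N(f)(z):=\sum_\lambda d_{N,\lambda}A_\lambda e^{\lambda z}\in\C[e_\lambda\,|\,\lambda\in\R^\times]$ whose restriction to each line matches the Bochner--Fej\'er mean there. From $K_N\ge0$ of mean $1$ one gets $\sup_{\Re z=c}|f-\sigma_N(f)|\to0$ as $N\to\infty$ for every fixed $c$, while $f-\sigma_N(f)$ is holomorphic and bounded on every closed substrip $[a',b']\times i\R$ (for $\sigma_N(f)$ because it is a finite sum of the $e^{\lambda z}$). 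Hadamard's three-lines theorem then gives $\sup_{[a',b']\times i\R}|f-\sigma_N(f)|\le\max\big(\sup_{\Re z=a'}|f-\sigma_N(f)|,\ \sup_{\Re z=b'}|f-\sigma_N(f)|\big)\to0$, i.e. $\sigma_N(f)\to f$ in the strip topology, so $\C[e_\lambda\,|\,\lambda\in\R^\times]$ is strip-dense in $\APH_\C((a,b))$. Conversely each $e_\lambda$ is genuinely periodic in the imaginary direction (period $2\pi/|\lambda|$), so it lies in $\APH_\C((a,b))$, and so does any finite $\C$-linear combination of such, by simultaneous approximation of the periods; since $(\APH_\C((a,b)),\text{strip})$ is complete, its strip-completion is exactly $\APH_\C((a,b))$.

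The main obstacle is the lemma of the second paragraph: that the Dirichlet exponents are independent of the line and that the Bohr coefficient on $\{\Re z=c\}$ is exactly $A_\lambda e^{\lambda c}$. It is precisely this rigidity that lets a single Bochner--Fej\'er kernel approximate $f$ simultaneously on all the lines; with it in hand, and with the easy boundedness-on-substrips estimate feeding Hadamard's three-lines theorem, the rest of the proof is the routine Bochner--Fej\'er machinery.
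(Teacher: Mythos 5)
Your proposal is correct and follows essentially the intended proof: the paper does not prove Theorem~\ref{bochnerthm} at all but quotes it from Besicovitch \cite[III.2, $3^\circ$]{Bes}, and the argument given there is precisely your route — restriction to the vertical lines, the rigidity $a(\lambda;c)=A_\lambda e^{\lambda c}$ of the Dirichlet data (obtained there by a Cauchy-theorem contour argument rather than by differentiating under the Bohr mean, which in your version needs only a routine Vitali/locally-uniform-convergence justification), Bochner--Fej\'er summation line by line, and the three-lines (Phragm\'en--Lindel\"of) estimate to upgrade to uniform convergence on closed substrips. Nothing essential is missing beyond that standard interchange-of-limits justification.
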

 The assignment  $(a,b) \mapsto \APH_\C((a,b))$ uniquely extends to a sheaf of Fr\'echet $\C$-algebras on $\R$. 
 \begin{defn} \label{APHdef}
 \hfill 
 \ben
 \item
 We denote by $\APH_{0,\C}$ the stalk of the sheaf $\APH_\C$ at $0$ equipped with the locally convex inductive limit topology of the system of Fr\'echet algebras $\APH_\C((-\veps,\veps))$ as $\veps \to 0^+$.
\item  We denote by $APH_\C \subset \C\{x\}$ the Fr\'echet algebra of global sections of $\APH_\C$ equipped with the strip topology. 
\een 
 \end{defn}
Notice that we have a natural injective morphism, induced by restriction of functions and the properties of the inductive limit
\beq
APH_\C \longrightarrow \APH_{0,\C} \;.
\eeq
It follows from the combined theorems  of approximation  Theorem~\ref{bohrthm} and  Theorem~\ref{bochnerthm}  that
\begin{cor} \label{bohrcor} $(AP_\C, ||~||_\R)$ identifies with the completion  of  the normed ring 
$(\APH_{0,\C}, ||~||_\R)$.
\end{cor}
\begin{rmk}\label{dirichlet} Sections of the sheaf $\APH_\C$ on open subsets of $\R$ may be viewed as generalized Dirichlet series  \cite[III.3]{Bes}. 
%We are presently unable
% to provide a 
A $p$-adic analog on $\Q_p$ of the sheaf $\APH_\C$  of Dirichlet series on $\R$,  might be  useful
in the theory of $p$-adic L-functions. 
%The lack of an obvious $p$-adic analog of the complex  rotation $z \mapsto iz$ suggests however 
%that a closer analogy 
%to proceed with the discussion of trigonometric series, which are more directly related to the $p$-adic phenomena presented in this paper.
%Our proposal of a $p$-adic analog for that sheaf might be useful 
%in the theory of $p$-adic L-functions. 
\end{rmk}
 \end{section}
 %%%%%%%%%%%%%%%%%%%%%%%%%%%%%%%%%%%%%%%%%%%%%%%%%%%%%%%%%%%%%%%%%%%%%%%%%%%%%%%%%%%%%%%%%%%%%%%

\begin{section}{Appendix C: Numerical Calculations by M. Candilera}
\label{candcalc} 
The following calculations were performed with \emph{Mathematica}${}^\copyright$.  We computed the first coefficients of the series $\Psi_p(T) = \sum_{n=1}^\infty b_n T^n$, for $p=2$, up to the term of degree $2^5$, and for $p=3$, up to  degree $3^4$. We also evaluated the 
a few coefficients of  $\Psi_5(T)$ and  $\Psi_7(T)$. We give here tables of the $p$-adic orders of the coefficients $b_n$ for $p=2,3$. For those values of $p$, we also draw the graph of the function $n \mapsto v_p(b_n)$ and compare it with the Newton polygon of $\Psi_p$ (flipped around the $y$-axis). We confirm experimentally the calculation of the corresponding valuation polygons. 
 \subsection{Very first coefficients}
\ben
\item $p=2$
\begin{equation*}
 \begin{split}
\Psi_2(T) &= T -2  \cdot  T^2 + 2^4  \cdot  T^3 -  11 \cdot 2^5 \cdot T^4 + 7 \cdot 2^{11}\cdot  T^5  - 7 \cdot 37 \cdot  2^{12} \cdot T^6 + \\
&3 \cdot 751 \cdot  2^{16} \cdot T^7 -  301627 \cdot 2^{17} \cdot T^8 +  308621 \cdot 2^{26} \cdot  T^9 + 2^{27} \cdot T^{10} \cdot u(T) \; ,
\end{split}
\end{equation*}
   for a unit $u(T) \in \Z_{(2)}[[T]]^\times$.
\item $p=3$
\begin{equation*}
 \begin{split}
 \Psi_3(T) &=   T - 3^2  \cdot  T^3 + 3^7  \cdot T^5 - 2^2   \cdot   7 \cdot 3^{11} \cdot T^7 + \\
 2 \cdot 7  \cdot 13 \cdot 113  \cdot 3^{14} 
 \cdot  T^9  -    & 5 \cdot 89  \cdot 1249 \cdot 3^{22} \cdot T^{11} +  5 \cdot117 \cdot 217667 \cdot  3^{28}  \cdot T^{13} + \dots 
 \;.
  \end{split}
\end{equation*}
   \begin{equation*}
 \begin{split}
%\\ + 3^{28} \cdot 5 \cdot117 \cdot217667 \cdot T^{13} \;.
% a_5 = 2187 = 3^7  \\
%a_7 = 4960116 = 2^2 3^{11} 7 \\
%a_9 = 98366540454 = 2 3^{14} 7 13 113  \\
%a_{11} = 17441749835980245 = 3^{22} 5 89 1249 \\
%a_{13} = 27810634744696967587395 = 3^{28} 5 117 217667\;. 
 \end{split}
\end{equation*}
\item $p=5$
\begin{equation*}
 \begin{split}
 \Psi_5(T) &=   T - 5^4  \cdot  T^5 + 5^{13}  \cdot T^9 - 53  \cdot   59  \cdot  5^{21}  \cdot  T^{13} + \\
   &3  \cdot  11  \cdot  97  \cdot 1123  \cdot  1699  \cdot  5^{29}  \cdot  T^{17} +   5^{37}  \cdot  T^{21}  \cdot u(T) \; ,
\end{split}
\end{equation*}
   for a unit $u(T) \in \Z_{(5)}[[T]]^\times$.
\item $p=7$
\begin{equation*} \Psi_7(T) = T - 7^6  \cdot   T^7  + 7^{19}  \cdot  T^{13} -   2  \cdot  31  \cdot  37  \cdot  359  \cdot  7^{31}  \cdot  T^{19} +  7^{43}  \cdot T^{25}  \cdot u(T) \; ,
\end{equation*}
for a unit $u(T) \in \Z_{(7)}[[T]]^\times$.
\een
%  The series $\Psi_p(t)=\sum_{n\geq1} b_nt^n\in\Z[\![t]\!]$ is defined by the functional
%equation (\ref{functeq}) and one can easily verify that $\Psi_p(t)=tu(t^{p-1})$ for a suitable series
%$u(T)=\sum_{n\geq0} a_nT^n\in\Z[\![T]\!]$ ($T=t^{p-1}$), which satisfies the functional equation
%$$
%1 = %%u(T) + p^{p-1}T u(p^{p-1}T)^p + p^{2(p^2-2)}T^{p+1}u(p^{2(p-1)}T)^{p^2} + \cdots =
%       \sum_{j\geq0} p^{j(p^j-1)}T^{\frac{p^j-1}{p-1}}u(p^{j(p-1)}T)^{p^j}.
%$$
%Using this relation one can calculate the terms of the series for different
%values of the prime $p$.
%
\subsection{First 24 coefficients of $\Psi_2(t)$ and $2$-adic order of the 32 first}
%\begin{columns}
%\column{.48\textwidth} 
\begin{figure}
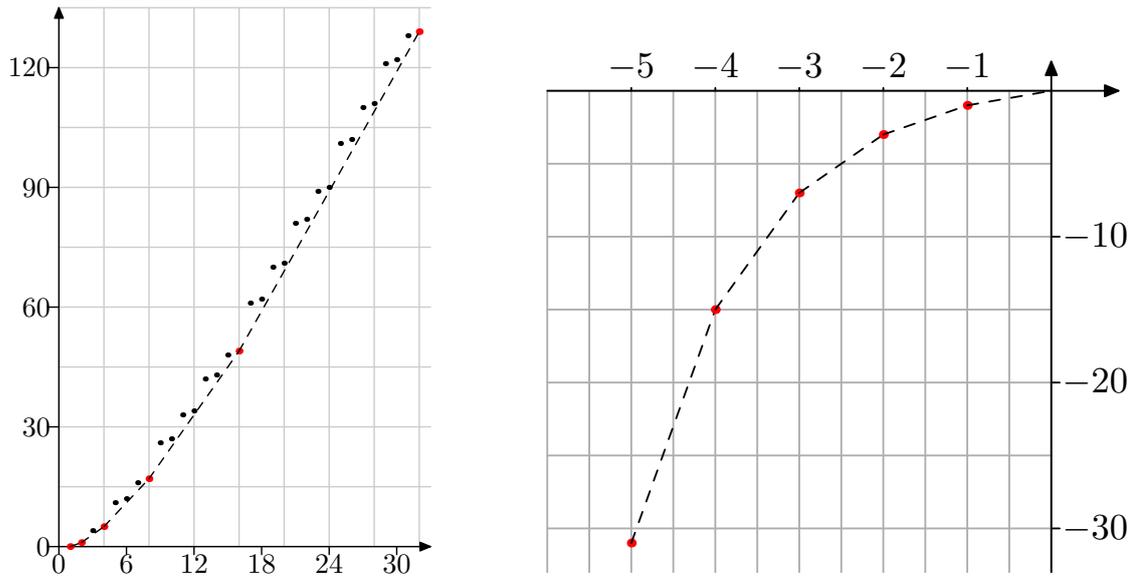

\includegraphics[width=.40\textwidth]{poligono.90}
\qquad \qquad
%\end{figure}
%\column{.48\textwidth}
%\begin{figure}
\includegraphics[width=.55\textwidth]{poligono.92}
   \caption{Newton and valuation polygons of $\Psi_2$.}
\end{figure}
%\end{columns}
\medskip
\begin{center}
%        \caption{$2$-adic valuation of the coefficients of $\Psi_2$.}
        %%\label{multiprogram}
        \begin{tabular}{|c|c||c|c||c|c||c|c|}
            \hline %\cline{2-9}
             \multicolumn{8}{|c|}{$\Psi_2(t)=\sum_{n\geq1} b_nt^n$ }\\ %\rule{0pt}{14pt}} \\
            \hline %\cline{2-9}
             $b_1$ & 0 & $b_9$    & 26 & $b_{17}$ & 61 & $b_{25}$ & 101 \\
            \hline %\cline{2-9}
             $b_2$ & 1 & $b_{10}$ & 27 & $b_{18}$ & 62 & $b_{26}$ & 102 \\
            \hline %\cline{2-9}
             $b_3$ & 4 & $b_{11}$ & 33 & $b_{19}$ & 70 & $b_{27}$ & 110 \\
            \hline %\cline{2-9}
             $b_4$ & 5 & $b_{12}$ & 34 & $b_{20}$ & 71 & $b_{28}$ & 111 \\
            \hline %\cline{2-9}
             $b_5$ & 11 & $b_{13}$ & 42 & $b_{21}$ & 81 & $b_{29}$ & 121 \\
            \hline %\cline{2-9}
             $b_6$ & 12 & $b_{14}$ & 43 & $b_{22}$ & 82 & $b_{30}$ & 122 \\
            \hline %\cline{2-9}
             $b_7$ & 16 & $b_{15}$ & 48 & $b_{23}$ & 89 & $b_{31}$ & 128 \\
            \hline %\cline{2-9}
             $b_8$ & 17 & $b_{16}$ & 49 & $b_{24}$ & 90 & $b_{32}$ & 129 \\
            \hline
        \end{tabular}
    \end{center}
   \centerline{$2$-adic valuation of the coefficients of $\Psi_2$}
\begin{scriptsize}
\begin{align*}
b_1 &=1,\qquad b_2 = -2, \qquad  b_3 =  16 = 2^4,
       \qquad b_4 =  -352 = -2^5\cdot11 \\
b_5  &=14336 = 2^{11}\cdot7, \qquad\qquad
     b_6 =  -1060864 = -2^{12}\cdot7\cdot37 \\
%%b_2 = a_1 &=-2 \\
%%b_3 = a_2 &=16 = 2^4 \\
%%b_4 = a_3 &=-352 = -2^5\cdot11 \\
%%b_5 = a_4 &=14336 = 2^{11}\cdot7 \\
%%b_6 = a_5 &=-1060864 = -2^{12}\cdot7\cdot37 \\
b_7 &= 147652608 = 2^{16}\cdot3\cdot751 \\
b_8  &=-39534854144 = -2^{17}\cdot301627 \\
b_9  &= 20711204716544 = 2^{26}\cdot308621 \\
b_{10}  &=-21454855889485824 = -2^{27}\cdot3^2\cdot13\cdot701\cdot1949 \\
b_{11}   &= 44195700516541431808 = 2^{33}\cdot5145056699 \\
b_{12}  &=-181554407879323198423040 = -2^{34}\cdot5\cdot41\cdot2273\cdot22679509 \\
b_{13}  &= 1489469015852141109009448960 = 2^{42}\cdot5\cdot67733208918623\\
b_{14}  &=-24421319844213105128638664146944 =-2^{43}\cdot3^2\cdot8179\cdot37716952983613\\
b_{15}   &= 800530746908074643997623203521363968 = 2^{48}\cdot31\cdot71\cdot1619\cdot826201\cdot966018887 \\
b_{16}   &=-52473187457503996327647036404796036743168 %\\ &
                =-2^{49}\cdot31\cdot397\cdot13687\cdot2882489\cdot191972726039  \\
b_{17}   &=6878395240848057051122842718175351390427152384  %\\ &
                = 2^{61}\cdot3\cdot47\cdot59\cdot919\cdot24709\cdot15791216459521333  \\
b_{18}  &=-1803212578568825704559863338710346864852507172012032 \\
                &= 2^{62}\cdot3^2\cdot19\cdot97\cdot173\cdot1665967\cdot581220517\cdot140723269997  \\
%\end{align*}
%\begin{align*}
b_{19}   &=945424354393817092018179744741353462710753588534117924864  \\
                &= 2^{70}\cdot7^2\cdot23\cdot15973\cdot44485316159805664956515547941  \\
b_{20}  &=-991360632780906301560343330625129510790528483073480047449866240 \\
                &= 2^{71}\cdot5\cdot167\cdot14503\cdot15445577653440901\cdot2244675152281633901  \\
b_{21}  &=2079045830009718214618472297232655379089817022368004517660824096997376 \\
                &= 2^{81}\cdot109\cdot23549\cdot167442376921\cdot2000645152343730624200879183  \\
b_{22}   &=-8720175189463740580963423057535032711261236371520206719551905031269050744832 \\
                &= 2^{82}\cdot47\cdot1867\cdot105323\cdot2119591\cdot80618233393589\cdot1141865166972250409671  \\
b_{23}  &= 73150235997673008411264495083486904164758556563477195586370441676376428384144588800 \\
                &= 2^{89}\cdot3^3\cdot5^2\cdot175082340917111384848376265817809832605816887352831773  \\
b_{24}   &=-1227258187586069935509530355473988020883482157853428276444146736521211077001846045664083968 \\
                &= 2^{90}\cdot54617\cdot76121647308197\cdot238451637287968840726339672350427699951944293  \\
\end{align*}
\end{scriptsize}

\subsection{$3$-adic values of the first $81$ coefficients of $\Psi_3(T)$}
\begin{figure}
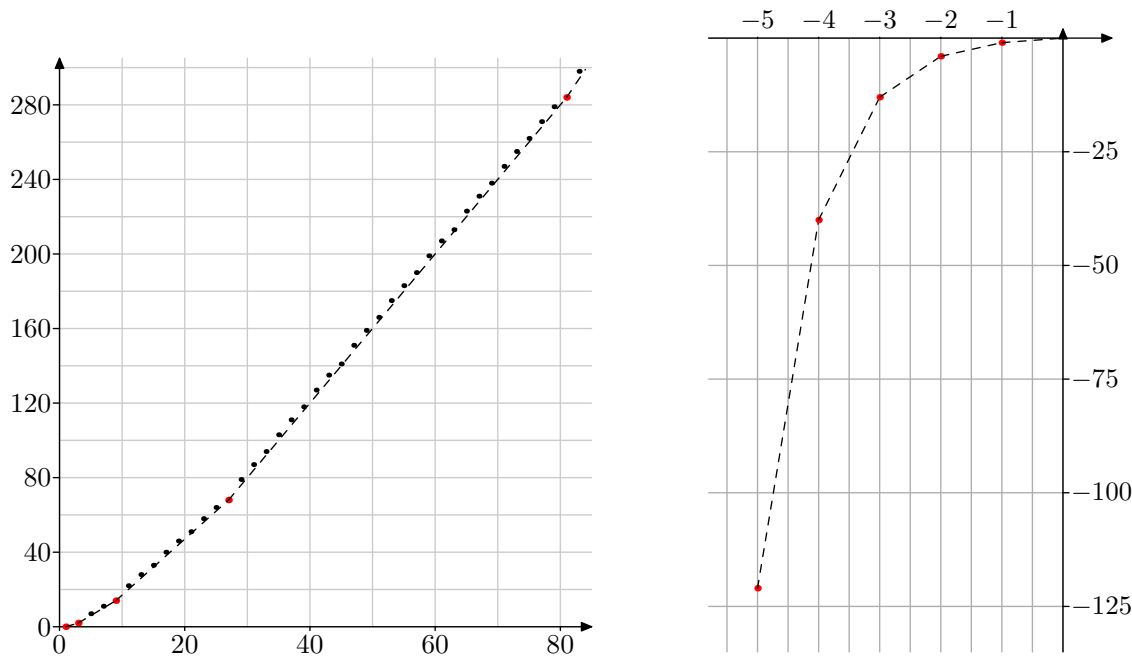

\includegraphics[width=.55\textwidth]{poligono.6}
\qquad \qquad
\includegraphics[width=.40\textwidth]{poligono.60}
\caption{The Newton and valuation polygons of $\Psi_3$.}
\end{figure}

$$
\begin{array}{|c|c|c|c|c|c|c|c|}
\hline
  b_3   & 2   & b_{23} &  58 & b_{43}  & 135 & b_{63} & 213
\\ \hline
  b_5   & 7   & b_{25} &  64 & b_{45}  & 141 & b_{65} & 223
\\ \hline
  b_7   & 11  & b_{27} &  68 & b_{47}  & 151 & b_{67} & 231
\\ \hline
b_9    & {14} & b_{29} &  79 & b_{49}  & 159 & b_{69} & 238
\\ \hline
b_{11} & {22} & b_{31} &  87 & b_{51}  & 166 & b_{71} & 247
\\ \hline
b_{13} & {28} & b_{33} &  94 & b_{53}  & 175 & b_{73} & 255
\\ \hline
b_{15} & {33} & b_{35} & 103 & b_{55}  & 183 & b_{75} & 262
\\ \hline
b_{17} &{40}  & b_{37} & 111 & b_{57}  & 190 & b_{77} & 271
\\ \hline
b_{19} &{46}  & b_{39} & 118 & b_{59}  & 199 & b_{79} & 279
\\ \hline
b_{21} &{51}  & b_{41} & 127 & b_{61}  & 207 & b_{81} & 284
\\ \hline
\end{array}
$$
\medskip
\centerline{$3$-adic valuation of the coefficients of $\Psi_3$}

\end{section}

\end{document}